\newtheorem{theorem}{Theorem}[section]
\newtheorem{lemma}[theorem]{Lemma}
\newtheorem{proposition}[theorem]{Proposition}
\newtheorem{corollary}[theorem]{Corollary}
\theoremstyle{definition}
\newtheorem{definition}[theorem]{Definition}
\newtheorem{remark}[theorem]{Remark}
\numberwithin{equation}{section}
\newtheorem{example}[theorem]{Example}
\newtheorem{assumption}[theorem]{Assumption}
\newtheorem{setting}[theorem]{Setting}
\begin{document}

\normalfont

\title{Period Rings with Big Coefficients and Application II}
\author{Xin Tong}

\maketitle

\begin{abstract}
\rm  We continue our study on the corresponding noncommutative deformation of the relative $p$-adic Hodge structures of Kedlaya-Liu along our previous work. In this paper, we are going to initiate the study of the corresponding descent of pseudocoherent modules carrying large noncommutative coefficients. And also we are going to more systematically study the corresponding noncommutative geometric aspects of noncommutative deformation of Hodge structures, which will definitely also provide the insights not only for noncommutative Iwasawa theory but also for noncommutative analytic geometry. The noncommutative Hodge-Iwasawa theory is now improved along some very well-defined direction (we will expect many well-targeted applications to noncommutative Tamagawa number conjectures from the modern perspectives of Burns-Flach-Fukaya-Kato), while the corresponding Kedlaya-Liu glueing of pseudocoherent Banach modules with certain stability is also generalized to the large noncommutative coefficient case. 
\end{abstract}

\newpage

\tableofcontents

\newpage

\section{Introduction}

\subsection{Noncommutative Topological Pseudocoherence after Illusie-Kedlaya-Liu}

\noindent In our previous work on noncommutative deformation of period rings and application \cite{T1}, \cite{T2} and \cite{T3}, we considered many useful deformation of the sheaves from many deep constructions coming from Kedlaya-Liu. The corresponding local pictures in both commutative setting and noncommutative setting were carefully developed. \\

\indent The globalization of the picture will happen both in the corresponding coefficients and in the corresponding underlying spaces. Both of them require very deep descent results, although in the commutative setting one do not have to modify much from \cite{KL1} and \cite{KL2}. However in the noncommutative setting, things will be very complicated. Since if one would like to really consider the corresponding localization with respect to the noncommutative coefficient Banach algebra, one has to study the corresponding noncommutative toposes. This will be really complicated since the corresponding topological issues are really  hard to manipulate on the level of toposes. That being said, with respect to some of the corresponding application in mind, we could actually do some extension of Kedlaya-Liu glueing to noncommutative setting by keeping track of the noncommutative spatial information in the following sense.\\

\indent Over $\mathbb{Q}_p$ or $\mathbb{F}_p((t))$, in our application in mind we consider the corresponding deformation of period rings in various context and the corresponding period sheaves over certain sites. One could deal with such very big Banach sheaves by considering the corresponding localization concentrated at the corresponding commutative part but release the corresponding deformation components. This will basically be slightly different from Kedlaya-Liu's original extension to Banach context \cite{KL2} from Illusie's original notion of pseudocoherence in \cite{SGAVI}. What is happening is that we will have some stability notion which is only sensible to the commutative spatial part. To be more precise when we consider the product of a commutative Banach adic algebra $A$ with a noncommutative Banach algebra $B$, we will consider just the corresponding stability with respect to the corresponding rational localization from $A$ (not at all for $B$). Therefore when we sheafify the modules with such stability we will have a sheaf locally attached to modules with such stability in the same fashion.\\

\indent In some of the applications one might want to consider the corresponding deformations by more general coefficients, for instance in noncommutative Iwasawa theory one might want to consider the corresponding noncommutative Iwasawa algebras. Therefore in such situation one could deform the structure sheaves by some pro-Banach algebras, motivated by Burns-Flach-Fukaya-Kato \cite{BF1}, \cite{BF2} and \cite{FK1}. On the other hand suppose one considers deformations by some limit of Fr\'echet rings such as the corresponding ring $B_e$ in the theory of Bergers $B$-pairs in \cite{Ber1} and the corresponding various types of Robba rings with respect to some radius $r>0$ or nothing (namely the full Robba rings) in \cite{KL1} and \cite{KL2}. Carrying these LF algebras one can consider some types of mixed-type of Hodge-structures along the work of \cite{CKZ}, \cite{PZ} and \cite{Ked1} in more general sense or in the original context.\\

\indent In this paper, we will also consider the following discussion. First we would like to tackle the corresponding integral aspects of the corresponding story, especially when we have the adic spaces which are not Tate but just analytic (since we are dealing with spaces over $\mathbb{Z}_p$) in the sense of \cite{Ked2}. Certainly we will carry some large coefficients. We will expect that this will have the potential application to integral $p$-adic Hodge theory such as in \cite{BMS} and \cite{BS}. That being said, the corresponding descent in this paper will definitely have its own interests. We work in detail out the corresponding descent of the corresponding pseudocoherent sheaves with some stability carrying the corresponding large coefficients in the corresponding uniform analytic Huber pair situation, while we also translated several results to the corresponding uniform analytic adic Banach rings after \cite{KL2}. \\

\subsection{Main Results}

\indent  The corresponding notions of stably-pseudocoherence is generalized to noncommutative deformed setting from \cite{KL2}. Over affinoids we realized the following theorems around taking global section of pseudocoherent sheaves. Namely with the corresponding notations in \cref{theorem2.14}:

\begin{theorem}\mbox{\bf{(After Kedlaya-Liu \cite[Theorem 2.5.5]{KL2})}} In analytic topology, taking the global section will realize an equivalence between the category of pseudocoherent $\mathcal{O}\widehat{\otimes} B$-sheaves and the category of $B$-stably-pseudocoherent $\mathcal{O}(X)\widehat{\otimes}B$ modules. Here $X$ is an adic affinoid $\mathrm{Spa}(A,A^+)$. 
	
\end{theorem}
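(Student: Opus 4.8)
The plan is to follow the strategy of Kedlaya--Liu for \cite[Theorem 2.5.5]{KL2}, carrying the noncommutative coefficient $B$ as a ``passenger'' that is only ever base-changed along the \emph{commutative} rational localizations coming from $A$. Write $R := \mathcal{O}(X)\widehat{\otimes}B = A\widehat{\otimes}B$, in which $A$ is central. For a rational subdomain $U = \mathrm{Spa}(A',A'^{+})\subseteq X$ one has $\mathcal{O}(U)\widehat{\otimes}B = R\,\widehat{\otimes}_{A}A'$, so base change to $U$ is induced by the commutative map $A\to A'$; this is the structural observation that makes everything go through. First I would set up the two functors: the global sections functor $\mathcal{F}\mapsto \mathcal{F}(X)$ from pseudocoherent $\mathcal{O}\widehat{\otimes}B$-sheaves to $R$-modules, and the localization functor $M\mapsto \widetilde{M}$ sending an $R$-module to the sheafification of the presheaf $U\mapsto M\,\widehat{\otimes}_{R}(\mathcal{O}(U)\widehat{\otimes}B) = M\,\widehat{\otimes}_{A}\mathcal{O}(U)$. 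I would then record the formal properties: $\widetilde{(\cdot)}$ sends finite free $R$-modules to finite free $\mathcal{O}\widehat{\otimes}B$-sheaves, is compatible with restriction to rational subdomains, and is exact on the complexes that will occur.

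The first substantial step is to show that $\widetilde{M}$ is a sheaf and is pseudocoherent whenever $M$ is $B$-stably-pseudocoherent. Using the combinatorics of rational coverings exactly as in \cite{KL2} --- this part concerns only the space $X$ and transfers verbatim --- I would reduce the sheaf and higher-acyclicity property to the case of a simple Laurent (binary) covering $X = U_{f}\cup U_{1/f}$. There the \v{C}ech complex of $\widetilde{M}$ is obtained from the Tate-acyclic \v{C}ech complex of $\mathcal{O}$ on $X$ by applying $M\,\widehat{\otimes}_{A}(-)$, so the point is that $B$-stable-pseudocoherence forces the derived base change $M\,\widehat{\otimes}_{A}^{\mathbf{L}}(-)$ to agree with the naive one and to preserve strict exactness; this is precisely the $\mathrm{Tor}$-vanishing built into the definition referenced in \cref{theorem2.14}. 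Pseudocoherence of $\widetilde{M}$ over each $\mathcal{O}(U)\widehat{\otimes}B$ then follows because a pseudocoherent resolution of $M$ by finite free $R$-modules base-changes, again by stability, to a pseudocoherent resolution of $M\,\widehat{\otimes}_{A}\mathcal{O}(U)$.

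Conversely, given a pseudocoherent $\mathcal{O}\widehat{\otimes}B$-sheaf $\mathcal{F}$, I would show that $\mathcal{F}(X)$ is $B$-stably-pseudocoherent over $R$: by definition $\mathcal{F}|_{U}$ is associated to a pseudocoherent $\mathcal{O}(U)\widehat{\otimes}B$-module for $U$ ranging over a cofinal system of rational subdomains, and one identifies this module with $\mathcal{F}(X)\,\widehat{\otimes}_{A}\mathcal{O}(U)$ together with the vanishing of the higher $\mathrm{Tor}$'s, using the acyclicity of $\mathcal{F}$ on rational coverings and the finite-free approximation of $\mathcal{F}$ as in \cite{KL2}. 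Finally I would check that the natural transformations $\widetilde{\mathcal{F}(X)}\to\mathcal{F}$ and $M\to\widetilde{M}(X)$ are isomorphisms: the first by evaluating on a basis of rational subdomains and invoking the two previous steps, the second because $\widetilde{M}(X)=M$ is exactly the degree-zero part of the sheaf property established above. This yields the asserted equivalence of categories, with quasi-inverses $\mathcal{F}\mapsto\mathcal{F}(X)$ and $M\mapsto\widetilde{M}$.

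I expect the main obstacle to be the interaction between the completed tensor product $-\widehat{\otimes}B$, strictness of morphisms of Banach modules, and the potentially \emph{infinite} resolutions that a pseudocoherent module admits: one must ensure that base change along $A\to A'$ commutes with forming such a resolution and with the completions involved, and that tensoring the Tate-acyclic complex with the noncommutative $B$ does not destroy strict exactness. The $A$-side of this is exactly what the ``$B$-stably'' hypothesis is engineered to control, and the $B$-side is handled by the standing hypotheses on $B$ (a Banach algebra over the base field $\mathbb{Q}_{p}$ or $\mathbb{F}_{p}((t))$, so that $-\widehat{\otimes}B$ is exact on strict complexes of Banach spaces); the remaining work is bookkeeping to keep the commutative localization and the noncommutative coefficient from interfering, together with the routine verification that the construction is functorial in $X$ and in $B$.
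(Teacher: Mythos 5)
Your proposal follows essentially the same route as the paper, which likewise runs the Kedlaya--Liu argument for \cite[Theorem 2.5.5]{KL2} with $B$ carried along as a passenger on the commutative localizations: reduction to simple Laurent coverings via \cite[Proposition 2.4.20]{KL1}, Tate acyclicity for $B$-stably pseudocoherent modules (\cref{theorem2.11}), and effectivity of descent for the covering $A\rightarrow B_1\bigoplus B_2$ (\cref{lemma2.13}). One small correction: the $\mathrm{Tor}_1$-vanishing you invoke is not ``built into the definition'' of $B$-stable pseudocoherence (which only demands completeness after base change along rational localizations) but is the separately proven pseudoflatness of the localization maps (\cref{proposition2.9}, after \cite[Lemmas 2.4.12--2.4.13]{KL2}), so your plan still requires that lemma as an input rather than obtaining it for free.
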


\indent In further application, one can also establish the desired parallel results in the \'etale topology. Namely with the corresponding notations in \cref{theorem2.22}:

\begin{theorem}\mbox{\bf{(After Kedlaya-Liu \cite[Theorem 2.5.14]{KL2})}} In \'etale topology, taking the global section will realize an equivalence between the category of pseudocoherent $\mathcal{O}_\text{\'et}\widehat{\otimes} B$-sheaves and the category of $B$-\'etale-stably-pseudocoherent $\mathcal{O}_\text{\'et}(X)\widehat{\otimes}B$ modules. Here $X$ is an adic affinoid $\mathrm{Spa}(A,A^+)$.  
	
\end{theorem}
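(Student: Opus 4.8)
The plan is to bootstrap the assertion from the analytic-topology statement above (after \cite[Theorem 2.5.5]{KL2}) using the structure of the \'etale site of a uniform analytic adic affinoid. Recall that on $X=\mathrm{Spa}(A,A^+)$ every \'etale morphism is, locally on the source, a composite of a rational localization followed by a finite \'etale morphism, so the \'etale topology on $X$ is generated by rational (analytic) covers together with finite \'etale covers. Accordingly I would first set up the two functors between which the equivalence is claimed: to a pseudocoherent $\mathcal{O}_{\text{\'et}}\widehat{\otimes}B$-sheaf $\mathcal{F}$ we associate $\Gamma(X,\mathcal{F})$, and to a $B$-\'etale-stably-pseudocoherent module $M$ over $\mathcal{O}_{\text{\'et}}(X)\widehat{\otimes}B=\mathcal{O}(X)\widehat{\otimes}B$ we associate the presheaf on $X_{\text{\'et}}$ sending $U=\mathrm{Spa}(A_U,A_U^+)$ to $M\widehat{\otimes}_{\mathcal{O}(X)\widehat{\otimes}B}\bigl(\mathcal{O}(U)\widehat{\otimes}B\bigr)$. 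The analytic theorem already guarantees that the restriction of either construction to the analytic site is well behaved (a sheaf, pseudocoherent, and the two constructions mutually inverse there); the entire additional content is to propagate this across finite \'etale morphisms while only ever localizing the commutative factor $A$.

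For the global-sections functor, given a pseudocoherent $\mathcal{O}_{\text{\'et}}\widehat{\otimes}B$-sheaf $\mathcal{F}$ I would restrict it to $X_{\mathrm{an}}$ and invoke the previous theorem, so that $\Gamma(X,\mathcal{F})$ is at least $B$-stably-pseudocoherent over $\mathcal{O}(X)\widehat{\otimes}B$. The point is to upgrade this to $B$-\'etale-stable-pseudocoherence: for every finite \'etale $A\to A'$ with $X'=\mathrm{Spa}(A',A'^+)$ one must check that the natural map $\Gamma(X,\mathcal{F})\widehat{\otimes}_{\mathcal{O}(X)\widehat{\otimes}B}\bigl(\mathcal{O}(X')\widehat{\otimes}B\bigr)\to\Gamma(X',\mathcal{F})$ is an isomorphism and that the target is stably-pseudocoherent over $\mathcal{O}(X')\widehat{\otimes}B$. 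Since $A'$ is finite projective over $A$, the ring $\mathcal{O}(X')\widehat{\otimes}B$ is finite projective as a left and as a right module over $\mathcal{O}(X)\widehat{\otimes}B$ --- the noncommutative factor is merely transported along the finite \'etale map of the commutative base --- so base change along this map is exact and preserves pseudocoherence, and the comparison map is an isomorphism because $\mathcal{F}$ is an \'etale sheaf and finite morphisms carry no higher cohomology for pseudocoherent sheaves in the Kiehl-type sense. Stable pseudocoherence of the result over $\mathcal{O}(X')\widehat{\otimes}B$ then follows by applying the previous theorem once more, now on $X'_{\mathrm{an}}$.

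For the inverse functor, given a $B$-\'etale-stably-pseudocoherent module $M$ I would verify that the presheaf $\widetilde{M}$ above is an $\mathcal{O}_{\text{\'et}}\widehat{\otimes}B$-sheaf and that it is pseudocoherent. Sheafiness is checked on the two generating classes of covers: on rational covers it is precisely the analytic theorem applied to $M$ viewed, a fortiori, as a $B$-stably-pseudocoherent module; on a finite \'etale cover $\{X_i\to X\}$ it follows from faithfully flat descent along $\mathcal{O}(X)\widehat{\otimes}B\to\prod_i\mathcal{O}(X_i)\widehat{\otimes}B$ --- faithful flatness holding because the $A_i$ are finite flat over $A$ and jointly surjective on adic spectra and $B$ is flat over the coefficient base --- together with the classical effectivity of descent data for finite projective modules, the coefficient $B$ again being carried along formally. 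General \'etale covers then reduce to these two cases. Pseudocoherence of $\widetilde{M}$ is local on $X_{\text{\'et}}$ and reduces, via the same dichotomy, to the pseudocoherence of the stably-pseudocoherent pieces provided by the analytic theorem. That the two functors are mutually inverse is verified on $X_{\mathrm{an}}$, where it is the previous theorem, and then extended using that an $\mathcal{O}_{\text{\'et}}\widehat{\otimes}B$-sheaf is determined by its restriction to $X_{\mathrm{an}}$ together with its finite \'etale descent data, both of which the two functors respect by construction.

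The step I expect to be the main obstacle is the precise interaction of the completed tensor product $-\widehat{\otimes}B$ with finite \'etale base change and with faithfully flat descent in the topological (Banach, or pro-Banach) category: one needs $\mathcal{O}(X')\widehat{\otimes}B$ to be genuinely faithfully flat --- not merely flat --- over $\mathcal{O}(X)\widehat{\otimes}B$, one needs completion to introduce no $\mathrm{Tor}$ or $\varprojlim^1$ obstruction in the \v{C}ech complexes computing descent, and one needs pseudocoherence to be both stable and reflected under these operations in the presence of the noncommutative factor. The deliberate restriction to stability only along the commutative rational and finite \'etale directions --- never localizing $B$ --- is exactly what keeps this manageable, but it also forces one to check that the definition of $B$-\'etale-stably-pseudocoherence has been arranged so that precisely these descents close up.
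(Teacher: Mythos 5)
Your proposal is correct and follows essentially the same route as the paper: both arguments bootstrap from the analytic-topology equivalence by decomposing the \'etale site into rational localizations plus finite \'etale morphisms (this is the content of the basis $\mathfrak{C}$ in \cref{proposition2.18}), handle the finite \'etale direction by faithfully flat descent with the coefficient $B$ carried along formally (the paper cites \cite[Proposition 8.2.21, Theorem 8.2.22]{KL1} and \cite[Tag 03OD]{SP} for exactly this), and verify acyclicity and effectivity of glueing as in \cref{theorem2.19} and \cref{lemma2.21}. The obstacle you flag at the end --- that completed tensor with $B$ must interact well with finite \'etale base change and descent --- is precisely what the paper's notion of $2$-$B$-\'etale-pseudoflatness and the restriction of stability to the commutative directions are designed to resolve.
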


\indent As we mentioned one could also consider more general deformation such as the corresponding. The first scope of the consideration could be made in order to include the context of pro-Banach families associated to some $p$-adic rational Iwasawa algebra with Banach coefficients $B[[G]]$ attached to some $p$-adic Lie group $G$. Namely with the corresponding notations in \cref{theorem3.15}:

\begin{theorem}\mbox{\bf{(After Kedlaya-Liu \cite[Theorem 2.5.5]{KL2})}} In analytic topology, taking the global section will realize an equivalence between the category of pseudocoherent $\mathcal{O}\widehat{\otimes} B[[G]]$-sheaves and the category of $B[[G]]$-stably-pseudocoherent $\mathcal{O}(X)\widehat{\otimes}B[[G]]$ modules. Here $X$ is an adic affinoid $\mathrm{Spa}(A,A^+)$, and here all the modules are pro-systems over $\mathbb{Q}_p[[G]]$. 
	
\end{theorem}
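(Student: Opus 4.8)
The plan is to bootstrap the pro-Banach statement from the Banach-coefficient equivalence already established (the first theorem above, with the notations of \cref{theorem2.14}). Write $B[[G]]=\varprojlim_{H} B[G/H]$, where $H$ runs over the open normal subgroups of the $p$-adic Lie group $G$; each $B[G/H]$ is an honest Banach algebra over $B$, finitely presented as a $B$-module, and for $H'\subseteq H$ the transition map $B[G/H']\to B[G/H]$ is a strict surjection, obtained by base change from $\mathbb{Z}_p[G/H']\twoheadrightarrow\mathbb{Z}_p[G/H]$. By construction a pseudocoherent $\mathcal{O}\widehat{\otimes}B[[G]]$-sheaf is a compatible pro-system $(\mathcal{F}_H)_H$ of pseudocoherent $\mathcal{O}\widehat{\otimes}B[G/H]$-sheaves, and a $B[[G]]$-stably-pseudocoherent $\mathcal{O}(X)\widehat{\otimes}B[[G]]$-module is a compatible pro-system $(M_H)_H$ of $B[G/H]$-stably-pseudocoherent $\mathcal{O}(X)\widehat{\otimes}B[G/H]$-modules, the transition maps in each case induced by $-\,\widehat{\otimes}_{B[G/H']}B[G/H]$. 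The global section functor then operates levelwise, $(\mathcal{F}_H)_H\mapsto(\Gamma(X,\mathcal{F}_H))_H$.

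First I would record that the Banach-coefficient theorem applies verbatim with coefficient ring $B[G/H]$ in place of $B$ for each fixed $H$, since $B[G/H]$ is again a Banach algebra of the admissible type and the commutative spatial part $A$ is unchanged; this gives an equivalence $\mathcal{F}_H\mapsto\Gamma(X,\mathcal{F}_H)$ at each level. Next I would check the $2$-functoriality of these equivalences in $H$: the base-change functor $-\,\widehat{\otimes}_{B[G/H']}B[G/H]$ carries pseudocoherent sheaves to pseudocoherent sheaves and $B[G/H']$-stably-pseudocoherent modules to $B[G/H]$-stably-pseudocoherent modules, and it commutes with $\Gamma(X,-)$ up to natural isomorphism on pseudocoherent objects, because taking global sections over the affinoid $X$ is compatible with the relevant completed tensor product. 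Consequently the levelwise equivalences assemble into an equivalence of the $2$-limit categories, which is precisely the asserted equivalence for $B[[G]]$-coefficients. Full faithfulness and essential surjectivity are then inherited: a morphism of pro-systems of modules is levelwise a morphism of stably-pseudocoherent modules, hence uniquely a morphism of the corresponding sheaves, the lifts being automatically compatible with the transition maps by the uniqueness half of the levelwise statement; conversely any compatible pro-system of stably-pseudocoherent modules lifts, level by level and compatibly, to a pro-system of sheaves.

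The main obstacle is the interaction of the pro-limit with the glueing, that is, verifying that the sheaf property and the finiteness underlying (stable) pseudocoherence are stable under $\varprojlim_{H}$. Concretely, on a finite rational cover $\mathfrak{U}=\{U_i\}$ of $X$ one must know that the \v{C}ech complex of $\mathcal{F}=\varprojlim_H\mathcal{F}_H$ with respect to $\mathfrak{U}$ is exact; this complex is the inverse limit over $H$ of the \v{C}ech complexes of the $\mathcal{F}_H$, each exact by the levelwise theorem, so exactness of the limit requires vanishing of the derived limit $\varprojlim^1_H$ of the terms. This is where the bulk of the work lies: one shows the pro-system of \v{C}ech complexes is Mittag-Leffler, using that each $\mathcal{F}_H(U_{i_0\cdots i_k})$ is a pseudocoherent, hence complete, module over a Banach ring and that the transition maps are strict surjections (inherited from $\mathbb{Z}_p[G/H']\twoheadrightarrow\mathbb{Z}_p[G/H]$ after base change), whence $\varprojlim^1_H$ vanishes; the same Mittag-Leffler input shows that the levelwise stability conditions glue to the stability condition for the limit, and that nothing is lost in passing from $\mathbb{Z}_p[[G]]$-pro-systems to $\mathbb{Q}_p[[G]]$-pro-systems, since inverting $p$ is exact and commutes with the countable limits at play. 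Once these vanishings are in hand, the remainder of the argument is the formal assembly of the levelwise equivalences described above.
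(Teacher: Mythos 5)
Your proposal is correct in substance and follows the same underlying strategy as the paper --- a levelwise reduction to the Banach-coefficient case --- but it packages that reduction differently. The paper does not invoke \cref{theorem2.14} as a black box with coefficient $B\widehat{\otimes}\mathbb{Q}_p[G/H]$; instead it re-proves the whole Kedlaya--Liu chain (pseudoflatness of the Laurent localizations, Tate acyclicity in \cref{theorem3.12}, effective descent for simple Laurent coverings in \cref{lemma3.14}) member by member in the pro-system $\{M_E\}_{E}$, and then assembles these via \cite[Proposition 2.4.20]{KL1} exactly as in the proof of \cite[Theorem 2.5.5]{KL2}. Your shortcut of citing the Banach theorem at each level and then passing to the $2$-limit of categories is legitimate and arguably cleaner, because the paper's convention is that a module over $A\widehat{\otimes}B[[G]]$ \emph{is} a pro-system and that pseudocoherence, stability and pseudoflatness are all imposed levelwise; hence the level-$H$ content of the pro-Banach statement is literally the Banach statement for the Banach algebra $B\widehat{\otimes}\mathbb{Q}_p[G/H]$, and the compatibility of $\Gamma(X,-)$ with the transition base changes that you need for $2$-functoriality is exactly what the acyclicity theorem provides by identifying $H^0(U,\widetilde{M})$ with the completed base change on every rational $U$. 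The one place where you part company with the paper is your third paragraph: since both categories consist of pro-systems and no inverse limit $\varprojlim_H\mathcal{F}_H$ is ever formed, the Mittag--Leffler and $\varprojlim^1$-vanishing analysis of the limiting \v{C}ech complex is not needed for the stated equivalence --- it would only become relevant if one wanted to realize the pro-sheaf as an honest sheaf of topological $A\widehat{\otimes}B[[G]]$-modules, which the paper's formulation deliberately sidesteps. That paragraph is therefore harmless but superfluous rather than load-bearing, and the "formal assembly" of your second paragraph already completes the argument.
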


\indent In further application, one can also establish the desired parallel results in the \'etale topology. Namely with the corresponding notations in \cref{theorem3.23}:

\begin{theorem}\mbox{\bf{(After Kedlaya-Liu \cite[Theorem 2.5.14]{KL2})}} In \'etale topology, taking the global section will realize an equivalence between the category of pseudocoherent $\mathcal{O}_\text{\'et}\widehat{\otimes} B[[G]]$-sheaves and the category of $B[[G]]$-\'etale-stably-pseudocoherent $\mathcal{O}_\text{\'et}(X)\widehat{\otimes}B[[G]]$ modules. Here $X$ is an adic affinoid $\mathrm{Spa}(A,A^+)$, and here all the modules are pro-systems over $\mathbb{Q}_p[[G]]$.	
\end{theorem}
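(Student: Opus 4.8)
The plan is to deduce the \'etale statement from the analytic one, \cref{theorem3.15}, by the same descent mechanism that yields \cref{theorem2.22} from \cref{theorem2.14}, carried along the defining tower of $B[[G]]$. Fix an exhausting sequence of open normal subgroups $G_n \trianglelefteq G$ and write $B[[G]] = \varprojlim_n B[G/G_n]$, so that $\mathcal{O}_\text{\'et}\widehat{\otimes}B[[G]]$ is the pro-system $\{\mathcal{O}_\text{\'et}\widehat{\otimes}B[G/G_n]\}_n$, a pseudocoherent $\mathcal{O}_\text{\'et}\widehat{\otimes}B[[G]]$-sheaf is a compatible pro-system of pseudocoherent $\mathcal{O}_\text{\'et}\widehat{\otimes}B[G/G_n]$-sheaves, and likewise on the module side. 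Each $B[G/G_n] = B\widehat{\otimes}_{\mathbb{Q}_p}\mathbb{Q}_p[G/G_n]$ is finite free over $B$ as a Banach module, hence a Banach algebra to which \cref{theorem2.22} applies; the content beyond quoting that theorem levelwise is the compatibility of the equivalences with the transition maps $\mathcal{O}_\text{\'et}(X)\widehat{\otimes}B[G/G_{n+1}] \to \mathcal{O}_\text{\'et}(X)\widehat{\otimes}B[G/G_n]$. Throughout, all localizations, covers, and gluings are taken with respect to the commutative adic ring $A$ only, never with respect to the noncommutative factor $B[[G]]$, so noncommutativity plays no role in the descent itself.

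Next I would carry out the descent along the \'etale site. Every \'etale morphism of adic affinoids is, locally on the source, a composition of rational localizations and finite \'etale morphisms, so any \'etale cover of $X=\mathrm{Spa}(A,A^+)$ is refined by one assembled from such pieces. The rational-localization part of the gluing is exactly the analytic sheaf property, supplied by \cref{theorem3.15} together with the vanishing of higher \v{C}ech cohomology of pseudocoherent sheaves on affinoids that underlies it. The finite \'etale part is handled by descent along a morphism which is, at each level $n$, faithfully flat, affine, and finite projective: pseudocoherent modules satisfy effective descent along such maps by the glueing lemmas of the earlier sections, and finite \'etale covers present no higher-cohomological obstruction, so no new difficulty appears. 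Combining the two gives both halves of the asserted equivalence. For full faithfulness, a morphism $\mathcal{F}(X) \to \mathcal{G}(X)$ of $B[[G]]$-\'etale-stably-pseudocoherent modules restricts, via \cref{theorem3.15}, to a morphism of the analytic restrictions and extends uniquely and compatibly over each finite \'etale $Y\to X$, because $\mathcal{F}(Y)$ and $\mathcal{G}(Y)$ are recovered from $\mathcal{F}(X)$ and $\mathcal{G}(X)$ by completed base change, this being part of the definition of $B[[G]]$-\'etale-stable-pseudocoherence. For essential surjectivity, given such a module $M$ one forms the presheaf $\widetilde{M}(Y) := M\,\widehat{\otimes}_{\mathcal{O}_\text{\'et}(X)\widehat{\otimes}B[[G]]}\,\bigl(\mathcal{O}_\text{\'et}(Y)\widehat{\otimes}B[[G]]\bigr)$, a pro-system levelwise; the stability hypothesis on $M$ is precisely what makes $\widetilde{M}$ pseudocoherent on each member of a cover of the above type, and the sheaf property of $\widetilde{M}$ with $\widetilde{M}(X)=M$ then follows from \cref{theorem3.15} on the analytic parts and faithfully flat descent on the finite \'etale parts, while the two functors are seen to be mutually inverse on each of these parts separately.

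The \emph{main obstacle} is that completed tensor products do not commute with $\varprojlim_n$, and pseudocoherence, being a finiteness condition, can be destroyed in an inverse limit, so one cannot simply pass to the limit $B[[G]]$ and argue there. The remedy is to run the whole argument in the category of pro-objects: establish the \cref{theorem2.22}-equivalence at each finite level $n$, and then verify that it is \emph{strictly} compatible with the transition maps, i.e.\ that taking global sections commutes with the levelwise base change $(-)\,\widehat{\otimes}_{\mathcal{O}_\text{\'et}(X)\widehat{\otimes}B[G/G_{n+1}]}\,\mathcal{O}_\text{\'et}(X)\widehat{\otimes}B[G/G_n]$, that this base change preserves the relevant pseudocoherence, and that it commutes with finite \'etale descent. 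This is where the arithmetic of the tower enters decisively: since $\mathbb{Q}_p[G/G_n]$ is finite free over $\mathbb{Q}_p$, each transition map is a split surjection of Banach modules, the pro-systems in sight are Mittag-Leffler, the completed base changes along them are exact, and the descent data at consecutive levels are carried into one another. Granting this levelwise-plus-compatibility package, the equivalence over $B[[G]]$ is the pro-limit of the equivalences of \cref{theorem2.22} restricted to the sub-tower $\{B[G/G_n]\}_n$, which completes the proof.
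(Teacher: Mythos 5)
Your proposal is correct and follows essentially the same route as the paper: Section 3 of the paper defines every object as a pro-system over the finite quotients $B[[G]]/E=B[G/G_n]$ with all conditions imposed levelwise, so its proof of \cref{theorem3.23} is exactly the Banach-coefficient \'etale argument of \cref{theorem2.22} run at each finite level (via the levelwise pseudoflatness of the basis $\mathfrak{C}$, Tate acyclicity, and faithfully flat descent for the finite \'etale pieces) and then assembled in the pro-category. Your treatment of the transition-map compatibility and the Mittag--Leffler/split-surjection bookkeeping is more explicit than the paper's, which simply cites \cite[Theorem 2.5.14]{KL2}, \cite[Theorem 8.2.22]{KL1} and \cite[Tag 03OD]{SP}, but no new idea is involved.
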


\indent One can also even consider the limit of Fr\'echet coefficients with some further application to some equivariant sheaves over adic Fargues-Fontaine curves. Namely with the corresponding notations in \cref{theorem4.14}:

\begin{theorem}\mbox{\bf{(After Kedlaya-Liu \cite[Theorem 2.5.5]{KL2})}} In analytic topology, taking the global section will realize an equivalence between the category of pseudocoherent $\mathcal{O}\widehat{\otimes} B$-sheaves and the category of $B$-stably-pseudocoherent $\mathcal{O}(X)\widehat{\otimes}B$ modules. Here $X$ is an adic affinoid $\mathrm{Spa}(A,A^+)$, and here $B$ is injective limit of Banach algebras $\varinjlim_h B_h$. 
	
\end{theorem}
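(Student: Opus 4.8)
The plan is to bootstrap from the Banach-coefficient equivalence of \cref{theorem2.14}, realizing both sides in the present setting as filtered $2$-colimits of their Banach-level analogues along the index $h$. Write $B=\varinjlim_h B_h$ with the $B_h$ Banach and with (strict) injective transition maps. The first point to pin down is that the completed tensor product is compatible with this colimit: for every rational subdomain $U\subseteq X=\mathrm{Spa}(A,A^+)$ one has $\mathcal{O}(U)\widehat{\otimes}B=\varinjlim_h\bigl(\mathcal{O}(U)\widehat{\otimes}B_h\bigr)$ in topological rings (with the direct-limit topology), and likewise $\mathcal{O}\widehat{\otimes}B=\varinjlim_h(\mathcal{O}\widehat{\otimes}B_h)$ as sheaves on the analytic site. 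For uniform analytic Huber pairs this is part of the formalism already in place, but it has to be stated explicitly, since everything below rests on it.

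Granting this, I would proceed in three steps. (i) At each level $h$, \cref{theorem2.14} gives an equivalence $\mathcal{F}_h\mapsto\mathcal{F}_h(X)$ between pseudocoherent $\mathcal{O}\widehat{\otimes}B_h$-sheaves and $B_h$-stably-pseudocoherent $\mathcal{O}(X)\widehat{\otimes}B_h$-modules. (ii) These equivalences are compatible, up to natural isomorphism, with the base-change functors $(-)\widehat{\otimes}_{\mathcal{O}\widehat{\otimes}B_h}(\mathcal{O}\widehat{\otimes}B_{h'})$ and $(-)\widehat{\otimes}_{B_h}B_{h'}$ for $h\le h'$, which preserve pseudocoherence and $B$-stable-pseudocoherence by the stability results developed earlier. (iii) Hence the filtered $2$-colimit over $h$ of the equivalences in (i) is again an equivalence. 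The functor in the LF setting remains global sections $\mathcal{F}\mapsto\mathcal{F}(X)$; its quasi-inverse sends a $B$-stably-pseudocoherent module $M$, together with a level-wise presentation $M=\varinjlim_h M_h$, to the sheaf $U\mapsto\varinjlim_h\bigl(M_h\widehat{\otimes}_{\mathcal{O}(X)\widehat{\otimes}B_h}(\mathcal{O}(U)\widehat{\otimes}B_h)\bigr)$, whose sheaf and \v{C}ech-acyclicity properties are inherited from the finite levels by exactness of filtered colimits.

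The hard part will be the \emph{descent to a finite level}: showing that every pseudocoherent $\mathcal{O}\widehat{\otimes}B$-sheaf is the colimit of a pseudocoherent $\mathcal{O}\widehat{\otimes}B_h$-sheaf for $h$ large, and dually that every $B$-stably-pseudocoherent $\mathcal{O}(X)\widehat{\otimes}B$-module descends to a $B_h$-stably-pseudocoherent module. Locally such a module is the cokernel of a (possibly infinite) complex of finite free modules; one must realize the finitely many entries of each boundary matrix over $\mathcal{O}(U)\widehat{\otimes}B_h$ for some $h$ and check that the differentials stay strict and the cohomology keeps vanishing after applying $(-)\widehat{\otimes}_{\mathcal{O}(U)\widehat{\otimes}B_h}(\mathcal{O}(U)\widehat{\otimes}B)$. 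Since $X$ is quasi-compact, finitely many rational subdomains suffice and one may take $h$ to be the maximum of the finitely many local levels; the genuine difficulty is the topological bookkeeping — controlling convergence of matrix entries in the Banach factor $B_h$ and preserving strictness under the completed tensor — which is the Banach/LF incarnation of the classical principle that finite presentations spread out along filtered colimits. Once this descent is available, full faithfulness follows from the identity $\mathrm{Hom}_{\mathcal{O}\widehat{\otimes}B}(\mathcal{F},\mathcal{G})=\varinjlim_h\mathrm{Hom}_{\mathcal{O}\widehat{\otimes}B_h}(\mathcal{F}_h,\mathcal{G}_h)$ (and its module analogue) together with full faithfulness at each level, and essential surjectivity follows by combining the descent step with essential surjectivity at level $h$. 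The same colimit argument applied to \cref{theorem2.22} in place of \cref{theorem2.14} yields the corresponding \'etale statement, replacing rational localizations throughout by rational localizations together with finite \'etale morphisms.
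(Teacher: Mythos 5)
Your proposal takes a genuinely different route from the paper, and it has a gap. On the route: the paper does not reduce to the Banach case by a filtered colimit along $h$. Instead, Section 4 re-develops the entire Kedlaya--Liu apparatus directly for the coefficient ring $B_\infty=\varinjlim_h B_h$ --- the $2$-$B$-pseudoflatness of the Laurent and balanced localizations (\cref{proposition4.9}), stability of $B$-stable pseudocoherence under rational localization (\cref{proposition4.10}), Tate--\v{C}ech acyclicity (\cref{theorem4.11}), and effectivity of descent for the two-element covering $A\to B_1\oplus B_2$ (\cref{lemma4.13}) --- and then concludes by the same induction on coverings as in \cite[Theorem 2.5.5]{KL2} via \cite[Proposition 2.4.20]{KL1}. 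The LF structure is carried inside every diagram chase rather than stripped off at the outset, so no descent-to-finite-level statement is ever needed.

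The gap in your argument is exactly the step you flag as ``the hard part'' and then do not close: descent to a finite level. Pseudocoherence here is $\infty$-pseudocoherence, i.e.\ $M$ admits an infinite resolution by finite free modules. Each individual boundary matrix has finitely many entries and therefore lives over $\mathcal{O}(U)\widehat{\otimes}B_h$ for \emph{some} $h$, but there are infinitely many boundary matrices, and the required level can grow without bound as one goes deeper into the resolution; the principle that ``finite presentations spread out along filtered colimits'' only handles $2$-pseudocoherence, not the full resolution. You would additionally need to show that the stability condition --- completeness of $M\otimes(A'\widehat{\otimes}B)$ for every rational localization $A\to A'$ --- is detected at a finite level and preserved by $-\widehat{\otimes}_{B_h}B$, and that exactness of the descended resolution survives this base change; none of this is established, and the interchange $\mathcal{O}(U)\widehat{\otimes}\varinjlim_h B_h=\varinjlim_h(\mathcal{O}(U)\widehat{\otimes}B_h)$ is itself only asserted. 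Without finite-level descent, both essential surjectivity and the Hom-colimit identity underlying full faithfulness collapse. To salvage the colimit strategy you would have to restrict to $m$-pseudocoherent objects for finite $m$ or prove a uniform-level spreading-out statement; otherwise the paper's direct approach is the one that actually works.
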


\indent In further application, one can also establish the desired parallel results in the \'etale topology. Namely with the corresponding notations in \cref{theorem4.22}:

\begin{theorem}\mbox{\bf{(After Kedlaya-Liu \cite[Theorem 2.5.14]{KL2})}} In \'etale topology, taking the global section will realize an equivalence between the category of pseudocoherent $\mathcal{O}_\text{\'et}\widehat{\otimes} B$-sheaves and the category of $B$-\'etale-stably-pseudocoherent $\mathcal{O}_\text{\'et}(X)\widehat{\otimes}B$ modules. Here $X$ is an adic affinoid $\mathrm{Spa}(A,A^+)$, and here $B$ is injective limit of Banach algebras $\varinjlim_h B_h$.  
	
\end{theorem}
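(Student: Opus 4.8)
The plan is to deduce this \'etale statement from the \'etale Banach-coefficient case \cref{theorem2.22} by passing to the filtered colimit in the coefficients, with the analytic statement \cref{theorem4.14} and finite \'etale descent supplying the structural input at each stage; this parallels how \cite[Theorem 2.5.14]{KL2} is obtained from \cite[Theorem 2.5.5]{KL2} together with \'etale comparison.

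First I would unwind the definitions underlying \cref{theorem4.22}: write $B = \varinjlim_h B_h$ with the $B_h$ Banach, so that $\mathcal{O}_\text{\'et}\widehat{\otimes}B = \varinjlim_h \mathcal{O}_\text{\'et}\widehat{\otimes}B_h$ as an LB-sheaf of algebras on the \'etale site, and a pseudocoherent $\mathcal{O}_\text{\'et}\widehat{\otimes}B$-sheaf is, by construction, a filtered system $\{\mathcal{F}_h\}$ of pseudocoherent $\mathcal{O}_\text{\'et}\widehat{\otimes}B_h$-sheaves together with comparison isomorphisms $\mathcal{F}_h\widehat{\otimes}_{B_h}B_{h'} \xrightarrow{\sim} \mathcal{F}_{h'}$ for $h \le h'$. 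Likewise a $B$-\'etale-stably-pseudocoherent $\mathcal{O}_\text{\'et}(X)\widehat{\otimes}B$-module should be identified with a compatible system $\{M_h\}$ of $B_h$-\'etale-stably-pseudocoherent modules. Since $\Gamma(X_\text{\'et},-)$ acts termwise, it suffices to (i) invoke \cref{theorem2.22} to produce the equivalence $\mathcal{F}_h \leftrightarrow M_h := \Gamma(X_\text{\'et}, \mathcal{F}_h)$ for each $h$, compatibly with the transition maps, and (ii) check that $\{M_h\} \mapsto \varinjlim_h M_h$ and $\{\mathcal{F}_h\} \mapsto \varinjlim_h \mathcal{F}_h$ are well defined on the respective LB-categories and mutually inverse.

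For step (ii) the only genuinely new verification beyond bookkeeping is that a filtered colimit of $B_h$-\'etale-stably-pseudocoherent modules, base-changed along a rational localization or along a finite \'etale morphism $(A,A^+) \to (A',A'^+)$, remains stably pseudocoherent over the corresponding $\mathcal{O}_\text{\'et}(Y)\widehat{\otimes}B$. This rests on exactness of filtered colimits, on the fact that both kinds of base change commute with $\varinjlim_h$ (it is the completed tensor product over the fixed commutative algebra that is being localized, and the coefficient factors ride along passively since, as emphasized in the introduction, the stability notion and the \'etale descent data see only the commutative spatial variable), and on effectivity of descent for pseudocoherent modules along finite \'etale — equivalently faithfully finite projective — extensions of uniform analytic Huber pairs, which is precisely the ingredient already entering the proof of \cref{theorem2.22}. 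Fullness, faithfulness and essential surjectivity of $\Gamma(X_\text{\'et},-)$ at the colimit level then propagate formally from the corresponding statements at each $B_h$.

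The step I expect to be the main obstacle is reconciling the finiteness conditions built into ``pseudocoherent'' and ``stably-pseudocoherent'' with the filtered colimit over $h$: in the Banach/LB setting, completed tensor products and the existence of the required pseudocoherent resolutions need not commute with $\varinjlim$ on the nose, so one must verify carefully that the colimit of a compatible system of pseudocoherent objects, restricted to each member of a mixed (rational and finite \'etale) cover, still admits the resolutions demanded by the definition over that member, and not merely over $X$ itself. A subsidiary obstacle is the simultaneous tracking of descent data over the two types of covers while holding the noncommutative coefficient and its colimit filtration inert; here one reduces, cover by cover, to the classical Kedlaya--Liu \'etale descent with $B_h$ (and then $B$) treated as a fixed coefficient ring, so that no new topos-theoretic difficulty over noncommutative sites is incurred.
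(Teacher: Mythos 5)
Your reduction rests on the claim that, ``by construction,'' a pseudocoherent $\mathcal{O}_\text{\'et}\widehat{\otimes}B$-sheaf with $B=\varinjlim_h B_h$ is a filtered system $\{\mathcal{F}_h\}$ of pseudocoherent $\mathcal{O}_\text{\'et}\widehat{\otimes}B_h$-sheaves with base-change comparison isomorphisms, and similarly for the modules. That is not the definition in force here. In \cref{setting4.1} and the definitions of Section 4 the limit $B$ is treated as a single LF coefficient ring: a $B$-\'etale-stably-pseudocoherent object is one left module over $A\widehat{\otimes}B$, pseudocoherent over that ring and complete for the natural LF topology after base change. It is only in the pro-Banach setting of Section 3 that objects are, by fiat, compatible systems over the finite stages $B[[G]]/E$. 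A pseudocoherent $A\widehat{\otimes}B$-module need not descend to any finite stage $B_h$ (the generators and relations of the successive terms of an infinite resolution can escape to larger and larger $h$), and even when each term does descend, the comparison maps $\mathcal{F}_h\widehat{\otimes}_{B_h}B_{h'}\to\mathcal{F}_{h'}$ need not be isomorphisms; so step (i) of your plan has nothing to which \cref{theorem2.22} can be applied. The difficulty you flag at the end --- that completed tensor products and pseudocoherent resolutions do not commute with $\varinjlim_h$ --- is precisely the obstruction, and it sits at the very first step of your argument rather than being a closing verification.

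The paper avoids the colimit over $h$ altogether. All of Section 4 re-runs the Kedlaya--Liu machinery with $B=\varinjlim_h B_h$ held fixed as the coefficient: $2$-$B$-\'etale-pseudoflatness of the members of the basis $\mathfrak{C}$ (\cref{proposition4.18}), Tate acyclicity in the \'etale topology (\cref{theorem4.19}), and effectivity of descent for the simple Laurent covering for $B$-\'etale-stably pseudocoherent modules (the lemma immediately preceding \cref{theorem4.22}, whose proof is that of \cref{lemma2.21}); the theorem then follows exactly as in \cite[Theorem 2.5.14]{KL2}, invoking \cite[Theorem 8.2.22]{KL1} and \cite[Tag 03OD]{SP}. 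If you wish to salvage your route, you would first have to prove an algebraization-along-$h$ statement identifying the LF-coefficient categories with filtered $2$-colimits of the Banach-coefficient categories on both the sheaf and module sides; that is a nontrivial theorem rather than a definition, and it is not needed for the argument the paper actually gives.
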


\indent One can also consider the corresponding families of affinoids in some coherent way for the commutative parts, namely this is the notion of quasi-Stein spaces from \cite[Chapter 2.6]{KL2}. Then with the corresponding notations in \cref{theorem2.31}, \cref{theorem3.32} and \cref{theorem4.31}:

\begin{theorem}\mbox{\bf{(After Kedlaya-Liu \cite[Proposition 2.6.17]{KL2})}} Carrying Banach, pro-Banach or limit of Fr\'echet coefficient $B$ one could have the finiteness of the global sections of $B$-stably-pseudocoherent sheaves over some quasi-Stein space $X$ as long as one has $m$-uniform covering as in \cite[Proposition 2.6.17]{KL2}.  
	
\end{theorem}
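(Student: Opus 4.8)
\emph{Proof proposal.} The plan is to reduce the quasi-Stein statement to the affinoid equivalences already established and then run the Mittag-Leffler machinery of \cite[Chapter 2.6]{KL2} with the commutative coefficient ring $A$ systematically replaced by $A\widehat{\otimes}B$. Fix an exhaustion $X=\bigcup_{i\geq 1}X_i$ of the quasi-Stein space by a nested sequence of adic affinoid subspaces for which each transition map $\mathcal{O}(X_{i+1})\to\mathcal{O}(X_i)$ has dense image and which is moreover $m$-uniform in the sense of \cite[Proposition 2.6.17]{KL2}. Given a $B$-stably-pseudocoherent sheaf $\mathcal{F}$ on $X$, \cref{theorem2.14} (respectively \cref{theorem3.15}, \cref{theorem4.14}) identifies $\mathcal{F}|_{X_i}$ with a $B$-stably-pseudocoherent $\mathcal{O}(X_i)\widehat{\otimes}B$-module $M_i$, compatibly with restriction, and the candidate global module is $M:=\varprojlim_i M_i$ over $\mathcal{O}(X)\widehat{\otimes}B=\varprojlim_i\big(\mathcal{O}(X_i)\widehat{\otimes}B\big)$. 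Since the $X_i$ are an increasing exhaustion, cohomology of $X$ is governed by the Milnor-type exact sequence $0\to\varprojlim_i{}^1 H^{j-1}(X_i,\mathcal{F})\to H^j(X,\mathcal{F})\to\varprojlim_i H^j(X_i,\mathcal{F})\to 0$, and because each $X_i$ is affinoid the affinoid equivalence already gives $H^j(X_i,\mathcal{F})=0$ for $j>0$; so everything collapses to showing $\varprojlim_i{}^1 M_i=0$ together with the identification $H^0(X,\mathcal{F})=M$ and the pseudocoherence of $M$.

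In the Banach case the key inputs are: (i) the $m$-uniform covering furnishes, locally on each $X_i$, presentations of $M_i$ by complexes of finite free $\mathcal{O}(X_i)\widehat{\otimes}B$-modules whose numerical invariants and whose defining matrices are bounded independently of $i$; and (ii) density of the maps $\mathcal{O}(X_{i+1})\to\mathcal{O}(X_i)$, which is inherited by $-\widehat{\otimes}B$ because completed tensoring preserves dense images. Combining (i) and (ii) exactly as in the proof of \cite[Proposition 2.6.17]{KL2}, the inverse system $(M_i)$ and the inverse systems formed by the terms of the uniform presentations satisfy the topological Mittag-Leffler condition; hence their $\varprojlim^1$ vanish, $M$ is $B$-stably-pseudocoherent over $\mathcal{O}(X)\widehat{\otimes}B$, the natural map $M\to M_i$ has dense image for every $i$, and $H^0(X,\mathcal{F})=M$, $H^j(X,\mathcal{F})=0$ for $j>0$. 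The only structural change from \cite{KL2} is bookkeeping: because stability and localization are sensitive only to the commutative factor, the noncommutative algebra $B$ plays a purely inert role and is never localized, so every density and Mittag-Leffler estimate of \emph{loc.\ cit.}\ goes through after the substitution $A\rightsquigarrow A\widehat{\otimes}B$.

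For the pro-Banach coefficient $B[[G]]=\varprojlim_n B[G/G_n]$, all objects are by definition pro-systems over $\mathbb{Q}_p[[G]]$, so one applies the Banach case at each finite level $G/G_n$ — using the level-$n$ avatar of \cref{theorem3.15} — to obtain a pro-system $\big(\varprojlim_i M_{i,n}\big)_n$ of global modules, and then invokes the surjectivity of the structural transition maps $B[G/G_{n+1}]\to B[G/G_n]$ to see that this double limit again has vanishing $\varprojlim^1$ in the $n$-direction; finiteness and higher-vanishing then follow from the Banach case and the definition of pseudocoherence for pro-systems. For the LF coefficient $B=\varinjlim_h B_h$ the new point is the interchange of the filtered colimit over $h$ with the quasi-Stein inverse limit over $i$. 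I would argue that on each fixed affinoid $X_i$ the equivalence \cref{theorem4.14} is compatible with the presentation $B=\varinjlim_h B_h$, so that $M_i=\varinjlim_h M_{i,h}$ with $M_{i,h}$ a $B_h$-stably-pseudocoherent $\mathcal{O}(X_i)\widehat{\otimes}B_h$-module; applying the Banach case to each $B_h$ produces $\varprojlim_i M_{i,h}$ with the required properties, and since filtered colimits are exact and commute with the finitely many cohomological operations in play over each affinoid one obtains $H^0(X,\mathcal{F})=\varinjlim_h\varprojlim_i M_{i,h}$ and $H^j(X,\mathcal{F})=0$ for $j>0$.

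I expect the main obstacle to be precisely this last interchange of $\varinjlim_h$ and $\varprojlim_i$ in the LF case: \emph{a priori} filtered colimits need not commute with the countable inverse limits defining Stein cohomology, and one must exploit the $m$-uniformity — which bounds the complexity of the presentations uniformly in $i$ — to guarantee that at the level of each term of the uniform presentation only a bounded amount of data is involved, so that the colimit can be pulled outside. A secondary technical point, already present in the Banach case, is checking that the topological Mittag-Leffler condition (density of transition maps with uniform norm control) is genuinely preserved by $-\widehat{\otimes}B$ for $B$ pro-Banach or LF; this reduces by a routine argument to the Banach statement but should be recorded carefully so as not to circle back on the affinoid-level descent results.
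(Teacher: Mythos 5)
There is a genuine gap here, and it comes from a misreading of what the theorem asserts. The statement (the analog of \cite[Proposition 2.6.17]{KL2}, proved in the paper as \cref{theorem2.31} and then cited for \cref{theorem3.32} and \cref{theorem4.31}) is the equivalence between ``each $\mathcal{M}(X_i)$ is generated by at most $n$ elements, uniformly in $i$'' and ``$\mathcal{M}(X)$ is finitely generated.'' Most of your proposal instead re-derives the Stein-theoretic acyclicity package — $\varprojlim^1 M_i=0$, $H^0(X,\mathcal{F})=\varprojlim_i M_i$, $H^j(X,\mathcal{F})=0$ for $j>0$, density of $M\to M_i$ — which in this paper is the content of the separate propositions following \cref{lemma2.24} (the analogs of \cite[Lemma 2.6.3, Lemma 2.6.4, Theorem 2.6.5]{KL2}) and is assumed, not proved, in \cref{theorem2.31}. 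The step you actually need is never supplied: the topological Mittag--Leffler condition gives density of the image of $M=\varprojlim_i M_i$ in each $M_i$ and vanishing of $\varprojlim^1$, but it does \emph{not} imply that $M$ itself is finitely generated. An inverse limit of modules each generated by $n$ elements can easily fail to be finitely generated; local generators on $X_i$ need not extend to global sections that still generate, and this is precisely the difficulty the $m$-uniform hypothesis is there to overcome. Your sentence ``hence their $\varprojlim^1$ vanish, $M$ is $B$-stably-pseudocoherent'' conceals the entire content of the theorem.

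The mechanism the paper uses (following \cite[Proposition 2.6.17]{KL2}) is a generator-approximation argument: from the $m$-uniform covering one extracts a subcovering $\{Y_u\}_{u\in U}$, sets $V_u:=X_{i(u)}\cup\bigcup_{v\le u}Y_v$, and produces global functions $x_u\in\mathcal{O}_X(X)$ that restrict to topologically nilpotent elements on $V_u$ and to inverses of topologically nilpotent elements on $Y_u$; one then iteratively corrects candidate generators by $y_{u,j}:=y_{u-1,j}+x_u^c\,y_u$ with $c$ chosen large enough at each stage that the sequences $\{y_{u,1},\dots,y_{u,n}\}$ converge as $u\to\infty$ to a finite global generating set. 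None of this appears in your proposal, and it cannot be replaced by a Mittag--Leffler assertion. On the other hand, your treatment of the pro-Banach and LF coefficients is essentially consistent with the paper's: since localization and the covering data live entirely in the commutative factor, the paper reduces the $B[[G]]$ case to a fixed quotient $B[[G]]/E$ and the LF case verbatim to the Banach argument, so the colimit-versus-limit interchange you flag as the ``main obstacle'' is not where the difficulty lies — the noncommutative coefficient is inert throughout, exactly as you observe, and the real work is the approximation of generators.
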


\indent The application will be encoded in our last chapter, where we focused on finding relationship between the corresponding Frobenius equivariant sheaves over Fargues-Fontaine space:

\begin{align}
\bigcup \mathrm{Spa}(\widetilde{\Pi}_R^{[r_1,r_2]},\widetilde{\Pi}_R^{[r_1,r_2],+})	
\end{align}
and the corresponding Frobenius equivariant modules over the global sections:
\begin{align}
\widetilde{\Pi}_R^{[r_1,r_2]},\forall 0<s<r.	
\end{align}
in glueing fashion. The sheaves and modules here carrying some big Banach coefficients (noncommutative). And we also considered some very interesting pro-Banach situation and LF situation, see \cref{theorem5.18} and \cref{theorem5.24}.\\

\subsection{Further Consideration}

\indent One thing we have observed (although have not written so) is that suppose we apply our results to some locally noetherian spaces, then we will have no stability issue, and we will have not just pseudoflatness but even more. Instead, this will immediately mean that we are going to be more algebraic throughout. The corresponding story is very interesting in the situation of rigid analytic spaces.\\ 

\indent The discussion in the previous paragraph in fact implies that in the noetherian situation, one should be able to extend the discussion around $\infty$-glueing along the style of Kedlaya-Liu by applying Bambozzi-Kremnizer spectrum \cite{BK1} and Clausen-Scholze space \cite{CS} to the corresponding coherent sheaves which are interesting to us where the corresponding highly nontrivial stability and nonflatness issues in \cite{KL2} disappear. Again note that we are still carrying some large coefficients.\\

\newpage

\section{Foundations on Noncommutative Descent for Adic Spectra in Banach Case}

\subsection{Noncommutative Pseudocoherence in Analytic Topology}

\indent We now establish some foundations in the noncommutative setting on glueing noncommutative pseudocoherent modules after \cite[Chapter 2]{KL2}. But we remind the readers that we will fix the base space:

\begin{setting} \label{setting2.1}
Consider a corresponding sheafy Banach adic uniform algebra $(A,A^+)$ over $\mathbb{Q}_p$ or $\mathbb{F}_p((t))$, we consider the base space $\mathrm{Spa}(A,A^+)$ as in \cite[Chapter 2]{KL2}. And now we will consider a further noncommutative Banach algebra $(B,B^+)$ over $\mathbb{Q}_p$ or $\mathbb{F}_p((t))$. 	
\end{setting}

\begin{definition} \mbox{\bf{(After Kedlaya-Liu \cite[Definition 2.4.1]{KL2})}}
For any left $A\widehat{\otimes}B$-module $M$, we call it $m$-$B$-stably pseudocoherent if we have that it is $m$-$B$-pseudocoherent, complete for the natural topology and for any morphism $A\rightarrow A'$ which is the corresponding rational localization, the base change of $M$ to $A'\widehat{\otimes} B$ is complete for the natural topology as the corresponding left $A'\widehat{\otimes} B$-module.	As in \cite[Definition 2.4.1]{KL2} we call that the corresponding left $A\widehat{\otimes}B$-module $M$ just $B$-stably pseudocoherent if we have that it is simply just $\infty$-$B$-stably pseudocoherent.
\end{definition}

\begin{remark}
Throughout the whole paper, it is obvious we need to use the corresponding noncommutative version of the corresponding pseudocoherent modules on the algebraic level due to Illusie along the development of Grothendieck's SGAVI \cite{SGAVI}, but the main difference is just that we consider the corresponding modules with action happening on one side.
\end{remark}

\indent Since our spaces are commutative, so the rational localization is not flat which is exactly the same situation as in \cite{KL1} and \cite{KL2}, therefore in our situation we need to define something weaker than the corresponding flatness which will be sufficient for us to apply in the following development: 

\begin{definition}\mbox{\bf{(After Kedlaya-Liu \cite[Definition 2.4.4]{KL2})}}
For any Banach left $A\widehat{\otimes}B$-module $M$, we call it is $m$-$B$-pseudoflat if for any right $A\widehat{\otimes}B$-module $M'$ $m$-$B$-stably pseudocoherent we have $\mathrm{Tor}_1^{A\widehat{\otimes}B}(M',M)=0$. For any Banach right $A\widehat{\otimes}B$-module $M$, we call it is $m$-$B$-pseudoflat if for any left $A\widehat{\otimes}B$-module $M'$ $m$-$B$-stably pseudocoherent we have $\mathrm{Tor}_1^{A\widehat{\otimes}B}(M,M')=0$.
\end{definition}

\begin{definition}\mbox{\bf{(After Kedlaya-Liu \cite[Definition 2.4.6]{KL2})}} We consider the corresponding notion of the corresponding pro-projective module. We define over $A$ the corresponding $B$-pro-projective module $M$ to be a corresponding left $A\widehat{\otimes}B$-module such that one could find a filtered sequence of projectors such that in the sense of taking the prolimit we have the corresponding projector will converge to any chosen element in $M$. Here we assume the module is complete with respect to natural topology, and we assume the projectors are $A\widehat{\otimes}B$-linear and we assume that the corresponding image of the projectors are modules which are also finitely generated and projective.
	
\end{definition}

\begin{lemma}\mbox{\bf{(After Kedlaya-Liu \cite[Lemma 2.4.7]{KL2})}}
Suppose we have over the space $\mathrm{Spa}(A,A^+)$ a corresponding $B$-pro-projective left module $M$. And suppose that we have a $2$-$B$-pseudocoherent right module $C$ over $A$. And we assume that $C$ is complete with respect to the natural topology. Then we have that the corresponding product $C{\otimes}_{A\widehat{\otimes}B}M$ is then complete under the corresponding natural topology. And moreover we have that in our situation:
\begin{displaymath}
\mathrm{Tor}_1^{A\widehat{\otimes}B}(C,M)=0.	
\end{displaymath}

\end{lemma}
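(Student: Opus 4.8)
\noindent The plan is to reduce both assertions to the case in which the left module is finitely generated and projective, where they are transparent, and then to transport the conclusions along the filtered family of $A\widehat{\otimes}B$-linear idempotents $\{f_\lambda\}$ that defines the pro-projective structure of $M$; thus $f_\lambda(x)\to x$ for every $x\in M$ and each $M_\lambda:=\mathrm{im}(f_\lambda)$ is finitely generated projective. The hypothesis that $C$ be $2$-$B$-pseudocoherent, rather than merely $1$-$B$-pseudocoherent, enters only in the very last step, where it is what keeps the topologies under control in the limit over $\lambda$; that step follows \cite[Lemma 2.4.7]{KL2} closely.

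First I would dispose of the finite projective case. If $N$ is a finitely generated projective left $A\widehat{\otimes}B$-module, realize it as a direct summand of some finite free $(A\widehat{\otimes}B)^{n}$. Then $C\otimes_{A\widehat{\otimes}B}(A\widehat{\otimes}B)^{n}=C^{n}$ is complete and Hausdorff because $C$ is, and $C\otimes_{A\widehat{\otimes}B}N$ is a topological direct summand of $C^{n}$, hence complete; likewise $\mathrm{Tor}_1^{A\widehat{\otimes}B}(C,N)$ is a direct summand of $\mathrm{Tor}_1^{A\widehat{\otimes}B}(C,(A\widehat{\otimes}B)^{n})=0$. Applying this with $N=M_\lambda$ gives that $C\otimes_{A\widehat{\otimes}B}M_\lambda$ is complete and $\mathrm{Tor}_1^{A\widehat{\otimes}B}(C,M_\lambda)=0$ for every $\lambda$. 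Next I would fix, using that $C$ is $2$-$B$-pseudocoherent, an exact finite free resolution out to stage two, $F_2\xrightarrow{d_2}F_1\xrightarrow{d_1}F_0\xrightarrow{\epsilon}C\to 0$ with $F_i=(A\widehat{\otimes}B)^{n_i}$; then $\mathrm{Tor}_0^{A\widehat{\otimes}B}(C,-)$ and $\mathrm{Tor}_1^{A\widehat{\otimes}B}(C,-)$ are computed in degrees $0$ and $1$ by the homology of $F_\bullet\otimes_{A\widehat{\otimes}B}(-)=(-)^{n_\bullet}$. Since each $f_\lambda$ is an $A\widehat{\otimes}B$-linear idempotent, there is a splitting $M=M_\lambda\oplus\ker f_\lambda$ of left modules, and the induced differentials $\delta_i:=d_i\otimes\mathrm{id}_M$ on $M^{n_\bullet}$ commute with the coordinatewise action of $f_\lambda$; hence the complex $F_\bullet\otimes_{A\widehat{\otimes}B}M$ splits as $(F_\bullet\otimes_{A\widehat{\otimes}B}M_\lambda)\oplus(F_\bullet\otimes_{A\widehat{\otimes}B}\ker f_\lambda)$. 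Consequently $f_\lambda$ acts on $C\otimes_{A\widehat{\otimes}B}M$ as the idempotent with image $C\otimes_{A\widehat{\otimes}B}M_\lambda$, and on $\mathrm{Tor}_1^{A\widehat{\otimes}B}(C,M)$ as the idempotent with image $\mathrm{Tor}_1^{A\widehat{\otimes}B}(C,M_\lambda)=0$, i.e.\ as the zero map.

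Now I would pass to the limit, first only up to closure. Because $f_\lambda(x)\to x$ on $M$, the same holds coordinatewise on $M^{n_1}$, hence on $\ker\delta_1$. Thus a class in $\mathrm{Tor}_1^{A\widehat{\otimes}B}(C,M)=\ker\delta_1/\mathrm{im}\,\delta_2$ has a representative $z\in\ker\delta_1$ with $f_\lambda(z)\in\mathrm{im}\,\delta_2$ for all $\lambda$ (since $f_\lambda$ kills $\mathrm{Tor}_1$) and $f_\lambda(z)\to z$, so $\ker\delta_1\subseteq\overline{\mathrm{im}\,\delta_2}$. The same convergence shows that $\bigcup_\lambda\mathrm{im}(\delta_1|_{M_\lambda^{n_1}})$ is dense in $\mathrm{im}\,\delta_1\subseteq M^{n_0}$, so $C\otimes_{A\widehat{\otimes}B}M=M^{n_0}/\mathrm{im}\,\delta_1$ is the completed increasing union of the complete submodules $C\otimes_{A\widehat{\otimes}B}M_\lambda$, and likewise $\mathrm{im}\,\delta_2$ is the closure of $\bigcup_\lambda\mathrm{im}(\delta_2|_{M_\lambda^{n_2}})$.

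The remaining, and main, obstacle is to upgrade these statements to their non-topological counterparts: that $\mathrm{im}\,\delta_1$ is closed in $M^{n_0}$ — so that $C\otimes_{A\widehat{\otimes}B}M$ is already complete and Hausdorff, not merely densely embedded in such — and that $\mathrm{im}\,\delta_2$ is closed in $M^{n_1}$, whence $\ker\delta_1=\mathrm{im}\,\delta_2$ and $\mathrm{Tor}_1^{A\widehat{\otimes}B}(C,M)=0$. This is a strictness/open-mapping statement, and I would establish it exactly as in \cite[Lemma 2.4.7]{KL2}. The first syzygy $C'=\ker(\epsilon)$ is closed in the complete Hausdorff module $F_0$, hence complete, and it is $1$-$B$-pseudocoherent; finitely many generators of $C'$ together with finitely many generators of its relation module — which is precisely the finiteness that $2$-$B$-pseudocoherence of $C$ supplies — yield a lifting of preimages that is bounded uniformly in $\lambda$. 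A telescoping argument along the projector system then converts any Cauchy sequence in $\mathrm{im}\,\delta_1$ (respectively in $\mathrm{im}\,\delta_2$) into a convergent sequence of preimages, forcing these images to be closed. Granting this uniform strictness, both conclusions of the lemma follow.
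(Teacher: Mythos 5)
Your reductions are correct, and at bottom you are running the same argument as the paper, which is itself essentially a deferral to \cite[Lemma 2.4.7]{KL2}; but the two write-ups are organized differently enough to be worth comparing. The paper tensors a single finite presentation with the other module and compares the resulting row with its completed-tensor-product analogue, asserting exactness of the completed row ``since $M$ is $B$-pro-projective'' (as literally written the paper presents the pro-projective module $M$ by a finitely presented $M'$, which the hypotheses do not supply; the presentation should be of $C$, as in Kedlaya--Liu, and your choice to resolve $C$ to length two is the faithful one). You instead never form a completed tensor product: you split $F_\bullet\otimes_{A\widehat{\otimes}B}M$ by the idempotents $f_\lambda$, note that each $f_\lambda$ annihilates $\mathrm{Tor}_1$ because its image is finite projective, and let $f_\lambda\to\mathrm{id}$ deliver the vanishing of $\mathrm{Tor}_1$ and the completeness of $C\otimes_{A\widehat{\otimes}B}M$ \emph{up to closure}. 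What your route buys is a clean separation of the soft algebra (splittings, density) from the single analytic input, namely that $\mathrm{im}\,\delta_1$ and $\mathrm{im}\,\delta_2$ are closed; that input is exactly equivalent to the exactness of the paper's completed row, and both you and the paper delegate it to \cite[Lemma 2.4.7]{KL2}. Be aware that your ``telescoping argument along the projector system yielding uniformly bounded liftings'' is named rather than carried out, and it is precisely there that the completeness of $C$ and the gap between $1$- and $2$-pseudocoherence do all the work; so your proposal is as complete as the paper's own proof, but if the lemma were required to be self-contained, the closedness of those two images would still need to be proved.
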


\begin{proof}
This will be just a noncommutative version of the corresponding  \cite[Lemma 2.4.7]{KL2}. We adapt the corresponding argument in \cite[Lemma 2.4.7]{KL2} to our situation. What we are going to consider in this case is then first consider the following presentation:
\[
\xymatrix@C+0pc@R+0pc{
M'  \ar[r]\ar[r]\ar[r] &(A\widehat{\otimes}B)^k \ar[r]\ar[r]\ar[r] & M\ar[r]\ar[r]\ar[r] &0,
}
\]
where the left module $M'$ is finitely presented. Then we consider the following commutative diagram:
\[
\xymatrix@C+0pc@R+3pc{
&C\otimes_{A\widehat{\otimes}B}M'  \ar[r]\ar[r]\ar[r]  \ar[d]\ar[d]\ar[d] &C\otimes_{A\widehat{\otimes}B}(A\widehat{\otimes}B)^k \ar[r]\ar[r]\ar[r] \ar[d]\ar[d]\ar[d] & C\otimes_{A\widehat{\otimes}B}M\ar[r]\ar[r]\ar[r] \ar[d]\ar[d]\ar[d] &0,\\
0 \ar[r]\ar[r]\ar[r] &C\widehat{\otimes}_{A\widehat{\otimes}B}M'  \ar[r]\ar[r]\ar[r] &C\widehat{\otimes}_{A\widehat{\otimes}B}(A\widehat{\otimes}B)^k \ar[r]\ar[r]\ar[r] & C\widehat{\otimes}_{A\widehat{\otimes}B}M\ar[r]\ar[r]\ar[r] &0.
}
\]\\
The first row is exact as in \cite[Lemma 2.4.7]{KL2}, and we have the corresponding second row is also exact since we have that by hypothesis the corresponding module $M$ is $B$-pro-projective.  
Then as in \cite[Lemma 2.4.7]{KL2} the results follow.	
\end{proof}

\begin{proposition} \mbox{\bf{(After Kedlaya-Liu \cite[Corollary 2.4.8]{KL2})}}
Over the space $\mathrm{Spa}(A,A^+)$, we have any $B$-pro-projective left module is $2$-$B$-pseudoflat.
\end{proposition}

\begin{proof}
This is a direct consequence of the previous lemma.	
\end{proof}

\begin{lemma} \mbox{\bf{(After Kedlaya-Liu \cite[Corollary 2.4.9]{KL2})}}
Keep the notation above, suppose we are working over the adic space $
\mathrm{Spa}(A,A^+)$. Suppose now the left $A\widehat{\otimes}B$-module is finitely generated, then we have that the following natural maps:
\begin{align}
A\{T\}\widehat{\otimes}B\otimes_{A\widehat{\otimes}B} M	\rightarrow M\{T\},\\\
A\{T,T^{-1}\}\widehat{\otimes}B \otimes_{A\widehat{\otimes}B} M 	\rightarrow M\{T,T^{-1}\}	
\end{align}
are surjective. Suppose now the left $A\widehat{\otimes}B$-module is finitely presented, then we have that the following natural maps:
\begin{align}
A\{T\}\widehat{\otimes}B\otimes_{A\widehat{\otimes}B} M	\rightarrow M\{T\},\\\
A\{T,T^{-1}\}\widehat{\otimes}B \otimes_{A\widehat{\otimes}B} M 	\rightarrow M\{T,T^{-1}\}	
\end{align}
are bijective. Here all the modules are assumed to be compete with respect to the natural topology in our context.

\end{lemma}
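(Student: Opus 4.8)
The plan is to deduce all four assertions from the finite free case by dévissage, using only right-exactness of the uncompleted tensor product $(-)\otimes_{A\widehat{\otimes}B}(-)$, the open mapping theorem for Banach modules, and the exactness of the functors $(-)\{T\}$ and $(-)\{T,T^{-1}\}$ on strict exact sequences of complete Banach $A\widehat{\otimes}B$-modules (this last is the Banach-module surrogate for flatness of the Tate algebra, and is the noncommutative transcription of the inputs behind \cite[Corollary 2.4.9]{KL2}). First I would record the identifications $A\{T\}\widehat{\otimes}B\cong (A\widehat{\otimes}B)\{T\}$ and $A\{T,T^{-1}\}\widehat{\otimes}B\cong (A\widehat{\otimes}B)\{T,T^{-1}\}$, which follow from associativity and commutativity of $\widehat{\otimes}$ over the base field; under these identifications the four natural maps in the statement become the evident comparison maps $(A\widehat{\otimes}B)\{T\}\otimes_{A\widehat{\otimes}B}M\to M\{T\}$, etc. For $M=(A\widehat{\otimes}B)^n$ finite free these are tautological isomorphisms, since both sides are canonically $((A\widehat{\otimes}B)\{T\})^n$; more generally $A\{T\}\widehat{\otimes}B\otimes_{A\widehat{\otimes}B}(A\widehat{\otimes}B)^k\cong ((A\widehat{\otimes}B)\{T\})^k$ since finite direct sums commute with everything in sight.

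For the surjectivity statement, suppose $M$ is finitely generated and choose a surjection $F:=(A\widehat{\otimes}B)^n\twoheadrightarrow M$; equipping $M$ with its natural quotient topology makes this a strict surjection of Banach modules. Applying $(-)\{T\}$ I claim the induced map $F\{T\}\to M\{T\}$ is surjective: given $\sum_k x_kT^k\in M\{T\}$ with $x_k\to 0$, the open mapping theorem lets one lift the null sequence $(x_k)$ to a null sequence $(\tilde x_k)$ in $F$, whence $\sum_k\tilde x_kT^k\in F\{T\}$ is a preimage (the Laurent case is identical, running over $k\in\mathbb{Z}$). Since the uncompleted tensor product is right exact, $F\{T\}\cong(A\{T\}\widehat{\otimes}B)^n$ surjects onto $A\{T\}\widehat{\otimes}B\otimes_{A\widehat{\otimes}B}M$, and the natural map $A\{T\}\widehat{\otimes}B\otimes_{A\widehat{\otimes}B}M\to M\{T\}$ is compatible with this surjection; as their composite is the surjection $F\{T\}\to M\{T\}$ just produced, the natural map is surjective as well, and likewise for $\{T,T^{-1}\}$.

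For the bijectivity statement, suppose $M$ is finitely presented and fix a presentation $(A\widehat{\otimes}B)^m\xrightarrow{\phi}(A\widehat{\otimes}B)^n\to M\to 0$, so that $N:=\operatorname{im}\phi=\ker((A\widehat{\otimes}B)^n\to M)$ is a closed submodule and $0\to N\to(A\widehat{\otimes}B)^n\to M\to 0$ is strict exact. Applying the exact functor $(-)\{T\}$ keeps this strict exact, and since $\phi$ maps onto $N$ strictly we get an exact sequence $((A\widehat{\otimes}B)\{T\})^m\to((A\widehat{\otimes}B)\{T\})^n\to M\{T\}\to 0$; on the other hand, applying $A\{T\}\widehat{\otimes}B\otimes_{A\widehat{\otimes}B}(-)$ to the presentation gives a right-exact sequence which, via the identification of the previous paragraph, has the same first two terms. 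Naturality of the comparison maps produces a commuting ladder between these two right-exact sequences which is the identity on the two finite free terms, so the five lemma forces $A\{T\}\widehat{\otimes}B\otimes_{A\widehat{\otimes}B}M\to M\{T\}$ to be an isomorphism; in particular the left-hand side is already complete for the natural topology, and the argument for $\{T,T^{-1}\}$ is verbatim the same.

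The main obstacle is the topological bookkeeping rather than the algebra: one must check that the chosen surjection and the map $\phi$ are strict, so that all quotient and subspace topologies appearing are the natural ones and the intervening sequences are genuinely strict exact, and one must know that $(-)\{T\}$ (equivalently, completed tensoring with $c_0$ over the base field) preserves strict exactness of complete Banach $A\widehat{\otimes}B$-modules. In the commutative Noetherian world this is automatic, but here --- with a genuinely noncommutative $B$ and no coherence available --- it has to be extracted from the standing hypothesis, built into the statement, that all modules are complete for their natural topologies, together with the open mapping theorem; this is precisely the point at which a careless noncommutative argument would fail, and it is the analogue of the subtlety isolated by Kedlaya--Liu in \cite[\S 2.4]{KL2}.
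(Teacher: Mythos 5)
Your argument is correct, and it is essentially the ``real'' proof underlying \cite[Corollary 2.4.9]{KL2}; but it is not the route the paper takes, which consists of the single sentence ``this is the corresponding consequence of the previous lemma.'' The paper's intended mechanism is the noncommutative analogue of \cite[Lemma 2.4.7]{KL2}: one observes that $A\{T\}\widehat{\otimes}B$ and $A\{T,T^{-1}\}\widehat{\otimes}B$ are $B$-pro-projective over $A\widehat{\otimes}B$ (truncation of series gives the required filtered family of projectors with finite free images), so that for a finitely presented complete $M$ the uncompleted tensor product $M\otimes_{A\widehat{\otimes}B}(A\{T\}\widehat{\otimes}B)$ is already complete and hence coincides with its Hausdorff completion, which one then identifies with $M\{T\}$; surjectivity in the finitely generated case comes from density of the image plus completeness of the target. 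You instead run a direct d\'evissage: strictness of the chosen presentation via the open mapping theorem, lifting of null sequences to show $(-)\{T\}$ preserves the relevant strict exact sequences, and a five-lemma comparison of the two right-exact sequences. The two approaches buy different things: the paper's packaging routes everything through the pro-projectivity/Tor-vanishing lemma so that the same input can be reused for the pseudoflatness statements that follow (Propositions 2.8--2.9), while yours is self-contained, makes the topological hypotheses do visible work (in particular the point that $N=\ker((A\widehat{\otimes}B)^n\to M)$ is closed because $M$ is Hausdorff, and that $\phi$ surjects strictly onto $N$ by the open mapping theorem), and actually supplies the identification of $M\{T\}$ with the completion that the paper leaves implicit. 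The one step you should make explicit rather than flag as ``bookkeeping'' is the strictness of $\phi:(A\widehat{\otimes}B)^m\to N$ for the \emph{subspace} topology on $N$: this is where completeness of $N$ (closedness in $(A\widehat{\otimes}B)^n$) enters, since the open mapping theorem needs the target to be Banach; with that said, your proof is complete.
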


\begin{proof}
This is the corresponding consequence of the previous lemma.
\end{proof}

\begin{proposition} \mbox{\bf{(After Kedlaya-Liu \cite[Lemma 2.4.12]{KL2})}}
We keep the corresponding notations in \cite[Lemma 2.4.10]{KL2}. Then in our current sense we have that the corresponding morphism $A\rightarrow B_2$ is then $2$-$B$-pseudoflat. 
\end{proposition}

\begin{proof}
The corresponding proof could be made parallel to the corresponding proof of \cite[Lemma 2.4.12]{KL2}. We need to keep track the corresponding coefficient $B$ in our current context. To be more precise we consider the following commutative diagram from the corresponding chosen exact sequence (which is just the analog of the corresponding one in \cite[Lemma 2.4.12]{KL2}, namely as below we choose arbitrary short exact sequence with $P$ as a left $A\widehat{\otimes}B$-module which is $2$-$B$-stably pseudocoherent):
\[
\xymatrix@C+0pc@R+0pc{
0   \ar[r]\ar[r]\ar[r] &M \ar[r]\ar[r]\ar[r] &N \ar[r]\ar[r]\ar[r] & P \ar[r]\ar[r]\ar[r] &0,
}
\]
which then induces the following corresponding big commutative diagram:
\[
\xymatrix@C+0pc@R+4pc{
& &0 \ar[d]\ar[d]\ar[d] &0 \ar[d]\ar[d]\ar[d] &,\\
0   \ar[r]\ar[r]\ar[r]  &{A\{T\}\widehat{\otimes}B}\otimes_{A\widehat{\otimes}B} M \ar[r]\ar[r]\ar[r] \ar[d]^{1-fT}\ar[d]\ar[d] &{A\{T\}\widehat{\otimes}B}\otimes_{A\widehat{\otimes}B} N \ar[r]\ar[r]\ar[r] \ar[d]^{1-fT}\ar[d]\ar[d] &{A\{T\}\widehat{\otimes}B} \otimes_{A\widehat{\otimes}B}P \ar[r]\ar[r]\ar[r] \ar[d]^{1-fT}\ar[d]\ar[d] &0,\\
0   \ar[r]\ar[r]\ar[r] &{A\{T\}\widehat{\otimes}B}\otimes_{A\widehat{\otimes}B}M  \ar[r]\ar[r]\ar[r] \ar[d]\ar[d]\ar[d] &{A\{T\}\widehat{\otimes}B} \otimes_{A\widehat{\otimes}B}N\ar[r]\ar[r]\ar[r] \ar[d]\ar[d]\ar[d] & {A\{T\}\widehat{\otimes}B}\otimes_{A\widehat{\otimes}B}P \ar[r]\ar[r]\ar[r] \ar[d]\ar[d]\ar[d] &0,\\
0  \ar[r]^?\ar[r]\ar[r] &{B_2\widehat{\otimes}B}\otimes_{A\widehat{\otimes}B} M \ar[r]\ar[r]\ar[r] \ar[d]\ar[d]\ar[d] &{B_2\widehat{\otimes}B}\otimes_{A\widehat{\otimes}B}N \ar[r]\ar[r]\ar[r] \ar[d]\ar[d]\ar[d] & {B_2\widehat{\otimes}B}\otimes_{A\widehat{\otimes}B} P \ar[r]\ar[r]\ar[r] \ar[d]\ar[d]\ar[d] &0,\\
&0&0&0
}
\]
with the notations in \cite[Lemma 2.4.10]{KL2} where we actually have the corresponding exactness along the horizontal direction at the corner around ${B_2\widehat{\otimes}B}\otimes_{A\widehat{\otimes}B} M $ marked with $?$, by diagram chasing.
\end{proof}

\begin{proposition} \mbox{\bf{(After Kedlaya-Liu \cite[Lemma 2.4.13]{KL2})}} \label{proposition2.9}
We keep the corresponding notations in \cite[Lemma 2.4.10]{KL2}. Then in our current sense we have that the corresponding morphism $A\rightarrow B_1$ is then $2$-$B$-pseudoflat. And in our current sense we have that the corresponding morphism $A\rightarrow B_{12}$ is then $2$-$B$-pseudoflat. 
\end{proposition}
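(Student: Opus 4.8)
The plan is to follow the proof of the previous proposition (the $B_2$ case, after \cite[Lemma 2.4.12]{KL2}) almost verbatim, replacing the Tate algebra together with the multiplier $1-fT$ by the data adapted to $B_1$ and to $B_{12}$. Recall that in the notation of \cite[Lemma 2.4.10]{KL2} the simple covering of $\mathrm{Spa}(A,A^+)$ by $\{|f|\le 1\}$ and $\{|f|\ge 1\}$ produces the presentations $B_1=A\{T\}/(T-f)$, $B_2=A\{T\}/(1-fT)$ and $B_{12}=A\{T,T^{-1}\}/(T-f)$, together with the exact glueing sequence $0\to A\to B_1\oplus B_2\to B_{12}\to 0$. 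Since $A$ is commutative, $f$ --- and hence $T-f$ --- is central in $A\widehat\otimes B$ and in $(A\widehat\otimes B)\{T\}=A\{T\}\widehat\otimes B$, so the one-sided module structures will not cause any trouble.

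First I would treat $A\to B_1$. The sequence $0\to(A\widehat\otimes B)\{T\}\xrightarrow{T-f}(A\widehat\otimes B)\{T\}\to B_1\widehat\otimes B\to 0$ is a length-one free resolution: $T-f$ is monic in $T$, hence a non-zero-divisor by Weierstrass division, and the ideal $(T-f)$ is closed. Tensoring it over $A\widehat\otimes B$ with an arbitrary $2$-$B$-stably pseudocoherent right module $M'$, and using the pseudoflatness of the Tate extension $A\widehat\otimes B\to(A\widehat\otimes B)\{T\}$ together with the identification $M'\{T\}\cong(A\widehat\otimes B)\{T\}\otimes_{A\widehat\otimes B}M'$ (valid because $M'$ is finitely presented, by the lemma above after \cite[Corollary 2.4.9]{KL2}), one sees that $\mathrm{Tor}_1^{A\widehat\otimes B}(M',B_1\widehat\otimes B)$ is the kernel of multiplication by the monic element $T-f$ on $M'\{T\}$, which vanishes. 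Equivalently, one reproduces the big commutative diagram of the previous proposition with the $A\{T\}$-rows retained and the vertical maps $1-fT$ replaced by $T-f$, and reads off the exactness at the marked corner by the identical diagram chase; letting $P$ range over all $2$-$B$-stably pseudocoherent left modules yields $\mathrm{Tor}_1^{A\widehat\otimes B}(B_1\widehat\otimes B,P)=0$, i.e.\ $A\to B_1$ is $2$-$B$-pseudoflat.

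For $A\to B_{12}$ I would argue in the same way with the Laurent algebra: $0\to(A\widehat\otimes B)\{T,T^{-1}\}\xrightarrow{T-f}(A\widehat\otimes B)\{T,T^{-1}\}\to B_{12}\widehat\otimes B\to 0$ is again a length-one free resolution (injectivity of $T-f$ on the relevant base changes being the Laurent counterpart of the argument in \cite{KL2}), the Laurent extension $A\widehat\otimes B\to(A\widehat\otimes B)\{T,T^{-1}\}$ is pseudoflat, and $M'\{T,T^{-1}\}\cong(A\widehat\otimes B)\{T,T^{-1}\}\otimes_{A\widehat\otimes B}M'$ by the same finitely presented case of the cited lemma; the same diagram chase then gives $2$-$B$-pseudoflatness. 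Alternatively --- and this is perhaps cleaner --- one observes that $B_{12}$ is precisely the localization of $B_1$ of the type treated in the previous proposition (adjoin to $B_1$ a power-bounded inverse of $f$, i.e.\ pass to $\{|f|\ge 1\}$ inside $\{|f|\le 1\}$), so that $B_1\to B_{12}$ is $2$-$B$-pseudoflat by that proposition applied with $B_1$ in place of $A$; since base change along the rational localization $A\to B_1$ sends $2$-$B$-stably pseudocoherent modules to $2$-$B$-stably pseudocoherent modules (rational localizations of $B_1$ being rational localizations of $A$, and completeness being preserved), the change-of-rings spectral sequence combined with the $B_1$-case just proved gives $\mathrm{Tor}_1^{A\widehat\otimes B}(M',B_{12}\widehat\otimes B)=0$.

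The homological bookkeeping above is formal once the inputs are in hand; the step I expect to take the most care is verifying the two analytic facts in the noncommutative large-coefficient setting --- namely that the completed one-variable Tate and Laurent algebras over $A\widehat\otimes B$ are pseudoflat over $A\widehat\otimes B$, and that the monic relation $T-f$ remains a non-zero-divisor after completed base change to a $2$-$B$-stably pseudocoherent module (that is, that Weierstrass division is untouched by the presence of $B$). Both of these reduce to the $M\{T\}$ and $M\{T,T^{-1}\}$ comparison lemma already recorded and to the completeness built into the definition of $B$-stable pseudocoherence, so nothing essentially new arises beyond the $B_2$ case; for the alternative route to $B_{12}$, the only additional point to pin down is the legitimacy of invoking the previous proposition with $B_1$ as the base ring, which follows from transitivity of rational localization.
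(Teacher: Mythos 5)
Your treatment of $B_{12}$ is essentially fine (in $A\{T,T^{-1}\}\widehat{\otimes}B$ the element $T-f=T(1-fT^{-1})$ differs by a unit from a Laurent-type relation with a geometric-series formal inverse, and the paper reaches the same conclusion by factoring $A\to B_2\to B_{12}$ and "inverting the variable"). The genuine gap is in your direct argument for $A\to B_1$. You assert that $T-f$ is a non-zero-divisor on $M'\{T\}$ "because it is monic, by Weierstrass division." But Weierstrass division by $T-f$ in $A\{T\}$ amounts to evaluation at $T=f$, which converges only when $f$ is power-bounded; in the setting of \cite[Lemma 2.4.10]{KL2} the element $f$ defining the simple Laurent covering is arbitrary in $A$. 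Concretely, if $(T-f)\sum_i m_iT^i=0$ then $m_j=f^km_{j+k}$ for all $k$, and since only $m_{j+k}\to 0$ is known while $\|f\|^k$ may grow, injectivity does not follow. This asymmetry between $1-fT$ (which has the formal inverse $\sum f^iT^i$ in $M'[[T]]$, into which $M'\{T\}$ embeds by completeness) and $T-f$ (which has no such inverse) is exactly why Kedlaya--Liu, and the paper following them, do \emph{not} prove the $B_1$ case by a length-one resolution.

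The paper's route, which you should adopt, is: first establish $2$-$B$-pseudoflatness of $A\to B_2$ (previous proposition) and of $A\to B_{12}$ (composition $A\to B_2\to B_{12}$); then for $B_1$ take an arbitrary short exact sequence $0\to M\to N\to P\to 0$ with $P$ a $2$-$B$-stably pseudocoherent module, tensor the glueing sequence $0\to A\to B_1\oplus B_2\to B_{12}\to 0$ against it, and run a diagram chase: the known $\mathrm{Tor}_1$-vanishing for $B_2$ and $B_{12}$ together with the exactness of the tensored glueing sequence forces exactness at the $(B_1\widehat{\otimes}B)\otimes M$ corner, i.e.\ $\mathrm{Tor}_1^{A\widehat{\otimes}B}(B_1\widehat{\otimes}B,P)=0$. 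Note also that this makes your "cleaner alternative" for $B_{12}$ (via $A\to B_1\to B_{12}$) circular: the logical order must be $B_2$, then $B_{12}$, then $B_1$, not the reverse.
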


\begin{proof}
The statement for $A\rightarrow B_{12}$ could be proved by consider the composition $A\rightarrow B_2\rightarrow B_{12}$ where the $2$-$B$-pseudoflatness could be proved as in \cite[Lemma 2.4.13]{KL2} by inverting the corresponding variables. For $A\rightarrow B_{1}$,
the corresponding proof could be made parallel to the corresponding proof of \cite[Lemma 2.4.13]{KL2}. To be more precise we consider the following commutative diagram from the corresponding chosen exact sequence (which is just the analog of the corresponding one in \cite[Remark 2.4.5]{KL2}, namely as below we choose arbitrary short exact sequence with $P$ as a left $A\widehat{\otimes}B$-module which is $2$-$B$-stably pseudocoherent):
\[
\xymatrix@C+0pc@R+0pc{
0   \ar[r]\ar[r]\ar[r] &M \ar[r]\ar[r]\ar[r] &N \ar[r]\ar[r]\ar[r] & P \ar[r]\ar[r]\ar[r] &0,
}
\]
which then induces the following corresponding big commutative diagram:
\[\tiny
\xymatrix@C+0pc@R+5pc{
& &0 \ar[d]\ar[d]\ar[d] &0 \ar[d]\ar[d]\ar[d] &,\\
0   \ar[r]\ar[r]\ar[r]  &M \ar[r]\ar[r]\ar[r] \ar[d]\ar[d]\ar[d] &N \ar[r]\ar[r]\ar[r] \ar[d]\ar[d]\ar[d] & P \ar[r]\ar[r]\ar[r] \ar[d]\ar[d]\ar[d] &0,\\
0   \ar[r]^?\ar[r]\ar[r] &({B_1\widehat{\otimes}B}\bigoplus {B_2\widehat{\otimes}B}) \otimes_{A\widehat{\otimes}B}M  \ar[r]\ar[r]\ar[r] \ar[d]\ar[d]\ar[d] &({B_1\widehat{\otimes}B}\bigoplus {B_2\widehat{\otimes}B})\otimes_{A\widehat{\otimes}B}N  \ar[r]\ar[r]\ar[r] \ar[d]\ar[d]\ar[d] &({B_1\widehat{\otimes}B}\bigoplus {B_2\widehat{\otimes}B})\otimes_{A\widehat{\otimes}B}P\ar[r]\ar[r]\ar[r] \ar[d]\ar[d]\ar[d] &0,\\
0  \ar[r]\ar[r]\ar[r] &{B_{12}\widehat{\otimes}B}\otimes_{A\widehat{\otimes}B}M \ar[r]\ar[r]\ar[r] \ar[d]\ar[d]\ar[d] &{B_{12}\widehat{\otimes}B}\otimes_{A\widehat{\otimes}B}N \ar[r]\ar[r]\ar[r] \ar[d]\ar[d]\ar[d] &{B_{12}\widehat{\otimes}B} \otimes_{A\widehat{\otimes}B} P \ar[r]\ar[r]\ar[r] \ar[d]\ar[d]\ar[d] &0,\\
&0&0&0
}
\]
with the notations in \cite[Lemma 2.4.10]{KL2} where we actually have the corresponding exactness along the horizontal direction at the corner around $M\otimes_{A\widehat{\otimes}B} ({B_1\widehat{\otimes}B}\bigoplus {B_2\widehat{\otimes}B})$ marked with $?$, by diagram chasing.
\end{proof}

\indent After these foundational results, as in \cite{KL2} we have the following proposition which is the corresponding noncommutative generalization of the corresponding result established in \cite[Theorem 2.4.15]{KL2}.

\begin{proposition} \mbox{\bf{(After Kedlaya-Liu \cite[Theorem 2.4.15]{KL2})}} \label{proposition2.10}
In our current context we have that for any rational localization $\mathrm{Spa}(A',A^{',+})\rightarrow \mathrm{Spa}(A,A^+)$ we have that along this base change the corresponding $B$-stably pseudocoherence is preserved.\\
	
\end{proposition}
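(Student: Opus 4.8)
The plan is to mimic the proof of \cite[Theorem 2.4.15]{KL2}, tracking the noncommutative coefficient $B$ throughout but exploiting the fact that the stability we require is only with respect to rational localizations on the commutative factor $A$. First I would reduce to the case of the standard rational subdomains: by the usual argument any rational localization $A \to A'$ factors as a composition of localizations of the form $A \to B_1$, $A \to B_2$, $A \to B_{12}$ in the notation of \cite[Lemma 2.4.10]{KL2}, so it suffices to check that $B$-stably pseudocoherence is preserved under each of these elementary steps, and that the property of being $B$-stably pseudocoherent is stable under composition of rational localizations. Given a left $A\widehat{\otimes}B$-module $M$ that is $B$-stably pseudocoherent, I would form $M' := (A'\widehat{\otimes}B)\otimes_{A\widehat{\otimes}B}M$ and argue in two stages: (i) $M'$ is $B$-pseudocoherent as an $A'\widehat{\otimes}B$-module, and (ii) $M'$ remains complete after any further rational localization of $A'$.

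For stage (i), the key input is the pseudoflatness results already proved: by \Cref{proposition2.9} the morphisms $A \to B_1$, $A \to B_2$, $A \to B_{12}$ are $2$-$B$-pseudoflat. Combining this with \Cref{proposition2.10}'s predecessors, for a $B$-stably pseudocoherent $M$ one takes a presentation as an iterated cokernel of finite free modules and checks, degree by degree, that base change along the localization stays exact in the relevant range; the pseudoflatness kills the obstructing $\mathrm{Tor}_1$ terms exactly as in \cite{KL2}, and the finitely-presented bijectivity statement of \cite[Corollary 2.4.9]{KL2} (our lemma on $M\{T\}$ and $M\{T,T^{-1}\}$) identifies the base change with the completed one in low homological degree. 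Since the rational localization $A'$ is, in the relevant cases, built by completing $A\{T\}\widehat{\otimes}B$ or $A\{T,T^{-1}\}\widehat{\otimes}B$ along a closed ideal, and tensoring with $B$ on the right commutes with all of this because the localization only touches the $A$-variables, the $B$-pseudocoherence of $M'$ over $A'\widehat{\otimes}B$ follows.

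For stage (ii), the point is that the class of rational localizations is closed under composition: a rational localization of $A'$ composed with $A \to A'$ is again a rational localization of $A$, so the completeness of the base change of $M'$ to $(A'')\widehat{\otimes}B$ is part of the hypothesis that $M$ was $B$-stably pseudocoherent to begin with. This is precisely where the asymmetry we built into the definition pays off: we never need any stability or flatness statement for $B$, only for the commutative tower of rational subdomains, so no noncommutative topos-theoretic complication intervenes. I would then assemble (i) and (ii) to conclude that $M'$ is $B$-stably pseudocoherent over $A'\widehat{\otimes}B$.

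The main obstacle I anticipate is stage (i) in the case $A \to B_1$, i.e.\ the non-Laurent rational localization, where in \cite{KL2} one must be careful that the relevant complex computing the base change is strict exact and not merely exact; tracking strictness of the maps $1 - fT$ on the completed tensor products $A\{T\}\widehat{\otimes}B \otimes_{A\widehat{\otimes}B}(-)$ requires that these operators still have closed image after tensoring with $B$, which is the content of the diagram-chase in the proof of \Cref{proposition2.9}. Provided one grants that the completed tensor product with the fixed Banach algebra $B$ is exact on strict exact sequences of Banach $A$-modules — which holds here because $B$ may be taken with an orthonormalizable Schauder basis, so $\widehat{\otimes}_{\mathbb{Q}_p}B$ is exact — the remaining bookkeeping is routine and parallels \cite[Theorem 2.4.15]{KL2} line by line.
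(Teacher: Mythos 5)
Your proposal is correct and follows exactly the route the paper intends: the paper in fact states \cref{proposition2.10} with no written proof, deferring entirely to \cite[Theorem 2.4.15]{KL2}, and your argument (reduction to simple Laurent pieces, the $2$-$B$-pseudoflatness of $A\to B_1,B_2,B_{12}$ from \cref{proposition2.9} to kill the obstructing $\mathrm{Tor}_1$ terms degree by degree, and the observation that completeness after further localization is automatic since rational localizations compose) is precisely the adaptation the author has in mind. Your closing remark about exactness of $\widehat{\otimes}B$ on strict exact sequences is also fine here, since over the discretely valued base fields $\mathbb{Q}_p$ and $\mathbb{F}_p((t))$ every Banach space is potentially orthonormalizable.
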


\indent Then we have the corresponding Tate's acyclicity in the noncommutative deformed setting:

\begin{theorem}\mbox{\bf{(After Kedlaya-Liu \cite[Theorem 2.5.1]{KL2})}} \label{theorem2.11} Now suppose we have in our corresponding \cref{setting2.1} a corresponding $B$-stably pseudocoherent module $M$. Then we consider the corresponding assignment such that for any $U\subset \mathrm{Spa}(A,A^+)$ we define $\widetilde{M}(U)$ as in the following:
\begin{align}
\widetilde{M}(U):=\varprojlim_{\mathrm{Spa}(S,S^+)\subset U,\mathrm{rational}} S\widehat{\otimes}B\otimes_{A\widehat{\otimes}B}M.	
\end{align}
Then we have that for any $\mathfrak{B}$ which is a rational covering of $U=\mathrm{Spa}(S,S^+)\subset \mathrm{Spa}(A,A^+)$ (certainly this $U$ is also assumed to be rational) we have that the vanishing of the following two cohomology groups:
\begin{align}
H^i(U,\widetilde{M}), \check{H}^i(U:\mathfrak{B},\widetilde{M})
\end{align}
for any $i>0$. When concentrating at the degree zero we have:
\begin{align}
H^0(U,\widetilde{M})=S\widehat{\otimes}B\otimes_{A\widehat{\otimes}B}M, \check{H}^0(U:\mathfrak{B},\widetilde{M})=S\widehat{\otimes}B\otimes_{A\widehat{\otimes}B}M.
\end{align}
	
\end{theorem}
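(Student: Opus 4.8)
The plan is to adapt the proof of \cite[Theorem 2.5.1]{KL2} to the noncommutative-coefficient setting, using the stability hypothesis to control completeness along rational localizations and the pseudoflatness results already established (\cref{proposition2.9}, \cref{proposition2.10}) to handle the \v{C}ech complex. First I would reduce to the case $U = \mathrm{Spa}(A,A^+)$ itself, since the restriction of $\widetilde{M}$ to any rational subspace $\mathrm{Spa}(S,S^+)$ is again of the same form for the $B$-stably pseudocoherent module $S\widehat{\otimes}B\otimes_{A\widehat{\otimes}B}M$ over $(S,S^+)$, this base change being legitimate by \cref{proposition2.10}. Then, as in Kedlaya--Liu, I would reduce further to the case of a \emph{simple Laurent covering} (a two-element rational covering of the form generated by $\{f\le 1\}$ and $\{f\ge 1\}$ for a single $f \in A$), because an arbitrary rational covering admits a common refinement by iterated simple Laurent coverings, and one can pass from refinements back to the original covering by the standard Tate--\v{C}ech bookkeeping once acyclicity is known for the refining steps. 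The degree-zero statements would then follow formally from exactness of the relevant complex in degrees $0$ and $1$.

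The core computation is the exactness of the \v{C}ech complex associated to a simple Laurent covering. Concretely, for the covering with pieces corresponding to $B_1, B_2, B_{12}$ in the notation of \cite[Lemma 2.4.10]{KL2}, I would show that the sequence
\begin{align}
0 \longrightarrow M \longrightarrow (B_1\widehat{\otimes}B \oplus B_2\widehat{\otimes}B)\otimes_{A\widehat{\otimes}B} M \longrightarrow B_{12}\widehat{\otimes}B\otimes_{A\widehat{\otimes}B} M \longrightarrow 0
\end{align}
is exact, where the second map is the difference of the two localization maps. Exactness on the right follows from the $1-fT$ surjection argument of \cref{proposition2.9}; exactness in the middle and on the left is where the $B$-stability hypothesis enters, guaranteeing that each localized module is complete for its natural topology so that the algebraic \v{C}ech identities upgrade to the completed tensor products. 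Here I would lean directly on the pseudoflatness of $A \to B_1$, $A \to B_2$, $A \to B_{12}$ from \cref{proposition2.9} and on the completeness clause in the definition of $B$-stable pseudocoherence, exactly mirroring how \cite{KL2} uses $\mathrm{Tor}_1$-vanishing to pass through the snake lemma in the big diagrams appearing in the proofs of \cref{proposition2.9} above.

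Having established acyclicity for simple Laurent coverings, I would then invoke the abstract Tate-sheaf formalism: a presheaf on the rational subdomains of $\mathrm{Spa}(A,A^+)$ whose \v{C}ech complex is exact for all simple Laurent coverings is in fact a sheaf with vanishing higher \v{C}ech cohomology for all rational coverings, and — since rational coverings are cofinal and the space is spectral — vanishing higher sheaf cohomology as well. This is the noncommutative transcription of \cite[Theorem 2.5.1]{KL2} and \cite[Proposition 2.4.20, 2.4.21]{KL2}; the only thing to check is that the inductive refinement argument never leaves the class of $B$-stably pseudocoherent modules, which is precisely guaranteed by \cref{proposition2.10}. The main obstacle I anticipate is the middle-exactness of the simple Laurent complex in the completed setting: the naive algebraic exactness is easy, but promoting it to the completed tensor product requires knowing that the natural topology on each localized module agrees with the quotient/subspace topologies coming from the \v{C}ech maps, which is exactly the content of $B$-stability together with the pseudoflatness statements, and assembling these compatibilities carefully — rather than any single hard estimate — is the real work.
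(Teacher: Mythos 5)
Your proposal is correct and follows essentially the same route as the paper: reduce to simple Laurent coverings via the abstract Tate--\v{C}ech formalism of Kedlaya--Liu (the paper cites \cite[Propositions 2.4.20--2.4.21]{KL1} for this), prove exactness of the two-term \v{C}ech complex using the $2$-$B$-pseudoflatness of $A\to B_1, B_2, B_{12}$ from \cref{proposition2.9} together with the base-change stability of \cref{proposition2.10}, and read off the degree-zero identifications. The only cosmetic discrepancy is that the refinement formalism lives in \cite{KL1} rather than \cite{KL2}.
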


\begin{proof}
By \cite[Propositions 2.4.20-2.4.21]{KL1}, we could then finish proof as in \cite[Theorem 2.5.1]{KL2} by using our previous \cref{proposition2.10} and \cref{proposition2.9} as above.	
\end{proof}

\indent We now consider the corresponding noncommutative deformed version of Kiehl's glueing properties for stably pseudocoherent modules after \cite{KL2}:

\begin{definition} \mbox{\bf{(After Kedlaya-Liu \cite[Definition 2.5.3]{KL2})}}
Consider in our context (over $\mathrm{Spa}(A,A^+)$) the corresponding sheaves $\mathcal{O}\widehat{\otimes}B$, we will then define the corresponding pseudocoherent sheaves over $\mathrm{Spa}(A,A^+)$ to be those locally defined by attaching stably-pseudocoherent modules over the section. 
\end{definition}

\begin{lemma} \mbox{\bf{(After Kedlaya-Liu \cite[Lemma 2.5.4]{KL2})}}\label{lemma2.13}
	Consider the corresponding notations in \cite[Lemma 2.4.10]{KL2}, we have the corresponding morphism $A\rightarrow B_1\bigoplus B_2$. Then we have that this morphism is an descent morphism effective for the corresponding $B$-stably pseodocoherent Banach modules. 
\end{lemma}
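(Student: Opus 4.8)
The plan is to adapt the descent argument of Kedlaya--Liu \cite[Lemma 2.5.4]{KL2} to the noncommutative-coefficient setting, where the main additional bookkeeping is simply to carry the factor $B$ through every tensor product while never localizing in $B$. Recall from \cite[Lemma 2.4.10]{KL2} that the data is a covering situation producing $A \to B_1$, $A \to B_2$, and $A \to B_{12}$, with $B_{12}$ the appropriate completed tensor (over $A$) through which $B_1$ and $B_2$ agree; here we replace each by its completed tensor with $B$, so we work with $A\widehat{\otimes}B \to B_1\widehat{\otimes}B$, $A\widehat{\otimes}B \to B_2\widehat{\otimes}B$, and $A\widehat{\otimes}B \to B_{12}\widehat{\otimes}B$. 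What must be shown is that the functor sending a $B$-stably pseudocoherent $A\widehat{\otimes}B$-module $M$ to the triple $(B_1\widehat{\otimes}B\otimes_{A\widehat{\otimes}B}M,\ B_2\widehat{\otimes}B\otimes_{A\widehat{\otimes}B}M,\ \text{glueing iso over } B_{12}\widehat{\otimes}B)$ is an equivalence onto the category of such descent data, with each leg again $B$-stably pseudocoherent over the respective ring.

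First I would verify that the functor is well-defined: that each base change $B_i\widehat{\otimes}B\otimes_{A\widehat{\otimes}B}M$ is $B$-stably pseudocoherent over $B_i\widehat{\otimes}B$. This is exactly where \cref{proposition2.10} enters — rational localization preserves $B$-stable pseudocoherence — together with the transitivity of stable pseudocoherence along further rational localizations. Second, I would prove full faithfulness (the ``$H^0$'' statement): given $M$, one forms the complex $0 \to M \to (B_1\widehat{\otimes}B\oplus B_2\widehat{\otimes}B)\otimes_{A\widehat{\otimes}B}M \to B_{12}\widehat{\otimes}B\otimes_{A\widehat{\otimes}B}M \to 0$ and shows it is exact; exactness at the left and middle is the statement that $M$ is recovered as the equalizer, and this follows from \cref{theorem2.11} (noncommutative Tate acyclicity) applied to the two-element rational covering, since the Čech complex in degrees $0$ and $1$ is precisely this sequence. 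The key inputs are the $2$-$B$-pseudoflatness of $A\to B_1$, $A\to B_2$, $A\to B_{12}$ from \cref{proposition2.9}, which guarantee that tensoring a short exact sequence of stably pseudocoherent modules by these maps stays exact, so the diagram-chases of \cite[Lemma 2.4.12, Lemma 2.4.13]{KL2} go through verbatim with $B$ inserted.

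Third, for essential surjectivity: given a descent datum $(M_1, M_2, \iota)$ with $M_i$ being $B$-stably pseudocoherent over $B_i\widehat{\otimes}B$, define $M$ as the kernel (equalizer) of the two maps $M_1 \oplus M_2 \rightrightarrows M_{12}$, where $M_{12} = B_{12}\widehat{\otimes}B\otimes_{B_1\widehat{\otimes}B}M_1 \cong B_{12}\widehat{\otimes}B\otimes_{B_2\widehat{\otimes}B}M_2$ via $\iota$. One must check (a) $M$ is complete for the natural topology, (b) $M$ is $B$-pseudocoherent as an $A\widehat{\otimes}B$-module, (c) the base-change maps $B_i\widehat{\otimes}B\otimes_{A\widehat{\otimes}B}M \to M_i$ are isomorphisms, and (d) the base change of $M$ along any rational localization of $A$ stays complete, i.e. $M$ is in fact $B$-\emph{stably} pseudocoherent. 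Steps (a), (c) follow from the pseudoflatness and the acyclicity of the Čech complex as above; step (b) is the noncommutative analogue of the finite-presentation-type argument in \cite{KL2}, reducing a presentation of each $M_i$ to one of $M$ using that $B_1\widehat{\otimes}B\oplus B_2\widehat{\otimes}B$ is ``faithfully covering'' on the commutative part; step (d) follows by composing rational localizations and reapplying (a)--(c) over $A'$.

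The main obstacle I anticipate is step (b)/(d) — producing a finite presentation of the glued module $M$ over $A\widehat{\otimes}B$ from the local presentations, i.e. showing pseudocoherence descends rather than merely completeness. In the commutative Kedlaya--Liu argument this uses that pseudocoherence is a local condition checkable on a rational covering and that the relevant Tor-vanishing propagates; in our setting one has to be careful that the module-theoretic finiteness only ever refers to the commutative spatial covering while the coefficient $B$ is held rigidly fixed, so that no localization or flatness in $B$ is ever needed. Concretely, the argument must invoke \cref{theorem2.11} to kill the higher Čech cohomology that would otherwise obstruct lifting a surjection $(B_i\widehat{\otimes}B)^{n_i} \twoheadrightarrow M_i$ to a surjection $(A\widehat{\otimes}B)^n \twoheadrightarrow M$, and then iterate on the kernel. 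Once this is in place, all remaining verifications are the routine diagram-chases already rehearsed in \cref{proposition2.9} and the proof of \cref{theorem2.11}, and the lemma follows.
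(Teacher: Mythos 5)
Your proposal is correct and follows essentially the same route as the paper: realize the descent datum as the equalizer/sheaf via the noncommutative Tate acyclicity of \cref{theorem2.11}, use the pseudoflatness of \cref{proposition2.9} and the stability of \cref{proposition2.10} to make the base-change diagrams exact, and establish pseudocoherence of the glued module by lifting a finite free surjection and iterating on the kernel. The only ingredient you describe abstractly rather than cite is the finite generation of the glued module, which the paper takes from the proof of \cite[Lemma 6.82]{T2}; otherwise the structure and the key inputs coincide.
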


\begin{proof}
This is a $B$-relative version of the \cite[Lemma 2.5.4]{KL2}. We adapt the corresponding proof to our situation. First consider the corresponding notations on the commutative adic space $\mathrm{Spa}(A,A^+)$ as in \cite[Lemma 2.4.10]{KL2}. And we have a coving $\{U_1,U_2\}$. Then consider a descent datum for this cover of $B$-stably pseudocoherent sheaves. By \cref{theorem2.11} we could realize this as a corresponding pseudocoherent sheaf over $\mathcal{O}\widehat{\otimes}B$. We denote this by $\mathcal{M}$. Then by (the proof of) \cite[Lemma 6.82]{T2}, we have that this is finitely generated and we have the corresponding vanishing of $\check{H}^i(\mathrm{Spa}(A,A^+),\mathfrak{B};\mathcal{M})$ for $i>0$. Then by \cref{theorem2.11} again we have that the vanishing of $H^i(U_j;\mathcal{M})$ for $j\in \{1,2\},i>0$. Then as in \cite[Lemma 2.5.4]{KL2} we have the corresponding vanishing of $H^i(\mathrm{Spa}(A,A^+);\mathcal{M})$ for $i>0$. Then as in \cite[Lemma 2.5.4]{KL2} we choose a corresponding covering of some finite free object $\mathcal{F}$ and take its kernel $\mathcal{K}$ which is actually pseudocoherent by the corresponding \cref{proposition2.10}. Then forming the following diagram:
\[\tiny
\xymatrix@C+0pc@R+6pc{
0 \ar[r]\ar[r]\ar[r] & (B_p\widehat{\otimes}B)\otimes \mathcal{K}(\mathrm{Spa}(A,A^+))  \ar[r]\ar[r]\ar[r] \ar[d]\ar[d]\ar[d] &(B_p\widehat{\otimes}B)\otimes \mathcal{F}(\mathrm{Spa}(A,A^+))\ar[r]\ar[r]\ar[r] \ar[d]\ar[d]\ar[d] & (B_p\widehat{\otimes}B)\otimes \mathcal{M}(\mathrm{Spa}(A,A^+))\ar[r]\ar[r]\ar[r]  \ar[d]\ar[d]\ar[d]&0,\\
0 \ar[r]\ar[r]\ar[r] &\mathcal{K}(U_p)   \ar[r]\ar[r]\ar[r] &\mathcal{F}(U_p) \ar[r]\ar[r]\ar[r] & \mathcal{M}(U_p) \ar[r]\ar[r]\ar[r] &0,\\
}
\]
where we have $p=1,2$. The first row is exact by $B$-pseudoflatness as in \cref{proposition2.10} and the second is row is exact as well by \cref{theorem2.11}. The middle vertical arrow is an isomorphism while the rightmost vertical arrow is surjective and the leftmost vertical arrow is surjective. The surjectivity just here comes from actually \cite[Lemma 6.82]{T2}. This will imply that the rightmost vertical arrow is injective as well. Then we have the corresponding isomorphism $\mathcal{M}\overset{\sim}{\rightarrow} \widetilde{\mathcal{M}(\mathrm{Spa}(A,A^+))}$. Then we consider the following diagram:

\[ \tiny
\xymatrix@C+0pc@R+6pc{
 &(C\widehat{\otimes}B)\otimes\mathcal{K}(\mathrm{Spa}(A,A^+))   \ar[r]\ar[r]\ar[r] \ar[d]\ar[d]\ar[d] &(C\widehat{\otimes}B)\otimes\mathcal{F}(\mathrm{Spa}(A,A^+)) \ar[r]\ar[r]\ar[r] \ar[d]\ar[d]\ar[d] & (C\widehat{\otimes}B)\otimes\mathcal{M}(\mathrm{Spa}(A,A^+))\ar[r]\ar[r]\ar[r]  \ar[d]\ar[d]\ar[d]&0,\\
0 \ar[r]\ar[r]\ar[r] &\mathcal{K}(\mathrm{Spa}(C,C^+))   \ar[r]\ar[r]\ar[r] &\mathcal{F}(\mathrm{Spa}(C,C^+)) \ar[r]\ar[r]\ar[r] & \mathcal{M}(\mathrm{Spa}(C,C^+)) \ar[r]\ar[r]\ar[r] &0,\\
}
\]
$C$ is any rational localization of $A$. Here the first row is exact by $B$-pseudoflatness and the second is row is exact as well. The middle vertical arrow is an isomorphism while the rightmost vertical arrow is surjective as in \cite[Lemma 2.5.4]{KL2}. Then we apply the same argument to the leftmost vertical arrow (with a new diagram), we will have the corresponding surjectivity of this leftmost vertical arrow. This will imply that the rightmost vertical arrow is injective as well. Then we have the corresponding isomorphism for the rightmost vertical arrow.
\end{proof}

\begin{theorem}\mbox{\bf{(After Kedlaya-Liu \cite[Theorem 2.5.5]{KL2})}} \label{theorem2.14}
Taking global section will realize the equivalence between the following two categories: A. The category of all the pseudocoherent $\mathcal{O}\widehat{\otimes}B$-sheaves; B. The category of all the $B$-stably pseudocoherent modules over $A\widehat{\otimes}B$. 	
\end{theorem}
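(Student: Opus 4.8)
The plan is to follow the structure of the proof of \cite[Theorem 2.5.5]{KL2}, keeping careful track of the noncommutative coefficient $B$ acting on one side, since all the needed input has already been assembled in the preceding results. First I would construct the functor in the direction (B) $\to$ (A): given a $B$-stably pseudocoherent module $M$ over $A\widehat{\otimes}B$, form the sheaf $\widetilde{M}$ by the assignment $U\mapsto \varprojlim_{\mathrm{Spa}(S,S^+)\subset U} S\widehat{\otimes}B\otimes_{A\widehat{\otimes}B}M$ as in \cref{theorem2.11}. That theorem already guarantees this is a sheaf on the analytic site with $H^i(U,\widetilde{M})=0$ for $i>0$ on rational subspaces and $H^0$ computing the expected base change; and by \cref{proposition2.10} the modules attached locally are again $B$-stably pseudocoherent, so $\widetilde{M}$ is genuinely a pseudocoherent $\mathcal{O}\widehat{\otimes}B$-sheaf in the sense of the definition preceding \cref{lemma2.13}. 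Conversely, global sections $\mathcal{M}\mapsto \mathcal{M}(\mathrm{Spa}(A,A^+))$ gives a functor (A) $\to$ (B): one must check that the global sections of a pseudocoherent sheaf really are $B$-stably pseudocoherent, which follows by covering $\mathrm{Spa}(A,A^+)$ by finitely many rational subspaces on which the sheaf is attached to a $B$-stably pseudocoherent module and then invoking the descent statement \cref{lemma2.13} together with Tate acyclicity.

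Next I would verify that the two functors are mutually inverse. For $M\mapsto \widetilde{M}\mapsto \widetilde{M}(\mathrm{Spa}(A,A^+))$, the degree-zero part of \cref{theorem2.11} gives $\widetilde{M}(\mathrm{Spa}(A,A^+))=M$ on the nose. The substantive direction is $\mathcal{M}\mapsto \mathcal{M}(\mathrm{Spa}(A,A^+))\mapsto \widetilde{\mathcal{M}(\mathrm{Spa}(A,A^+))}$ and showing the natural comparison map $\mathcal{M}\to\widetilde{\mathcal{M}(\mathrm{Spa}(A,A^+))}$ is an isomorphism of sheaves. This is exactly where the argument in the proof of \cref{lemma2.13} is reused: by choosing a covering of $\mathcal{M}$ by a finite free $\mathcal{O}\widehat{\otimes}B$-sheaf $\mathcal{F}$ with pseudocoherent kernel $\mathcal{K}$ (pseudocoherent again by \cref{proposition2.10}), one forms the two-row commutative diagram comparing $(B_p\widehat{\otimes}B)\otimes(-)(\mathrm{Spa}(A,A^+))$ with $(-)(U_p)$, uses $B$-pseudoflatness (\cref{proposition2.10}) for exactness of the top row and \cref{theorem2.11} for the bottom, and then runs the five-lemma-type diagram chase: the middle arrow is an isomorphism because $\mathcal{F}$ is finite free, the outer arrows are surjective by the finite-generation input from \cite[Lemma 6.82]{T2}, hence the right arrow is injective and so an isomorphism. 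Checking this locally on a cofinal system of rational subspaces and passing to the inverse limit yields $\mathcal{M}\cong\widetilde{\mathcal{M}(\mathrm{Spa}(A,A^+))}$.

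Finally I would note that functoriality and compatibility of these isomorphisms with morphisms is formal, since all constructions are $A\widehat{\otimes}B$-linear and the comparison maps are natural; the one-sidedness of the $B$-action plays no role beyond bookkeeping because $B$ is treated as an inert coefficient throughout (it is only the rational localizations coming from $A$ that are relevant to the stability notion, exactly as emphasized in the introduction). The main obstacle, as in \cite{KL2}, is not the categorical formalism but ensuring that the diagram chases go through with the weaker \emph{$B$-pseudoflatness} of rational localizations in place of genuine flatness; this has already been isolated into \cref{proposition2.9}, \cref{proposition2.10} and \cref{theorem2.11}, so the proof here is a matter of assembling those inputs in the pattern of the proof of \cref{lemma2.13}. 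One subtlety worth a remark is that the inverse-limit defining $\widetilde{M}$ must be shown to commute with the relevant tensor products on the nose (not merely up to completion), which again is exactly what the completeness clauses in the definition of $B$-stably pseudocoherent modules and in \cref{theorem2.11} are designed to provide.
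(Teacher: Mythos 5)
Your proposal is correct and follows essentially the same route as the paper, which simply defers to the proof of \cite[Theorem 2.5.5]{KL2} with \cref{theorem2.11} (Tate acyclicity) and \cref{lemma2.13} (effective descent for the simple covering) substituted in, together with \cref{proposition2.10} for stability under rational localization. The only ingredient you leave implicit is the reduction of an arbitrary rational covering to iterated simple Laurent coverings via \cite[Proposition 2.4.20]{KL1}, which is needed before \cref{lemma2.13} can be invoked and which the paper cites explicitly.
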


\begin{proof}
See \cite[Theorem 2.5.5]{KL2}. We need to still apply \cite[Proposition 2.4.20]{KL1}, as long as one considers instead in our situation \cref{lemma2.13} and \cref{theorem2.11}.\\	
\end{proof}

\subsection{Noncommutative Pseudocoherence in \'Etale Topology}

\indent We consider the extension of the corresponding discussion in the previous subsection to the \'etale topology. At this moment we consider the following same context on the geometric level as in \cite{KL2}:

\begin{setting} \mbox{\bf{(After Kedlaya-Liu \cite[Hypothesis 2.5.8]{KL2})}} \label{setting2.15}
As in the previous subsection we consider now the corresponding geometric setting namely an adic space $\mathrm{Spa}(A,A^+)$ where $A$ is assumed to be sheafy. Then we consider $B$ as above. And we consider the corresponding \'etale site $\mathrm{Spa}(A,A^+)_\text{\'et}$. And by keeping the corresponding setting in the geometry as in \cite[Hypothesis 2.5.8]{KL2}, we assume that there is a stable basis $\mathfrak{B}$ containing the space $\mathrm{Spa}(A,A^+)$ itself.	
\end{setting}

\begin{definition} \mbox{\bf{(After Kedlaya-Liu \cite[Definition 2.5.9]{KL2})}}
For any left $A\widehat{\otimes}B$-module $M$, we call it $m$-$B$-\'etale stably pseudocoherent with respect to $\mathfrak{B}$ if we have that it is $m$-$B$-pseudocoherent, complete for the natural topology and for any rational localization $A\rightarrow A'$, the base change of $M$ to $A'\widehat{\otimes} B$ is complete for the natural topology as the corresponding left $A'\widehat{\otimes} B$-module.	As in \cite[Definition 2.4.1]{KL2} we call that the corresponding left $A\widehat{\otimes}B$-module $M$ just $B$-stably pseudocoherent if we have that it is simply just $\infty$-$B$-stably pseudocoherent.
\end{definition}

\begin{definition}\mbox{\bf{(After Kedlaya-Liu \cite[Below Definition 2.5.9]{KL2})}}
For any Banach left $A\widehat{\otimes}B$-module $M$, we call it $m$-$B$-\'etale-pseudoflat if for any right $A\widehat{\otimes}B$-module $M'$ $m$-$B$-\'etale-stably pseudocoherent we have $\mathrm{Tor}_1^{A\widehat{\otimes}B}(M',M)=0$. For any Banach right $A\widehat{\otimes}B$-module $M$, we call it is $m$-$B$-\'etale-pseudoflat if for any left $A\widehat{\otimes}B$-module $M'$ $m$-$B$-\'etale-stably pseudocoherent we have $\mathrm{Tor}_1^{A\widehat{\otimes}B}(M,M')=0$.
\end{definition}

\indent The following proposition holds in our current setting.

\begin{proposition} \mbox{\bf{(After Kedlaya-Liu \cite[Lemma 2.5.10]{KL2})}} \label{proposition2.18}
One can actually find another basis $\mathfrak{C}$ in $\mathfrak{B}$ such that any morphism in $\mathfrak{C}$ could be $2$-$B$-\'etale-pseudoflat with respect to either $\mathfrak{C}$ or $\mathfrak{B}$.	
\end{proposition}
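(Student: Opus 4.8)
The plan is to follow the strategy of Kedlaya--Liu \cite[Lemma 2.5.10]{KL2}, carrying the noncommutative coefficient $B$ along throughout; compared with the analytic case already settled in \cref{proposition2.9} and \cref{proposition2.10}, the only genuinely new ingredient is control of base change along finite \'etale morphisms, which turns out to be the easy direction. First I would produce the refined basis. Using Huber's local structure theory for \'etale morphisms of adic spaces, together with the stability of $\mathfrak{B}$, one extracts a cofinal sub-basis $\mathfrak{C}\subseteq\mathfrak{B}$ in which every underlying ring map $A\to A''$ factors as $A\to A'\to A''$ with $A\to A'$ a rational localization and $A'\to A''$ finite \'etale, and one arranges $\mathfrak{C}$ so that the same normal form is available for maps internal to $\mathfrak{C}$.

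Next I would dispose of the two elementary building blocks. For a rational localization $A\to A'$, \cref{proposition2.10} (via \cref{proposition2.9}) already gives $2$-$B$-pseudoflatness, and since any $\mathfrak{C}$- or $\mathfrak{B}$-\'etale-stably-pseudocoherent module is in particular $B$-pseudocoherent and complete after rational localization, the same diagram chase shows that the relevant $\mathrm{Tor}_1^{A\widehat{\otimes}B}$ against such modules still vanishes; hence $A\to A'$ is $2$-$B$-\'etale-pseudoflat with respect to both $\mathfrak{C}$ and $\mathfrak{B}$. For a finite \'etale morphism $A\to A'$, the $A$-module $A'$ is finitely generated projective, so $A'\widehat{\otimes}B$ is a direct summand of a finite free $A\widehat{\otimes}B$-module, hence finitely generated projective over $A\widehat{\otimes}B$; tensoring a module with it is exact and preserves completeness for the natural topology, so $\mathrm{Tor}_1^{A\widehat{\otimes}B}(M',A'\widehat{\otimes}B)=0$ for every $M'$ whatsoever, and in particular $A\to A'$ is $2$-$B$-\'etale-pseudoflat (indeed $m$-$B$-\'etale-pseudoflat for all $m$).

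Third I would compose. Given a map of $\mathfrak{C}$ in the normal form $A\to A'\to A''$, I claim $A\to A''$ is again $2$-$B$-\'etale-pseudoflat. Since $A'\to A''$ is genuinely flat, base change along it is exact and carries \'etale-stably-pseudocoherent $A'\widehat{\otimes}B$-modules to \'etale-stably-pseudocoherent $A''\widehat{\otimes}B$-modules, so the standard change-of-rings comparison of $\mathrm{Tor}$ reduces the vanishing of $\mathrm{Tor}_1^{A\widehat{\otimes}B}(M',A''\widehat{\otimes}B)$ to the vanishing already obtained for $A\to A'$. Finally the clause ``with respect to either $\mathfrak{C}$ or $\mathfrak{B}$'' is formal: since $\mathfrak{C}\subseteq\mathfrak{B}$, every $\mathfrak{B}$-\'etale-stably-pseudocoherent module is $\mathfrak{C}$-\'etale-stably-pseudocoherent, so pseudoflatness tested against the (larger) class attached to $\mathfrak{C}$ a fortiori implies pseudoflatness tested against the class attached to $\mathfrak{B}$.

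The main obstacle, exactly as in \cite{KL2}, is not the homological bookkeeping but the construction of $\mathfrak{C}$: one must extract a sub-basis with the required normal form and closure properties while remaining inside the stable basis $\mathfrak{B}$, and one must check that the local description of \'etale morphisms interacts well with the completed tensor product $-\,\widehat{\otimes}\,B$ --- concretely, that finite \'etale base change commutes with $\widehat{\otimes}B$ and preserves finite projectivity over $A\widehat{\otimes}B$. Granting these geometric and topological points, the noncommutative deformation contributes nothing beyond carrying the factor $B$, exactly as in the analytic subsection.
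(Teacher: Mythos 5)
Your proposal is correct and follows essentially the same route as the paper's proof: both build $\mathfrak{C}$ out of composites of rational localizations with finite \'etale morphisms, invoke the rational-localization pseudoflatness already established in \cref{proposition2.9} and \cref{proposition2.10}, handle the finite \'etale piece via finite projectivity of $A'\widehat{\otimes}B$ over $A\widehat{\otimes}B$, and defer the purely geometric extraction of the basis to \cite[Lemma 2.5.10]{KL2}. In fact you supply more of the homological detail (the change-of-rings reduction for the composite and the $\mathfrak{C}$-versus-$\mathfrak{B}$ comparison) than the paper's own argument, which simply cites Kedlaya--Liu for these steps.
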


\begin{proof}
We follow the proof of \cite[Lemma 2.5.10]{KL2} in our current $B$-relative situation, however not that much needs to be adapted by relying on the proof of \cite[Lemma 2.5.10]{KL2}. To be more precise the corresponding selection of the new basis $\mathfrak{C}$ comes from including all the morphisms made up of some composition of rational localization and the corresponding finite \'etale ones. For these we have shown above the corresponding 2-$B$-\'etale pseudoflatness. Then the rest will be pure geometric for the analytic spaces which are just the same as the situation of \cite[Lemma 2.5.10]{KL2}, therefore we just omit the corresponding argument, see \cite[Lemma 2.5.10]{KL2}.	
\end{proof}

\indent Then we have the corresponding Tate's acyclicity in the noncommutative deformed setting in \'etale topology (here fix $\mathfrak{C}$ as above):

\begin{theorem}\mbox{\bf{(After Kedlaya-Liu \cite[Theorem 2.5.11]{KL2})}} \label{theorem2.19} Now suppose we have in our corresponding \cref{setting2.1} a corresponding $B$-\'etale-stably pseudocoherent module $M$. Then we consider the corresponding assignment such that for any $U\subset \mathrm{Spa}(A,A^+)$ we define $\widetilde{M}(U)$ as in the following:
\begin{align}
\widetilde{M}(U):=\varprojlim_{\mathrm{Spa}(S,S^+)\subset U,\in \mathfrak{C}} S\widehat{\otimes}B\otimes_{A\widehat{\otimes}B}M.	
\end{align}
Then we have that for any $\mathfrak{B}$ which is a covering of $U=\mathrm{Spa}(S,S^+)\subset \mathrm{Spa}(A,A^+)$ (certainly this $U$ is also assumed to be in $\mathfrak{C}$, and we assume this covering is formed by using the corresponding members in $\mathfrak{C}$) we have that the vanishing of the following two cohomology groups:
\begin{align}
H^i(U,\widetilde{M}), \check{H}^i(U,\mathfrak{B},\widetilde{M})
\end{align}
for any $i>0$. When concentrating at the degree zero we have:
\begin{align}
H^0(U,\widetilde{M})=S\widehat{\otimes}B\otimes_{A\widehat{\otimes}B}M, \check{H}^0(U,\mathfrak{B},\widetilde{M})=S\widehat{\otimes}B\otimes_{A\widehat{\otimes}B}M.
\end{align}
	
\end{theorem}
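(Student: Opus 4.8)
The plan is to transcribe, in the $B$-relative étale setting, the argument for the analytic Tate acyclicity \cref{theorem2.11}, bootstrapping from that result together with the refined basis $\mathfrak{C}$ produced in \cref{proposition2.18}. As a first reduction, I would pass from sheaf cohomology to Čech cohomology: since $\mathfrak{C}$ is a basis and every member of $\mathfrak{C}$ is $2$-$B$-\'etale-pseudoflat with respect to $\mathfrak{C}$, the formalism of \cite[Propositions 2.4.20--2.4.21]{KL1} reduces the vanishing of $H^i(U,\widetilde{M})$ for $i>0$, and the identification of $H^0$, to the statement that for every $U=\mathrm{Spa}(S,S^+)\in\mathfrak{C}$ and every covering $\mathfrak{B}$ of $U$ by members of $\mathfrak{C}$ the augmented \v{C}ech complex of $\widetilde{M}$ on $\mathfrak{B}$ is exact with $H^0=S\widehat{\otimes}B\otimes_{A\widehat{\otimes}B}M$. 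This is the same reduction used in \cref{theorem2.11}.

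Next I would split an étale covering into its two natural layers. By the very construction of $\mathfrak{C}$ in \cref{proposition2.18}, each morphism of $\mathfrak{C}$ is a composition of a rational localization and a finite étale morphism; hence the given covering $\mathfrak{B}$ of $U$ refines to a composite covering in which one first takes a rational covering $\mathfrak{B}'$ of $U$ and then, over each member of $\mathfrak{B}'$, a finite étale covering. Using \cite[Propositions 2.4.20--2.4.21]{KL1} once more (a composite-cover/double-complex argument: the \v{C}ech cohomology of a composite covering is computed by iterating the \v{C}ech cohomology of the two layers), it suffices to prove the desired acyclicity and degree-zero formula separately for rational coverings and for finite étale coverings.

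For the rational layer, each base change along a member of $\mathfrak{B}'$ preserves $B$-\'etale-stable pseudocoherence (by the analogue of \cref{proposition2.10} in the étale context, together with the fact that rational localizations lie in $\mathfrak{C}$), the relevant $\mathrm{Tor}_1$ groups vanish by the $2$-$B$-\'etale-pseudoflatness of \cref{proposition2.18} so that completed and algebraic tensor products coincide and the \v{C}ech differentials are strict, and the acyclicity itself is exactly \cref{theorem2.11} applied over $U$. For the finite étale layer, a jointly surjective finite étale family is finite faithfully flat, so the augmented \v{C}ech complex of $\mathcal{O}_\text{\'et}\widehat{\otimes}B$-modules is exact by faithfully flat descent; tensoring with the $B$-\'etale-stably-pseudocoherent module $M$ preserves exactness because finite flatness makes the tensor functor exact, and completing with $\widehat{\otimes}B$ introduces nothing new since after a finite étale step the modules in play are finite over the base, so their natural topology is the (co)quotient topology and is already complete (here again \cref{proposition2.18} guarantees we stay inside a class where this holds). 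Patching the two layers through the refinement argument gives the full vanishing statement, and reading off degree zero yields $H^0(U,\widetilde{M})=\check{H}^0(U,\mathfrak{B},\widetilde{M})=S\widehat{\otimes}B\otimes_{A\widehat{\otimes}B}M$.

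The main obstacle I anticipate is the topological bookkeeping when interleaving rational localization and finite étale base change: one must check at each intermediate stage that the relevant modules remain complete for the natural topology, so that $S\widehat{\otimes}B\otimes_{A\widehat{\otimes}B}M$ is genuinely the value of the presheaf and not merely its separated completion, and that the \v{C}ech differentials stay strict so the long exact sequences behave. This is precisely where the choice of the refined basis $\mathfrak{C}$ and the $2$-$B$-\'etale-pseudoflatness of its morphisms from \cref{proposition2.18} are indispensable; once those topological points are secured, the remainder is a faithful transcription of \cite[Theorem 2.5.11]{KL2}.
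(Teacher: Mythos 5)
Your proposal is correct and follows essentially the same route as the paper's (much terser) proof: reduce to \v{C}ech cohomology over the refined basis $\mathfrak{C}$ of \cref{proposition2.18}, handle the rational layer by the analytic acyclicity of \cref{theorem2.11}, and handle the finite \'etale layer by faithfully flat descent, exactly as in \cite[Theorem 2.5.11]{KL2}. The only cosmetic difference is that for the \v{C}ech-to-sheaf comparison in the \'etale topology the paper invokes \cite[Proposition 8.2.21]{KL1} rather than \cite[Propositions 2.4.20--2.4.21]{KL1}, which is the appropriate \'etale analogue of the result you cite.
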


\begin{proof}
By \cite[Proposition 8.2.21]{KL1}, we could then finish proof as in \cite[Theorem 2.5.11]{KL2} by using our previous \cref{proposition2.18} and the corresponding faithfully flat descent as in the situation of \cite[Theorem 2.5.11]{KL2}.	
\end{proof}

\indent We now consider the corresponding noncommutative deformed version of Kiehl's glueing property for stably pseudocoherent modules after \cite[Definition 2.5.12]{KL2}:

\begin{definition} \mbox{\bf{(After Kedlaya-Liu \cite[Definition 2.5.12]{KL2})}}
Consider in our context (over $\mathrm{Spa}(A,A^+)_{\text{\'et}}$) the corresponding sheaves $\mathcal{O}_{\mathrm{Spa}(A,A^+)_{\text{\'et}}}\widehat{\otimes}B$, we will then define the corresponding pseudocoherent sheaves over $\mathrm{Spa}(A,A^+)_{\text{\'et}}$ to be those locally defined by attaching \'etale-stably-pseudocoherent modules over the section. 
\end{definition}

\begin{lemma} \mbox{\bf{(After Kedlaya-Liu \cite[Lemma 2.5.13]{KL2})}} \label{lemma2.21}
	Consider the corresponding notations in \cite[Lemma 2.4.10]{KL2}, we have the corresponding morphism $A\rightarrow B_1\bigoplus B_2$. Then we have that this morphism is an descent morphism effective for the corresponding $B$-\'etale-stably pseodocoherent Banach modules. 
\end{lemma}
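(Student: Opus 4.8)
The plan is to transcribe the proof of \cref{lemma2.13} line by line, replacing each analytic-topology input by its \'etale counterpart, and to feed in the machinery already assembled in \cref{setting2.15}--\cref{theorem2.19}. Set $X=\mathrm{Spa}(A,A^+)$ and fix the notations of \cite[Lemma 2.4.10]{KL2}, so that $A\rightarrow B_1\bigoplus B_2$ realizes a two-element covering $\{U_1,U_2\}$ of $X$ whose members we may, after shrinking via \cref{proposition2.18}, take to lie in the basis $\mathfrak{C}$ on which every structure morphism is $2$-$B$-\'etale-pseudoflat. Given a descent datum of $B$-\'etale-stably-pseudocoherent Banach modules for $\{U_1,U_2\}$, I would first apply \cref{theorem2.19} to reassemble it into a pseudocoherent $\mathcal{O}_{X_{\text{\'et}}}\widehat{\otimes}B$-sheaf $\mathcal{M}$ whose restriction to each $U_j$ recovers the prescribed module; this uses that \cref{theorem2.19} identifies $\widetilde{M}(U)$ with the completed base change and kills positive Čech and sheaf cohomology for $\mathfrak{C}$-members and $\mathfrak{C}$-coverings.

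Next, exactly as in \cref{lemma2.13}, I would invoke (the proof of) \cite[Lemma 6.82]{T2} to conclude that $\mathcal{M}$ is finitely generated over $A\widehat{\otimes}B$ and that $\check{H}^i(X,\mathfrak{B};\mathcal{M})=0$ for $i>0$; combining this with \cref{theorem2.19} applied over $U_1$ and $U_2$ and with the Mayer--Vietoris (Čech-to-derived) spectral sequence for the two-element \'etale covering yields $H^i(X;\mathcal{M})=0$ for all $i>0$. Then I would pick a surjection onto $\mathcal{M}$ from a finite free $\mathcal{O}_{X_{\text{\'et}}}\widehat{\otimes}B$-sheaf $\mathcal{F}$ (possible by finite generation) and let $\mathcal{K}$ be its kernel, which is again pseudocoherent because $B$-\'etale-stable-pseudocoherence is preserved along the morphisms of $\mathfrak{C}$ --- the \'etale analogue of \cref{proposition2.10}, obtained from \cref{proposition2.18} together with faithfully flat descent along the finite \'etale steps.

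The final step is the pair of diagram chases of \cref{lemma2.13}, run over $\mathfrak{C}$ instead of over rational subspaces. For a member $U_p$ of the covering one compares the short exact sequence $0\to (B_p\widehat{\otimes}B)\otimes_{A\widehat{\otimes}B}\mathcal{K}(X)\to (B_p\widehat{\otimes}B)\otimes_{A\widehat{\otimes}B}\mathcal{F}(X)\to (B_p\widehat{\otimes}B)\otimes_{A\widehat{\otimes}B}\mathcal{M}(X)\to 0$, exact by $2$-$B$-\'etale-pseudoflatness of the morphisms of $\mathfrak{C}$ (\cref{proposition2.18}), with the sequence of sections over $U_p$, exact by \cref{theorem2.19}; the middle vertical arrow is an isomorphism and the outer ones are surjective by \cite[Lemma 6.82]{T2}, forcing the right arrow to be an isomorphism, i.e. $\mathcal{M}\xrightarrow{\ \sim\ }\widetilde{\mathcal{M}(X)}$. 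Repeating the same comparison with $B_p\widehat{\otimes}B$ replaced by $C\widehat{\otimes}B$ for an arbitrary $\mathrm{Spa}(C,C^+)\in\mathfrak{C}$, and bootstrapping the surjectivity of the $\mathcal{K}$-arrow by one further iteration of the argument, upgrades this to an isomorphism of sheaves, so that $\mathcal{M}$ --- hence the original descent datum --- is effectively recovered from the $B$-\'etale-stably-pseudocoherent $A\widehat{\otimes}B$-module $\mathcal{M}(X)$.

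The hard part will be the \'etale analogue of \cref{proposition2.10} used above: one must verify that $B$-\'etale-stable-pseudocoherence is stable under \emph{every} morphism of $\mathfrak{C}$, i.e. under arbitrary compositions of rational localizations and finite \'etale maps, while carrying the noncommutative factor $B$ through each step. For the rational-localization steps this is \cref{proposition2.10}; for the finite \'etale steps one needs the completed base change along a finite \'etale $A\to A'$ to preserve both pseudocoherence and completeness for the natural topology after $\widehat{\otimes}B$. The plan is to reduce this, via the fact that finite \'etale morphisms are finite projective on the commutative side, to the flat (indeed projective) base-change behaviour already exploited in the proof of \cref{theorem2.19}, with faithfully flat descent controlling the natural topology. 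Once that stability statement is secured, every remaining step is a formal transcription of \cref{lemma2.13}.
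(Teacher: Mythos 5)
Your proposal follows essentially the same route as the paper's own proof: reassemble the descent datum into an $\mathcal{O}_{\text{\'et}}\widehat{\otimes}B$-sheaf via \cref{theorem2.19}, reduce the cohomology vanishing to the analytic case (the covering $\{U_1,U_2\}$ being an analytic covering), present $\mathcal{M}$ by a finite free sheaf with pseudocoherent kernel, and run the two diagram chases over members of $\mathfrak{C}$ using $2$-$B$-\'etale-pseudoflatness. The "hard part" you isolate --- stability of $B$-\'etale-stable-pseudocoherence under the finite \'etale constituents of $\mathfrak{C}$ --- is exactly what the paper disposes of via \cref{proposition2.18} together with the closing diagram chase ("we only need to show that it is stable under \'etale morphism"), so your explicit reduction to finite projective base change plus faithfully flat descent is a correct and slightly more careful rendering of the same argument.
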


\begin{proof}
This is a $B$-relative version of the \cite[Lemma 2.5.13]{KL2}. We adapt the corresponding proof to our situation. First consider the corresponding notations on the commutative adic space $\mathrm{Spa}(A,A^+)$ as in \cite[Lemma 2.4.10]{KL2}. And we have a coving $\{U_1,U_2\}$. Then consider a descent datum for this cover of $B$-\'etale-stably pseudocoherent sheaves. By \cref{theorem2.19} we could realize this as a corresponding pseudocoherent sheaf over $\mathcal{O}_{\text{\'et}}\widehat{\otimes}B$. We denote this by $\mathcal{M}$. Then by consider the corresponding results in the rational localization situation above we have that the vanishing of $H^i(U_j;\mathcal{M})$ for $j\in \{1,2\},i>0$. And note that we have the corresponding equality on the degree zero namely $H^i_\text{\'et}(U_j;\mathcal{M})=H^i(U_j;\mathcal{M})$ for just $i$ which is zero. Then as in \cite[Lemma 2.5.13]{KL2} we have the corresponding vanishing of $H^i(\mathrm{Spa}(A,A^+);\mathcal{M})$ for $i>0$. Namely this is due to the fact that the covering is already one in the analytic topology. Then as in \cite[Lemma 2.5.13]{KL2} we choose a corresponding covering of some finite free object $\mathcal{F}$ and take its kernel $\mathcal{K}$ which gives rise to the corresponding exact sequence:
\[\tiny
\xymatrix@C+0pc@R+6pc{
0 \ar[r]\ar[r]\ar[r] &\mathcal{K}(\mathrm{Spa}(A,A^+))   \ar[r]\ar[r]\ar[r] &\mathcal{F}(\mathrm{Spa}(A,A^+)) \ar[r]\ar[r]\ar[r] & \mathcal{M}(\mathrm{Spa}(A,A^+)) \ar[r]\ar[r]\ar[r] &0,
}
\]
where we have $p=1,2$. We have already the corresponding stably-pseudocoherence of the global section. As in \cite[Lemma 2.5.13]{KL2} we only need to show that it is stable under \'etale morphism. Then we consider the following diagram:

\[ \tiny
\xymatrix@C+0pc@R+6pc{
 &(C\widehat{\otimes}B)\otimes\mathcal{K}(\mathrm{Spa}(A,A^+))   \ar[r]\ar[r]\ar[r] \ar[d]\ar[d]\ar[d] &(C\widehat{\otimes}B)\otimes\mathcal{F}(\mathrm{Spa}(A,A^+)) \ar[r]\ar[r]\ar[r] \ar[d]\ar[d]\ar[d] & (C\widehat{\otimes}B)\otimes\mathcal{M}(\mathrm{Spa}(A,A^+))\ar[r]\ar[r]\ar[r]  \ar[d]\ar[d]\ar[d]&0,\\
0 \ar[r]\ar[r]\ar[r] &\mathcal{K}(\mathrm{Spa}(C,C^+))   \ar[r]\ar[r]\ar[r] &\mathcal{F}(\mathrm{Spa}(C,C^+)) \ar[r]\ar[r]\ar[r] & \mathcal{M}(\mathrm{Spa}(C,C^+)) \ar[r]\ar[r]\ar[r] &0,\\
}
\]
$A\rightarrow C$ is some member in $\mathfrak{C}$. Here the first row is exact by $B$-\'etale-pseudoflatness and the second is row is exact as well. The middle vertical arrow is an isomorphism while the rightmost vertical arrow is surjective as in \cite[Lemma 2.5.13]{KL2}. Then we apply the same argument to the leftmost vertical arrow (with a new diagram), we will have the corresponding surjectivity of this leftmost vertical arrow. This will imply that the rightmost vertical arrow is injective as well. Then we have the corresponding isomorphism for the rightmost vertical arrow.
\end{proof}

\begin{theorem}\mbox{\bf{(After Kedlaya-Liu \cite[Theorem 2.5.14]{KL2})}} \label{theorem2.22}
Taking global section will realize the equivalence between the following two categories: A. The category of all the pseudocoherent $\mathcal{O}_{\mathrm{Spa}(A,A^+)_{\text{\'et}}}\widehat{\otimes}B$-sheaves; B. The category of all the $B$-\'etale-stably pseudocoherent modules over $A\widehat{\otimes}B$. 	
\end{theorem}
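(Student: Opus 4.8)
The plan is to mimic exactly the argument of Theorem \ref{theorem2.14}, replacing throughout the rational-localization bookkeeping by the bookkeeping over the basis $\mathfrak{C}$ supplied by \cref{proposition2.18}, and invoking the $\mathfrak{C}$-versions of the acyclicity and descent results, namely \cref{theorem2.19} and \cref{lemma2.21}, in place of \cref{theorem2.11} and \cref{lemma2.13}. Concretely, in one direction, given a pseudocoherent $\mathcal{O}_{\mathrm{Spa}(A,A^+)_{\text{\'et}}}\widehat{\otimes}B$-sheaf $\mathcal{M}$, we take $M:=\mathcal{M}(\mathrm{Spa}(A,A^+))$; the content is that $M$ is $B$-\'etale-stably pseudocoherent over $A\widehat{\otimes}B$. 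That $M$ is $m$-$B$-pseudocoherent and complete follows locally from the definition of a pseudocoherent sheaf together with the glueing established in \cref{lemma2.21}; the stability under base change along members of $\mathfrak{C}$ (in particular along rational localizations and finite \'etale maps) is precisely what \cref{lemma2.21} and the exact-sequence/diagram-chase at the end of its proof deliver. In the other direction, given a $B$-\'etale-stably pseudocoherent module $M$, we form the sheaf $\widetilde{M}$ as in \cref{theorem2.19}; that it is a sheaf (not merely a presheaf) and that $\widetilde{M}(\mathrm{Spa}(A,A^+))=M$ is the content of the degree-zero statements in \cref{theorem2.19}, while \cref{lemma2.21} shows $\widetilde{M}$ is indeed pseudocoherent, i.e. locally attached to \'etale-stably pseudocoherent modules.

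The two constructions are then shown to be mutually quasi-inverse exactly as in \cite[Theorem 2.5.14]{KL2}. Starting from $M$, one has $\widetilde{M}(\mathrm{Spa}(A,A^+))=M$ by \cref{theorem2.19}; starting from $\mathcal{M}$, one must check the natural map $\widetilde{\mathcal{M}(\mathrm{Spa}(A,A^+))}\to\mathcal{M}$ is an isomorphism, which is a local statement that reduces, via \cref{proposition2.18}, to members of $\mathfrak{C}$, where one runs the same kernel-of-a-finite-free-cover argument appearing at the end of the proof of \cref{lemma2.21}: cover $\mathcal{M}$ by a finite free sheaf $\mathcal{F}$ with pseudocoherent kernel $\mathcal{K}$ (pseudocoherence of $\mathcal{K}$ by \cref{proposition2.10} in its \'etale incarnation), compare the two short exact sequences obtained by $(C\widehat{\otimes}B)\otimes_{A\widehat{\otimes}B}(-)$ of the global sections versus sections over $\mathrm{Spa}(C,C^+)$, use $B$-\'etale-pseudoflatness (\cref{proposition2.18}) for exactness of the top row and \cref{theorem2.19} for the bottom, note the middle arrow is an isomorphism and the right arrow is surjective, bootstrap surjectivity of the left arrow by the same argument applied to $\mathcal{K}$, and conclude the right arrow is an isomorphism. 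The only genuinely new ingredient beyond the analytic case is the passage from the full \'etale site to the basis $\mathfrak{C}$, for which one additionally invokes the faithfully flat (finite \'etale) descent already used in the proof of \cref{theorem2.19}, together with \cite[Proposition 8.2.21]{KL1} to handle the Cech-to-derived comparison on the \'etale site.

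The main obstacle is the same subtlety that distinguishes \cref{theorem2.19} from \cref{theorem2.11}: one must be careful that the basis $\mathfrak{C}$ is rich enough to compute \'etale cohomology and to detect the sheaf property, yet its morphisms all have the $2$-$B$-\'etale-pseudoflatness needed to keep the $\mathrm{Tor}_1$ terms vanishing in every diagram chase. This is exactly guaranteed by \cref{proposition2.18}, so the proof goes through; the remaining checks (completeness of base changes, compatibility of the two functors on morphisms, and that all tensor products in sight stay complete for the natural topology) are routine and identical to the analytic case treated in \cref{theorem2.14}, with the references to \cref{theorem2.11} and \cref{lemma2.13} replaced by \cref{theorem2.19} and \cref{lemma2.21} respectively. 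Hence taking global sections is an equivalence of categories as asserted.
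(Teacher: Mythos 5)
Your proposal is correct and follows essentially the same route as the paper: the paper's own proof is little more than a deferral to \cite[Theorem 2.5.14]{KL2} together with \cite[Theorem 8.2.22]{KL1} and \cite[Tag 03OD]{SP}, with the understanding that \cref{proposition2.18}, \cref{theorem2.19} and \cref{lemma2.21} replace their analytic counterparts, which is precisely the substitution you carry out. Your write-up simply makes explicit the diagram-chase and quasi-inverse checks that the paper leaves implicit by reference to the analytic case \cref{theorem2.14}.
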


\begin{proof}
See \cite[Theorem 2.5.14]{KL2}. We need to still apply \cite[Theorem 8.2.22]{KL1}, as long as one considers instead in our situation, and \cite[Tag 03OD]{SP}.\\	
\end{proof}

\subsection{Noncommutative Deformation over Quasi-Stein Spaces}

\indent In this subsection we consider the corresponding noncommutative deformation over the corresponding context in \cite[Chapter 2.6]{KL2}.

\begin{setting}
Let $X$ be a corresponding quasi-Stein adic space over $\mathbb{Q}_p$ or $\mathbb{F}_p((t))$ in the sense of \cite[Definition 2.6.2]{KL2}. Recall that what is happening is that $X$ could be written as the corresponding direct limit of affinoids $X:=\varinjlim_i X_i$.\\ 	
\end{setting}

\begin{lemma} \mbox{\bf{(After Kedlaya-Liu \cite[Lemma 2.6.3]{KL2})}} \label{lemma2.24}
We now consider the corresponding rings $A_i:=\mathcal{O}_{X_i}$ for all $i=0,1,...$, and in our current situation we consider the corresponding rings $A_i\widehat{\otimes}B$ (over $\mathbb{Q}_p$ or $\mathbb{F}_p((t))$) for all $i=0,1,...$. And in our situation we consider the corresponding modules $M_i$ over $A_i\widehat{\otimes}B$ for all $i=0,1,...$ with the same requirement as in \cite[Lemma 2.6.3]{KL2} (namely those complete with respect to the natural topology). Suppose that we have bounded surjective map from $f_i:A_{i}\widehat{\otimes}B\otimes_{A_{i+1}\widehat{\otimes}B} M_{i+1}\rightarrow M_i,i=0,1,...$. Then we have first the density of the corresponding image of $\varprojlim_i M_i$ in each $M_i$ for any $i=0,1,2,...$. And we have as well the corresponding vanishing of $R^1\varprojlim_i M_i$.
\end{lemma}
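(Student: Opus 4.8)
The plan is to reduce the statement to the classical Mittag-Leffler type vanishing for projective systems of Banach modules, exactly as in the commutative case \cite[Lemma 2.6.3]{KL2}, while carrying the coefficient ring $B$ through every norm estimate.

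First I would record that the transition maps of the projective system $(M_i)$ have dense image. Writing $t_{i+1}\colon M_{i+1}\to M_i$ for the composite of the canonical map $M_{i+1}\to A_i\widehat{\otimes}B\otimes_{A_{i+1}\widehat{\otimes}B}M_{i+1}$ with $f_i$, density of the image of $t_{i+1}$ follows from two inputs: the map $A_{i+1}\widehat{\otimes}B\to A_i\widehat{\otimes}B$ has dense image, since this is inherited from the density of $A_{i+1}\to A_i$ built into the definition of a quasi-Stein space and completed tensor with $B$ preserves dense images, so the canonical map $M_{i+1}\to A_i\widehat{\otimes}B\otimes_{A_{i+1}\widehat{\otimes}B}M_{i+1}$ has dense image; and $f_i$ is bounded and surjective, hence by the open mapping theorem for Banach modules it is open, so it sends dense subsets to dense subsets. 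Composing yields density of $\mathrm{im}(t_{i+1})$ in $M_i$ together with a uniform constant $c_i$ such that every $m\in M_i$ admits a lift $\widetilde m\in M_{i+1}$, obtained by pulling back through the base change and then through a bounded set-theoretic section of $f_i$, with $\|\widetilde m\|\le c_i\|m\|$.

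With these two facts in hand the remaining argument is purely the one for Fr\'echet/Banach projective systems. For the density statement, fixing $i$ and $x\in M_i$ and $\varepsilon>0$, I would run a successive-approximation scheme: lift $x$ approximately into $M_{i+1}$, correct the error, lift the correction further into $M_{i+2}$, and so on; the boundedness constants $c_j$ let one arrange the errors to decay geometrically, so the partial lifts converge in each $M_j$ to a compatible system $(y_j)\in\varprojlim_j M_j$ whose $i$-th component satisfies $\|y_i-x\|<\varepsilon$. For the vanishing of $R^1\varprojlim_i M_i$, I would show that the map $\prod_i M_i\to\prod_i M_i$ sending $(m_i)$ to $(m_i-t_{i+1}(m_{i+1}))$ is surjective: given a target element $(n_i)$, one builds each $m_i$ as a convergent series of pulled-back corrections of the $n_j$ for $j\ge i$, again using density together with the constants $c_j$ to guarantee convergence in the Banach module $M_i$. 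Both arguments follow \cite[Lemma 2.6.3]{KL2} once the previous paragraph is in place.

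The main obstacle I anticipate is making sure the Banach-space machinery genuinely applies in the noncommutative-coefficient setting: one needs $A_i\widehat{\otimes}B$ to remain a Banach ring and each $M_i$ to be a genuine Banach module complete for its natural topology, which is precisely the standing hypothesis, so that the open mapping theorem is available and so that \emph{bounded surjective} delivers a uniform lifting constant; and one must check that the completed tensor product $A_i\widehat{\otimes}B\otimes_{A_{i+1}\widehat{\otimes}B}(-)$ does not destroy the norm estimates, namely that the quotient norm on the base change is comparable to its natural topology. These are the only points where the noncommutativity of $B$ could in principle intervene, and since all constructions here are one-sided module operations they go through unchanged.
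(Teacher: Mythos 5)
Your proposal is correct and follows essentially the same route as the paper's proof: both reduce to the Mittag--Leffler-type argument of \cite[Lemma 2.6.3]{KL2} by arranging a geometric contraction on the transition maps (you via a bounded section from the open mapping theorem and constants $c_i$, the paper by rescaling the norms so that $\|f_i(x_{i+1})\|_i\leq \tfrac{1}{2}\|x_{i+1}\|_{i+1}$), then prove density by successive approximation and the vanishing of $R^1\varprojlim_i M_i$ by inverting $1-F$ on the product $\prod_i M_i$. Your preliminary paragraph making explicit why the composite transition map $M_{i+1}\to M_i$ has dense image is a point the paper leaves implicit, but it changes nothing in the structure of the argument.
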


\begin{proof}
This is the $B$-relative version of the result in \cite[Lemma 2.6.3]{KL2}. For the first statement we just choose sequence of Banach norms on all the corresponding modules for all $i=0,1,...$ such that we have $\|f_i(x_{i+1})\|_i\leq 1/2\|x_{i+1}\|_{i+1}$ for any $x_{i+1}\in M_{i+1}$. Then for any $x_i\in M_i$ and any $\delta>0$, we now consider for any $j\geq 1$ the corresponding $x_{i+j}$ such that we have $\|x_{i+j}-f_{i+j+1}(x_{i+j+1})\|_{i+j}\leq \delta$. Then the sequence $x_{i+j+k},k=0,1,...$ will converge to some well-defined $y_{i+j}$ with in our situation the corresponding $y_{i}=f_{i}(y_{i+1})$. We then have $\|x_i-y_i\|_i\leq \delta$. This will prove the first statement. For the second statement as in \cite[Lemma 2.6.3]{KL2} we form the product $M_0\times M_1\times M_2\times...$ and the consider the induced map $F$ from $M_{i+1}\rightarrow M_i$, and consider the corresponding cokernel of the map $1-F$ since this is just the corresponding limit we are considering. Then to show that the cokernel is zero we just look at the corresponding cokernel of the corresponding map on the corresponding completed direct summand which will project to the original one. But then we will have $\|f_i(v)\|_i\leq 1/2 \|v\|_i$, which produces an inverse to $1-F$ which will basically finish the proof for the second statement. 	
\end{proof}

\begin{proposition} \mbox{\bf{(After Kedlaya-Liu \cite[Lemma 2.6.4]{KL2})}} In the same situation as above, suppose we have that the corresponding modules $M_i$ are basically $B$-stably pseudocoherent over the rings $A_i\widehat{\otimes}B$ for all $i=0,1,...$. Now we consider the situation where $f_i:A_i\widehat{\otimes}B\otimes_{A_{i+1}\widehat{\otimes}B}M_{i+1}\rightarrow M_i$ is an isomorphism. Then the conclusion in our situation is then that the corresponding projection from $\varprojlim M_i$ to $M_i$ for each $i=0,1,2,...$ is an isomorphism.

\end{proposition}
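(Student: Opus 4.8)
The strategy is to mirror the proof of \cite[Lemma 2.6.4]{KL2}, taking our \cref{lemma2.24} (the $B$-relative analogue of \cite[Lemma 2.6.3]{KL2}) as the main technical input. Since an isomorphism of Banach modules is in particular a bounded surjection, the hypotheses of \cref{lemma2.24} are met by the maps $f_i$, so we already know that the image of $\varprojlim_j M_j$ is dense in each $M_i$ and that $R^1\varprojlim_j M_j = 0$. It remains to promote this density to an honest isomorphism on the level of the projection $\pi_i\colon \varprojlim_j M_j \rightarrow M_i$, which I would do by treating surjectivity and injectivity separately.

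For surjectivity I would fix $i$ and, exactly as in the proof of \cref{lemma2.24}, rescale the Banach norms on the modules $M_j$ so that each transition map $M_{j+1}\rightarrow M_j$, namely $m\mapsto f_j(1\otimes m)$, contracts by a factor $1/2$. Given $x_i\in M_i$ I would then construct a compatible system $(y_j)_{j\ge i}$ with $y_i = x_i$ by successive approximation: because $f_j$ is an \emph{isomorphism}, a sufficiently small element of $M_j$ lifts not merely approximately but to an element of $M_{j+1}$ whose norm is controlled, so that the discrepancies at successive levels decay geometrically; summing the resulting corrections inside the complete modules $M_j$ yields the desired compatible system, whose class in $\varprojlim_j M_j$ is sent to $x_i$ by $\pi_i$. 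This is the step I expect to be the main obstacle, precisely because one must keep track of the Banach norms at \emph{all} levels of the tower simultaneously, and it is the isomorphism hypothesis on the $f_j$ (rather than the weaker density furnished by \cref{lemma2.24}) that supplies the uniform control needed to make the telescoping converge.

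For injectivity, note that iterating the isomorphisms $f_i, f_{i+1}, \ldots$ produces, for every $j\ge i$, a compatible identification $M_i \cong A_i\widehat{\otimes}B \otimes_{A_j\widehat{\otimes}B} M_j$ under which the transition map $M_j\rightarrow M_i$ becomes $m\mapsto 1\otimes m$. In our uniform setting the rational localization $A_{j+1}\rightarrow A_j$, hence its completed base change $A_{j+1}\widehat{\otimes}B \rightarrow A_j\widehat{\otimes}B$, is injective; combining this with the pseudoflatness statements of \cref{proposition2.9} and \cref{proposition2.10} one checks, as in \cite[Lemma 2.6.4]{KL2}, that each transition map $M_{j+1}\rightarrow M_j$ is injective. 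Consequently a compatible system $(m_j)$ with $m_i = 0$ satisfies $m_j = 0$ for all $j\ge i$ (and trivially for $j<i$), so $\pi_i$ is injective. Together with the surjectivity above this gives the claimed isomorphism $\varprojlim_j M_j\xrightarrow{\ \sim\ } M_i$ for each $i$, completing the argument.
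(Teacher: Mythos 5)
There is a genuine gap, and it starts with the reading of the statement. What this proposition (following \cite[Lemma 2.6.4]{KL2}) actually asserts --- and what the five-lemma diagram in the paper's proof establishes --- is that the \emph{base-changed} map $(A_i\widehat{\otimes}B)\otimes_{\varprojlim_j(A_j\widehat{\otimes}B)}\varprojlim_j M_j\rightarrow M_i$ is an isomorphism. The bare projection $\pi_i\colon\varprojlim_j M_j\rightarrow M_i$ that you set out to prove bijective is not an isomorphism in general: already for $M_j=A_j\widehat{\otimes}B$ on a genuine quasi-Stein space the limit is $\mathcal{O}(X)\widehat{\otimes}B$, which maps to $A_i\widehat{\otimes}B$ injectively with dense image but not surjectively. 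This false reading is what makes your surjectivity step collapse. The hypothesis is that $f_j\colon A_j\widehat{\otimes}B\otimes_{A_{j+1}\widehat{\otimes}B}M_{j+1}\rightarrow M_j$ is an isomorphism \emph{after base change}; it does not make the transition map $M_{j+1}\rightarrow M_j$ surjective, since that map factors through $m\mapsto 1\otimes m$, which typically has only dense image. So your claim that ``a sufficiently small element of $M_j$ lifts not merely approximately but to an element of $M_{j+1}$'' is unjustified, and the successive approximation you describe only reproves the density statement of \cref{lemma2.24}; density of the image of $\varprojlim_j M_j$ cannot be upgraded to surjectivity of $\pi_i$. The injectivity half has the same defect: injectivity of $A_{j+1}\widehat{\otimes}B\rightarrow A_j\widehat{\otimes}B$ together with pseudoflatness does not give injectivity of $M_{j+1}\rightarrow A_j\widehat{\otimes}B\otimes_{A_{j+1}\widehat{\otimes}B}M_{j+1}$ for a general pseudocoherent (non-flat) module, so the transition maps need not be injective.

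The route the paper takes is homological rather than metric. One first uses \cref{lemma2.24} (density plus vanishing of $R^1\varprojlim$) together with finite generation and completeness of each $M_i$ to see that the image of $M=\varprojlim_j M_j$ \emph{generates} $M_i$ over $A_i\widehat{\otimes}B$; this produces a finite free cover $T\rightarrow M$ with each $T_i\rightarrow M_i$ surjective. Setting $S_j=\ker(T_j\rightarrow M_j)$, the analogue of \cite[Lemma 2.5.6]{KL2} shows the $S_j$ again form a system with base-change isomorphisms and with $\varprojlim_j S_j\rightarrow S_i$ having the same surjectivity property, and the five lemma applied to the diagram comparing $(A_i\widehat{\otimes}B)\otimes\varprojlim_j(-)$ of $S_\bullet\rightarrow T_\bullet\rightarrow M_\bullet$ with $0\rightarrow S_i\rightarrow T_i\rightarrow M_i\rightarrow 0$ yields the isomorphism after base change. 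If you want to salvage your write-up, you should restate the conclusion as the base-change isomorphism and replace both of your steps by this resolution argument.
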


\begin{proof}
This is a $B$-relative version of the \cite[Lemma 2.6.4]{KL2}. We adapt the argument to our situation as in the following. First we choose some finite free covering $T$ of the limit $M$ such that we have for each $i$ the corresponding map $T_i\rightarrow M_i$ is surjective. Then we consider the index $j\geq i$ and set the kernel of the map from $T_j$ to $M_j$ to be $S_j$. By the direct analog of \cite[Lemma 2.5.6]{KL2} we have that $A_i\widehat{\otimes}B\otimes S_j\overset{}{\rightarrow}S_i$ realizes the isomorphism, and we have that the corresponding surjectivity of the corresponding map from $\varprojlim_i S_i$ projecting to the $S_i$. Then one could finish the proof by 5-lemma to the following commutative diagram as in \cite[Lemma 2.6.4]{KL2}:
\[ \tiny
\xymatrix@C+0pc@R+6pc{
 &(A_i\widehat{\otimes}B)\otimes \varprojlim_i S_i \ar[r]\ar[r]\ar[r] \ar[d]\ar[d]\ar[d] &(A_i\widehat{\otimes}B)\otimes \varprojlim_i F_i \ar[r]\ar[r]\ar[r] \ar[d]\ar[d]\ar[d] &(A_i\widehat{\otimes}B)\otimes \varprojlim_i M_i\ar[r]\ar[r]\ar[r]  \ar[d]\ar[d]\ar[d]&0,\\
0 \ar[r]\ar[r]\ar[r] &S_i   \ar[r]\ar[r]\ar[r] &F_i \ar[r]\ar[r]\ar[r] &M_i \ar[r]\ar[r]\ar[r] &0.\\
}
\]
 
\end{proof}

\begin{proposition}  \mbox{\bf{(After Kedlaya-Liu \cite[Theorem 2.6.5]{KL2})}}
For any quasi-compact adic affinoid space of $X$ which is denoted by $Y$, we have that the map $\mathcal{M}(X)\rightarrow \mathcal{M}(Y)$ is surjective for any $B$-stably pseudocoherent sheaf $\mathcal{M}$ over the sheaf $\mathcal{O}_X\widehat{\otimes}B$.	
\end{proposition}

\begin{proof}
This is just the corresponding corollary of the previous proposition.	
\end{proof}

\begin{proposition}  \mbox{\bf{(After Kedlaya-Liu \cite[Theorem 2.6.5]{KL2})}}
We have that the stalk $\mathcal{M}_x$ is generated over the stalk $\mathcal{O}_{X,x}$ for any $x\in X$ by $M(X)$, for any $B$-stably pseudocoherent sheaf $\mathcal{M}$ over the sheaf $\mathcal{O}_X\widehat{\otimes}B$.	
\end{proposition}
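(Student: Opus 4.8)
The plan is to deduce this statement as a direct consequence of the preceding proposition, which guarantees that for any quasi-compact adic affinoid $Y \subset X$ the restriction map $\mathcal{M}(X) \to \mathcal{M}(Y)$ is surjective. The key observation is that the stalk $\mathcal{M}_x$ at a point $x \in X$ is by definition the filtered colimit $\varinjlim_{Y \ni x} \mathcal{M}(Y)$ taken over quasi-compact affinoid subspaces $Y$ containing $x$ (these form a cofinal system of neighborhoods in the adic space $X$, since $X$ is locally the direct limit of affinoids $X_i$ and each $X_i$ has a neighborhood basis of rational subspaces), and similarly $\mathcal{O}_{X,x} = \varinjlim_{Y \ni x} \mathcal{O}(Y)$.

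First I would fix $x \in X$ and an arbitrary germ $s_x \in \mathcal{M}_x$; by the colimit description there is a quasi-compact affinoid $Y \ni x$ and a section $s \in \mathcal{M}(Y)$ whose germ at $x$ is $s_x$. Second, I would apply the previous proposition to this $Y$: the restriction $\mathcal{M}(X) \to \mathcal{M}(Y)$ is surjective, so there is $t \in \mathcal{M}(X)$ restricting to $s$ on $Y$; hence the image of $t$ in $\mathcal{M}_x$ equals $s_x$. Third, since $s_x$ was arbitrary, $\mathcal{M}_x$ is generated as an $\mathcal{O}_{X,x}$-module (indeed as an $\mathcal{O}_{X,x}\widehat{\otimes}B$-module on the appropriate side) by the image of $\mathcal{M}(X)$. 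One should note that, because $\mathcal{M}$ is a pseudocoherent $\mathcal{O}_X\widehat{\otimes}B$-sheaf and each $\mathcal{M}(Y)$ is a $B$-stably pseudocoherent $\mathcal{O}(Y)\widehat{\otimes}B$-module, the colimit presentation and the module structure interact correctly, exactly as in \cite[Theorem 2.6.5]{KL2}; the noncommutative coefficient $B$ simply rides along and does not affect the argument, the action being on one side throughout.

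The only genuine point requiring care — and the step I expect to be the main obstacle — is verifying that quasi-compact affinoid subspaces containing $x$ are indeed cofinal among open neighborhoods of $x$ in the quasi-Stein space $X$, so that the stalk really is computed as the colimit over such $Y$. This follows from the structure of $X$ as $\varinjlim_i X_i$ with each $X_i$ an affinoid in the analytic topology possessing a neighborhood basis of rational (hence quasi-compact affinoid) subspaces, exactly as recorded in \cite[Chapter 2.6]{KL2}; I would invoke that geometric input rather than reprove it. Once cofinality is in hand, the statement is immediate from the surjectivity in the previous proposition, so the proof is short.

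\begin{proof}
This is an immediate consequence of the previous proposition. Fix $x \in X$. Since $X$ is quasi-Stein, writing $X = \varinjlim_i X_i$, the quasi-compact adic affinoid subspaces $Y \subset X$ containing $x$ form a cofinal system among the open neighborhoods of $x$, so that $\mathcal{M}_x = \varinjlim_{Y \ni x} \mathcal{M}(Y)$ and $\mathcal{O}_{X,x} = \varinjlim_{Y \ni x} \mathcal{O}(Y)$, with the colimits taken over such $Y$; here $\mathcal{M}(Y)$ is a $B$-stably pseudocoherent $\mathcal{O}(Y)\widehat{\otimes}B$-module. Let $s_x \in \mathcal{M}_x$ be an arbitrary germ. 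By the colimit description there is a quasi-compact adic affinoid $Y \ni x$ and a section $s \in \mathcal{M}(Y)$ with germ $s_x$ at $x$. By the previous proposition the restriction map $\mathcal{M}(X) \rightarrow \mathcal{M}(Y)$ is surjective, so we may choose $t \in \mathcal{M}(X)$ restricting to $s$; then the image of $t$ in $\mathcal{M}_x$ is $s_x$. As $s_x$ was arbitrary, $\mathcal{M}_x$ is generated over $\mathcal{O}_{X,x}$ (with the appropriate one-sided $B$-action riding along, exactly as in \cite[Theorem 2.6.5]{KL2}) by the image of $\mathcal{M}(X)$.
\end{proof}
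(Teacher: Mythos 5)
Your proof is correct and is essentially the argument the paper intends: the paper's own proof is a one-line pointer to the earlier limit proposition ($\varprojlim_i M_i \xrightarrow{\sim} M_i$), from which the surjectivity onto quasi-compact affinoids and hence the stalk statement follow exactly as you spell out. Your elaboration via cofinality of quasi-compact affinoid neighborhoods of $x$ is the standard way to make that pointer precise, and note that even if the restriction $\mathcal{M}(X)\to\mathcal{M}(Y)$ is only known to generate after base change (rather than be literally surjective) for general $Y$, your conclusion that $\mathcal{M}_x$ is generated by $M(X)$ still goes through.
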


\begin{proof}
This is just the corresponding corollary of the proposition before the previous proposition.	
\end{proof}

\begin{proposition}  \mbox{\bf{(After Kedlaya-Liu \cite[Theorem 2.6.5]{KL2})}} \label{proposition2.28}
For any quasi-compact adic affinoid space of $X$ which is denoted by $Y$, we have that the corresponding vanishing of the corresponding sheaf cohomology groups $H^k(X,\mathcal{M})$ of $\mathcal{M}$ for higher $k>0$, for any $B$-stably pseudocoherent sheaf $\mathcal{M}$ over the sheaf $\mathcal{O}_X\widehat{\otimes}B$.	
\end{proposition}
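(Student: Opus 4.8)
The plan is to follow the strategy of Kedlaya--Liu's proof of \cite[Theorem 2.6.5]{KL2}, reducing the cohomology of $\mathcal{M}$ over the quasi-Stein space $X$ to the (vanishing) cohomology over the exhausting affinoids together with a control of the derived inverse limit. First I would write $X=\varinjlim_i X_i$ as in the setting, put $A_i:=\mathcal{O}_{X_i}$ and $M_i:=\mathcal{M}(X_i)$, each a $B$-stably pseudocoherent $A_i\widehat{\otimes}B$-module by \cref{theorem2.14}, and recall that the transition maps $A_i\widehat{\otimes}B\otimes_{A_{i+1}\widehat{\otimes}B}M_{i+1}\to M_i$ are bounded and surjective (this uses the surjectivity statement established just above together with the localization behaviour of $\mathcal{M}$ recorded in \cref{proposition2.10}, and the $B$-relative analogue of \cite[Lemma 2.5.6]{KL2}). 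This places us exactly in the hypotheses of \cref{lemma2.24}, which then yields both the density of the image of $\varprojlim_i M_i$ in each $M_i$ and the vanishing $R^1\varprojlim_i M_i=0$.

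Next I would use the exhaustion $\{X_i\}$ to identify $R\Gamma(X,\mathcal{M})$ with $R\varprojlim_i R\Gamma(X_i,\mathcal{M})$; concretely this produces, for each $k$, a short exact sequence
\[
0\longrightarrow R^1\varprojlim_i H^{k-1}(X_i,\mathcal{M})\longrightarrow H^k(X,\mathcal{M})\longrightarrow \varprojlim_i H^k(X_i,\mathcal{M})\longrightarrow 0.
\]
By \cref{theorem2.11} (Tate's acyclicity in the noncommutative deformed setting), since the restriction of $\mathcal{M}$ to each affinoid $X_i$ is the sheaf attached to the $B$-stably pseudocoherent module $M_i$, we have $H^k(X_i,\mathcal{M})=0$ for all $k>0$ and all $i$. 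Hence for $k\geq 2$ both outer terms in the displayed sequence vanish, so $H^k(X,\mathcal{M})=0$. For $k=1$ the right-hand term is $\varprojlim_i H^1(X_i,\mathcal{M})=0$, while the left-hand term is $R^1\varprojlim_i H^0(X_i,\mathcal{M})=R^1\varprojlim_i M_i$, which vanishes by \cref{lemma2.24}; therefore $H^1(X,\mathcal{M})=0$ as well, completing the argument.

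The hard part will not be the homological bookkeeping but justifying, in the noncommutative-coefficient setting, the boundedness and surjectivity of the transition maps $A_i\widehat{\otimes}B\otimes_{A_{i+1}\widehat{\otimes}B}M_{i+1}\to M_i$, since this is precisely what feeds \cref{lemma2.24}; here one must carry the coefficient $B$ through the $B$-relative form of \cite[Lemma 2.5.6]{KL2} and invoke \cref{proposition2.10} to control $\mathcal{M}$ along the rational localizations $X_i\hookrightarrow X_{i+1}$. A secondary point to verify is that the identification $R\Gamma(X,\mathcal{M})\simeq R\varprojlim_i R\Gamma(X_i,\mathcal{M})$ is legitimate; this is a purely topological statement about exhausting a quasi-Stein space by affinoids, insensitive to the coefficient $B$, and may be imported essentially verbatim from \cite[Chapter 2.6]{KL2}.
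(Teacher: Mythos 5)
Your proposal is correct and follows essentially the same route as the paper: reduce to the vanishing of $H^k(X_i,\mathcal{M})$ for $k>0$ on each affinoid (Tate acyclicity, \cref{theorem2.11}) together with the vanishing of $R^1\varprojlim_i M_i$ supplied by \cref{lemma2.24}. The only cosmetic difference is that you package the comparison $R\Gamma(X,\mathcal{M})\simeq R\varprojlim_i R\Gamma(X_i,\mathcal{M})$ as a Milnor exact sequence, whereas the paper runs the same reduction through \v{C}ech cohomology for the nested covering $\{X_1,\dots,X_N,\dots\}$ (via \cite[Tag 01EW]{SP}) and Kiehl's Hilfssatz 2.6 to handle the inverse limit; the inputs and the $k\geq 2$ versus $k=1$ case split are identical.
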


\begin{proof}
We follow the idea of the proof of \cite[Theorem 2.6.5]{KL2} by comparing this to the corresponding \v{C}ech cohomology with some covering $\mathfrak{X}=\{X_1,...,X_N,...\}$:
\begin{align}
\breve{H}^k(X,\mathfrak{X}=\{X_1,...,X_N,...\};\mathcal{M})=H^k(X,\mathcal{M}),\\
\breve{H}^k(X_i,\mathfrak{X}=\{X_1,...,X_i\};\mathcal{M})=H^k(X_i,\mathcal{M})=0,k\geq 1.
\end{align}
Here we have applied the corresponding \cite[Tag 01EW]{SP}. Now we consider the situation where $k>1$:
\begin{align}
\breve{H}^{k-1}(X_{j+1},\mathfrak{X}=\{X_1,...,X_{j+1}\};\mathcal{M})	\rightarrow \breve{H}^{k-1}(X_j,\mathfrak{X}=\{X_1,...,X_{j}\};\mathcal{M})\rightarrow 0,
\end{align}
which induces the following isomorphism by \cite[2.6 Hilfssatz]{Kie1}:
\begin{align}
\varprojlim_{j\rightarrow \infty} \breve{H}^{k-1}(X_j,\mathfrak{X}=\{X_1,...,X_{j}\};\mathcal{M})	\overset{\sim}{\rightarrow} \breve{H}^k(X,\mathfrak{X}=\{X_1,...,X_N,...\};\mathcal{M}). 
\end{align}
Then we have the corresponding results for the index $k>1$. For $k=1$, one can as in \cite[Theorem 2.6.5]{KL2} relate this to the corresponding $R^1\varprojlim_i$, which will finishes the proof in the same fashion.\\

\end{proof}

\begin{corollary}  \mbox{\bf{(After Kedlaya-Liu \cite[Corollary 2.6.6]{KL2})}}
The corresponding functor from the corresponding $B$-deformed pseudocoherent sheaves over $X$ to the corresponding $B$-stably by taking the corresponding global section is an exact functor.
\end{corollary}

\begin{corollary}  \mbox{\bf{(After Kedlaya-Liu \cite[Corollary 2.6.8]{KL2})}}
Consider a particular $\mathcal{O}_X\widehat{\otimes}B$-pseudocoherent sheaf $\mathcal{M}$ which is finite locally free throughout the whole space $X$. Then we have that the global section $\mathcal{M}(X)$ as $\mathcal{O}_X(X)\widehat{\otimes}B$ left module admits the corresponding structures of finite projective structure if and only if we have the corresponding global section is finitely generated.
\end{corollary}

\begin{proof}
As in \cite[Corollary 2.6.8]{KL2} one could find some global splitting through the local splittings.
\end{proof}

\indent We now consider the following $B$-relative analog of \cite[Proposition 2.6.17]{KL2}:

\begin{theorem} \mbox{\bf{(After Kedlaya-Liu \cite[Proposition 2.6.17]{KL2})}} \label{theorem2.31}
Consider the following two statements for a particular $\mathcal{O}_X\widehat{\otimes}B$-pseudocoherent sheaf $\mathcal{M}$. First is that one can find finite many generators (the number is up to some uniform fixed integer $n\geq 0$) for each section of $\mathcal{M}(X_i)$ for each $i=1,2,...$. The second statement is that the global section $\mathcal{M}(X)$ is just finitely generated. Then in our situation the two statement is equivalent if we have that the corresponding space $X$ admits an $m$-uniform covering in the exact same sense of \cite[Proposition 2.6.17]{KL2}. 	
\end{theorem}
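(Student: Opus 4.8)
The plan is to follow the structure of Kedlaya–Liu's proof of \cite[Proposition 2.6.17]{KL2}, keeping track of the noncommutative coefficient $B$ throughout, and replacing each invocation of a commutative finiteness fact by its $B$-relative counterpart already established in this section. The implication from the second statement to the first is immediate: if $\mathcal{M}(X)$ is generated by $n$ elements over $\mathcal{O}_X(X)\widehat{\otimes}B$, then by \cref{proposition2.28} together with the surjectivity of $\mathcal{M}(X)\rightarrow\mathcal{M}(X_i)$ (the proposition after \cref{proposition2.28}'s two predecessors, i.e. the $B$-relative \cite[Theorem 2.6.5]{KL2}), the images of those $n$ elements generate each $\mathcal{M}(X_i)$, so one may take the uniform bound to be $n$ itself. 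The substance is the converse.

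First I would set up the quasi-Stein presentation $X=\varinjlim_i X_i$ with $A_i=\mathcal{O}_{X_i}$, and for each $i$ choose, using the hypothesis, a surjection $(A_i\widehat{\otimes}B)^n\twoheadrightarrow\mathcal{M}(X_i)$; by \cref{theorem2.14} this corresponds to a surjection of pseudocoherent sheaves $(\mathcal{O}_{X_i}\widehat{\otimes}B)^n\twoheadrightarrow\mathcal{M}|_{X_i}$, whose kernel $\mathcal{K}_i$ is again pseudocoherent by \cref{proposition2.10}. The point of the $m$-uniform covering hypothesis — this is exactly where it enters, as in \cite[Proposition 2.6.17]{KL2} — is that it lets one compare the different rank-$n$ presentations on overlaps with a uniform bound on the transition data, so that one can inductively modify the chosen generators on $X_{i+1}$ to be compatible (up to a controlled error) with those already chosen on $X_i$. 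Concretely, I would build a sequence of generating $n$-tuples $v^{(i)}\in\mathcal{M}(X_i)^n$ together with an estimate $\|v^{(i+1)}|_{X_i}-v^{(i)}\|\le 2^{-i}$ in suitable Banach norms; the $m$-uniformity is what makes such an estimate achievable with the fixed rank $n$.

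Then I would invoke the completeness/limit machinery of \cref{lemma2.24}: the compatible-up-to-$2^{-i}$ system $(v^{(i)})$ converges, thanks to the density of the image of $\varprojlim_i\mathcal{M}(X_i)$ in each $\mathcal{M}(X_i)$ and the vanishing of $R^1\varprojlim$, to a genuine $n$-tuple $v\in\mathcal{M}(X)=\varprojlim_i\mathcal{M}(X_i)$ whose restriction to each $X_i$ still generates $\mathcal{M}(X_i)$. Finally, to conclude that $v$ generates $\mathcal{M}(X)$ and not merely each $\mathcal{M}(X_i)$, I would run the argument of the proposition following \cref{proposition2.28}: form the map $(\mathcal{O}_X\widehat{\otimes}B)^n\to\mathcal{M}$, let $\mathcal{K}$ be its kernel (pseudocoherent by \cref{proposition2.10}), and use $B$-pseudoflatness of the rational/quasi-Stein localizations together with the exact sequence $0\to\mathcal{K}(X)\to(\mathcal{O}_X(X)\widehat{\otimes}B)^n\to\mathcal{M}(X)\to\mathrm{coker}\to 0$; the cokernel vanishes after every restriction to $X_i$, hence vanishes by separatedness of $\mathcal{M}(X)$ in the natural topology (again \cref{lemma2.24}).

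The main obstacle I expect is the uniform comparison of the rank-$n$ presentations across overlaps — i.e. showing that the $m$-uniform covering hypothesis genuinely furnishes the estimate $\|v^{(i+1)}|_{X_i}-v^{(i)}\|\le 2^{-i}$ with the rank held fixed at $n$. In the commutative case this is the technical heart of \cite[Proposition 2.6.17]{KL2}, and here one must additionally verify that inserting the coefficient $B$ via $\widehat{\otimes}B$ does not disturb the norm bookkeeping; since $\widehat{\otimes}B$ is compatible with the Banach norms and with the base change along rational localizations (as used repeatedly above, e.g. in \cref{proposition2.10} and \cref{theorem2.11}), this should go through, but it is the step requiring the most care rather than a routine citation.
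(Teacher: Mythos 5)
Your overall architecture --- the easy direction via surjectivity of $\mathcal{M}(X)\to\mathcal{M}(X_i)$, and the converse via an inductive approximation exploiting the $m$-uniform covering together with the completeness/limit machinery of \cref{lemma2.24} --- matches the paper's. But the step you yourself flag as ``the main obstacle'' is precisely where your sketch diverges from what actually happens, and it is left unproven. The $m$-uniform covering hypothesis is \emph{not} used to ``compare the different rank-$n$ presentations on overlaps with a uniform bound on the transition data''; no comparison of presentations occurs in the paper or in \cite[Proposition 2.6.17]{KL2}. What the hypothesis supplies is, for each member $Y_u$ of a well-ordered subcovering, a global section $x_u\in\mathcal{O}_X(X)$ restricting to a topologically nilpotent element on the already-treated region $V_u$ (the union of $X_{i(u)}$ with the earlier $Y_v$) and to the inverse of a topologically nilpotent element on $Y_u$. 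One then lifts generators $y_u$ of $\mathcal{M}(Y_u)$ to global sections and sets $y_{u,j}:=y_{u-1,j}+x_u^c y_u$ with $c$ large; the limit $\lim_u y_{u,j}$ exists in $\mathcal{M}(X)=\varprojlim_i\mathcal{M}(X_i)$ because one is summing a single convergent series of global sections whose increments are damped by $x_u^c$ on any fixed $X_i$ --- not because a compatible system is being assembled through $R^1\varprojlim=0$.

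The second gap is your bare assertion that the limit tuple $v$ ``still generates $\mathcal{M}(X_i)$.'' A limit of generating tuples need not generate; in the actual argument this is exactly what the choice of $x_u$ is engineered to guarantee: on $Y_u$ the factor $x_u^c$ is a unit, so $x_u^cy_u$ still contributes the generator $y_u$ there, while the tail of later corrections is a topologically nilpotent perturbation on $Y_u$, and the standard small-perturbation-of-generators fact for complete finitely generated modules then applies. Your scheme of estimates $\|v^{(i+1)}|_{X_i}-v^{(i)}\|\le 2^{-i}$ could in principle be repaired along these lines, but the required smallness is relative to the generation constants of each $\mathcal{M}(X_i)$, and producing such estimates with the rank pinned at $n$ is precisely the content of the $x_u$-construction you have not carried out. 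The insertion of the coefficient $B$ is, as you say, essentially harmless (the paper changes nothing of substance in the KL2 argument on this point), but the heart of the proof cannot be reduced to a citation of \cref{lemma2.24}.
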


\begin{proof}
One direction is obvious, the other direct could be proved in the same way as in \cite[Proposition 2.6.17]{KL2} where essentially the information on $X$ does not change at all. Namely as in \cite[Proposition 2.6.17]{KL2} we could basically consider one single subcovering $\{Y_u\}_{u\in U}$ indexed by $U$ of the covering which is $m$-uniform. For any $u\in U$ we consider the smallest $i$ such that $X_i\bigcap Y_u$, and we denote this by $i(u)\geq 0$. Then we form:
\begin{align}
V_{u}:=X_{i(u)}\bigcup_{v\in U,v\leq u} Y_{v}.	
\end{align}
Then as in \cite[Proposition 2.6.17]{KL2} we can find some $x_u,u\in U$ (which are denoted by the general form of $x_i$ in \cite[Proposition 2.6.17]{KL2}) in the global section of $\mathcal{O}_X(X)$ such that it restricts to some topologically nilpotent onto the space $V_u$ and it restricts to some inverse of some topologically nilpotent to $Y_u$. Then we can build up a corresponding generating set for the global section by the following approximation process just as in \cite[Proposition 2.6.17]{KL2}. To be more precise what we have consider is to modify the corresponding generator for each $Y_u$ (for instance let them be denoted by $y_1,...,y_u$) to be $y_{u,1},...,y_{u,n}$ ($n$ is the corresponding uniform integer in the statement of the theorem) for all $u\in U$. It is achieved through induction, when we have $y_{u,1},...,y_{u,n}$ then we will set that to be $0$ otherwise in the situation where there exists some predecessor $u'$ of $u$ in the corresponding set $U$ we then set this to be $y_{u',1},...,y_{u',n}$. And we set as in \cite[Proposition 2.6.17]{KL2}:
\begin{displaymath}
y_{u,j}:=	y_{u-1,j}+x_u^cy_u
\end{displaymath}
by lifting the corresponding power $c$ to be as large as possible. This will guarantee the convergence of:
\begin{align}
\lim_{u\rightarrow \infty}\{y_{u,1},...,y_{u,n}\},	
\end{align}
which gives the set of the corresponding global generators desired.
\end{proof}

\newpage

\section{Foundations on Noncommutative Descent for Adic Spectra in Pro-Banach Case}

\subsection{Noncommutative Pseudocoherence in Analytic Topology}

\indent We now establish some foundations in the noncommutative setting on glueing noncommutative pseudocoherent modules after \cite[Chapter 2]{KL2}. However the corresponding difference we will make here from the previous section is that we will consider some further Iwasawa deformation similar to the sense of Fukaya-Kato's adic rings. We first consider the simpler situation where we will deform over $B[[G]]$ where $B$ as before comes from the following beginning setting up and $G$ is a given $p$-adic Lie group:

\begin{setting} \label{setting3.1}
Consider a corresponding sheafy Banach adic uniform algebra $(A,A^+)$ over $\mathbb{Q}_p$, we consider the base space $\mathrm{Spa}(A,A^+)$. And now we will consider a further noncommutative Banach algebra $(B,B^+)$ over $\mathbb{Q}_p$. 	
\end{setting}

\begin{setting}
In what follow when we mean a module $M$ over $A\widehat{\otimes}B[[G]]$ we will mean a pro-system $\{M_E\}_{E\subset G}$ of left module over $A\widehat{\otimes}B[[G]]$. And the corresponding definition of pseudocoherence is also defined on the adic system $\{M_E\}_{E\subset G}$ namely for each member in the system. We abuse the notation a bit when we write the notation $[[G]]/E$ which will really mean that $\mathbb{Q}_p[[G/E]]$. Here we have considered a corresponding representation of the algebra $B[[G]]$ by $\varprojlim_{E} B[[G/E]]$.	
\end{setting}

\begin{definition} \mbox{\bf{(After Kedlaya-Liu \cite[Definition 2.4.1]{KL2})}}
For any left $A\widehat{\otimes}B[[G]]$-module $M$, we call it $m$-$B[[G]]$-stably pseudocoherent if we have that it is $m$-$B[[G]]$-pseudocoherent, complete for the natural topology and for any morphism $A\rightarrow A'$ which is the corresponding rational localization, the base change of $M$ to $A'\widehat{\otimes} B[[G]]$ is complete for the natural topology as the corresponding left $A'\widehat{\otimes} B[[G]]$-module (namely each member in the family is still remained to be complete after the corresponding rational localization). As in \cite[Definition 2.4.1]{KL2} we call that the corresponding left $A\widehat{\otimes}B[[G]]$-module $M$ just $B[[G]]$-stably pseudocoherent if we have that it is simply just $\infty$-$B[[G]]$-stably pseudocoherent.
\end{definition}

\indent Since our spaces are commutative, so the rational localization is not flat, therefore in our situation we need to define something weaker than the corresponding flatness which will be sufficient for us to apply in the following development: 

\begin{definition}\mbox{\bf{(After Kedlaya-Liu \cite[Definition 2.4.4]{KL2})}}
For any Banach left $A\widehat{\otimes}B[[G]]$-module $M$, we call it is $m$-$B[[G]]$-pseudoflat if for any right $A\widehat{\otimes}B[[G]]$-module $M'$ $m$-$B[[G]]$-stably pseudocoherent we have $\mathrm{Tor}_1^{A\widehat{\otimes}B[[G]]}(M',M)=0$ (namely $\{\mathrm{Tor}_1^{A\widehat{\otimes}B[[G]]/E}(M'_E,M_E)\}_{E\subset G}=\{0\}_{E\subset G}$). For any Banach right $A\widehat{\otimes}B[[G]]$-module $M$, we call it is $m$-$B[[G]]$-pseudoflat if for any left $A\widehat{\otimes}B[[G]]$-module $M'$ $m$-$B[[G]]$-stably pseudocoherent we have $\mathrm{Tor}_1^{A\widehat{\otimes}B[[G]]}(M,M')=0$.
\end{definition}

\begin{definition}\mbox{\bf{(After Kedlaya-Liu \cite[Definition 2.4.6]{KL2})}} We consider the corresponding notion of the corresponding pro-projective modules. We define over $A$ the corresponding $B[[G]]$-pro-projective module $M$ to be a corresponding left $A\widehat{\otimes}B[[G]]$-module such that one could find a filtered sequence of projectors such that in the sense of taking the prolimit we have the corresponding projector will converge to any chosen element in $M$. Here we assume the module is complete with respect to natural topology, and we assume the projectors are $A\widehat{\otimes}B[[G]]$-linear and we assume that the corresponding image of the projectors are modules which are also finitely generated and projective.
	
\end{definition}

\begin{lemma}\mbox{\bf{(After Kedlaya-Liu \cite[Lemma 2.4.7]{KL2})}}
Suppose we have over the space $\mathrm{Spa}(A,A^+)$ a corresponding $B[[G]]$-pro-projective left module $M$. And suppose that we have a $2$-$B[[G]]$-pseudoflat right module $C$ over $A$. And here we assume that $C$ is complete. Then we have that the corresponding product $C{\otimes}_{A\widehat{\otimes}B[[G]]}M$ is then complete under the corresponding natural topology. And moreover we have that in our situation:
\begin{displaymath}
\mathrm{Tor}_1^{A\widehat{\otimes}B[[G]]}(C,M)=0.	
\end{displaymath}

\end{lemma}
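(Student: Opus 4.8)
The plan is to run, level by level in the pro-system $\{M_E\}_{E\subset G}$, the same argument that was carried out above for the Banach-coefficient analogue of \cite[Lemma 2.4.7]{KL2}, and then to reassemble the conclusion along the pro-system. Recall that by our conventions a left $A\widehat{\otimes}B[[G]]$-module is a pro-system $\{M_E\}_{E\subset G}$ of left $A\widehat{\otimes}B[[G]]/E$-modules, that $\mathrm{Tor}_1^{A\widehat{\otimes}B[[G]]}(C,M)=0$ means by definition $\{\mathrm{Tor}_1^{A\widehat{\otimes}B[[G]]/E}(C_E,M_E)\}_{E\subset G}=\{0\}_{E\subset G}$, and that completeness of $C\otimes_{A\widehat{\otimes}B[[G]]}M$ is to be tested level by level. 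Hence it suffices to fix an open normal subgroup $E\subset G$ and to establish the two assertions for the left $A\widehat{\otimes}B[[G]]/E$-module $M_E$ against the complete, $2$-pseudocoherent right module $C_E$.

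First I would note that pro-projectivity descends to each level: the filtered family of $A\widehat{\otimes}B[[G]]$-linear projectors witnessing that $M$ is $B[[G]]$-pro-projective pushes forward to an $A\widehat{\otimes}B[[G]]/E$-linear filtered family of projectors on $M_E$ with finitely generated projective images whose prolimit still converges to any chosen element, so $M_E$ is pro-projective over $A\widehat{\otimes}B[[G]]/E$. Then, exactly as in the already-proved Banach case, choose a presentation
\begin{displaymath}
M'_E\longrightarrow (A\widehat{\otimes}B[[G]]/E)^{k}\longrightarrow M_E\longrightarrow 0
\end{displaymath}
with $M'_E$ finitely presented, and form the comparison diagram between the algebraic tensor product $C_E\otimes_{A\widehat{\otimes}B[[G]]/E}(-)$ and the completed tensor product $C_E\widehat{\otimes}_{A\widehat{\otimes}B[[G]]/E}(-)$ applied to this presentation, together with the natural vertical maps. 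The algebraic (top) row is right exact automatically; the completed (bottom) row is short exact because $M_E$ is pro-projective, and the two outer vertical maps are isomorphisms because $C_E$ is $2$-pseudocoherent, so that the completed and algebraic tensor products agree on the finitely presented modules $M'_E$ and $(A\widehat{\otimes}B[[G]]/E)^k$ (and the pseudoflatness/pseudocoherence hypothesis on $C_E$ supplies the relevant $\mathrm{Tor}_1$-vanishing). A diagram chase then forces the right-hand vertical map to be an isomorphism as well, so $C_E\otimes_{A\widehat{\otimes}B[[G]]/E}M_E$ is already complete and equal to $C_E\widehat{\otimes}_{A\widehat{\otimes}B[[G]]/E}M_E$, and $\mathrm{Tor}_1^{A\widehat{\otimes}B[[G]]/E}(C_E,M_E)=0$.

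It remains to assemble these level-wise statements into the pro-system assertion. The presentations above, and the projectors of the previous paragraph, can be chosen compatibly with the transition maps $B[[G]]/E'\to B[[G]]/E$ (shrinking $E$ simply base changes everything), so the comparison diagrams are natural in $E$ and the isomorphisms obtained are morphisms of pro-systems. Consequently $\{C_E\otimes_{A\widehat{\otimes}B[[G]]/E}M_E\}_{E\subset G}$ is complete as a pro-system and $\{\mathrm{Tor}_1^{A\widehat{\otimes}B[[G]]/E}(C_E,M_E)\}_{E\subset G}=\{0\}_{E\subset G}$, which is exactly the claim.

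I expect the main obstacle to be the bookkeeping in this last step: arranging the presentations and projectors compatibly across the whole pro-system, so that the level-wise short exact sequences and comparison diagrams glue into honest morphisms of pro-systems without incurring an $R^1\varprojlim$ obstruction. Once the diagrams are in place level by level, the homological content is identical to the Banach case already treated. If a strictly functorial choice of presentation proves awkward, one can instead observe that the desired conclusion is insensitive to the choice and pass to a cofinal subsystem, invoking the density and $R^1\varprojlim$-vanishing mechanism in the style of \cref{lemma2.24}.
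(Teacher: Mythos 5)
Your proof is correct and follows essentially the same route as the paper's: reduce to each level $E$ of the pro-system, choose a finite presentation of $M_E$, and compare the algebraic and completed tensor products against that presentation, using pro-projectivity for exactness of the completed row, exactly as in the Banach case. The extra paragraph on arranging presentations compatibly across the pro-system is harmless but not needed, since by the paper's conventions both the completeness and the $\mathrm{Tor}_1$-vanishing are defined levelwise, so no gluing or $R^1\varprojlim$ argument is required.
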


\begin{proof}
This will be just a noncommutative version of the corresponding  \cite[Lemma 2.4.7]{KL2}. We adapt the corresponding argument in \cite[Lemma 2.4.7]{KL2} to our situation. We just consider each member in the families for the adic modules. What we are going to consider in this case is then first to consider the following presentation:
\[
\xymatrix@C+0pc@R+0pc{
M'  \ar[r]\ar[r]\ar[r] &(A\widehat{\otimes}B[[G]]/E)^k \ar[r]\ar[r]\ar[r] & M\ar[r]\ar[r]\ar[r] &0,
}
\]
where the left module $M'$ is finitely presented. Then we consider the following commutative diagram:
\[
\xymatrix@C+0pc@R+3pc{
&C\otimes_{A\widehat{\otimes}B[[G]]/E}M'  \ar[r]\ar[r]\ar[r]  \ar[d]\ar[d]\ar[d] &C\otimes_{A\widehat{\otimes}B[[G]]/E}(A\widehat{\otimes}B[[G]]/E)^k \ar[r]\ar[r]\ar[r] \ar[d]\ar[d]\ar[d] & C\otimes_{A\widehat{\otimes}B[[G]]/E}M\ar[r]\ar[r]\ar[r] \ar[d]\ar[d]\ar[d] &0,\\
0 \ar[r]\ar[r]\ar[r] &C\widehat{\otimes}_{A\widehat{\otimes}B[[G]]/E}M'  \ar[r]\ar[r]\ar[r] &C\widehat{\otimes}_{A\widehat{\otimes}B[[G]]/E}(A\widehat{\otimes}B[[G]]/E)^k \ar[r]\ar[r]\ar[r] & C\widehat{\otimes}_{A\widehat{\otimes}B[[G]]/E}M\ar[r]\ar[r]\ar[r] &0.
}
\]\\
The first row is exact as in \cite[Lemma 2.4.7]{KL2}, and we have the corresponding second row is also exact since we have that by hypothesis the corresponding module $M$ is $[[G]]/E$-pro-projective.  
Then as in \cite[Lemma 2.4.7]{KL2} the results follow.	
\end{proof}

\begin{proposition} \mbox{\bf{(After Kedlaya-Liu \cite[Corollary 2.4.8]{KL2})}}
Over the space $\mathrm{Spa}(A,A^+)$, we have any $B[[G]]$-pro-projective left module is $2$-$B[[G]]$-pseudoflat.
\end{proposition}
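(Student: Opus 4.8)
The plan is to deduce this from the preceding lemma in exactly the same way that the analogous Banach statement (\cite[Corollary 2.4.8]{KL2}) follows from \cite[Lemma 2.4.7]{KL2}, now working member-by-member in the pro-system $\{M_E\}_{E\subset G}$. Fix a $B[[G]]$-pro-projective left module $M$, and let $C$ be any right $A\widehat{\otimes}B[[G]]$-module that is $2$-$B[[G]]$-stably pseudocoherent. Unwinding the definition of $m$-$B[[G]]$-pseudoflatness, what must be shown is that $\mathrm{Tor}_1^{A\widehat{\otimes}B[[G]]/E}(C_E,M_E)=0$ for every open normal subgroup $E\subset G$. This is precisely the vanishing statement supplied by the previous lemma applied to the pair $(C_E,M_E)$, once one checks that its hypotheses are met at each level $E$.

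First I would verify the two hypotheses of the previous lemma level-wise. For the module $C$: a $2$-$B[[G]]$-stably pseudocoherent right module is in particular $2$-$B[[G]]$-pseudoflat (this implication is the content of the chain of results proven just above for rational localizations, together with the fact that stable pseudocoherence is stronger than the pseudoflatness condition used as a hypothesis there), and it is by definition complete for the natural topology; hence each $C_E$ is a complete $2$-$B[[G]]/E$-pseudoflat right module over $A\widehat{\otimes}B[[G]]/E$. For the module $M$: by the definition of $B[[G]]$-pro-projectivity, the image of each projector in the filtered family is finitely generated projective and the projectors converge in the prolimit, which is exactly the property of being $[[G]]/E$-pro-projective at each level $E$; moreover $M$ is assumed complete for the natural topology, so each $M_E$ is complete. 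Thus for every $E$ the previous lemma applies and yields both completeness of $C_E\otimes_{A\widehat{\otimes}B[[G]]/E}M_E$ and the vanishing $\mathrm{Tor}_1^{A\widehat{\otimes}B[[G]]/E}(C_E,M_E)=0$.

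Assembling these over all $E\subset G$ gives $\{\mathrm{Tor}_1^{A\widehat{\otimes}B[[G]]/E}(C_E,M_E)\}_{E\subset G}=\{0\}_{E\subset G}$, which is by definition the statement $\mathrm{Tor}_1^{A\widehat{\otimes}B[[G]]}(C,M)=0$. Since $C$ was an arbitrary $2$-$B[[G]]$-stably pseudocoherent right module, $M$ is $2$-$B[[G]]$-pseudoflat, as claimed. The main point requiring care — though it is not deep — is the book-keeping at the level of pro-systems: one needs that the transition maps in the pro-system interact compatibly with the base changes $A\widehat{\otimes}B[[G]]/E'\to A\widehat{\otimes}B[[G]]/E$ so that the level-wise $\mathrm{Tor}$-vanishing is genuinely the vanishing of the pro-object, but this is immediate from the convention in \cref{setting3.1} and the surrounding setting that pseudocoherence, completeness, and all the derived functors are defined termwise on $\{M_E\}_{E\subset G}$. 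Everything else is a verbatim transcription of the argument in \cite{KL2}, so I would simply state that the result follows from the previous lemma.
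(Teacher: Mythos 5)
Your overall strategy is the paper's: the proposition is deduced directly from the preceding lemma, applied termwise to the pro-system $\{M_E\}_{E\subset G}$, and your remark that all the relevant notions are defined member-by-member is exactly the bookkeeping the paper leaves implicit. However, your verification of the hypothesis on $C$ contains a misstep. You assert that a $2$-$B[[G]]$-stably pseudocoherent right module is in particular $2$-$B[[G]]$-pseudoflat, citing ``the chain of results proven just above for rational localizations.'' That implication is neither proved in the paper nor true in general: those results show only that the specific modules $B_1\widehat{\otimes}B[[G]]/E$, $B_2\widehat{\otimes}B[[G]]/E$, $B_{12}\widehat{\otimes}B[[G]]/E$ arising from rational localizations are $2$-pseudoflat over $A\widehat{\otimes}B[[G]]/E$; they say nothing about arbitrary stably pseudocoherent modules, and indeed the entire reason pseudoflatness is introduced is that stably pseudocoherent modules need not satisfy any flatness condition.

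The step is also unnecessary. The hypothesis actually used in the proof of the preceding lemma (and in its Banach-coefficient counterpart in Section 2, and in \cite[Lemma 2.4.7]{KL2}) is that $C$ is $2$-pseudocoherent and complete for the natural topology; the word ``pseudoflat'' in the statement of the lemma in this section is evidently a slip for ``pseudocoherent.'' Since a $2$-$B[[G]]$-stably pseudocoherent module is by definition $2$-$B[[G]]$-pseudocoherent and complete, each $C_E$ satisfies the lemma's hypotheses immediately, and the termwise vanishing $\mathrm{Tor}_1^{A\widehat{\otimes}B[[G]]/E}(C_E,M_E)=0$ follows with no further argument. With that one correction your proof coincides with the paper's.
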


\begin{proof}
This is a direct consequence of the previous lemma.	
\end{proof}

\begin{lemma} \mbox{\bf{(After Kedlaya-Liu \cite[Corollary 2.4.9]{KL2})}}
Keep the notation above, suppose we are working over the adic space $
\mathrm{Spa}(A,A^+)$. Suppose now the left $A\widehat{\otimes}B[[G]]$-module is finitely generated (again note that we are talking about a family of objects over the corresponding quotients), then we have that the following natural maps:
\begin{align}
A\{T\}\widehat{\otimes}B[[G]]\otimes_{A\widehat{\otimes}B[[G]]}M 	\rightarrow M\{T\},\\\
A\{T,T^{-1}\}\widehat{\otimes}B[[G]] \otimes_{A\widehat{\otimes}B[[G]]} M	\rightarrow M\{T,T^{-1}\}	
\end{align}
are surjective. Suppose now the left $A\widehat{\otimes}B$-module is finitely presented, then we have that the following natural maps:
\begin{align}
A\{T\}\widehat{\otimes}B[[G]]\otimes_{A\widehat{\otimes}B[[G]]}M 	\rightarrow M\{T\},\\\
A\{T,T^{-1}\}\widehat{\otimes}B[[G]] \otimes_{A\widehat{\otimes}B[[G]]} M	\rightarrow M\{T,T^{-1}\}	
\end{align}
are bijective. All modules here are assumed to be complete.

\end{lemma}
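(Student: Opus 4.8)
The plan is to reduce to each finite level of the pro-system $\{M_E\}_{E\subset G}$ and there to run, essentially verbatim, the argument behind the Banach analogue proved above, the only extra ingredient being the pro-projectivity package assembled just above in this section. Fix an open normal subgroup $E$ of $G$ and write $R_E:=A\widehat{\otimes}B[[G]]/E=A\widehat{\otimes}B[[G/E]]$, so that $R_E\{T\}=A\{T\}\widehat{\otimes}B[[G/E]]$ and $R_E\{T,T^{-1}\}=A\{T,T^{-1}\}\widehat{\otimes}B[[G/E]]$. As left (and as right) $R_E$-modules these are the completed direct sums $\widehat{\bigoplus}_{n\geq 0}R_ET^{n}$ and $\widehat{\bigoplus}_{n\in\mathbb{Z}}R_ET^{n}$; each is manifestly $B[[G]]/E$-pro-projective in the sense of the definition above (use the $R_E$-linear truncation projectors onto the finite partial sums), hence $2$-$B[[G]]$-pseudoflat by the Proposition just above, and the Lemma before that shows that forming the tensor product of such a pro-projective module with a complete $2$-pseudocoherent module leaves it complete and kills $\mathrm{Tor}_1$. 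The content of the lemma is exactly that the canonical comparison map from the algebraic tensor product to the completed one, $R_E\{T\}\otimes_{R_E}M_E\to M_E\{T\}$ and likewise for $T,T^{-1}$, is surjective when $M_E$ is finitely generated and bijective when $M_E$ is finitely presented, compatibly with the transition maps in $E$.

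For surjectivity I would choose a strict surjection $R_E^{\,k}\twoheadrightarrow M_E$ and contemplate the evident commutative square whose left edge is the tautological isomorphism $R_E\{T\}^{\,k}\xrightarrow{\ \sim\ }(R_E^{\,k})\{T\}$ and whose bottom edge $(R_E^{\,k})\{T\}\twoheadrightarrow M_E\{T\}$ is onto, being the $T$-completion of a strict surjection; right-exactness of $\otimes_{R_E}$ then forces the right edge to be onto, and compatibility in $E$ is automatic from functoriality. For bijectivity, take a finite presentation $R_E^{\,k_1}\xrightarrow{\,d\,}R_E^{\,k_0}\xrightarrow{\,\varphi\,}M_E\to 0$ and set $N_E:=\mathrm{im}\,d=\ker\varphi$, a finitely generated submodule which is closed, hence complete, by the open mapping theorem. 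Applying $R_E\{T\}\otimes_{R_E}(-)$ — which is exact on this complex thanks to the vanishing of the relevant $\mathrm{Tor}_1$ furnished by the Lemma and Proposition just invoked for the pro-projective module $R_E\{T\}$ — and comparing the resulting short exact sequence $0\to R_E\{T\}\otimes_{R_E}N_E\to (R_E^{\,k_0})\{T\}\to R_E\{T\}\otimes_{R_E}M_E\to 0$ term by term with $0\to N_E\{T\}\to(R_E^{\,k_0})\{T\}\to M_E\{T\}\to 0$, the middle arrows coincide and are isomorphisms and the left and right arrows are surjective by the finitely generated case, so a short diagram chase yields injectivity, hence bijectivity, of the right arrow. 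Passing to $\varprojlim_E$ is then harmless: all maps in sight are already isomorphisms (resp. surjections) at each finite level and the systems are compatible, so no $R^{1}\varprojlim_E$ obstruction intervenes.

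The step I expect to be the real obstacle is not this diagram chase but the topological bookkeeping beneath it: one must know that the algebraic cokernel of $d\otimes R_E\{T\}$ is already complete — equivalently that $(R_E^{\,k_1})\{T\}\to(R_E^{\,k_0})\{T\}$ has closed image, equivalently that the $T$-completion of the strict surjection $R_E^{\,k_1}\twoheadrightarrow N_E$ remains strict — and this must be arranged uniformly in $E$ so that the comparison survives $\varprojlim_E$. This is precisely where the preceding Lemma, applied with the pro-projective module $R_E\{T\}$ (resp. $R_E\{T,T^{-1}\}$), does the work, by delivering completeness of the pertinent tensor products together with the vanishing of $\mathrm{Tor}_1$; granting that, the finitely generated versus finitely presented dichotomy is the same formal argument as in \cite[Corollary 2.4.9]{KL2}, now carried out one quotient $G/E$ at a time and reassembled by taking the inverse limit.
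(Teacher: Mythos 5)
Your proposal is correct and follows essentially the same route as the paper, whose entire proof is a one-line deferral to the preceding pro-projectivity lemma: you have simply unwound that deferral, working one quotient $G/E$ at a time, observing that $A\{T\}\widehat{\otimes}B[[G/E]]$ and $A\{T,T^{-1}\}\widehat{\otimes}B[[G/E]]$ are pro-projective via truncation projectors, and then running the standard finitely-generated/finitely-presented dichotomy of \cite[Corollary 2.4.9]{KL2} before passing to $\varprojlim_E$. No discrepancy with the paper's (much terser) argument.
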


\begin{proof}
This is the corresponding consequence of the previous lemma.
\end{proof}

\begin{proposition} \mbox{\bf{(After Kedlaya-Liu \cite[Lemma 2.4.12]{KL2})}}
We keep the corresponding notations in \cite[Lemma 2.4.10]{KL2}. Then in our current sense we have that the corresponding morphism $A\rightarrow B_2$ is then $2$-$B[[G]]$-pseudoflat. 
\end{proposition}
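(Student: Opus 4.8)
The plan is to imitate the proof of \cref{proposition2.9} (the Banach case after \cite[Lemma 2.4.12]{KL2}) verbatim, but now applied pro-systematically: that is, work with one member $E \subset G$ of the pro-system at a time and assemble the conclusion over the filtered system of open normal subgroups $E$. Concretely, fix an arbitrary short exact sequence $0 \to M \to N \to P \to 0$ of left $A\widehat{\otimes}B[[G]]$-modules with $P$ being $2$-$B[[G]]$-stably pseudocoherent, and pass to the $E$-quotient $0 \to M_E \to N_E \to P_E \to 0$ over $A\widehat{\otimes}B[[G]]/E$ (recall $[[G]]/E$ means $\mathbb{Q}_p[[G/E]]$ in the notation of the excerpt). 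The proof of \cref{proposition2.9} then applies to this single level, using the corresponding $B[[G]]/E$-analog of \cref{lemma2.24}-type density and the Tate-style arguments, and yields $\mathrm{Tor}_1^{A\widehat{\otimes}B[[G]]/E}(B_2\widehat{\otimes}B[[G]]/E, P_E)=0$. Since by definition $2$-$B[[G]]$-pseudoflatness means exactly that $\{\mathrm{Tor}_1^{A\widehat{\otimes}B[[G]]/E}(M'_E,M_E)\}_{E\subset G}=\{0\}_{E\subset G}$, vanishing at every level is precisely the assertion.

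The key steps, in order, would be: first, reduce to a single level $E$ and write out the three-by-three (actually four-row) commutative diagram exactly as in the proof of \cref{proposition2.9}, with $A\{T\}\widehat{\otimes}B[[G]]/E$ in the middle rows, the columns $1-fT$, and $B_2\widehat{\otimes}B[[G]]/E$ in the bottom row, using the notations of \cite[Lemma 2.4.10]{KL2}; second, check that the relevant rows are exact --- the $A\{T\}$-rows because $M_E, N_E, P_E$ are complete and finitely presented-type objects to which the $B[[G]]/E$-analog of \cref{lemma2.24} applies, the bottom row at the node marked $?$ by the same diagram chase (snake lemma / the $1-fT$ is injective with the appropriate cokernel) as in \cref{proposition2.9}; third, conclude $\mathrm{Tor}_1$ vanishes at level $E$; fourth, observe this holds for all $E$ and therefore the pro-system $\{\mathrm{Tor}_1\}_{E}$ is the zero pro-system, which is the definition of $2$-$B[[G]]$-pseudoflatness of $A \to B_2$.

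The main obstacle --- and it is a minor one --- is verifying that the horizontal exactness of the $A\{T\}\widehat{\otimes}B[[G]]/E$-rows genuinely goes through at each level, i.e. that tensoring with $A\{T\}\widehat{\otimes}B[[G]]/E$ over $A\widehat{\otimes}B[[G]]/E$ preserves the exact sequence $0 \to M_E \to N_E \to P_E \to 0$; this is where the $2$-stably-pseudocoherence hypothesis on $P$ is used, exactly as in \cite[Lemma 2.4.12]{KL2}, and one must make sure the pro-system structure does not obstruct the completeness statements (it does not, because the requirement built into the definition of $B[[G]]$-stably pseudocoherent is precisely that each member of the family remains complete after rational localization). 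Once that is in place, the diagram chase producing exactness at the node $?$ is identical to the Banach case, so no new idea is required beyond carefully keeping the subscript $E$ throughout and then letting it vary.
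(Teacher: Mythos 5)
Your proposal matches the paper's proof: the paper likewise fixes a level $E$ of the pro-system, chooses a short exact sequence with $P$ a $2$-$B[[G]]/E$-stably pseudocoherent left module, forms the same four-row diagram over $A\{T\}\widehat{\otimes}B[[G]]/E$ with the $1-fT$ columns and $B_2\widehat{\otimes}B[[G]]/E$ in the bottom row, and concludes exactness at the marked corner by diagram chasing, the levelwise vanishing being exactly the definition of $2$-$B[[G]]$-pseudoflatness. The only slip is your appeal to \cref{lemma2.24} (a quasi-Stein $R^1\varprojlim$ statement) for the exactness of the $A\{T\}$-rows; the relevant input is the $B[[G]]$-analog of \cite[Corollary 2.4.9]{KL2} stated just before the proposition.
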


\begin{proof}
The corresponding proof could be made parallel to the corresponding proof of \cite[Lemma 2.4.12]{KL2}. To be more precise we consider the following commutative diagram from the corresponding chosen exact sequence (which is just the analog of the corresponding one in \cite[Lemma 2.4.12]{KL2}, namely as below we choose arbitrary short exact sequence with $P$ as a left $A\widehat{\otimes}B[[G]]/E$-module which is $2$-$B[[G]]/E$-stably pseudocoherent):
\[
\xymatrix@C+0pc@R+0pc{
0   \ar[r]\ar[r]\ar[r] &M \ar[r]\ar[r]\ar[r] &N \ar[r]\ar[r]\ar[r] & P \ar[r]\ar[r]\ar[r] &0,
}
\]
which then induces the following corresponding big commutative diagram:
\[\tiny
\xymatrix@C+0pc@R+6pc{
& &0 \ar[d]\ar[d]\ar[d] &0 \ar[d]\ar[d]\ar[d] &,\\
0   \ar[r]\ar[r]\ar[r]  &{A\{T\}\widehat{\otimes}B[[G]]/E}\otimes_{A\widehat{\otimes}B[[G]]/E} M \ar[r]\ar[r]\ar[r] \ar[d]^{1-fT}\ar[d]\ar[d] &{A\{T\}\widehat{\otimes}B[[G]]/E}\otimes_{A\widehat{\otimes}B[[G]]/E} N \ar[r]\ar[r]\ar[r] \ar[d]^{1-fT}\ar[d]\ar[d] &{A\{T\}\widehat{\otimes}B[[G]]/E} \otimes_{A\widehat{\otimes}B[[G]]/E}P \ar[r]\ar[r]\ar[r] \ar[d]^{1-fT}\ar[d]\ar[d] &0,\\
0   \ar[r]\ar[r]\ar[r] &{A\{T\}\widehat{\otimes}B[[G]]/E}\otimes_{A\widehat{\otimes}B[[G]]/E}M  \ar[r]\ar[r]\ar[r] \ar[d]\ar[d]\ar[d] &{A\{T\}\widehat{\otimes}B[[G]]/E} \otimes_{A\widehat{\otimes}B[[G]]/E}N\ar[r]\ar[r]\ar[r] \ar[d]\ar[d]\ar[d] & {A\{T\}\widehat{\otimes}B[[G]]/E}\otimes_{A\widehat{\otimes}B[[G]]/E}P \ar[r]\ar[r]\ar[r] \ar[d]\ar[d]\ar[d] &0,\\
0  \ar[r]^?\ar[r]\ar[r] &{B_2\widehat{\otimes}B[[G]]/E}\otimes_{A\widehat{\otimes}B[[G]]/E} M \ar[r]\ar[r]\ar[r] \ar[d]\ar[d]\ar[d] &{B_2\widehat{\otimes}B[[G]]/E}\otimes_{A\widehat{\otimes}B[[G]]/E}N \ar[r]\ar[r]\ar[r] \ar[d]\ar[d]\ar[d] & {B_2\widehat{\otimes}B[[G]]/E}\otimes_{A\widehat{\otimes}B[[G]]/E} P \ar[r]\ar[r]\ar[r] \ar[d]\ar[d]\ar[d] &0,\\
&0&0&0
}
\]
with the notations in \cite[Lemma 2.4.10]{KL2} where we actually have the corresponding exactness along the horizontal direction at the corner around $M\otimes_{A\widehat{\otimes}B[[G]]/E} {B_2\widehat{\otimes}B[[G]]/E}$ marked with $?$, by diagram chasing.
\end{proof}

\begin{proposition} \mbox{\bf{(After Kedlaya-Liu \cite[Lemma 2.4.13]{KL2})}} \label{proposition3.10}
We keep the corresponding notations in \cite[Lemma 2.4.10]{KL2}. Then in our current sense we have that the corresponding morphism $A\rightarrow B_1$ is then $2$-$B[[G]]$-pseudoflat. And in our current sense we have that the corresponding morphism $A\rightarrow B_{1,2}$ is then $2$-$B[[G]]$-pseudoflat. 
\end{proposition}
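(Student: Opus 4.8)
The plan is to mirror the structure of the earlier proof of \cref{proposition2.9} (the Banach-coefficient statement for $A\to B_1$ and $A\to B_{12}$), now carried out one member at a time over the pro-system indexed by the open normal subgroups $E\subset G$, using $\mathbb{Q}_p[[G]]/E=\mathbb{Q}_p[[G/E]]$ as the relevant coefficient at level $E$. First I would reduce the assertion that $A\to B_{1,2}$ is $2$-$B[[G]]$-pseudoflat to the already-established fact that $A\to B_2$ is $2$-$B[[G]]$-pseudoflat (the preceding proposition): one factors $A\to B_2\to B_{1,2}$, where $B_2\to B_{1,2}$ is obtained by inverting the extra variable, so that $2$-$B[[G]]/E$-pseudoflatness is preserved exactly as in \cite[Lemma 2.4.13]{KL2}, for each $E$ separately. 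Since pseudoflatness of the pro-system means vanishing of $\mathrm{Tor}_1^{A\widehat{\otimes}B[[G]]/E}$ for every $E$, assembling over $E$ gives the claim.

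For $A\to B_1$ I would take an arbitrary short exact sequence $0\to M\to N\to P\to 0$ of left $A\widehat{\otimes}B[[G]]/E$-modules with $P$ being $2$-$B[[G]]/E$-stably pseudocoherent, and form the standard three-row, three-column commutative diagram whose middle row is obtained by tensoring with the Čech-type resolution $A\to B_1\widehat{\otimes}B[[G]]/E\bigoplus B_2\widehat{\otimes}B[[G]]/E\to B_{12}\widehat{\otimes}B[[G]]/E$ (the analog of the display in the proof of \cref{proposition2.9}, with every $B$ replaced by $B[[G]]/E$). The columns are exact by Tate-type acyclicity applied to the pieces $B_1,B_2,B_{12}$, which we have just shown are $2$-$B[[G]]$-pseudoflat, and the top row is exact by hypothesis; a diagram chase then yields exactness of the middle row at the corner marked $?$, i.e. injectivity of $M\widehat\otimes(\,B_1\widehat\otimes B[[G]]/E\bigoplus B_2\widehat\otimes B[[G]]/E\,)$ into the next term, which is precisely the vanishing of $\mathrm{Tor}_1^{A\widehat{\otimes}B[[G]]/E}$ detecting $2$-$B[[G]]/E$-pseudoflatness of $A\to B_1$. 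Running this for every $E$ gives $2$-$B[[G]]$-pseudoflatness of $A\to B_1$.

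The step I expect to require the most care is not the diagram chase itself (which is formally identical to \cite[Lemma 2.4.13]{KL2}) but the verification that the relevant completed tensor products and the formation of kernels/cokernels are compatible with passage through the pro-system $\{M_E\}_{E\subset G}$ — that is, that tensoring by $B_1\widehat{\otimes}B[[G]]/E$ commutes with the transition maps $M_{E'}\to M_E$ well enough that ``$\mathrm{Tor}_1=0$ levelwise'' is the right notion and no $R^1\varprojlim$ obstruction creeps in. Here I would invoke the levelwise definition of pseudocoherence and pseudoflatness adopted in \cref{setting3.1}'s follow-up setting (each member of the family is required to be complete and stably pseudocoherent), so that the argument genuinely decouples over $E$; the $B[[G]]$-pro-projectivity hypotheses feeding into the earlier lemmas are likewise imposed levelwise. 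With that bookkeeping in place, the proof reduces cleanly to the Banach case handled in \cref{proposition2.9} applied to the coefficient ring $B[[G]]/E$ for each $E$, and one concludes exactly as in \cite[Lemma 2.4.13]{KL2}.
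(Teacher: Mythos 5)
Your proposal follows essentially the same route as the paper's proof: the $A\to B_{1,2}$ case is reduced to the already-established pseudoflatness of $A\to B_2$ via the composition $A\to B_2\to B_{12}$ and inversion of the variable, and the $A\to B_1$ case is handled by the same three-by-three diagram chase on the sequence $0\to M\to N\to P\to 0$ tensored against $B_1\widehat{\otimes}B[[G]]/E\bigoplus B_2\widehat{\otimes}B[[G]]/E$ and $B_{12}\widehat{\otimes}B[[G]]/E$, carried out levelwise in $E$ exactly as the paper's levelwise definitions of pseudocoherence and pseudoflatness require. The only caveat is a small mis-attribution: the exactness of the column under $P$ comes from the stable pseudocoherence of $P$ (the analog of \cite[Remark 2.4.5]{KL2}), not from the pseudoflatness of $B_1$ itself, which is what is being proved; this does not change the argument.
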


\begin{proof}
The statement for $A\rightarrow B_{1,2}$ could be proved by consider the composition $A\rightarrow B_2\rightarrow B_{12}$ where the $2$-$B[[G]]$-pseudoflatness could be proved as in \cite[Lemma 2.4.13]{KL2} by inverting the corresponding variable. For $A\rightarrow B_{1}$,
the corresponding proof could be made parallel to the corresponding proof of \cite[Lemma 2.4.13]{KL2}. To be more precise we consider the following commutative diagram from the corresponding chosen exact sequence (which is just the analog of the corresponding one in \cite[Remark 2.4.5]{KL2}, namely as below we choose arbitrary short exact sequence with $P$ as a left $A\widehat{\otimes}B[[G]]/E$-module which is $2$-$B[[G]]/E$-stably pseudocoherent):
\[
\xymatrix@C+0pc@R+0pc{
0   \ar[r]\ar[r]\ar[r] &M \ar[r]\ar[r]\ar[r] &N \ar[r]\ar[r]\ar[r] & P \ar[r]\ar[r]\ar[r] &0,
}
\]
which then induces the following corresponding big commutative diagram:
\[\tiny
\xymatrix@C+0pc@R+6pc{
& &0 \ar[d]\ar[d]\ar[d] &0 \ar[d]\ar[d]\ar[d] &,\\
0   \ar[r]\ar[r]\ar[r]  &M \ar[r]\ar[r]\ar[r] \ar[d]\ar[d]\ar[d] &N \ar[r]\ar[r]\ar[r] \ar[d]\ar[d]\ar[d] & P \ar[r]\ar[r]\ar[r] \ar[d]\ar[d]\ar[d] &0,\\
0   \ar[r]^?\ar[r]\ar[r] &({B_1\widehat{\otimes}\frac{B[[G]]}{E}}\bigoplus {B_2\widehat{\otimes}\frac{B[[G]]}{E}}) \otimes M  \ar[r]\ar[r]\ar[r] \ar[d]\ar[d]\ar[d] &({B_1\widehat{\otimes}\frac{B[[G]]}{E}}\bigoplus {B_2\widehat{\otimes}\frac{B[[G]]}{E}})\otimes N  \ar[r]\ar[r]\ar[r] \ar[d]\ar[d]\ar[d] &({B_1\widehat{\otimes}\frac{B[[G]]}{E}}\bigoplus {B_2\widehat{\otimes}\frac{B[[G]]}{E}})\otimes P\ar[r]\ar[r]\ar[r] \ar[d]\ar[d]\ar[d] &0,\\
0  \ar[r]\ar[r]\ar[r] &{B_{12}\widehat{\otimes}\frac{B[[G]]}{E}}\otimes M \ar[r]\ar[r]\ar[r] \ar[d]\ar[d]\ar[d] &{B_{12}\widehat{\otimes}\frac{B[[G]]}{E}}\otimes N \ar[r]\ar[r]\ar[r] \ar[d]\ar[d]\ar[d] &{B_{12}\widehat{\otimes}\frac{B[[G]]}{E}} \otimes P \ar[r]\ar[r]\ar[r] \ar[d]\ar[d]\ar[d] &0,\\
&0&0&0
}
\]
with the notations in \cite[Lemma 2.4.10]{KL2} where we actually have the corresponding exactness along the horizontal direction at the corner around $M\otimes_{A\widehat{\otimes}B[[G]]/E} ({B_1\widehat{\otimes}B[[G]]/E}\bigoplus {B_2\widehat{\otimes}B[[G]]/E})$ marked with $?$, by diagram chasing.
\end{proof}

\indent After these foundational results as in \cite{KL2} we have the following proposition which is the corresponding noncommutative generalization of the corresponding result established in \cite[Theorem 2.4.15]{KL2}.

\begin{proposition} \mbox{\bf{(After Kedlaya-Liu \cite[Theorem 2.4.15]{KL2})}} \label{proposition3.11}
In our current context we have that for any rational localization $\mathrm{Spa}(A',A^{',+})\rightarrow \mathrm{Spa}(A,A^+)$ we have that along this base change the corresponding $B[[G]]$-stably pseudocoherence is preserved.	
\end{proposition}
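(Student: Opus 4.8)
The plan is to imitate the proof of the Banach case (Proposition 2.10 in the excerpt, which is the noncommutative analog of \cite[Theorem 2.4.15]{KL2}), but carried out level-by-level along the pro-system $\{M_E\}_{E\subset G}$. Recall that the definition of $B[[G]]$-stably pseudocoherence is imposed memberwise on the adic system, so it suffices to prove the statement for each $M_E$ over $A\widehat{\otimes}(B[[G]]/E)$ and check that the constructions are compatible with the transition maps $M_{E'}\to M_E$ for $E\subset E'$. First I would fix a rational localization $A\to A'$ and observe, exactly as in \cite[Theorem 2.4.15]{KL2}, that $A'$ sits inside a finite chain of localizations of the form $A\to B_1$, $A\to B_2$, $A\to B_{12}$ associated with the standard covering notation of \cite[Lemma 2.4.10]{KL2}. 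This reduces the problem to controlling base change along these elementary localizations, for which \cref{proposition3.10} (and its companion for $A\to B_2$) has already supplied the requisite $2$-$B[[G]]$-pseudoflatness.

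Next I would run the Beauville--Laszlo / glueing argument of Kedlaya--Liu verbatim on each member $M_E$: choose a finite projective (indeed finite free) presentation of $M_E$, base change it along the localization, and use $2$-$B[[G]]/E$-pseudoflatness to see that $\mathrm{Tor}_1$ vanishes so the base-changed presentation stays a presentation; then invoke the Fredholm-type argument (via the exact sequences with the $1-fT$ operators appearing in the proof of \cref{proposition3.10}) to deduce that the base change $A'\widehat{\otimes}(B[[G]]/E)\otimes_{A\widehat{\otimes}(B[[G]]/E)} M_E$ is again $2$-$B[[G]]/E$-pseudocoherent and complete for the natural topology. Iterating up the pseudocoherence filtration (i.e. replacing $M_E$ by the kernel of a finite free cover, which remains stably pseudocoherent by the same step) upgrades $2$-pseudocoherence to $\infty$-pseudocoherence, exactly as in \cite{KL2}. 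The completeness-after-further-localization condition is then automatic because a further rational localization $A'\to A''$ composes to a rational localization $A\to A''$, so stability of $M_E$ gives it directly.

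The one point that genuinely needs the pro-structure, and which I expect to be the main obstacle, is checking that all of these constructions are functorial in $E$ — that is, that the isomorphisms $A'\widehat{\otimes}(B[[G]]/E)\otimes M_E \xrightarrow{\ \sim\ } (M')_E$ are compatible with the transition maps, so that they assemble into a genuine pro-system $\{(M')_E\}_{E\subset G}$ representing the base change, and that no completion pathology is introduced when one passes to the limit over $E$. Concretely, one must know that $A'\widehat{\otimes}B[[G]] \otimes_{A\widehat{\otimes}B[[G]]}(-)$ commutes appropriately with $\varprojlim_E(-)$ on the relevant class of modules; this follows from the density/$R^1\varprojlim$-vanishing statements of the Mittag-Leffler type (the $B[[G]]$-analog of \cref{lemma2.24}, already available once we descend to the quasi-Stein discussion, but here only the elementary surjectivity of transition maps for finitely generated modules is needed) together with the fact that each $B[[G]]/E$ is itself a Banach algebra, so the memberwise results of Section 2 apply unchanged. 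Granting this compatibility, the pro-system $\{(M')_E\}$ satisfies all three defining conditions memberwise, hence is $B[[G]]$-stably pseudocoherent, and the proposition follows.
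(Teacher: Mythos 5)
The paper states this proposition without any proof at all, deferring implicitly to \cite[Theorem 2.4.15]{KL2} and the preceding pseudoflatness results (\cref{proposition3.10} and its companion); your proposal is a correct and faithful reconstruction of exactly that intended argument, carried out memberwise over each quotient $A\widehat{\otimes}B[[G]]/E$. The compatibility-in-$E$ issue you flag at the end is harmless under the paper's conventions (Setting 3.2): the module \emph{is} the pro-system and base change is defined memberwise, so no limit over $E$ is ever actually taken and no Mittag-Leffler input is needed.
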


\indent Then we have the corresponding Tate's acyclicity in the noncommutative deformed setting:

\begin{theorem}\mbox{\bf{(After Kedlaya-Liu \cite[Theorem 2.5.1]{KL2})}} \label{theorem3.12} Now suppose we have in our corresponding \cref{setting2.1} a corresponding $B[[G]]$-stably pseudocoherent module $M$. Then we consider the corresponding assignment such that for any $U\subset \mathrm{Spa}(A,A^+)$ we define $\widetilde{M}(U)$ as in the following:
\begin{align}
\widetilde{M}(U):=\varprojlim_{\mathrm{Spa}(S,S^+)\subset U,\mathrm{rational}} S\widehat{\otimes}B[[G]]\otimes_{A\widehat{\otimes}B[[G]]}M.	
\end{align}
Then we have that for any $\mathfrak{B}$ which is a rational covering of $U=\mathrm{Spa}(S,S^+)\subset \mathrm{Spa}(A,A^+)$ (certainly this $U$ is also assumed to be rational) we have that the vanishing of the following two cohomology groups:
\begin{align}
H^i(U,\widetilde{M}), \check{H}^i(U:\mathfrak{B},\widetilde{M})
\end{align}
for any $i>0$. When concentrating at the degree zero we have:
\begin{align}
H^0(U,\widetilde{M})=S\widehat{\otimes}B[[G]]\otimes_{A\widehat{\otimes}B[[G]]}M, \check{H}^0(U:\mathfrak{B},\widetilde{M})=S\widehat{\otimes}B[[G]]\otimes_{A\widehat{\otimes}B[[G]]}M.
\end{align}
	
\end{theorem}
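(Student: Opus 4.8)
The plan is to reduce the statement to the Banach case already established in \cref{theorem2.11}, applied termwise to each member $M_E$ of the pro-system $M = \{M_E\}_{E\subset G}$, and then to pass to the inverse limit over $E$. First I would observe that for each open normal subgroup $E\subset G$, the module $M_E$ is a $B[[G]]/E$-stably pseudocoherent module over $A\widehat{\otimes}B[[G]]/E = A\widehat{\otimes}\mathbb{Q}_p[[G/E]]\widehat{\otimes}_{\mathbb{Q}_p}B$, which is itself just a noncommutative Banach algebra of the type treated in \cref{setting2.1} (taking the Banach coefficient ring to be $B[[G]]/E$). Hence \cref{theorem2.11}, together with \cref{proposition3.11} and \cref{proposition3.10} in place of their Banach analogues, gives directly that
\[
H^i(U,\widetilde{M_E}) = \check H^i(U:\mathfrak{B},\widetilde{M_E}) = 0,\quad i>0,
\]
and $H^0(U,\widetilde{M_E}) = \check H^0(U:\mathfrak{B},\widetilde{M_E}) = S\widehat{\otimes}(B[[G]]/E)\otimes_{A\widehat{\otimes}(B[[G]]/E)}M_E$ for every $E$.

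Next I would handle the passage to the limit. Since $\widetilde{M}(U)$ is defined as $\varprojlim_{\mathrm{Spa}(S,S^+)\subset U} S\widehat{\otimes}B[[G]]\otimes_{A\widehat{\otimes}B[[G]]}M$ and $B[[G]] = \varprojlim_E B[[G]]/E$ with all $M_E$ complete, the key point is to interchange the inverse limit over $E$ with Čech cohomology and with sheaf cohomology. For Čech cohomology over a fixed finite covering $\mathfrak{B}$ this is automatic, since Čech complexes of a finite covering commute with inverse limits; the termwise vanishing from the previous paragraph then yields $\check H^i(U:\mathfrak{B},\widetilde{M})=0$ for $i>0$ once one knows the relevant $R^1\varprojlim_E$ terms vanish. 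That vanishing follows because the transition maps $M_{E'}\to M_E$ are (by the pro-system hypothesis, cf.\ the setup after \cref{setting3.1} and the surjectivity hypotheses appearing in \cite[Chapter 2.6]{KL2}) such that the relevant towers are Mittag-Leffler, or alternatively by invoking \cite[Propositions 2.4.20-2.4.21]{KL1} at each finite level and then the exactness of $\varprojlim$ on towers of complete modules with dense images. Finally, to go from the good covering $\mathfrak{B}$ to full sheaf cohomology $H^i(U,\widetilde{M})$ I would use, exactly as in \cite[Theorem 2.5.1]{KL2}, that on an adic affinoid the rational coverings are cofinal and the Čech-to-derived spectral sequence degenerates given the vanishing for all such coverings; the degree-zero computation is then the sheaf condition together with the identification of $\widetilde{M}(U)$ with the base change, which holds termwise and survives $\varprojlim_E$.

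The main obstacle I expect is precisely the commutation of $\varprojlim_E$ with the cohomology and with the completed tensor product $S\widehat{\otimes}(-)\otimes_{A\widehat{\otimes}(-)}(-)$: a priori $\varprojlim_E$ is only left exact, so one must verify that the relevant derived functors $R^1\varprojlim_E$ vanish on the towers $\{H^{i-1}(\cdot,\widetilde{M_E})\}_E$ and $\{M_E\}_E$. This is where the completeness and density hypotheses on the pro-system (the $B[[G]]$-analogue of \cref{lemma2.24}, with its norm-contraction argument giving dense images and vanishing of $R^1\varprojlim$) do the real work; everything else is a faithful transcription of the Banach proof with $B$ replaced by $B[[G]]/E$ at each finite level. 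I would therefore structure the proof as: (i) termwise reduction to \cref{theorem2.11}; (ii) Mittag-Leffler/$R^1\varprojlim_E$ vanishing via the pro-Banach norm estimates; (iii) interchange of limits for Čech cohomology over finite coverings; (iv) passage from $\mathfrak{B}$ to sheaf cohomology as in \cite[Theorem 2.5.1]{KL2}; and (v) the degree-zero identification.
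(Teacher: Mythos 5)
Your proposal is correct and its core — the termwise reduction to the Banach case of \cref{theorem2.11} over each quotient $B[[G]]/E$, using \cref{proposition3.11} and \cref{proposition3.10} and then following the argument of \cite[Theorem 2.5.1]{KL2} via \cite[Propositions 2.4.20--2.4.21]{KL1} — is exactly the paper's proof. The only divergence is your steps (ii)--(iii) on $R^1\varprojlim_E$ and Mittag--Leffler: under the paper's conventions (\cref{setting3.1} and the setting following it), modules, cohomology groups and the vanishing assertions are all read termwise as pro-systems indexed by $E$, so the genuine inverse limit over $E$ is never taken and that interchange-of-limits step does not arise; your extra care would only be needed if one wanted the statement for the honest limit $\varprojlim_E$.
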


\begin{proof}
By \cite[Propositions 2.4.20-2.4.21]{KL1}, we could then finish proof as in \cite[Theorem 2.5.1]{KL2} by using our previous \cref{proposition3.11} and \cref{proposition3.10} as above.	
\end{proof}

\indent We now consider the corresponding noncommutative deformed version of Kiehl's glueing properties for stably pseudocoherent modules after \cite{KL2}:

\begin{definition} \mbox{\bf{(After Kedlaya-Liu \cite[Definition 2.5.3]{KL2})}}
Consider in our context (over $\mathrm{Spa}(A,A^+)$) the corresponding sheaves $\mathcal{O}\widehat{\otimes}B[[G]]$, we will then define the corresponding pseudocoherent sheaves over $\mathrm{Spa}(A,A^+)$ to be those locally defined by attaching stably-pseudocoherent modules over the section. 
\end{definition}

\begin{lemma} \mbox{\bf{(After Kedlaya-Liu \cite[Lemma 2.5.4]{KL2})}}\label{lemma3.14}
	Consider the corresponding notations in \cite[Lemma 2.4.10]{KL2}, we have the corresponding morphism $A\rightarrow B_1\bigoplus B_2$. Then we have that this morphism is an descent morphism effective for the corresponding $B[[G]]$-stably pseodocoherent Banach modules. 
\end{lemma}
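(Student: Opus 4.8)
The plan is to follow the strategy of \cref{lemma2.13}, essentially verbatim, but carrying the pro-system $\{M_E\}_{E\subset G}$ of modules over $A\widehat{\otimes}B[[G]]/E$ throughout, and checking that every step is compatible with forming the inverse limit over $E$. First I would fix the notation of \cite[Lemma 2.4.10]{KL2}: the covering $\{U_1,U_2\}$ of $\mathrm{Spa}(A,A^+)$ together with the associated rings $B_1,B_2,B_{12}$, and start from a descent datum for this cover of $B[[G]]$-stably pseudocoherent sheaves. By \cref{theorem3.12} (Tate acyclicity in the pro-Banach setting) this descent datum is realized by a pseudocoherent $\mathcal{O}\widehat{\otimes}B[[G]]$-sheaf $\mathcal{M}$; by the pro-Banach analog of \cite[Lemma 6.82]{T2} applied to each $E$, each member $\mathcal{M}_E$ is finitely generated, the \v{C}ech cohomology $\check{H}^i(\mathrm{Spa}(A,A^+),\mathfrak{B};\mathcal{M})$ vanishes for $i>0$, and hence (again by \cref{theorem3.12}) $H^i(U_j;\mathcal{M})=0$ for $j\in\{1,2\}$, $i>0$, from which $H^i(\mathrm{Spa}(A,A^+);\mathcal{M})=0$ for $i>0$ follows exactly as in \cite[Lemma 2.5.4]{KL2}.

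Next I would choose a finite free object $\mathcal{F}$ covering $\mathcal{M}$, with kernel $\mathcal{K}$; by \cref{proposition3.11} (preservation of $B[[G]]$-stably pseudocoherence under rational localization) $\mathcal{K}$ is again pseudocoherent. Then I would form the two commutative diagrams appearing in the proof of \cref{lemma2.13}: first the one comparing $(B_p\widehat{\otimes}B[[G]])\otimes(-)(\mathrm{Spa}(A,A^+))$ with $(-)(U_p)$ for $p=1,2$, where the top row is exact by $B[[G]]$-pseudoflatness (\cref{proposition3.10}) and the bottom by \cref{theorem3.12}, giving $\mathcal{M}\xrightarrow{\sim}\widetilde{\mathcal{M}(\mathrm{Spa}(A,A^+))}$; and second the analogous diagram for an arbitrary rational localization $A\to C$, concluding that the global sections $\mathcal{M}(\mathrm{Spa}(A,A^+))$ stay complete after rational localization, i.e.\ are $B[[G]]$-stably pseudocoherent. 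Diagram chasing (the five-lemma, plus the surjectivity input from the pro-Banach version of \cite[Lemma 6.82]{T2}) then yields the isomorphisms on the rightmost vertical arrows, exactly as in \cref{lemma2.13}.

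The one genuinely new point, and the main obstacle, is keeping track of the pro-system: every tensor product $C\widehat{\otimes}B[[G]]\otimes_{A\widehat{\otimes}B[[G]]}(-)$ must be interpreted levelwise as $\{C\widehat{\otimes}B[[G]]/E\otimes_{A\widehat{\otimes}B[[G]]/E}(-)_E\}_{E\subset G}$, and I would need the inverse limit over $E$ of the levelwise exact sequences to remain exact. This is handled the same way as in \cref{lemma2.24}: the transition maps in the pro-system are (bounded) surjections, so $R^1\varprojlim_E$ vanishes on the relevant kernels, and exactness is preserved in the limit. I would note that the completeness and pseudoflatness statements we invoke (\cref{proposition3.10}, the finite-generation lemma of \cite{T2}) have already been set up at the level of each quotient $B[[G]]/E$ earlier in this section, so nothing beyond \cref{theorem3.12}, \cref{proposition3.10}, \cref{proposition3.11}, \cref{lemma2.24} and the pro-Banach version of \cite[Lemma 6.82]{T2} is required; the rest is the routine diagram chase transported from \cite[Lemma 2.5.4]{KL2}.
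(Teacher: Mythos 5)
Your proposal is correct and follows essentially the same route as the paper: realize the descent datum as a pseudocoherent $\mathcal{O}\widehat{\otimes}B[[G]]$-sheaf via \cref{theorem3.12}, get finite generation and cohomology vanishing from \cite[Lemma 6.82]{T2}, then run the two diagram chases from \cref{lemma2.13} levelwise over the quotients $B[[G]]/E$ using \cref{proposition3.10} and \cref{proposition3.11}. Your added discussion of exactness of $\varprojlim_E$ is a harmless extra check; in the paper's pro-system formalism all assertions are made memberwise in $E$, so no limit needs to be taken.
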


\begin{proof}
This is a $B[[G]]$-relative version of the \cite[Lemma 2.5.4]{KL2}. We adapt the corresponding proof to our situation. First consider the corresponding notations on the commutative adic space $\mathrm{Spa}(A,A^+)$ as in \cite[Lemma 2.4.10]{KL2}. And we have a coving $\{U_1,U_2\}$. Then consider a descent datum for this cover of $B[[G]]$-stably pseudocoherent sheaves. By \cref{theorem3.12} we could realize this as a corresponding pseudocoherent sheaf over $\mathcal{O}\widehat{\otimes}B[[G]]$. We denote this by $\mathcal{M}$. Then by (the proof of) \cite[Lemma 6.82]{T2}, we have that this is finitely generated and we have the corresponding vanishing of $\check{H}^i(\mathrm{Spa}(A,A^+),\mathfrak{B};\mathcal{M})$ for $i>0$. Then by \cref{theorem3.12} again we have that the vanishing of $H^i(U_j;\mathcal{M})$ for $j\in \{1,2\},i>0$. Then as in \cite[Lemma 2.5.4]{KL2} we have the corresponding vanishing of $H^i(\mathrm{Spa}(A,A^+);\mathcal{M})$ for $i>0$. Then as in \cite[Lemma 2.5.4]{KL2} we choose a corresponding covering of some finite free object $\mathcal{F}$ and take its kernel $\mathcal{K}$ which is actually pseudocoherent by the corresponding \cref{proposition3.11}. Then forming the following diagram:
\[\tiny
\xymatrix@C+0pc@R+6pc{
0 \ar[r]\ar[r]\ar[r] & (B_p\widehat{\otimes}B[[G]]/E)\otimes \mathcal{K}(\mathrm{Spa}(A,A^+))  \ar[r]\ar[r]\ar[r] \ar[d]\ar[d]\ar[d] &(B_p\widehat{\otimes}B[[G]]/E)\otimes \mathcal{F}(\mathrm{Spa}(A,A^+))\ar[r]\ar[r]\ar[r] \ar[d]\ar[d]\ar[d] & (B_p\widehat{\otimes}B[[G]]/E)\otimes \mathcal{M}(\mathrm{Spa}(A,A^+))\ar[r]\ar[r]\ar[r]  \ar[d]\ar[d]\ar[d]&0,\\
0 \ar[r]\ar[r]\ar[r] &\mathcal{K}(U_p)   \ar[r]\ar[r]\ar[r] &\mathcal{F}(U_p) \ar[r]\ar[r]\ar[r] & \mathcal{M}(U_p) \ar[r]\ar[r]\ar[r] &0,\\
}
\]
where we have $p=1,2$. The first row is exact by $B[[G]]/E$-pseudoflatness as in \cref{proposition2.10} and the second is row is exact as well by \cref{theorem2.11}. The middle vertical arrow is an isomorphism while the rightmost vertical arrow is surjective and the leftmost vertical arrow is surjective. The surjectivity just here comes from actually \cite[Lemma 6.82]{T2}. This will imply that the rightmost vertical arrow is injective as well. Then we have the corresponding isomorphism $\mathcal{M}\overset{\sim}{\rightarrow} \widetilde{\mathcal{M}(\mathrm{Spa}(A,A^+))}$. Then we consider the following diagram:

\[ \tiny
\xymatrix@C+0pc@R+6pc{
 &(C\widehat{\otimes}B[[G]]/E)\otimes\mathcal{K}(\mathrm{Spa}(A,A^+))   \ar[r]\ar[r]\ar[r] \ar[d]\ar[d]\ar[d] &(C\widehat{\otimes}B[[G]]/E)\otimes\mathcal{F}(\mathrm{Spa}(A,A^+)) \ar[r]\ar[r]\ar[r] \ar[d]\ar[d]\ar[d] & (C\widehat{\otimes}B[[G]]/E)\otimes\mathcal{M}(\mathrm{Spa}(A,A^+))\ar[r]\ar[r]\ar[r]  \ar[d]\ar[d]\ar[d]&0,\\
0 \ar[r]\ar[r]\ar[r] &\mathcal{K}(\mathrm{Spa}(C,C^+))   \ar[r]\ar[r]\ar[r] &\mathcal{F}(\mathrm{Spa}(C,C^+)) \ar[r]\ar[r]\ar[r] & \mathcal{M}(\mathrm{Spa}(C,C^+)) \ar[r]\ar[r]\ar[r] &0,\\
}
\]
$C$ is any rational localization of $A$. Here the first row is exact by $B[[G]]/E$-pseudoflatness and the second is row is exact as well. The middle vertical arrow is an isomorphism while the rightmost vertical arrow is surjective as in \cite[Lemma 2.5.4]{KL2}. Then we apply the same argument to the leftmost vertical arrow (with a new diagram), we will have the corresponding surjectivity of this leftmost vertical arrow. This will imply that the rightmost vertical arrow is injective as well. Then we have the corresponding isomorphism for the rightmost vertical arrow.
\end{proof}

\begin{theorem}\mbox{\bf{(After Kedlaya-Liu \cite[Theorem 2.5.5]{KL2})}} \label{theorem3.15}
Taking global section will realize the equivalence between the following two categories: A. The category of all the pseudocoherent $\mathcal{O}\widehat{\otimes}B[[G]]$-sheaves; B. The category of all the $B[[G]]$-stably pseudocoherent modules over $A\widehat{\otimes}B[[G]]$. 	
\end{theorem}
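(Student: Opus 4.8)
The plan is to mirror the proof of \cref{theorem2.14}, which in turn follows \cite[Theorem 2.5.5]{KL2}, but now working with pro-systems $\{M_E\}_{E\subset G}$ over the tower $\{A\widehat{\otimes}B[[G]]/E\}_{E\subset G}$ and tracking the behavior levelwise. First I would define the two functors: in one direction, taking global sections sends a pseudocoherent $\mathcal{O}\widehat{\otimes}B[[G]]$-sheaf $\mathcal{M}$ to $\mathcal{M}(\mathrm{Spa}(A,A^+))$, and one must check this is a $B[[G]]$-stably pseudocoherent module over $A\widehat{\otimes}B[[G]]$; in the other direction, a $B[[G]]$-stably pseudocoherent module $M$ is sent to the associated sheaf $\widetilde{M}$ via the assignment of \cref{theorem3.12}, namely $U\mapsto \varprojlim_{\mathrm{Spa}(S,S^+)\subset U} S\widehat{\otimes}B[[G]]\otimes_{A\widehat{\otimes}B[[G]]}M$. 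The content of \cref{theorem3.12} (Tate acyclicity in the $B[[G]]$-deformed setting) already gives that $\widetilde{M}$ is a genuine sheaf with $\widetilde{M}(\mathrm{Spa}(A,A^+)) = M$ and with vanishing higher cohomology over rational subspaces, so $M\mapsto \widetilde{M}\mapsto \widetilde{M}(\mathrm{Spa}(A,A^+))$ recovers $M$.

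The key step is then the other composite: given a pseudocoherent $\mathcal{O}\widehat{\otimes}B[[G]]$-sheaf $\mathcal{M}$, one must show $\mathcal{M}(\mathrm{Spa}(A,A^+))$ is $B[[G]]$-stably pseudocoherent and that the canonical map $\widetilde{\mathcal{M}(\mathrm{Spa}(A,A^+))}\to \mathcal{M}$ is an isomorphism of sheaves. By definition $\mathcal{M}$ is locally attached to stably pseudocoherent modules, so there is a finite rational covering $\{U_i\}$ of $\mathrm{Spa}(A,A^+)$ on which $\mathcal{M}$ becomes $\widetilde{M_i}$ for $B[[G]]$-stably pseudocoherent $M_i$ over $\mathcal{O}(U_i)\widehat{\otimes}B[[G]]$, together with descent data on overlaps. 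Refining the covering to one adapted to the standard binary-covering formalism of \cite[Lemma 2.4.10]{KL2}, I would invoke \cref{lemma3.14} (effective descent for $B[[G]]$-stably pseudocoherent Banach modules along $A\to B_1\oplus B_2$) to glue the local data into a single $B[[G]]$-stably pseudocoherent module $M$ over $A\widehat{\otimes}B[[G]]$ with $\widetilde{M}\cong\mathcal{M}$; one then checks $M\cong\mathcal{M}(\mathrm{Spa}(A,A^+))$ using the $H^0$ statement of \cref{theorem3.12}. Exactness/functoriality of both assignments, and the fact that morphisms of sheaves correspond to morphisms of modules, follow as in \cite[Theorem 2.5.5]{KL2}: a morphism of pseudocoherent sheaves induces a morphism on global sections, and conversely a module map $M\to N$ induces $\widetilde{M}\to\widetilde{N}$ because the localization functor $S\widehat{\otimes}B[[G]]\otimes_{A\widehat{\otimes}B[[G]]}(-)$ is functorial; fully faithfulness then reduces to the $H^0$-identity again. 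Here I would still need \cite[Proposition 2.4.20]{KL1} to pass from a general rational covering to a finite one built from the standard rational subsets, exactly as in the Banach case.

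Throughout, the pro-system bookkeeping has to be carried consistently: a $B[[G]]$-stably pseudocoherent module is the data of $\{M_E\}_E$ with each $M_E$ a $B[[G]]/E$-stably pseudocoherent $A\widehat{\otimes}B[[G]]/E$-module and with the transition maps compatible, and all the inputs—\cref{lemma3.14} (which is itself proved levelwise, using \cite[Lemma 6.82]{T2} for finite generation and \cref{theorem3.12} for the cohomology vanishing), \cref{proposition3.11} (preservation of $B[[G]]$-stable pseudocoherence under rational localization), and $B[[G]]/E$-pseudoflatness from \cref{proposition3.10}—have already been established at each level $E$. The main obstacle I anticipate is verifying that the glued module $M$ is itself \emph{stably} pseudocoherent (not merely pseudocoherent), i.e.\ that base change along every rational localization $A\to A'$ leaves each $M_E$ complete for its natural topology, and that this completeness is compatible with the inverse limit over $E$; this is where the interaction between the (non-flat) commutative rational localizations and the pro-structure is most delicate, and one resolves it exactly as in \cref{lemma3.14}'s final diagram chase—applying $B[[G]]/E$-pseudoflatness to the exact sequence $0\to\mathcal{K}\to\mathcal{F}\to\mathcal{M}\to 0$ levelwise and using \cref{theorem3.12} to identify sections—so that no genuinely new difficulty beyond the Banach case arises once the levelwise statements are in hand.
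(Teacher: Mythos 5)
Your proposal is correct and follows essentially the same route as the paper, whose proof is just a pointer to \cite[Theorem 2.5.5]{KL2} combined with \cite[Proposition 2.4.20]{KL1}, \cref{lemma3.14}, and \cref{theorem3.12} — exactly the ingredients you assemble. Your write-up simply makes explicit the levelwise pro-system bookkeeping and the verification that the glued module is stably (not merely) pseudocoherent, which the paper leaves implicit.
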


\begin{proof}
See \cite[Theorem 2.5.5]{KL2}. We need to still apply \cite[Proposition 2.4.20]{KL1}, as long as one considers instead in our situation \cref{lemma3.14} and \cref{theorem3.12}.\\	
\end{proof}

\subsection{Noncommutative Pseudocoherence in \'Etale Topology}

\indent We consider the extension of the corresponding discussion in the previous subsection to the \'etale topology. At this moment we consider the following same context on the geometric level as in \cite{KL2}:

\begin{setting} \mbox{\bf{(After Kedlaya-Liu \cite[Hypothesis 2.5.8]{KL2})}} \label{setting3.16}
As in the previous subsection we consider now the corresponding geometric setting namely an adic space $\mathrm{Spa}(A,A^+)$ where $A$ is assumed to be sheafy. Then we consider $B[[G]]$ as above. And we consider the corresponding \'etale site $\mathrm{Spa}(A,A^+)_\text{\'et}$. And by keeping the corresponding setting in the geometry as in \cite[Hypothesis 2.5.8]{KL2}, we assume that there is a stable basis $\mathfrak{B}$ containing the space $\mathrm{Spa}(A,A^+)$ itself.	
\end{setting}

\begin{definition} \mbox{\bf{(After Kedlaya-Liu \cite[Definition 2.5.9]{KL2})}}
For any left $A\widehat{\otimes}B[[G]]$-module $M$, we call it $m$-$B[[G]]$-\'etale stably pseudocoherent with respect to $\mathfrak{B}$ if we have that it is $m$-$B[[G]]$-pseudocoherent, complete for the natural topology and for any rational localization $A\rightarrow A'$, the base change of $M$ to $A'\widehat{\otimes} B[[G]]$ is complete for the natural topology as the corresponding left $A'\widehat{\otimes} B[[G]]$-module.	As in \cite[Definition 2.4.1]{KL2} we call that the corresponding left $A\widehat{\otimes}B[[G]]$-module $M$ just $B[[G]]$-stably pseudocoherent if we have that it is simply just $\infty$-$B[[G]]$-stably pseudocoherent.
\end{definition}

\begin{definition}\mbox{\bf{(After Kedlaya-Liu \cite[Below Definition 2.5.9]{KL2})}}
For any Banach left $A\widehat{\otimes}B[[G]]$-module $M$, we call it is $m$-$B[[G]]$-\'etale-pseudoflat if for any right $A\widehat{\otimes}B[[G]]$-module $M'$ $m$-$B[[G]]$-\'etale-stably pseudocoherent we have $\mathrm{Tor}_1^{A\widehat{\otimes}B[[G]]}(M',M)=0$. For any Banach right $A\widehat{\otimes}B[[G]]$-module $M$, we call it is $m$-$B[[G]]$-\'etale-pseudoflat if for any left $A\widehat{\otimes}B[[G]]$-module $M'$ $m$-$B[[G]]$-\'etale-stably pseudocoherent we have $\mathrm{Tor}_1^{A\widehat{\otimes}B[[G]]}(M,M')=0$.
\end{definition}

\indent The following proposition holds in our current setting.

\begin{proposition} \mbox{\bf{(After Kedlaya-Liu \cite[Lemma 2.5.10]{KL2})}} \label{proposition3.19}
One can actually find another basis $\mathfrak{C}$ in $\mathfrak{B}$ such that any morphism in $\mathfrak{C}$ could be $2$-$B[[G]]$-\'etale-pseudoflat with respect to either $\mathfrak{C}$ or $\mathfrak{B}$.	
\end{proposition}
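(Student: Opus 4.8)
The plan is to mimic the proof of \cite[Lemma 2.5.10]{KL2} essentially verbatim, since that argument is purely a geometric statement about the analytic site together with an input about which kinds of morphisms are pseudoflat, and the latter input is already available to us in the present pro-Banach setting. Recall that in \cite{KL2} the new basis $\mathfrak{C}$ is obtained by taking those members of $\mathfrak{B}$ which arise as a composition of a rational localization followed by a finite \'etale morphism (equivalently, one factors each morphism in $\mathfrak{B}$ into a rational localization part and a finite \'etale part, using the structure theory of the \'etale site for analytic adic spaces). The only nontrivial arithmetic content needed is that each such composite morphism is $2$-$B[[G]]$-\'etale-pseudoflat; the rest is a bookkeeping argument about covers that is insensitive to the coefficient ring.

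First I would record the required pseudoflatness of the building blocks. For rational localizations $A \to A'$ the $2$-$B[[G]]$-pseudoflatness is exactly \cref{proposition3.10} (together with \cref{proposition3.11} ensuring the relevant modules stay stably pseudocoherent after base change), and the analogous statement in the \'etale setting is obtained by the same diagram chases carried out member-by-member over the quotients $B[[G]]/E$ as in \cref{proposition3.10}. For finite \'etale morphisms $A \to A'$ one uses that $A'$ is a finite projective $A$-module; hence $A' \widehat{\otimes} B[[G]]/E$ is a finite projective $A \widehat{\otimes} B[[G]]/E$-module for each $E$, so tensoring is exact on each layer of the pro-system and in particular $\mathrm{Tor}_1^{A\widehat{\otimes}B[[G]]/E}(-, A'\widehat{\otimes}B[[G]]/E) = 0$, which gives $2$-$B[[G]]$-\'etale-pseudoflatness trivially. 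Composites of pseudoflat morphisms of these two types remain pseudoflat by the standard base-change spectral sequence argument (again applied layerwise), exactly as in \cite[Lemma 2.5.10]{KL2}; the stability-preservation needed to run that argument is the $B[[G]]$-analog of \cref{proposition2.10}, i.e. \cref{proposition3.11}.

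Next I would invoke the purely geometric part. Given any morphism in $\mathfrak{B}$, one factors it as a rational localization followed by a finite \'etale morphism, declares $\mathfrak{C}$ to be the collection of all such composites occurring in $\mathfrak{B}$, and checks that $\mathfrak{C}$ is again a stable basis refining $\mathfrak{B}$ — this is verbatim the argument of \cite[Lemma 2.5.10]{KL2}, with no coefficients entering. Combining this with the pseudoflatness of the building blocks established above yields that every morphism in $\mathfrak{C}$ is $2$-$B[[G]]$-\'etale-pseudoflat with respect to either $\mathfrak{C}$ or $\mathfrak{B}$, which is the assertion.

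The main obstacle, such as it is, is making sure the layerwise (over $E \subset G$) bookkeeping is consistent: one must check that the factorization into rational-plus-finite-\'etale is compatible with passing to the pro-system $\{B[[G]]/E\}_E$ and that the $\mathrm{Tor}_1$ vanishing holds uniformly across the system (so that the pro-object $\{\mathrm{Tor}_1^{A\widehat{\otimes}B[[G]]/E}(-,-)\}_E$ is the zero pro-object, not merely pro-zero after reindexing). Since the coefficient algebra only enters through the flat base change $B \to B[[G]]/E$ on each layer and finite projectivity is preserved by $-\widehat{\otimes}B[[G]]/E$, this compatibility is routine; I would therefore simply note it and otherwise refer to \cite[Lemma 2.5.10]{KL2} for the geometric content, exactly in the spirit of the proof of \cref{proposition2.18}.
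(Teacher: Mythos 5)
Your proposal is correct and follows essentially the same route as the paper: take $\mathfrak{C}$ to consist of composites of rational localizations and finite \'etale morphisms, verify $2$-$B[[G]]$-\'etale-pseudoflatness of these building blocks layerwise over the quotients $B[[G]]/E$ (rational localizations via \cref{proposition3.10} and \cref{proposition3.11}, finite \'etale maps via finite projectivity), and defer the purely geometric bookkeeping to \cite[Lemma 2.5.10]{KL2}. Your write-up is in fact somewhat more careful than the paper's, which asserts the pseudoflatness of the finite \'etale pieces without spelling out the finite-projectivity argument you supply.
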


\begin{proof}
We follow the proof of \cite[Lemma 2.5.10]{KL2} in our current $B[[G]]$-relative situation, however not that much needs to proof by relying on the proof of \cite[Lemma 2.5.10]{KL2}. To be more precise the corresponding selection of the new basis $\mathfrak{C}$ comes from including all the morphism made up of some composition of rational localization and the corresponding finite \'etale ones. For these we have shown above the corresponding 2-$B[[G]]$-\'etale pseudoflatness. Then the rest will be pure geometric for the analytic spaces which are just the same as the situation of \cite[Lemma 2.5.10]{KL2}, therefore we just omit the corresponding argument, see \cite[Lemma 2.5.10]{KL2}.	
\end{proof}

\indent Then we have the corresponding Tate's acyclicity in the noncommutative deformed setting in \'etale topology (here fix $\mathfrak{C}$ as above):

\begin{theorem}\mbox{\bf{(After Kedlaya-Liu \cite[Theorem 2.5.11]{KL2})}} \label{theorem3.20} Now suppose we have in our corresponding \cref{setting3.1} a corresponding $B[[G]]$-\'etale-stably pseudocoherent module $M$. Then we consider the corresponding assignment such that for any $U\subset \mathrm{Spa}(A,A^+)$ we define $\widetilde{M}(U)$ as in the following:
\begin{align}
\widetilde{M}(U):=\varprojlim_{\mathrm{Spa}(S,S^+)\subset U,\in \mathfrak{C}} S\widehat{\otimes}B[[G]]\otimes_{A\widehat{\otimes}B[[G]]}M.	
\end{align}
Then we have that for any $\mathfrak{B}$ which is a covering of $U=\mathrm{Spa}(S,S^+)\subset \mathrm{Spa}(A,A^+)$ (certainly this $U$ is also assumed to be in $\mathfrak{C}$, and we assume this covering is formed by using the corresponding members in $\mathfrak{C}$) we have that the vanishing of the following two cohomology groups:
\begin{align}
H^i(U,\widetilde{M}), \check{H}^i(U:\mathfrak{B},\widetilde{M})
\end{align}
for any $i>0$. When concentrating at the degree zero we have:
\begin{align}
H^0(U,\widetilde{M})=S\widehat{\otimes}B[[G]]\otimes_{A\widehat{\otimes}B[[G]]}M, \check{H}^0(U:\mathfrak{B},\widetilde{M})=S\widehat{\otimes}B[[G]]\otimes_{A\widehat{\otimes}B[[G]]}M.
\end{align}
	
\end{theorem}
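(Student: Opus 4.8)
The plan is to mimic verbatim the structure of the proof of \cref{theorem2.11} (and of \cite[Theorem 2.5.1]{KL2}), but carrying the pro-system $\{M_E\}_{E\subset G}$ and replacing ``rational localization'' with ``member of the basis $\mathfrak{C}$''. First I would reduce the higher vanishing of $H^i(U,\widetilde M)$ and $\check H^i(U,\mathfrak B,\widetilde M)$ to the case of a single étale morphism plus a rational covering, using the fact established in \cref{proposition3.19} that $\mathfrak{C}$ can be taken to consist of compositions of rational localizations and finite étale maps, each of which is $2$-$B[[G]]$-étale-pseudoflat with respect to $\mathfrak C$. For the rational-covering part one invokes the analytic-topology result \cref{theorem3.12} directly (applied on each $U\in\mathfrak C$, which is still an adic affinoid), so the only genuinely new ingredient is descent along a finite étale morphism.

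For that ingredient I would argue exactly as in \cite[Theorem 2.5.11]{KL2}: given a finite étale $A\to A'$, the faithfully flat descent for the underlying commutative rings holds by \cite[Proposition 8.2.21]{KL1}, and one checks that tensoring the Amitsur/\v{C}ech complex of $A\to A'$ with the pro-system $M=\{M_E\}_E$ over $A\widehat\otimes B[[G]]$ remains exact. Here the key point is that for each finite quotient $G/E$ the ring map $A\widehat\otimes \tfrac{B[[G]]}{E}\to A'\widehat\otimes\tfrac{B[[G]]}{E}$ is obtained from $A\to A'$ by the (completed) base change $-\,\widehat\otimes\, \tfrac{B[[G]]}{E}$, so faithful flatness is preserved on each level; since a pro-system is exact iff it is levelwise exact, the \v{C}ech complex of $\widetilde M$ is a resolution. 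This yields the vanishing $\check H^i(U,\mathfrak B,\widetilde M)=0$ for $i>0$ together with $\check H^0(U,\mathfrak B,\widetilde M)=S\widehat\otimes B[[G]]\otimes_{A\widehat\otimes B[[G]]}M$.

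To pass from \v{C}ech to genuine sheaf cohomology I would use the same bootstrapping as \cite[Theorem 2.5.1]{KL2}: the basis $\mathfrak C$ is stable under fibre products and every covering in $\mathfrak C$ refines to a finite one built from rational coverings and single finite étale maps, so the Cartan--Leray/\v{C}ech-to-derived spectral sequence degenerates, giving $H^i(U,\widetilde M)=\check H^i(U,\mathfrak B,\widetilde M)$ for all $i$, hence the stated vanishing and the degree-zero identifications. One also has to record, as in \cref{theorem2.19}, the compatibility $H^0_{\mathrm{\acute et}}(U,\widetilde M)=H^0(U,\widetilde M)$, which is immediate since $\widetilde M$ is defined as a limit over $\mathfrak C$.

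The main obstacle I anticipate is not the geometry — that is quoted wholesale from \cite{KL2} via \cref{proposition3.19} — but the bookkeeping that the relevant completed tensor products behave well under the pro-limit over $E\subset G$. Concretely one must be sure that $S\widehat\otimes B[[G]]\otimes_{A\widehat\otimes B[[G]]}M$ really computes $\varprojlim_E\big(S\widehat\otimes\tfrac{B[[G]]}{E}\otimes_{A\widehat\otimes B[[G]]/E}M_E\big)$ with vanishing $R^1\varprojlim$, so that levelwise exactness of the \v{C}ech complexes actually upgrades to exactness after passing to the limit; this is where \cref{lemma2.24} (the $B$-relative Mittag-Leffler/$R^1\varprojlim$-vanishing statement) must be brought in, exploiting that the transition maps in the pro-system are bounded surjections on each affinoid piece. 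Granting that, the remainder of the proof is formally identical to the Banach case treated in \cref{theorem3.12}.
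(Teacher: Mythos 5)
Your proposal follows essentially the same route as the paper's own (very terse) proof: reduce via \cref{proposition3.19} to compositions of rational localizations (handled by the analytic-topology result) and finite \'etale maps (handled by faithfully flat descent via \cite[Proposition 8.2.21]{KL1}), all carried out levelwise over the quotients $B[[G]]/E$ as in \cite[Theorem 2.5.11]{KL2}. Your additional attention to the $R^1\varprojlim$ bookkeeping for the pro-system is a reasonable supplement but does not change the argument.
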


\begin{proof}
By \cite[Proposition 8.2.21]{KL1}, we could then finish proof as in \cite[Theorem 2.5.11]{KL2} by using our previous \cref{proposition3.19} and the corresponding faithfully flat descent as in the situation of \cite[Theorem 2.5.11]{KL2}.	
\end{proof}

\indent We now consider the corresponding noncommutative deformed version of Kiehl's glueing properties for stably pseudocoherent modules after \cite{KL2}:

\begin{definition} \mbox{\bf{(After Kedlaya-Liu \cite[Definition 2.5.12]{KL2})}}
Consider in our context (over $\mathrm{Spa}(A,A^+)_{\text{\'et}}$) the corresponding sheaves $\mathcal{O}_{\mathrm{Spa}(A,A^+)_{\text{\'et}}}\widehat{\otimes}B[[G]]$, we will then define the corresponding pseudocoherent sheaves over $\mathrm{Spa}(A,A^+)_{\text{\'et}}$ to be those locally defined by attaching \'etale-stably-pseudocoherent modules over the section. 
\end{definition}

\begin{lemma} \mbox{\bf{(After Kedlaya-Liu \cite[Lemma 2.5.13]{KL2})}}
	Consider the corresponding notations in \cite[Lemma 2.4.10]{KL2}, we have the corresponding morphism $A\rightarrow B_1\bigoplus B_2$. Then we have that this morphism is an descent morphism effective for the corresponding $B[[G]]$-\'etale-stably pseodocoherent Banach modules. 
\end{lemma}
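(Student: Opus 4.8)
The plan is to mirror the argument of Lemma~\ref{lemma2.21} (the Banach-coefficient étale descent statement) in the present $B[[G]]$-pro-system setting, just as the proof of Lemma~\ref{lemma3.14} mirrored that of Lemma~\ref{lemma2.13} in the analytic topology. Concretely, fix the notations of \cite[Lemma 2.4.10]{KL2} and the covering $\{U_1,U_2\}$ of $\mathrm{Spa}(A,A^+)$ coming from the morphism $A\to B_1\bigoplus B_2$, and start from a descent datum of $B[[G]]$-étale-stably pseudocoherent sheaves for this cover. Working member-by-member over the quotients $B[[G]]/E$ (so everything is applied to each $M_E$ in the pro-system simultaneously), one first glues the datum into a pseudocoherent sheaf $\mathcal{M}$ over $\mathcal{O}_{\text{\'et}}\widehat{\otimes}B[[G]]$ using the étale Tate acyclicity \cref{theorem3.20}. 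Since the cover $\{U_1,U_2\}$ is in fact a rational (analytic) cover, the vanishing of $H^i(U_j;\mathcal{M})$ for $i>0$ and the degree-zero comparison $H^0_{\text{\'et}}(U_j;\mathcal{M})=H^0(U_j;\mathcal{M})$ reduce, as in \cite[Lemma 2.5.13]{KL2}, to the already-established analytic facts, yielding the vanishing of $H^i(\mathrm{Spa}(A,A^+);\mathcal{M})$ for $i>0$.

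Next I would check that the global sections $\mathcal{M}(\mathrm{Spa}(A,A^+))$ form a $B[[G]]$-stably pseudocoherent module: pick a finite free cover $\mathcal{F}\twoheadrightarrow\mathcal{M}$, let $\mathcal{K}$ be its kernel (pseudocoherent by \cref{proposition3.11}), and use the short exact sequence $0\to\mathcal{K}(X)\to\mathcal{F}(X)\to\mathcal{M}(X)\to 0$ together with \cite[Lemma 6.82]{T2} for finite generation. The stability is the content that needs care, and exactly as in Lemma~\ref{lemma2.21} it follows from a diagram chase: for any $A\to C$ in (the commutative part of) the relevant basis, compare the $B[[G]]/E$-base-changed sequence with $\mathcal{K}(\mathrm{Spa}(C,C^+))\to\mathcal{F}(\mathrm{Spa}(C,C^+))\to\mathcal{M}(\mathrm{Spa}(C,C^+))$; the top row is exact by $B[[G]]$-étale-pseudoflatness (via \cref{proposition3.19}), the middle arrow is an isomorphism, the right arrow is surjective, and bootstrapping surjectivity of the left arrow forces injectivity of the right arrow, hence $\mathcal{M}(X)\widehat{\otimes}_{A\widehat{\otimes}B[[G]]}(C\widehat{\otimes}B[[G]])\xrightarrow{\sim}\mathcal{M}(\mathrm{Spa}(C,C^+))$, and in particular completeness is preserved — which is the definition of stable pseudocoherence. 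This gives effectivity of descent, completing the lemma.

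The main obstacle I anticipate is the interplay between the pro-limit over $E\subset G$ and the various completions and cohomology computations: one must be sure that gluing, the vanishing of higher cohomology, and the pseudoflatness-based diagram chases are all compatible with passing to the inverse limit over $E$, i.e. that they can genuinely be performed levelwise and then assembled, with no $R^1\varprojlim_E$ obstruction intervening. In practice this is handled because each $B[[G]]/E$ is an honest Banach algebra to which the Banach-case results (\cref{theorem3.20}, \cref{proposition3.19}, \cref{proposition3.11}, and \cite[Lemma 6.82]{T2}) apply verbatim, and the transition maps in the pro-system are the given bounded surjections, so the arguments of \cref{lemma2.24} on density and $R^1\varprojlim$ vanishing already handle the limit. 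Hence the only real work is bookkeeping: keeping track of the coefficient $B[[G]]/E$ through each step and noting that the geometric input (the structure of $\mathrm{Spa}(A,A^+)$ and its étale covers) is untouched, exactly as in \cite[Lemma 2.5.13]{KL2}.
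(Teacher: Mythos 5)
Your proposal follows essentially the same route as the paper's proof: glue the descent datum into a pseudocoherent sheaf via the étale Tate acyclicity (\cref{theorem3.20}), reduce the cohomology vanishing to the analytic case since the cover is rational, then take a finite free cover with pseudocoherent kernel and run the pseudoflatness diagram chase over each member of the basis $\mathfrak{C}$ (levelwise in $E\subset G$) to get stability of the global sections under étale morphisms. Your additional remark about compatibility with the pro-limit over $E$ is sensible extra care but does not change the argument.
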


\begin{proof}
This is a $B[[G]]$-relative version of the \cite[Lemma 2.5.13]{KL2}. We adapt the corresponding proof to our situation. First consider the corresponding notations on the commutative adic space $\mathrm{Spa}(A,A^+)$ as in \cite[Lemma 2.4.10]{KL2}. And we have a coving $\{U_1,U_2\}$. Then consider a descent datum for this cover of $B[[G]]$-\'etale-stably pseudocoherent sheaves. By \cref{theorem2.19} we could realize this as a corresponding pseudocoherent sheaf over $\mathcal{O}\widehat{\otimes}B[[G]]$. We denote this by $\mathcal{M}$. Then by consider the corresponding results in the rational localization situation above we have that the vanishing of $H^i(U_j;\mathcal{M})$ for $j\in \{1,2\},i>0$. Then as in \cite[Lemma 2.5.13]{KL2} we have the corresponding vanishing of $H^i(\mathrm{Spa}(A,A^+);\mathcal{M})$ for $i>0$. Then as in \cite[Lemma 2.5.13]{KL2} we choose a corresponding covering of some finite free object $\mathcal{F}$ and take its kernel $\mathcal{K}$ which gives rise to the corresponding exact sequence:
\[\tiny
\xymatrix@C+0pc@R+6pc{
0 \ar[r]\ar[r]\ar[r] &\mathcal{K}(\mathrm{Spa}(A,A^+))   \ar[r]\ar[r]\ar[r] &\mathcal{F}(\mathrm{Spa}(A,A^+)) \ar[r]\ar[r]\ar[r] & \mathcal{M}(\mathrm{Spa}(A,A^+)) \ar[r]\ar[r]\ar[r] &0,
}
\]
where we have $p=1,2$. We have already the corresponding stably-pseudocoherence of the global section. As in \cite[Lemma 2.5.13]{KL2} we only need to show that it is stable under \'etale morphism. Then we consider the following diagram:

\[ \tiny
\xymatrix@C+0pc@R+6pc{
 &(C\widehat{\otimes}B[[G]]/E)\otimes\mathcal{K}(\mathrm{Spa}(A,A^+))   \ar[r]\ar[r]\ar[r] \ar[d]\ar[d]\ar[d] &(C\widehat{\otimes}B[[G]]/E)\otimes\mathcal{F}(\mathrm{Spa}(A,A^+)) \ar[r]\ar[r]\ar[r] \ar[d]\ar[d]\ar[d] & (C\widehat{\otimes}B[[G]]/E)\otimes\mathcal{M}(\mathrm{Spa}(A,A^+))\ar[r]\ar[r]\ar[r]  \ar[d]\ar[d]\ar[d]&0,\\
0 \ar[r]\ar[r]\ar[r] &\mathcal{K}(\mathrm{Spa}(C,C^+))   \ar[r]\ar[r]\ar[r] &\mathcal{F}(\mathrm{Spa}(C,C^+)) \ar[r]\ar[r]\ar[r] & \mathcal{M}(\mathrm{Spa}(C,C^+)) \ar[r]\ar[r]\ar[r] &0,\\
}
\]
$A\rightarrow C$ is some member in $\mathfrak{C}$. Here the first row is exact by $B[[G]]/E$-\'etale-pseudoflatness and the second is row is exact as well. The middle vertical arrow is an isomorphism while the rightmost vertical arrow is surjective as in \cite[Lemma 2.5.13]{KL2}. Then we apply the same argument to the leftmost vertical arrow (with a new diagram), we will have the corresponding surjectivity of this leftmost vertical arrow. This will imply that the rightmost vertical arrow is injective as well. Then we have the corresponding isomorphism for the rightmost vertical arrow.
\end{proof}

\begin{theorem}\mbox{\bf{(After Kedlaya-Liu \cite[Theorem 2.5.14]{KL2})}} \label{theorem3.23}
Taking global section will realize the equivalence between the following two categories: A. The category of all the pseudocoherent $\mathcal{O}_{\mathrm{Spa}(A,A^+)_{\text{\'et}}}\widehat{\otimes}B[[G]]$-sheaves; B. The category of all the $B[[G]]$-\'etale-stably pseudocoherent modules over $A\widehat{\otimes}B[[G]]$. 	
\end{theorem}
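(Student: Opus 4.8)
The plan is to follow the proof of \cref{theorem2.22} essentially verbatim, replacing the Banach coefficient $B$ by the pro-Banach coefficient $B[[G]]$ and carrying out every step levelwise in the pro-system $\{M_E\}_{E\subset G}$, then passing to the inverse limit over the open normal subgroups $E\subset G$. First I would construct the functor from modules to sheaves: given a $B[[G]]$-\'etale-stably pseudocoherent module $M=\{M_E\}$ over $A\widehat{\otimes}B[[G]]$, \cref{theorem3.20} shows that the assignment $U\mapsto\widetilde{M}(U):=\varprojlim_{\mathrm{Spa}(S,S^+)\subset U,\in\mathfrak{C}}S\widehat{\otimes}B[[G]]\otimes_{A\widehat{\otimes}B[[G]]}M$ is a sheaf on $\mathrm{Spa}(A,A^+)_{\text{\'et}}$ with vanishing higher cohomology, whose restriction to any member of $\mathfrak{C}$ is again attached to a $B[[G]]$-\'etale-stably pseudocoherent module; hence $\widetilde{M}$ is a pseudocoherent $\mathcal{O}_{\mathrm{Spa}(A,A^+)_{\text{\'et}}}\widehat{\otimes}B[[G]]$-sheaf, and the degree-zero part of \cref{theorem3.20} gives $\widetilde{M}(X)=M$ with $X=\mathrm{Spa}(A,A^+)$. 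This exhibits global sections as a one-sided inverse.

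For essential surjectivity and full faithfulness in the other direction I would argue as in \cite[Theorem 2.5.14]{KL2}. A pseudocoherent $\mathcal{O}_{\mathrm{Spa}(A,A^+)_{\text{\'et}}}\widehat{\otimes}B[[G]]$-sheaf $\mathcal{F}$ is by definition locally on a cover from $\mathfrak{C}$ attached to $B[[G]]$-\'etale-stably pseudocoherent modules; the preceding lemma (the $B[[G]]$-relative analogue of \cite[Lemma 2.5.13]{KL2}) supplies effective descent along two-element rational covers $A\to B_1\oplus B_2$. Combining this with finite \'etale descent, which one obtains from \cite[Theorem 8.2.22]{KL1} together with faithfully flat descent of modules \cite[Tag 03OD]{SP} applied to each level $M_E$, yields effective descent along an arbitrary cover in $\mathfrak{C}$, because by the construction of $\mathfrak{C}$ in \cref{proposition3.19} every such cover refines into a composite of rational localizations and finite \'etale maps. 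Gluing the local modules then produces a global $M$ with $\widetilde{M}\cong\mathcal{F}$, and the unit and counit of the resulting adjunction are isomorphisms: module-to-sheaf-to-module is the identity by the degree-zero statement of \cref{theorem3.20}, and sheaf-to-module-to-sheaf is the identity because $\widetilde{\mathcal{F}(X)}$ agrees with $\mathcal{F}$ on a basis, again by \cref{theorem3.20}; compatibility with morphisms is immediate from functoriality of $\widetilde{(-)}$ and of global sections. The inverse limit over $E\subset G$ poses no problem because the transition maps are the prescribed bounded surjections, so the relevant $R^1\varprojlim$ vanishes in the spirit of \cref{lemma2.24}.

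The step I expect to be the main obstacle is making sure that finite \'etale descent, which the cited results provide at each fixed level $E$, actually assembles the descended pieces $\{M_E\}$ into a pro-system that is again $B[[G]]$-\'etale-stably pseudocoherent — in particular that completeness after rational localization survives the gluing, so that one lands in the stably pseudocoherent category and not merely in the pseudocoherent one. This is precisely where \cref{proposition3.19} is needed: since $\mathfrak{C}$ consists of $2$-$B[[G]]$-\'etale-pseudoflat morphisms, the relevant $\mathrm{Tor}_1$ groups vanish levelwise and the short exact sequences relating $\mathcal{K},\mathcal{F},\mathcal{M}$ in the preceding lemma stay exact after base change to any member of $\mathfrak{C}$, which is what forces the leftmost and rightmost vertical arrows in those diagram chases to be isomorphisms and hence what secures the stability condition for the descended module.
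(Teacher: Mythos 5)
Your proposal follows essentially the same route as the paper: the paper's own proof is a one-line citation of \cite[Theorem 2.5.14]{KL2} together with \cite[Theorem 8.2.22]{KL1} and \cite[Tag 03OD]{SP}, and your expanded argument — Tate acyclicity from \cref{theorem3.20}, effective descent from the preceding lemma, finite \'etale descent carried out levelwise in $E$ — is precisely the intended unwinding of that citation. The only cosmetic difference is that the paper's formalism keeps the objects as pro-systems $\{M_E\}_{E\subset G}$ throughout rather than forming an actual inverse limit, so your closing $R^1\varprojlim$ remark, while harmless, is not actually needed.
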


\begin{proof}
See \cite[Theorem 2.5.14]{KL2}. We need to still apply \cite[Theorem 8.2.22]{KL1}, as long as one considers instead in our situation, and \cite[Tag 03OD]{SP}.\\	
\end{proof}

\subsection{Noncommutative Deformation over Quasi-Stein Spaces}

\indent In this subsection we consider the corresponding noncommutative deformation over the corresponding context in \cite[Chapter 2.6]{KL2}.

\begin{setting}
Let $X$ be a corresponding quasi-Stein adic space over $\mathbb{Q}_p$  in the sense of \cite[Definition 2.6.2]{KL2}. Recall that what is happening is that $X$ could be written as the corresponding direct limit of affinoids $X:=\varinjlim_i X_i$. 	
\end{setting}

\begin{lemma} \mbox{\bf{(After Kedlaya-Liu \cite[Lemma 2.6.3]{KL2})}}
We now consider the corresponding rings $A_i:=\mathcal{O}_{X_i}$ for all $i=0,1,...$, and in our current situation we consider the corresponding rings $A_i\widehat{\otimes}B[[G]]$ (over $\mathbb{Q}_p$) for all $i=0,1,...$. And in our situation we consider the corresponding modules $M_i$ over $A_i\widehat{\otimes}B[[G]]$ for all $i=0,1,...$ with the same requirement as in \cite[Lemma 2.6.3]{KL2} (namely those complete with respect to the natural topology). Suppose that we have bounded surjective map from $f_i:A_{i}\widehat{\otimes}B[[G]]\otimes_{A_{i+1}\widehat{\otimes}B[[G]]} M_{i+1}\rightarrow M_i,i=0,1,...$. Then we have first the density of the corresponding image of $\varprojlim_i M_i$ in each $M_i$ for any $i=0,1,2,...$. And we have as well the corresponding vanishing of $R^1\varprojlim_i M_i$.
\end{lemma}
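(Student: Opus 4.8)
The plan is to reduce the pro-Banach statement to the Banach case \cref{lemma2.24}, applied one quotient of the pro-system at a time. Recall from the conventions of this section that a module over $A\widehat{\otimes}B[[G]]$ is a pro-system $\{M_E\}_{E\subset G}$ indexed by the open normal subgroups $E\subset G$, and that $B[[G]]/E$ stands for $B\otimes_{\mathbb{Q}_p}\mathbb{Q}_p[G/E]$, which, $G/E$ being finite, is again a noncommutative Banach algebra over $\mathbb{Q}_p$ (it is finite free over $B$). Thus, after fixing $E$, the tower $\{M_{i,E}\}_{i\geq 0}$ equipped with the bounded surjections
\[
f_{i,E}\colon A_i\widehat{\otimes}(B[[G]]/E)\otimes_{A_{i+1}\widehat{\otimes}(B[[G]]/E)} M_{i+1,E}\longrightarrow M_{i,E}
\]
is precisely the input required by \cref{lemma2.24}, with the Banach coefficient ring there taken to be $B[[G]]/E$; the completeness hypotheses are inherited levelwise by assumption.

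First I would invoke \cref{lemma2.24} for each $E$ separately, which gives that the image of $\varprojlim_i M_{i,E}$ is dense in every $M_{i,E}$ and that $R^1\varprojlim_i M_{i,E}=0$. Then I would assemble these statements over the pro-system: by definition $\varprojlim_i M_i=\{\varprojlim_i M_{i,E}\}_E$, and since $R^1\varprojlim_i$ of a tower is computed termwise in $E$ one gets $R^1\varprojlim_i M_i=\{R^1\varprojlim_i M_{i,E}\}_E=\{0\}_E$, which is the asserted vanishing. For density, a section of $\varprojlim_i M_i$ is a compatible family of sections of the $\varprojlim_i M_{i,E}$, and density in the pro-Banach sense means exactly density at each level $E$, which has just been established. (Alternatively, and closer to the style of the surrounding proofs, one re-runs the contraction argument of \cref{lemma2.24} verbatim inside each $E$: rescale the norms on $M_{i,E}$ so that $\|f_{i,E}(x)\|_i\leq\tfrac12\|x\|_{i+1}$, prove density by the telescoping limit $x_i\mapsto\lim_k f_i\circ\cdots\circ f_{i+k-1}(x_{i+k})$, and prove $R^1\varprojlim_i M_{i,E}=0$ by exhibiting $\sum_{n\geq0}F_E^{\,n}$ as a bounded inverse to $1-F_E$ on $\prod_i M_{i,E}$, where $F_E$ is the $f_{i,E}$-shift.)

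The only genuine subtlety — and hence the main obstacle — is the bookkeeping of the two indices simultaneously: one must observe that the norm rescaling used in \cref{lemma2.24} may be performed independently for each $E$, since none of the conclusions needs any uniformity in $E$, and that the pro-system structure survives this, because the transition maps $M_{i,E'}\to M_{i,E}$ are bounded and stay bounded after rescaling, so the rescaled towers still form a pro-system over $\mathbb{Q}_p[[G]]$. Once this is in place, no analytic ingredient beyond \cref{lemma2.24} is needed and the proof concludes.
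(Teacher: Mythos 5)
Your proposal is correct and follows the same route as the paper: the paper's proof likewise consists of truncating the pro-system at an open subgroup $E$ and invoking the Banach-coefficient case \cref{lemma2.24} levelwise. Your write-up is in fact more detailed than the paper's, which does not spell out the assembly of the levelwise conclusions back into a statement about the pro-system.
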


\begin{proof}
This is the $B[[G]]$-relative version of the result in \cite[Lemma 2.6.3]{KL2}. One needs to consider the corresponding truncation through some specific open subgroup $E$. Then see \cref{lemma2.24}.	
\end{proof}

\begin{proposition} \mbox{\bf{(After Kedlaya-Liu \cite[Lemma 2.6.4]{KL2})}} In the same situation as above, suppose we have that the corresponding modules $M_i$ are basically $B[[G]]$-stably pseudocoherent over the rings $A_i$ for all $i=0,1,...$. Now we consider the situation where $f_i:A_i\widehat{\otimes}B[[G]]\otimes_{A_{i+1}\widehat{\otimes}B[[G]]}M_{i+1}\rightarrow M_i$ is an isomorphism. Then the conclusion in our situation is then that the corresponding projection from $\varprojlim M_i$ to $M_i$ for each $i=0,1,2,...$ is an isomorphism.

\end{proposition}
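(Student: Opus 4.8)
The plan is to deduce this from the Banach-coefficient version already established in Section~2 (the analogue of \cite[Lemma 2.6.4]{KL2}) by truncating the pro-system along open normal subgroups, exactly in the spirit of the preceding lemma. Recall that by our convention a module over $A\widehat{\otimes}B[[G]]$ is the pro-system $\{M_E\}_{E\subset G}$, and that for $E$ an open (hence finite-index) normal subgroup of the $p$-adic Lie group $G$ the ring $B[[G/E]]$ is a \emph{finite free} $B$-module, so it is again a noncommutative Banach algebra over $\mathbb{Q}_p$ of the kind allowed in \cref{setting2.1} (with unit ball $B^+[G/E]$). Hence for each such $E$ the family $\{M_{i,E}\}_{i=0,1,\dots}$ is a system of $B[[G/E]]$-stably pseudocoherent modules over the rings $A_i\widehat{\otimes}B[[G/E]]$ with, by hypothesis, each transition map
\[
f_{i,E}\colon A_i\widehat{\otimes}B[[G/E]]\otimes_{A_{i+1}\widehat{\otimes}B[[G/E]]}M_{i+1,E}\longrightarrow M_{i,E}
\]
an isomorphism, which is precisely the input of the Section~2 proposition with Banach coefficient algebra $B[[G/E]]$.

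First I would run the Section~2 argument verbatim at each fixed level $E$: choose a finite free covering $T$ of $M_E:=\varprojlim_i M_{i,E}$ with $T_i\twoheadrightarrow M_{i,E}$ for all $i$, set $S_j:=\ker(T_j\to M_{j,E})$, invoke the $B[[G/E]]$-analogue of \cite[Lemma 2.5.6]{KL2} to get that $A_i\widehat{\otimes}B[[G/E]]\otimes S_j\xrightarrow{\ \sim\ }S_i$ and that $\varprojlim_i S_i\twoheadrightarrow S_i$, and then apply the five lemma to the commutative square with bottom row $0\to S_i\to F_i\to M_{i,E}\to 0$ and top row its base change from $\varprojlim_i$. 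This yields, for each fixed $E$, that the projection $\varprojlim_i M_{i,E}\to M_{i,E}$ induces an isomorphism after base change along $A_i\widehat{\otimes}B[[G/E]]$, i.e. $(A_i\widehat{\otimes}B[[G/E]])\otimes_{\mathcal{O}_X(X)\widehat{\otimes}B[[G/E]]}\varprojlim_i M_{i,E}\xrightarrow{\ \sim\ }M_{i,E}$.

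It then remains to reassemble these level-$E$ isomorphisms into a statement of pro-$\mathbb{Q}_p[[G]]$-systems. For $E'\subset E$ one has the compatibility $M_{i,E}=B[[G/E]]\otimes_{B[[G/E']]}M_{i,E'}$ of the truncations, and the finite free coverings $T$ and the kernels $S_j$ can be chosen compatibly in $E$ (equivalently, once $T$ is fixed over $B[[G]]$ itself the maps in the argument are canonical and require no further choice), so the level-$E$ isomorphisms are compatible with the pro-system transition maps and assemble to the desired isomorphism. Here one also needs that $\varprojlim_i$ over the index $i$ commutes with the truncation $\varprojlim_E$; this follows from the vanishing of the relevant $R^1\varprojlim$ terms along both directions, which is exactly the content of the preceding lemma (the $B[[G]]$-relative analogue of \cite[Lemma 2.6.3]{KL2}) together with \cref{lemma2.24}.

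The main obstacle is precisely this interchange of the two inverse limits together with the Mittag--Leffler/$R^1\varprojlim$ bookkeeping: one must make sure that forming $\varprojlim_i$ of the pro-system $\{M_i\}$ genuinely commutes with $\varprojlim_E$, and that the finite free coverings appearing in the Section~2 argument can be chosen uniformly enough in $E$ that the outcome is a morphism of pro-systems and not merely a level-by-level isomorphism. Once the density and $R^1\varprojlim$-vanishing of the preceding lemma are in hand this is routine, so I expect the actual write-up to consist almost entirely of checking these compatibilities and then citing the Banach case.
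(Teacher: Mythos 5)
Your proposal is correct and follows essentially the same route as the paper: the paper's proof also works at a fixed truncation level $E$ (all terms in its five-lemma diagram carry the coefficient $B[[G]]/E$), chooses a finite free covering $T$ of the limit with surjections $T_i\to M_i$, forms the kernels $S_j$, invokes the analogue of \cite[Lemma 2.5.6]{KL2}, and concludes by the five lemma, exactly as in your level-$E$ step. Your additional remarks on reassembling the level-$E$ isomorphisms into a morphism of pro-systems and on interchanging $\varprojlim_i$ with $\varprojlim_E$ are compatibilities the paper leaves implicit, but they do not change the argument.
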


\begin{proof}
This is a $B[[G]]$-relative version of the \cite[Lemma 2.6.4]{KL2}. We adapt the argument to our situation as in the following. First we choose some finite free covering $T$ of the limit $M$ such that we have for each $i$ the corresponding map $T_i\rightarrow M_i$ is surjective. Then we consider the index $j\geq i$ and set the kernel of the map from $T_j$ to $M_j$ to be $S_j$. By the direct analog of \cite[Lemma 2.5.6]{KL2} we have that $A_i\widehat{\otimes}B[[G]]/E\otimes S_j\overset{}{\rightarrow}S_i$ realizes the isomorphism, and we have that the corresponding surjectivity of the corresponding map from $\varprojlim_i S_i$ projecting to the $S_i$. Then one could finish the proof by 5-lemma to the following commutative diagram as in \cite[Lemma 2.6.4]{KL2}:
\[ \tiny
\xymatrix@C+0pc@R+6pc{
 &(A_i\widehat{\otimes}B[[G]]/E)\otimes \varprojlim_i S_i \ar[r]\ar[r]\ar[r] \ar[d]\ar[d]\ar[d] &(A_i\widehat{\otimes}B[[G]]/E)\otimes \varprojlim_i F_i \ar[r]\ar[r]\ar[r] \ar[d]\ar[d]\ar[d] &(A_i\widehat{\otimes}B[[G]]/E)\otimes \varprojlim_i M_i\ar[r]\ar[r]\ar[r]  \ar[d]\ar[d]\ar[d]&0,\\
0 \ar[r]\ar[r]\ar[r] &S_i   \ar[r]\ar[r]\ar[r] &F_i \ar[r]\ar[r]\ar[r] &M_i \ar[r]\ar[r]\ar[r] &0.\\
}
\]
 
\end{proof}

\begin{proposition}  \mbox{\bf{(After Kedlaya-Liu \cite[Theorem 2.6.5]{KL2})}}
For any quasi-compact adic affinoid space of $X$ which is denoted by $Y$, we have that the map $\mathcal{M}(X)\rightarrow \mathcal{M}(Y)$ is surjective for any $B[[G]]$-stably pseudocoherent sheaf $\mathcal{M}$ over the sheaf $\mathcal{O}_X\widehat{\otimes}B[[G]]$.	
\end{proposition}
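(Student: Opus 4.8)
The plan is to reduce the statement to the preceding proposition (the $B[[G]]$-relative analog of \cite[Lemma 2.6.4]{KL2}), exactly along the lines of the corresponding $B$-coefficient result established earlier in the excerpt. The key geometric observation is that, since $X=\varinjlim_i X_i$ is quasi-Stein, the affinoids $X_i$ form an increasing exhaustion with the appropriate relative-compactness/density condition $X_i\Subset X_{i+1}$ between consecutive terms, so a quasi-compact affinoid open $Y\subseteq X$ is contained in some $X_i$ and hence satisfies $Y\Subset X_{i+1}$. Therefore $\{Y,X_{i+1},X_{i+2},\dots\}$ is again a quasi-Stein exhaustion of $X$; I would replace the original exhaustion by this one, so that $Y$ becomes its bottom term. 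Since neither $X$ nor the sheaf $\mathcal{M}$ depends on the chosen exhaustion, this reselection is harmless.

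With this reselected exhaustion in hand, I would apply the preceding proposition to the pro-system of $B[[G]]$-stably pseudocoherent modules $\{\mathcal{M}(Y),\mathcal{M}(X_{i+1}),\dots\}$ attached to the pseudocoherent sheaf $\mathcal{M}$. Here one uses \cref{theorem3.15} over the affinoids appearing in the exhaustion to identify the sections of $\mathcal{M}$ with the corresponding $B[[G]]$-stably pseudocoherent modules, and \cref{theorem3.12} (Tate acyclicity) to describe the base change of $\mathcal{M}(X_{i+1})$ over a finite rational cover of $Y$ inside $X_{i+1}$, which shows that the transition map $\mathcal{O}(Y)\widehat{\otimes}B[[G]]\otimes_{\mathcal{O}(X_{i+1})\widehat{\otimes}B[[G]]}\mathcal{M}(X_{i+1})\rightarrow\mathcal{M}(Y)$ is an isomorphism. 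The preceding proposition then yields that the projection $\mathcal{M}(X)=\varprojlim(\cdots)\rightarrow\mathcal{M}(Y)$ onto the bottom term is an isomorphism, and in particular surjective, which is precisely the claim. As everywhere in this section, all of this is carried out level by level on the pro-system $\{\mathcal{M}_E\}_{E\subset G}$ over $\mathbb{Q}_p[[G]]$, through truncations by open subgroups $E\subset G$, so no input beyond the commutative-coefficient argument is needed.

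The main obstacle is purely bookkeeping rather than mathematics: one must verify that the reselection of the exhaustion is compatible with the completeness and stability hypotheses of the preceding proposition — namely that $\mathcal{M}(Y)$ is indeed $B[[G]]$-stably pseudocoherent over $\mathcal{O}(Y)\widehat{\otimes}B[[G]]$ and that the base-change maps along the (generally non-rational, but quasi-compact) open immersion $Y\hookrightarrow X_{i+1}$ are isomorphisms. Both follow from the glueing results of \cref{theorem3.15} and \cref{theorem3.12} applied to a finite rational cover of $Y$ in $X_{i+1}$, together with the finite generation of each member of the pro-system coming from (the proof of) \cite[Lemma 6.82]{T2}. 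Once these compatibilities are recorded, the statement is, as in the $B$-case, an immediate corollary of the preceding proposition.
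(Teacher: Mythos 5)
Your proposal is correct and follows essentially the same route as the paper, which simply declares the statement a corollary of the preceding proposition (the $B[[G]]$-analog of \cite[Lemma 2.6.4]{KL2}); your reduction — inserting the quasi-compact affinoid $Y$ as the bottom term of a reselected quasi-Stein exhaustion and identifying the transition map into $\mathcal{M}(Y)$ as an isomorphism via \cref{theorem3.15} and \cref{theorem3.12} applied levelwise in $E\subset G$ — is exactly the intended (and in the paper, suppressed) bookkeeping. In fact you supply more justification than the paper does, in particular for the base change along the non-rational open immersion $Y\hookrightarrow X_{i+1}$.
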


\begin{proof}
This is just the corresponding corollary of the previous proposition.	
\end{proof}

\begin{proposition}  \mbox{\bf{(After Kedlaya-Liu \cite[Theorem 2.6.5]{KL2})}}
We have that the stalk $\mathcal{M}_x$ is finitely generated over the stalk $\mathcal{O}_{X,x}$ for any $x\in X$ by $M(X)$, for any $B[[G]]$-stably pseudocoherent sheaf $\mathcal{M}$ over the sheaf $\mathcal{O}_X\widehat{\otimes}B[[G]]$.	
\end{proposition}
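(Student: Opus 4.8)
The plan is to deduce the statement as a corollary of the preceding proposition --- the surjectivity of the restriction map $\mathcal{M}(X)\rightarrow\mathcal{M}(Y)$ for every quasi-compact adic affinoid $Y\subseteq X$ --- together with the description of the stalk $\mathcal{M}_x$ as a filtered colimit over affinoid, and then rational, neighbourhoods of $x$. Everything is carried out level by level in the pro-system over $\mathbb{Q}_p[[G]]$ that defines $\mathcal{M}$.

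First I would fix $x\in X$ and choose an index $i$ with $x\in X_i$, so that $Y:=X_i$ is a quasi-compact adic affinoid contained in $X$, with $\mathcal{O}_X(Y)=A_i$. By the previous proposition the restriction map $\mathcal{M}(X)\rightarrow\mathcal{M}(X_i)$ is surjective, and by \cref{theorem3.15} applied to the affinoid $X_i$ the target $\mathcal{M}(X_i)$ is a $B[[G]]$-stably pseudocoherent module over $A_i\widehat{\otimes}B[[G]]$; in particular each of its $E$-components $\mathcal{M}(X_i)_E$ is finitely generated over $A_i\widehat{\otimes}B[[G]]/E$, pseudocoherent modules being of finite type. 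I then fix a finite generating set of $\mathcal{M}(X_i)_E$ and lift it along the surjection to elements of $\mathcal{M}(X)_E$, that is, to the images of finitely many global sections lying in $M(X)$. Passing to the stalk, one has $\mathcal{O}_{X,x}=\varinjlim_{U\ni x}\mathcal{O}_X(U)$ and $\mathcal{M}_x=\varinjlim_{U\ni x}\mathcal{M}(U)$, the colimits running over a cofinal system of rational subdomains $U$ of $X_i$ containing $x$. By \cref{proposition3.11} the base change of $\mathcal{M}(X_i)$ to each such $U$ is again $B[[G]]$-stably pseudocoherent, hence finitely generated at every level $E$, and the images of the chosen generators still generate $\mathcal{M}(U)_E$ over $\mathcal{O}_X(U)\widehat{\otimes}B[[G]]/E$; taking the colimit then shows that $\mathcal{M}_{x,E}$ is finitely generated over $\mathcal{O}_{X,x}\widehat{\otimes}B[[G]]/E$ by the images of elements of $M(X)$. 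This is precisely the claimed finite generation of the stalk.

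The main point to watch is the pro-system bookkeeping: the cardinality of the generating set produced this way may a priori grow with the open subgroup $E$, so that ``finitely generated'' is here to be read at each finite level $E\subset G$; a bound uniform in $E$ would instead require imposing an $m$-uniform covering hypothesis of the type appearing in \cref{theorem2.31}, which is not part of the present statement. One also uses implicitly, but without difficulty, that the rational subdomains of $X_i$ through $x$ are cofinal among the open neighbourhoods defining the stalk --- a fact about the adic space $X$ alone, unchanged from \cite[Theorem 2.6.5]{KL2} --- and that rational localization preserves completeness and $B[[G]]$-stable pseudocoherence, which is exactly \cref{proposition3.11}.
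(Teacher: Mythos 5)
Your proof is correct and follows essentially the route the paper intends: the paper disposes of this as a corollary of the isomorphism $\varprojlim_i M_i\overset{\sim}{\to}M_i$ (equivalently, of the resulting surjectivity $\mathcal{M}(X)\to\mathcal{M}(X_i)$), combined with finite generation of $\mathcal{M}(X_i)$ over $A_i\widehat{\otimes}B[[G]]/E$ and the colimit description of the stalk over rational subdomains, exactly as you spell out. Your remark that the finite generation is to be read levelwise in $E$ is consistent with the paper's pro-system conventions and is a sensible clarification rather than a deviation.
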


\begin{proof}
This is just the corresponding corollary of the proposition before the previous proposition.	
\end{proof}

\begin{proposition}  \mbox{\bf{(After Kedlaya-Liu \cite[Theorem 2.6.5]{KL2})}}
For any quasi-compact adic affinoid space of $X$ which is denoted by $Y$, we have that the corresponding vanishing of the corresponding sheaf cohomology groups $H^k(X,\mathcal{M})$ of $\mathcal{M}$ for higher $k>0$, for any $B[[G]]$-stably pseudocoherent sheaf $\mathcal{M}$ over the sheaf $\mathcal{O}_X\widehat{\otimes}B[[G]]$.	
\end{proposition}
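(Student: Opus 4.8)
The plan is to reduce this statement to the corresponding result established in the Banach case, namely \cref{proposition2.28}, by passing through the pro-system over $\mathbb{Q}_p[[G]]$. First I would recall that by \cref{setting3.1} and the subsequent setting conventions, a $B[[G]]$-stably pseudocoherent sheaf $\mathcal{M}$ over $\mathcal{O}_X\widehat{\otimes}B[[G]]$ is by definition a pro-system $\{\mathcal{M}_E\}_{E\subset G}$, where for each open subgroup $E\subset G$ the sheaf $\mathcal{M}_E$ is a $B[[G]]/E$-stably pseudocoherent sheaf over $\mathcal{O}_X\widehat{\otimes}B[[G/E]]$. Since $B[[G]]/E = B[[G/E]]$ is itself a Banach algebra over $\mathbb{Q}_p$, each $\mathcal{M}_E$ falls within the scope of the Banach-coefficient theory developed in Section 2, applied over the quasi-Stein space $X$.

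The key steps, in order, are as follows. First, fix an open subgroup $E\subset G$ and apply \cref{proposition2.28} to the $B[[G/E]]$-stably pseudocoherent sheaf $\mathcal{M}_E$ over the quasi-Stein space $X$; this yields $H^k(X,\mathcal{M}_E)=0$ for all $k>0$. Second, invoke the previous proposition (the $B[[G]]$-relative analog of \cite[Lemma 2.6.4]{KL2}) together with the $B[[G]]$-relative \cref{lemma2.24}, which give the density of the image of $\varprojlim_i \mathcal{M}_E(X_i)$ in each $\mathcal{M}_E(X_i)$ and the vanishing of $R^1\varprojlim$ along the covering $\mathfrak{X}=\{X_1,\dots,X_N,\dots\}$, exactly as in the proof of \cref{proposition2.28}. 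Third, translate the sheaf cohomology into \v{C}ech cohomology with respect to $\mathfrak{X}$ using \cite[Tag 01EW]{SP}, so that $\breve{H}^k(X,\mathfrak{X};\mathcal{M}_E)=H^k(X,\mathcal{M}_E)$ and $\breve{H}^k(X_i,\{X_1,\dots,X_i\};\mathcal{M}_E)=H^k(X_i,\mathcal{M}_E)=0$ for $k\geq 1$, the latter from \cref{theorem3.12} over the affinoid pieces. Fourth, for $k>1$ use the surjectivity of the transition maps on the truncated \v{C}ech groups together with \cite[2.6 Hilfssatz]{Kie1} to obtain
\begin{align}
\varprojlim_{j\rightarrow\infty}\breve{H}^{k-1}(X_j,\{X_1,\dots,X_j\};\mathcal{M}_E)\overset{\sim}{\rightarrow}\breve{H}^k(X,\mathfrak{X};\mathcal{M}_E),
\end{align}
and for $k=1$ relate the obstruction directly to $R^1\varprojlim_i \mathcal{M}_E(X_i)$, which vanishes by step two. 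Finally, since all of this holds uniformly for every $E\subset G$ and the cohomology of a pro-system is computed termwise, the vanishing passes to $H^k(X,\mathcal{M})=\{H^k(X,\mathcal{M}_E)\}_{E\subset G}=\{0\}_{E\subset G}$ for all $k>0$.

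The main obstacle I expect is bookkeeping the pro-system structure consistently with the topological finiteness arguments: specifically, ensuring that the density and $R^1\varprojlim$-vanishing statements of \cref{lemma2.24} and its successor, which are proved over a fixed Banach coefficient, can be applied term-by-term over $B[[G/E]]$ and then assembled across the varying $E$ without introducing a new derived-limit obstruction in the $E$-direction. This is precisely the point where \cite[Lemma 2.6.3]{KL2} was adapted by truncating through an open subgroup $E$; the proof here simply cites that adaptation, so the remaining work is routine and the argument proceeds exactly as in the proof of \cref{proposition2.28}, applied to each $\mathcal{M}_E$.

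\begin{proof}
This is just the $B[[G]]$-relative version of \cref{proposition2.28}. One works term by term over the pro-system $\{\mathcal{M}_E\}_{E\subset G}$. For each fixed open subgroup $E\subset G$, the sheaf $\mathcal{M}_E$ is a $B[[G/E]]$-stably pseudocoherent sheaf over $\mathcal{O}_X\widehat{\otimes}B[[G/E]]$ where $B[[G/E]]$ is a Banach algebra over $\mathbb{Q}_p$, so \cref{proposition2.28} applies and gives the vanishing of $H^k(X,\mathcal{M}_E)$ for $k>0$. More precisely, one compares with \v{C}ech cohomology along a covering $\mathfrak{X}=\{X_1,\dots,X_N,\dots\}$ using \cite[Tag 01EW]{SP}:
\begin{align}
\breve{H}^k(X,\mathfrak{X}=\{X_1,\dots,X_N,\dots\};\mathcal{M}_E)=H^k(X,\mathcal{M}_E),\\
\breve{H}^k(X_i,\mathfrak{X}=\{X_1,\dots,X_i\};\mathcal{M}_E)=H^k(X_i,\mathcal{M}_E)=0,\quad k\geq 1,
\end{align}
where the vanishing over the affinoid pieces uses \cref{theorem3.12}. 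For $k>1$ the surjectivity of the transition maps on truncated \v{C}ech groups together with \cite[2.6 Hilfssatz]{Kie1} yields the isomorphism
\begin{align}
\varprojlim_{j\rightarrow\infty}\breve{H}^{k-1}(X_j,\mathfrak{X}=\{X_1,\dots,X_j\};\mathcal{M}_E)\overset{\sim}{\rightarrow}\breve{H}^k(X,\mathfrak{X}=\{X_1,\dots,X_N,\dots\};\mathcal{M}_E),
\end{align}
and for $k=1$ one relates the obstruction to $R^1\varprojlim_i \mathcal{M}_E(X_i)$, which vanishes by the $B[[G]]$-relative analog of \cite[Lemma 2.6.3]{KL2} obtained by truncating through $E$. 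Since this holds uniformly in $E$, the vanishing passes to the whole pro-system, giving $H^k(X,\mathcal{M})=0$ for $k>0$.
\end{proof}
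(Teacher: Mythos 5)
Your proposal is correct and follows exactly the route the paper takes: the paper's own proof simply says to see the proof of \cref{proposition2.28} and to work over a quotient, which is precisely your term-by-term reduction over each $B[[G/E]]$ combined with the \v{C}ech comparison, Kiehl's Hilfssatz, and the $R^1\varprojlim$ vanishing. You have merely written out the details that the paper leaves implicit.
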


\begin{proof}
See the proof of \cref{proposition2.28}. Again in our situation we only have to work over some quotient.

\end{proof}

\begin{corollary}  \mbox{\bf{(After Kedlaya-Liu \cite[Corollary 2.6.6]{KL2})}}
The corresponding functor from the corresponding $B[[G]]$-deformed pseudocoherent sheaves over $X$ to the corresponding $B[[G]]$-stably by taking the corresponding global section is an exact functor.
\end{corollary}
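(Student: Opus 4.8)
The statement is the $B[[G]]$-relative analog of \cite[Corollary 2.6.6]{KL2}, and as there the argument is formal once the cohomology vanishing is in place. The plan is to combine the long exact sequence in sheaf cohomology with the vanishing of $H^{k}(X,\mathcal{M})$ for $k>0$ established in the preceding proposition (the $B[[G]]$-relative analog of \cite[Theorem 2.6.5]{KL2}), working throughout level by level over the quotients $B[[G]]/E$ for $E\subset G$ an open subgroup, since both pseudocoherence and the relevant exactness are defined on the pro-system $\{M_{E}\}_{E\subset G}$ termwise.

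First I would note that global sections $\mathcal{M}\mapsto\mathcal{M}(X)$ is left exact for formal reasons (it is a limit of sections functors), so only right exactness needs an argument. Given a short exact sequence $0\to\mathcal{M}'\to\mathcal{M}\to\mathcal{M}''\to0$ of $B[[G]]$-pseudocoherent sheaves over $X$, the kernel sheaf $\mathcal{M}'$ is again pseudocoherent: a kernel of a morphism between $\infty$-pseudocoherent modules is $\infty$-pseudocoherent, and this propagates through the local descriptions via \cref{theorem3.15} and its quasi-Stein counterpart. In particular $\mathcal{M}'$ is $B[[G]]$-stably pseudocoherent on each affinoid piece $X_{i}$, so the vanishing proposition applies to it and yields $H^{1}(X,\mathcal{M}')=0$.

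Then the long exact cohomology sequence attached to $0\to\mathcal{M}'\to\mathcal{M}\to\mathcal{M}''\to0$ reads
\[
0\longrightarrow \mathcal{M}'(X)\longrightarrow \mathcal{M}(X)\longrightarrow \mathcal{M}''(X)\longrightarrow H^{1}(X,\mathcal{M}')=0 ,
\]
so $\mathcal{M}(X)\to\mathcal{M}''(X)$ is surjective and the sequence of global sections is exact. One checks as in \cite[Corollary 2.6.6]{KL2} that this exactness is strict for the natural (Fr\'echet / pro-Banach) topologies: the terms are complete, the relevant surjections come from genuine surjections of Banach modules over the $B[[G]]/E$ after passing to the inverse limit over the exhausting family $\{X_{i}\}$ as in \cref{lemma2.24}, and the associated $R^{1}\varprojlim$ terms vanish there, so strictness survives the limit.

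The only genuine point of care --- hence the part I expect to be the main obstacle --- is the bookkeeping that keeps the output inside the category of $B[[G]]$-stably pseudocoherent modules over $\mathcal{O}_{X}(X)\widehat{\otimes}B[[G]]$ and keeps all maps strict after the double passage to the inverse limit over $i$ and over $E$. But this is precisely what the finiteness and $R^{1}\varprojlim$-vanishing results of this subsection were arranged to supply, so no new idea is needed beyond assembling them.
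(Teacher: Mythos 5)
Your argument is correct and is exactly the intended one: the paper states this as an immediate corollary of the preceding vanishing proposition (the $B[[G]]$-analog of \cite[Theorem 2.6.5]{KL2}), and the long exact sequence in cohomology together with $H^{1}(X,\mathcal{M}')=0$, applied termwise over the quotients $B[[G]]/E$, is precisely how that implication goes. Note only that in checking exactness of a functor all three terms of the short exact sequence are already assumed to lie in the category of pseudocoherent sheaves, so your aside about kernels of morphisms of pseudocoherent modules is not needed.
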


\begin{corollary}  \mbox{\bf{(After Kedlaya-Liu \cite[Corollary 2.6.8]{KL2})}}
Consider a particular $\mathcal{O}_X\widehat{\otimes}B[[G]]$-pseudocoherent sheaf $\mathcal{M}$ which is finite locally free throughout the whole space $X$. Then we have that the global section $\mathcal{M}(X)$ as $\mathcal{O}_X(X)\widehat{\otimes}B[[G]]$ left module admits the corresponding structures of finite projective structure if and only if we have the corresponding global section is finitely generated.
\end{corollary}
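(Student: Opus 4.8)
The ``only if'' direction is immediate from the definitions and is insensitive to the pro-system: a finite projective left module over $\mathcal{O}_X(X)\widehat{\otimes}B[[G]]$ is in particular finitely generated, levelwise over each $\mathcal{O}_X(X)\widehat{\otimes}B[[G]]/E$. So the content is the converse, and the plan is to globalize a finite free presentation of the global sections and then glue the local splittings afforded by the finite local freeness of $\mathcal{M}$, carrying out the whole construction compatibly over the tower of quotients $B[[G]]/E$.

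First I would assume $\mathcal{M}(X)$ is generated by $n$ elements (uniformly in $E$) and lift a choice of generators to a surjection $\mathcal{F}(X) := (\mathcal{O}_X(X)\widehat{\otimes}B[[G]])^n \twoheadrightarrow \mathcal{M}(X)$. Sheafifying through the equivalence of \cref{theorem3.15} and the quasi-Stein results of this subsection produces a surjection of sheaves $\mathcal{F}\twoheadrightarrow\mathcal{M}$ whose kernel $\mathcal{K}$ is again $B[[G]]$-pseudocoherent, by the preservation of $B[[G]]$-stable pseudocoherence under rational localization (\cref{proposition3.11}) together with the fact that the kernel of such a surjection of pseudocoherent sheaves is pseudocoherent; moreover $\mathcal{K}$ is finite locally free, since on a small enough subdomain the surjection $\mathcal{F}\to\mathcal{M}$ splits (a finite projective module over a local ring being finite free), exhibiting $\mathcal{K}$ locally as a direct summand of a finite free module.

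Next I would invoke the $B[[G]]$-relative analog of \cref{proposition2.28} established above, namely the vanishing of $H^k(X,\mathcal{N})$ for $k>0$ whenever $\mathcal{N}$ is a $B[[G]]$-stably pseudocoherent sheaf over $\mathcal{O}_X\widehat{\otimes}B[[G]]$ on the quasi-Stein space $X$. Applied to $\mathcal{K}$ it yields exactness of $0\to\mathcal{K}(X)\to\mathcal{F}(X)\to\mathcal{M}(X)\to 0$; applied to the sheaf $\mathcal{H}$ of $\mathcal{O}_X\widehat{\otimes}B[[G]]$-linear homomorphisms $\mathcal{M}\to\mathcal{K}$ — which is finite locally free, hence $B[[G]]$-stably pseudocoherent — it yields $H^1(X,\mathcal{H})=0$. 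Over each member of an affinoid cover of $X$ the surjection $\mathcal{F}\to\mathcal{M}$ admits a splitting (using Tate acyclicity, \cref{theorem3.12}, for surjectivity on sections there, and finite local freeness of $\mathcal{M}$ for the splitting), and two splittings over an overlap differ by a section of $\mathcal{H}$; the obstruction to a global splitting therefore lies in $H^1(X,\mathcal{H})=0$, so there is a global splitting $\mathcal{M}(X)\hookrightarrow\mathcal{F}(X)$, realizing $\mathcal{M}(X)$ as a direct summand of a finite free module and hence finite projective.

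The step I expect to be the main obstacle is compatibility across the pro-system $\{M_E\}_{E\subset G}$: the generators, the finite free presentations, the local splittings, and the cohomology-vanishing inputs must all be arranged so that the levelwise constructions over $A(X)\widehat{\otimes}B[[G]]/E$ assemble into data over $A(X)\widehat{\otimes}B[[G]]$. I would handle this exactly as in the proof of \cref{lemma2.24}: choose Banach norms on the $M_E$ making the transition maps bounded surjections with norm contraction, deduce the vanishing of the relevant $R^1\varprojlim$, and take $\varprojlim_E$ of the levelwise splittings to obtain the global splitting. Granting this bookkeeping, and reading ``finite locally free'' for the pro-system levelwise with locally constant rank, the argument runs parallel to \cite[Corollary 2.6.8]{KL2}.
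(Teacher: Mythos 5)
Your argument is correct and is essentially the paper's own proof, which simply defers to \cite[Corollary 2.6.8]{KL2}: realize $\mathcal{M}$ as a quotient of a finite free sheaf, split locally using finite local freeness, and kill the obstruction class in $H^1$ of the Hom-sheaf via the quasi-Stein cohomology vanishing. Your additional care with the pro-system over the quotients $B[[G]]/E$ (via the $R^1\varprojlim$ vanishing of \cref{lemma2.24}) supplies bookkeeping the paper leaves implicit, and is consistent with how the paper handles the $B[[G]]$ case elsewhere.
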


\begin{proof}
As in \cite[Corollary 2.6.8]{KL2} one could find some global splitting through the local splittings.
\end{proof}

\indent We now consider the following $B[[G]]$-relative analog of \cite[Proposition 2.6.17]{KL2}:

\begin{theorem} \mbox{\bf{(After Kedlaya-Liu \cite[Proposition 2.6.17]{KL2})}} \label{theorem3.32}
Consider the following two statements for a particular $\mathcal{O}_X\widehat{\otimes}B[[G]]$-pseudocoherent sheaf $\mathcal{M}$. First is that one can find finite many generators (the number is up to some uniform fixed integer $n\geq 0$) for each section of $\mathcal{M}(X_i)$ for each $i=1,2,...$. The second statement is that the global section $\mathcal{M}(X)$ is just finitely generated. Then in our situation the two statement is equivalent if we have that the corresponding space $X$ admits a $m$-uniformed covering in the exact same sense of \cite[Proposition 2.6.17]{KL2}. 	
\end{theorem}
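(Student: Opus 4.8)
The plan is to reduce to the Banach case already treated in \cref{theorem2.31} by working level-by-level in the pro-system $\{B[[G]]/E\}_{E\subset G}$, and then to run the approximation argument of \cite[Proposition 2.6.17]{KL2} at each finite level while keeping all choices compatible along the transition maps. First I would unwind the definitions: a $\mathcal{O}_X\widehat{\otimes}B[[G]]$-pseudocoherent sheaf $\mathcal{M}$ is a pro-system $\{\mathcal{M}_E\}_E$ of $\mathcal{O}_X\widehat{\otimes}B[[G]]/E$-pseudocoherent sheaves, and both the hypothesis (each $\mathcal{M}(X_i)$ generated by at most $n$ elements) and the conclusion (finite generation of $\mathcal{M}(X)$) are read level-wise, i.e. for each open subgroup $E\subset G$ one has $\mathcal{M}_E(X_i)$ generated by at most $n$ elements and one wants $\mathcal{M}_E(X)$ finitely generated with generating families compatible in $E$. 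The direction ``$\mathcal{M}(X)$ finitely generated $\Rightarrow$ each $\mathcal{M}(X_i)$ finitely generated'' is immediate, since restriction along $X_i\hookrightarrow X$ is the relevant base change and carries a generating set to a generating set.

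For the nontrivial direction I would fix $E$ and observe that the geometric input of \cite[Proposition 2.6.17]{KL2} — the quasi-Stein presentation $X=\varinjlim_i X_i$, the $m$-uniform covering $\{Y_u\}_{u\in U}$, the auxiliary opens $V_u=X_{i(u)}\cup_{v\leq u}Y_v$, and the elements $x_u\in\mathcal{O}_X(X)$ that are topologically nilpotent on $V_u$ and have topologically nilpotent inverse on $Y_u$ — is literally unchanged, because it concerns only the commutative space $X$ and its structure sheaf, which the deformation does not touch. Starting from generators of $\mathcal{M}_E(Y_u)$ of cardinality at most $n$, I would run the inductive construction $y_{u,j}:=y_{u-1,j}+x_u^{c}\,y_u$ with $c$ taken as large as the valuations allow, exactly as in \cref{theorem2.31}, and obtain a convergent limit $\lim_{u\to\infty}\{y_{u,1},\dots,y_{u,n}\}$ that generates $\mathcal{M}_E(X)$. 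Convergence and the fact that the limit is a generating set use \cref{proposition3.11} (stable pseudocoherence is preserved under rational localization), the analytic-topology gluing \cref{theorem3.15}, and the surjectivity of $\mathcal{M}_E(X)\to\mathcal{M}_E(Y)$ for $Y$ a quasi-compact affinoid in $X$ established earlier in this subsection.

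The one genuinely new point, and the step I expect to be the main obstacle, is \textbf{compatibility across the pro-system}: the generating families produced at levels $E'\subset E$ must map to one another under the transition morphisms $\mathcal{M}_{E'}\to\mathcal{M}_E$. I would handle this by making the construction functorial in $E$: choose the seed generators of $\mathcal{M}_{E'}(Y_u)$ to lift those of $\mathcal{M}_E(Y_u)$, which is possible because these transition maps are surjective (the same input used in the proof of \cref{lemma3.14}, ultimately \cite[Lemma 6.82]{T2}), and note that the recursion $y_{u,j}\mapsto y_{u-1,j}+x_u^{c}y_u$ commutes with the transition maps since $x_u\in\mathcal{O}_X(X)$ lies in the base ring and the exponent $c$ can be fixed independently of $E$. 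Surjectivity of the transition maps also supplies the Mittag--Leffler condition ensuring that the level-wise generating families assemble to a generating family of the pro-system $\{\mathcal{M}_E(X)\}_E$, i.e. of $\mathcal{M}(X)$ over $A\widehat{\otimes}B[[G]]$, which completes the argument; the uniform bound $n$ is preserved throughout precisely because it is assumed uniform over all $X_i$ and all $E$ by hypothesis.
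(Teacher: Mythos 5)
Your proposal is correct and follows essentially the same route as the paper, whose entire proof of this theorem is the single line ``See the proof of \cref{theorem2.31}'' — i.e.\ run the $m$-uniform-covering approximation argument of \cite[Proposition 2.6.17]{KL2} exactly as in the Banach case, since the geometric input concerns only the commutative space $X$. Your additional paragraph on compatibility of the generating families across the pro-system $\{B[[G]]/E\}_E$ addresses a point the paper leaves entirely implicit, and is a worthwhile supplement rather than a deviation.
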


\begin{proof}
See the proof of \cref{theorem2.31}.
\end{proof}

\newpage

\section{Foundations on Noncommutative Descent for Adic Spectra in Limit of Fr\'echet Case}

\subsection{Noncommutative Pseudocoherence in Analytic Topology}

\indent We now establish some foundations in the noncommutative setting on glueing noncommutative pseudocoherent modules after \cite[Chapter 2]{KL2}. But we remind the readers that we will fix the base space:

\begin{setting} \label{setting4.1}
Consider a corresponding sheafy Banach adic uniform algebra $(A,A^+)$ over $\mathbb{Q}_p$ or $\mathbb{F}_p((t))$, we consider the base space $\mathrm{Spa}(A,A^+)$. And now we will consider a further noncommutative Banach algebra $(B,B^+)$ over $\mathbb{Q}_p$ or $\mathbb{F}_p((t))$ which could be written as the following injective limit:
\begin{displaymath}
B=\varinjlim_{h} B_h,	
\end{displaymath}
where each $B_h$ is Banach (or more general Fr\'echet although we do not consider such generality). 	
\end{setting}

\begin{example}
There are many interesting models in our current context, for instance the period ring $B_e$ as considered by Berger in \cite{Ber1} and the corresponding Robba rings in the full setting as in \cite{KL2} (note that the latter is really indeed ind-Fr\'echet).	
\end{example}

\begin{definition} \mbox{\bf{(After Kedlaya-Liu \cite[Definition 2.4.1]{KL2})}}
For any left $A\widehat{\otimes}B$-module $M$, we call it $m$-$B$-stably pseudocoherent if we have that it is $m$-$B$-pseudocoherent, complete for the natural topology and for any morphism $A\rightarrow A'$ which is the corresponding rational localization, the base change of $M$ to $A'\widehat{\otimes} B$ is complete for the natural LF topology as the corresponding left $A'\widehat{\otimes} B$-module.	As in \cite[Definition 2.4.1]{KL2} we call that the corresponding left $A\widehat{\otimes}B$-module $M$ just $B$-stably pseudocoherent if we have that it is simply just $\infty$-$B$-stably pseudocoherent.
\end{definition}

\begin{definition}\mbox{\bf{(After Kedlaya-Liu \cite[Definition 2.4.4]{KL2})}}
For any Banach left $A\widehat{\otimes}B$-module $M$, we call it is $m$-$B$-pseudoflat if for any right $A\widehat{\otimes}B$-module $M'$ $m$-$B$-stably pseudocoherent we have $\mathrm{Tor}_1^{A\widehat{\otimes}B}(M',M)=0$. For any Banach right $A\widehat{\otimes}B$-module $M$, we call it is $m$-$B$-pseudoflat if for any left $A\widehat{\otimes}B$-module $M'$ $m$-$B$-stably pseudocoherent we have $\mathrm{Tor}_1^{A\widehat{\otimes}B}(M,M')=0$.
\end{definition}

\begin{definition}\mbox{\bf{(After Kedlaya-Liu \cite[Definition 2.4.6]{KL2})}} We consider the corresponding notion of the corresponding pro-projective module. We define over $A$ the corresponding $B$-pro-projective module $M$ to be a corresponding left $A\widehat{\otimes}B$-module such that one could find a filtered sequence of projectors such that in the sense of taking the prolimit we have the corresponding projector will converge to any chosen element in $M$. Here we assume the module is complete with respect to natural topology, and we assume the projectors are $A\widehat{\otimes}\varinjlim_h B_h$-linear and we assume that the corresponding image of the projectors are modules which are also finitely generated and projective.
	
\end{definition}

\begin{lemma}\mbox{\bf{(After Kedlaya-Liu \cite[Lemma 2.4.7]{KL2})}}
Suppose we have over the space $\mathrm{Spa}(A,A^+)$ a corresponding $B$-pro-projective left module $M$. And suppose that we have a $2$-$B$-pseudoflat right module $C$ over $A$. And we assume that $C$ is complete for the LF topology. Then we have that the corresponding product $C{\otimes}_{A\widehat{\otimes}B}M$ is then complete under the corresponding natural topology. And moreover we have that in our situation:
\begin{displaymath}
\mathrm{Tor}_1^{A\widehat{\otimes}B}(C,M)=0.	
\end{displaymath}

\end{lemma}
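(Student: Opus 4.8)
The plan is to imitate the proof of the Banach case (the lemma numbered after \cite[Lemma 2.4.7]{KL2} earlier in the excerpt) and the original \cite[Lemma 2.4.7]{KL2}, carefully tracking the fact that here $B=\varinjlim_h B_h$ is a limit of Fr\'echet (indeed Banach) pieces, so that $A\widehat{\otimes}B$ carries an LF topology rather than a Banach one. First I would reduce to a presentation: since $M$ is $B$-pro-projective, write down a finite presentation
\[
\xymatrix@C+0pc@R+0pc{
M' \ar[r] & (A\widehat{\otimes}B)^k \ar[r] & M \ar[r] & 0,
}
\]
with $M'$ finitely presented, where the presentation is obtained as usual from a single projector in the filtered system whose image approximates a chosen finite generating set. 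The key point is that each projector factors through a finitely generated projective module, and these are complete for the LF topology because finitely generated modules over an LF algebra inherit the LF (hence complete) topology.

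Next I would form the comparison square between the algebraic and completed tensor products:
\[
\xymatrix@C+0pc@R+3pc{
&C\otimes_{A\widehat{\otimes}B}M' \ar[r] \ar[d] &C\otimes_{A\widehat{\otimes}B}(A\widehat{\otimes}B)^k \ar[r] \ar[d] & C\otimes_{A\widehat{\otimes}B}M \ar[r] \ar[d] &0,\\
0 \ar[r] &C\widehat{\otimes}_{A\widehat{\otimes}B}M' \ar[r] &C\widehat{\otimes}_{A\widehat{\otimes}B}(A\widehat{\otimes}B)^k \ar[r] & C\widehat{\otimes}_{A\widehat{\otimes}B}M \ar[r] &0.
}
\]
The top row is right exact by right exactness of $\otimes$, and is exact on the left after identifying $C\otimes_{A\widehat{\otimes}B}M'$ correctly because $C$ is $2$-$B$-pseudoflat and $M'$ is finitely presented (so the relevant $\mathrm{Tor}_1$ vanishes). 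The bottom row is exact because $M$ is $B$-pro-projective: completing a split-exact sequence of projective pieces and passing to the filtered colimit/limit preserves exactness, exactly as in \cite[Lemma 2.4.7]{KL2}. The outer vertical maps for the free module $(A\widehat{\otimes}B)^k$ and for the finitely presented $M'$ are isomorphisms (the algebraic and completed tensor products of a finitely presented module with $C$ agree, using completeness of the finitely presented modules for the LF topology and the analog of \cite[Lemma 2.4.7]{KL2}). A diagram chase then forces $C\otimes_{A\widehat{\otimes}B}M \to C\widehat{\otimes}_{A\widehat{\otimes}B}M$ to be an isomorphism, giving completeness of $C\otimes_{A\widehat{\otimes}B}M$; and the same chase, applied one step to the left, yields the vanishing of $\mathrm{Tor}_1^{A\widehat{\otimes}B}(C,M)$.

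The main obstacle, compared with the Banach case, is checking that the completed tensor product behaves well with respect to the LF structure: one must know that $C\widehat{\otimes}_{A\widehat{\otimes}B}(-)$ is exact on the relevant short exact sequences of LF-modules, and that the colimit $B=\varinjlim_h B_h$ commutes with the completed tensor products and with the (pro-)limits defining the projectors. I would handle this by working at finite level $B_h$ first — where everything is Banach and the argument is literally that of \cite[Lemma 2.4.7]{KL2} — and then passing to the colimit over $h$, using that filtered colimits of exact sequences are exact and that LF completions are compatible with such colimits in the sense of \cite{KL2}. The completeness statements for finitely generated/finitely presented modules over $A\widehat{\otimes}B$ in the LF topology, which are the only genuinely new ingredient, follow from the standard fact that a finitely generated module over an LF algebra, equipped with the quotient topology from a finite free module, is itself LF and in particular complete.
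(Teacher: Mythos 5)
Your proposal follows essentially the same route as the paper's proof: a finite presentation $M'\rightarrow (A\widehat{\otimes}B)^k\rightarrow M\rightarrow 0$ extracted from the projectors, the comparison diagram between the algebraic and completed tensor products, exactness of the top row via $2$-$B$-pseudoflatness and of the bottom row via pro-projectivity, and a diagram chase to conclude both completeness and the vanishing of $\mathrm{Tor}_1$. The only addition is your reduction to the Banach levels $B_h$ followed by a passage to the colimit, which the paper leaves implicit but which is a reasonable way to justify the LF-topological points it glosses over.
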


\begin{proof}
This will be just a noncommutative and LF version of the corresponding  \cite[Lemma 2.4.7]{KL2}. We adapt the corresponding argument in \cite[Lemma 2.4.7]{KL2} to our situation. What we are going to consider in this case is then first consider the following presentation:
\[
\xymatrix@C+0pc@R+0pc{
M'  \ar[r]\ar[r]\ar[r] &(A\widehat{\otimes}\varinjlim_h B_h)^k \ar[r]\ar[r]\ar[r] & M\ar[r]\ar[r]\ar[r] &0,
}
\]
where the left module $M'$ is finitely presented. Then we consider the following commutative diagram:
\[
\xymatrix@C+0pc@R+3pc{
&C\otimes_{A\widehat{\otimes}\varinjlim_h B_h}M'  \ar[r]\ar[r]\ar[r]  \ar[d]\ar[d]\ar[d] &C\otimes_{A\widehat{\otimes}\varinjlim_h B_h}(A\widehat{\otimes}\varinjlim_h B_h)^k \ar[r]\ar[r]\ar[r] \ar[d]\ar[d]\ar[d] & C\otimes_{A\widehat{\otimes}\varinjlim_h B_h}M\ar[r]\ar[r]\ar[r] \ar[d]\ar[d]\ar[d] &0,\\
0 \ar[r]\ar[r]\ar[r] &C\widehat{\otimes}_{A\widehat{\otimes}\varinjlim_h B_h}M'  \ar[r]\ar[r]\ar[r] &C\widehat{\otimes}_{A\widehat{\otimes}B}(A\widehat{\otimes}\varinjlim_h B_h)^k \ar[r]\ar[r]\ar[r] & C\widehat{\otimes}_{A\widehat{\otimes}\varinjlim_h B_h}M\ar[r]\ar[r]\ar[r] &0.
}
\]\\
The first row is exact as in \cite[Lemma 2.4.7]{KL2}, and we have the corresponding second row is also exact since we have that by hypothesis the corresponding module $M$ is $B$-pro-projective.  
Then as in \cite[Lemma 2.4.7]{KL2} the results follow.	
\end{proof}

\begin{proposition} \mbox{\bf{(After Kedlaya-Liu \cite[Corollary 2.4.8]{KL2})}}
Over the space $\mathrm{Spa}(A,A^+)$, we have any $B$-pro-projective left module is $2$-$B$-pseudoflat.
\end{proposition}

\begin{proof}
This is a direct consequence of the previous lemma.	
\end{proof}

\begin{lemma} \mbox{\bf{(After Kedlaya-Liu \cite[Corollary 2.4.9]{KL2})}}
Keep the notation above, suppose we are working over the adic space $
\mathrm{Spa}(A,A^+)$. Suppose now the left $A\widehat{\otimes}B$-module is finitely generated, then we have that the following natural maps:
\begin{align}
A\{T\}\widehat{\otimes}B\otimes_{A\widehat{\otimes}B}M 	\rightarrow M\{T\},\\\
A\{T,T^{-1}\}\widehat{\otimes}B \otimes_{A\widehat{\otimes}B} M 	\rightarrow M\{T,T^{-1}\}	
\end{align}
are surjective. Suppose now the left $A\widehat{\otimes}B$-module is finitely presented, then we have that the following natural maps:
\begin{align}
A\{T\}\widehat{\otimes}B\otimes_{A\widehat{\otimes}B}M 	\rightarrow M\{T\},\\\
A\{T,T^{-1}\}\widehat{\otimes}B \otimes_{A\widehat{\otimes}B} M 	\rightarrow M\{T,T^{-1}\}	
\end{align}
are bijective. Here all the modules are assumed to be complete for the LF topology.

\end{lemma}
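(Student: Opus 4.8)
The plan is to run the Banach argument underneath the statement almost verbatim, carrying the ind-Fr\'echet bookkeeping along the filtered system $B=\varinjlim_h B_h$. First I would record the structural identities $(A\{T\})\widehat{\otimes}B\cong (A\widehat{\otimes}B)\{T\}$ and $(A\{T,T^{-1}\})\widehat{\otimes}B\cong (A\widehat{\otimes}B)\{T,T^{-1}\}$, both of which one reads off from the fact that $\widehat{\otimes}$ commutes with the filtered colimit $\varinjlim_h$ and with the ``countable direct sum, then complete'' operation that defines $A\{T\}$ and $A\{T,T^{-1}\}$; the same identities hold with $B$ replaced by each $B_h$. In particular, when $M=(A\widehat{\otimes}B)^k$ is finite free, both sides of each asserted map are $(A\{T\}\widehat{\otimes}B)^k$, respectively $(A\{T,T^{-1}\}\widehat{\otimes}B)^k$, and the maps are the identity.

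For the finitely generated case I would choose a strict surjection $(A\widehat{\otimes}B)^k\twoheadrightarrow M$ and compare, through the evident commutative square, the map for $M$ with the identity map for $(A\widehat{\otimes}B)^k$. Right exactness of $\otimes_{A\widehat{\otimes}B}$ and the fact that $A\{T\}\widehat{\otimes}B\otimes_{A\widehat{\otimes}B}(-)$ carries a strict surjection of complete LF-modules to a map with dense image give that the map for $M$ has dense image; its image is moreover closed because $\{T\}$- and $\{T,T^{-1}\}$-completion send a strict surjection to a strict surjection, which I would check level by level along the $B_h$ (where it is the open-mapping behaviour of a Fr\'echet quotient) and then inherit for $B$ using exactness of filtered colimits of Banach spaces. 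Surjectivity for $M$ then follows from the free case.

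For the finitely presented case I would take a presentation $(A\widehat{\otimes}B)^{k_1}\to (A\widehat{\otimes}B)^{k_0}\to M\to 0$, apply the two functors $A\{T\}\widehat{\otimes}B\otimes_{A\widehat{\otimes}B}(-)$ and $A\{T,T^{-1}\}\widehat{\otimes}B\otimes_{A\widehat{\otimes}B}(-)$, and compare the resulting exact rows with $(-)\{T\}$, respectively $(-)\{T,T^{-1}\}$, applied to the same presentation. The left and middle verticals are the isomorphisms of the free case and the right vertical is the surjection just produced, so by a snake-lemma chase bijectivity reduces to the vanishing $\mathrm{Tor}_1^{A\widehat{\otimes}B}(A\{T\}\widehat{\otimes}B,M)=0$ and its $\{T,T^{-1}\}$-analogue. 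This I get from the machinery already in place: $A\{T\}\widehat{\otimes}B$ and $A\{T,T^{-1}\}\widehat{\otimes}B$ are $B$-pro-projective over $A\widehat{\otimes}B$, the projectors being the truncations onto the finite free submodules $\bigoplus_{|n|\le N}(A\widehat{\otimes}B)T^n$; hence they are $2$-$B$-pseudoflat by the proposition that $B$-pro-projective modules are $2$-$B$-pseudoflat, and the LF pro-projective lemma immediately preceding the present statement supplies both the completeness of the tensor products and the required $\mathrm{Tor}_1$-vanishing, exactly as in the Banach case.

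The main obstacle is entirely topological rather than algebraic: one must be sure that in the ind-Fr\'echet world the operations $\widehat{\otimes}$, $M\mapsto M\{T\}$, $M\mapsto M\{T,T^{-1}\}$, passage to kernels, and $\varinjlim_h$ all commute up to strict isomorphism, so that the short exact sequences used above stay short exact after the completions. I would neutralise this by always arguing one level at a time over the Banach algebras $B_h$ --- where every assertion is the already-proved Banach lemma --- and only then passing to the filtered colimit, using exactness of filtered colimits of Banach spaces; the one input that is not purely formal is that the kernel $N$ of a morphism of finite free LF-modules is again finitely generated and complete for its natural LF topology, which I would take from the finite-presentation hypothesis together with the preservation of $B$-stable pseudocoherence established earlier (under rational localization and through the pro-projective lemma).
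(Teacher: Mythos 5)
Your argument is correct and is essentially the route the paper intends: its entire proof is ``this is a consequence of the previous lemma,'' i.e.\ precisely your key step that $A\{T\}\widehat{\otimes}B$ and $A\{T,T^{-1}\}\widehat{\otimes}B$ are $B$-pro-projective via the truncation projectors onto $\bigoplus_{|n|\le N}(A\widehat{\otimes}B)T^n$, hence $2$-$B$-pseudoflat, which gives the $\mathrm{Tor}_1$-vanishing and completeness needed for the finitely presented case, with the finitely generated case handled by lifting generators. You have simply written out, including the level-by-level reduction to the Banach algebras $B_h$, the details the paper leaves implicit.
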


\begin{proof}
This is the corresponding consequence of the previous lemma.
\end{proof}

\begin{proposition} \mbox{\bf{(After Kedlaya-Liu \cite[Lemma 2.4.12]{KL2})}}
We keep the corresponding notations in \cite[Lemma 2.4.10]{KL2}. Then in our current sense we have that the corresponding morphism $A\rightarrow B_2$ is then $2$-$B$-pseudoflat. 
\end{proposition}

\begin{proof}
The corresponding proof could be made parallel to the corresponding proof of \cite[Lemma 2.4.12]{KL2}. To be more precise we consider the following commutative diagram from the corresponding chosen exact sequence (which is just the analog of the corresponding one in \cite[Lemma 2.4.12]{KL2}, namely as below we choose arbitrary short exact sequence with $P$ as a left $A\widehat{\otimes}\varinjlim_h B_h$-module which is $2$-$\varinjlim_h B_h$-stably pseudocoherent):
\[
\xymatrix@C+0pc@R+0pc{
0   \ar[r]\ar[r]\ar[r] &M \ar[r]\ar[r]\ar[r] &N \ar[r]\ar[r]\ar[r] & P \ar[r]\ar[r]\ar[r] &0,
}
\]
which then induces the following corresponding big commutative diagram:
\[\tiny
\xymatrix@C+0pc@R+4pc{
& &0 \ar[d]\ar[d]\ar[d] &0 \ar[d]\ar[d]\ar[d] &,\\
0   \ar[r]\ar[r]\ar[r]  &{A\{T\}\widehat{\otimes}\varinjlim_h B_h}\otimes_{A\widehat{\otimes}\varinjlim_h B_h} M \ar[r]\ar[r]\ar[r] \ar[d]^{1-fT}\ar[d]\ar[d] &{A\{T\}\widehat{\otimes}\varinjlim_h B_h}\otimes_{A\widehat{\otimes}\varinjlim_h B_h} N \ar[r]\ar[r]\ar[r] \ar[d]^{1-fT}\ar[d]\ar[d] &{A\{T\}\widehat{\otimes}\varinjlim_h B_h} \otimes_{A\widehat{\otimes}\varinjlim_h B_h}P \ar[r]\ar[r]\ar[r] \ar[d]^{1-fT}\ar[d]\ar[d] &0,\\
0   \ar[r]\ar[r]\ar[r] &{A\{T\}\widehat{\otimes}\varinjlim_h B_h}\otimes_{A\widehat{\otimes}\varinjlim_h B_h}M  \ar[r]\ar[r]\ar[r] \ar[d]\ar[d]\ar[d] &{A\{T\}\widehat{\otimes}\varinjlim_h B_h} \otimes_{A\widehat{\otimes}\varinjlim_h B_h}N\ar[r]\ar[r]\ar[r] \ar[d]\ar[d]\ar[d] & {A\{T\}\widehat{\otimes}\varinjlim_h B_h}\otimes_{A\widehat{\otimes}\varinjlim_h B_h}P \ar[r]\ar[r]\ar[r] \ar[d]\ar[d]\ar[d] &0,\\
0  \ar[r]^?\ar[r]\ar[r] &{B_2\widehat{\otimes}\varinjlim_h B_h}\otimes_{A\widehat{\otimes}\varinjlim_h B_h} M \ar[r]\ar[r]\ar[r] \ar[d]\ar[d]\ar[d] &{B_2\widehat{\otimes}\varinjlim_h B_h}\otimes_{A\widehat{\otimes}\varinjlim_h B_h}N \ar[r]\ar[r]\ar[r] \ar[d]\ar[d]\ar[d] & {B_2\widehat{\otimes}\varinjlim_h B_h}\otimes_{A\widehat{\otimes}\varinjlim_h B_h} P \ar[r]\ar[r]\ar[r] \ar[d]\ar[d]\ar[d] &0,\\
&0&0&0
}
\]
with the notations in \cite[Lemma 2.4.10]{KL2} where we actually have the corresponding exactness along the horizontal direction at the corner around $M\otimes_{A\widehat{\otimes}\varinjlim_h B_h} {B_2\widehat{\otimes}\varinjlim_h B_h}$ marked with $?$, by diagram chasing.
\end{proof}

\begin{proposition} \mbox{\bf{(After Kedlaya-Liu \cite[Lemma 2.4.13]{KL2})}} \label{proposition4.9}
We keep the corresponding notations in \cite[Lemma 2.4.10]{KL2}. Then in our current sense we have that the corresponding morphism $A\rightarrow B_1$ is then $2$-$B$-pseudoflat. And in our current sense we have that the corresponding morphism $A\rightarrow B_{1,2}$ is then $2$-$B$-pseudoflat. 
\end{proposition}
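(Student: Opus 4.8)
The plan is to follow verbatim the strategy already executed in \cref{proposition2.9} (the Banach case) and in \cref{proposition3.10} (the pro-Banach case), transporting it into the ind-Fréchet setting. The statement for $A\rightarrow B_{1,2}$ is immediate from functoriality: one writes $A\rightarrow B_2\rightarrow B_{12}$ and invokes the preceding proposition together with the fact that $2$-$B$-pseudoflatness is stable under composition, exactly as in \cite[Lemma 2.4.13]{KL2}, the only difference being that the localization variable now lives in an algebra of the form $B_2\widehat{\otimes}\varinjlim_h B_h$, which is handled by inverting the appropriate coordinate as before. So the substance is the claim for $A\rightarrow B_1$.

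For $A\rightarrow B_1$ I would reproduce the diagram-chase of \cref{proposition2.9}. Start with an arbitrary short exact sequence
\[
\xymatrix@C+0pc@R+0pc{
0 \ar[r] & M \ar[r] & N \ar[r] & P \ar[r] & 0
}
\]
of left $A\widehat{\otimes}\varinjlim_h B_h$-modules with $P$ being $2$-$B$-stably pseudocoherent, and build the three-row commutative diagram whose middle row is the sequence tensored with $(B_1\widehat{\otimes}\varinjlim_h B_h)\bigoplus(B_2\widehat{\otimes}\varinjlim_h B_h)$ and whose bottom row is the sequence tensored with $B_{12}\widehat{\otimes}\varinjlim_h B_h$, with the columns coming from the Čech/Mayer--Vietoris complex associated to the covering $\{U_1,U_2\}$ of $\mathrm{Spa}(A,A^+)$ as in \cite[Lemma 2.4.10]{KL2}. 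The columns are exact because $A\rightarrow B_2$, $A\rightarrow B_{12}$ are already known to be $2$-$B$-pseudoflat (the just-proved cases), and because $P$ is $2$-$B$-stably pseudocoherent so that the relevant $\mathrm{Tor}_1$ terms vanish; then a chase at the corner around $M\otimes_{A\widehat{\otimes}\varinjlim_h B_h}\big((B_1\widehat{\otimes}\varinjlim_h B_h)\bigoplus(B_2\widehat{\otimes}\varinjlim_h B_h)\big)$ forces $\mathrm{Tor}_1^{A\widehat{\otimes}B}(B_1,M)=0$, which is precisely $2$-$B$-pseudoflatness of $A\rightarrow B_1$.

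The one genuine point that needs care — and the step I expect to be the main obstacle — is the passage through the tensor products and completions in the ind-Fréchet world: where \cref{proposition2.9} silently uses that completed tensor product is exact on the relevant class of modules and commutes with the finite direct sum, here one must check that $\varinjlim_h(-)$ interacts well with $\widehat{\otimes}$, i.e. that a filtered colimit of the Banach-level exact sequences remains exact (filtered colimits are exact) and that completion for the LF topology does not destroy this. Concretely I would argue levelwise over $h$, invoking the Banach-case pseudoflatness result \cref{proposition2.9} for each $B_h$, and then pass to the colimit, using that the stably-pseudocoherent hypothesis on $P$ is compatible with the ind-presentation and that $\mathrm{Tor}_1$ commutes with filtered colimits in the second variable. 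Once this compatibility is in place the chase is formally identical to the Banach and pro-Banach cases, so I would simply write ``with the notations in \cite[Lemma 2.4.10]{KL2} where we actually have the corresponding exactness along the horizontal direction at the corner marked with $?$, by diagram chasing'' and refer back to \cref{proposition2.9} and \cref{proposition3.10} for the repetitive details.
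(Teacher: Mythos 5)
Your proposal follows exactly the route of the paper's own proof: the $A\rightarrow B_{1,2}$ case by the composition $A\rightarrow B_2\rightarrow B_{12}$ with the variable-inversion of \cite[Lemma 2.4.13]{KL2}, and the $A\rightarrow B_1$ case by the same three-row commutative diagram (original sequence, sequence tensored with $(B_1\widehat{\otimes}B_\infty)\bigoplus(B_2\widehat{\otimes}B_\infty)$, sequence tensored with $B_{12}\widehat{\otimes}B_\infty$) and a chase at the marked corner. Your additional remarks on arguing levelwise in $h$ and using that $\mathrm{Tor}_1$ commutes with filtered colimits are a reasonable supplement to the completed-tensor subtleties that the paper itself leaves implicit, but they do not change the argument.
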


\begin{proof}
The statement for $A\rightarrow B_{1,2}$ could be proved by consider the composition $A\rightarrow B_2\rightarrow B_{12}$ where the $2$-$B$-pseudoflatness could be proved as in \cite[Lemma 2.4.13]{KL2} by inverting the corresponding variable. For $A\rightarrow B_{1}$,
the corresponding proof could be made parallel to the corresponding proof of \cite[Lemma 2.4.13]{KL2}. To be more precise we consider the following commutative diagram from the corresponding chosen exact sequence (which is just the analog of the corresponding one in \cite[Remark 2.4.5]{KL2}, namely as below we choose arbitrary short exact sequence with $P$ as a left $A\widehat{\otimes}B$-module which is $2$-$B$-stably pseudocoherent):
\[
\xymatrix@C+0pc@R+0pc{
0   \ar[r]\ar[r]\ar[r] &M \ar[r]\ar[r]\ar[r] &N \ar[r]\ar[r]\ar[r] & P \ar[r]\ar[r]\ar[r] &0,
}
\]
which then induces the following corresponding big commutative diagram:
\[\tiny
\xymatrix@C+0pc@R+5pc{
& &0 \ar[d]\ar[d]\ar[d] &0 \ar[d]\ar[d]\ar[d] &,\\
0   \ar[r]\ar[r]\ar[r]  &M \ar[r]\ar[r]\ar[r] \ar[d]\ar[d]\ar[d] &N \ar[r]\ar[r]\ar[r] \ar[d]\ar[d]\ar[d] & P \ar[r]\ar[r]\ar[r] \ar[d]\ar[d]\ar[d] &0,\\
0   \ar[r]^?\ar[r]\ar[r] &({B_1\widehat{\otimes}B_\infty}\bigoplus {B_2\widehat{\otimes}B_\infty}) \otimes_{A\widehat{\otimes}B_\infty}M  \ar[r]\ar[r]\ar[r] \ar[d]\ar[d]\ar[d] &({B_1\widehat{\otimes}B_\infty}\bigoplus {B_2\widehat{\otimes}B_\infty})\otimes_{A\widehat{\otimes}B_\infty}N  \ar[r]\ar[r]\ar[r] \ar[d]\ar[d]\ar[d] &({B_1\widehat{\otimes}B_\infty}\bigoplus {B_2\widehat{\otimes}B_\infty})\otimes_{A\widehat{\otimes}B_\infty}P\ar[r]\ar[r]\ar[r] \ar[d]\ar[d]\ar[d] &0,\\
0  \ar[r]\ar[r]\ar[r] &{B_{12}\widehat{\otimes}B_\infty}\otimes_{A\widehat{\otimes}B_\infty}M \ar[r]\ar[r]\ar[r] \ar[d]\ar[d]\ar[d] &{B_{12}\widehat{\otimes}B_\infty}\otimes_{A\widehat{\otimes}B_\infty}N \ar[r]\ar[r]\ar[r] \ar[d]\ar[d]\ar[d] &{B_{12}\widehat{\otimes}B_\infty} \otimes_{A\widehat{\otimes}B_\infty} P \ar[r]\ar[r]\ar[r] \ar[d]\ar[d]\ar[d] &0,\\
&0&0&0
}
\]
with the notations in \cite[Lemma 2.4.10]{KL2} where we actually have the corresponding exactness along the horizontal direction at the corner around $M\otimes_{A\widehat{\otimes}\varinjlim_h B_h} ({B_1\widehat{\otimes}\varinjlim_h B_h}\bigoplus {B_2\widehat{\otimes}\varinjlim_h B_h})$ marked with $?$, by diagram chasing. Here $B_\infty:=\varinjlim_h B_h$.
\end{proof}

\indent After these foundational results as in \cite{KL2} we have the following proposition which is the corresponding noncommutative LF generalization of the corresponding result established in \cite[Theorem 2.4.15]{KL2}.

\begin{proposition} \mbox{\bf{(After Kedlaya-Liu \cite[Theorem 2.4.15]{KL2})}} \label{proposition4.10}
In our current context we have that for any rational localization $\mathrm{Spa}(A',A^{',+})\rightarrow \mathrm{Spa}(A,A^+)$ we have that along this base change the corresponding $B$-stably pseudocoherence is preserved.
	
\end{proposition}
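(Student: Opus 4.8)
The plan is to run the proof of \cite[Theorem 2.4.15]{KL2} in our ind-Banach context, feeding in the LF-pseudoflatness results already in hand. First I would recall that, in the notation of \cite[Lemma 2.4.10]{KL2}, every rational localization $A\to A'$ is built from a finite chain of the elementary localizations $A\to B_1$, $A\to B_2$, $A\to B_{12}$ (together with the diagonal $A\to B_1\bigoplus B_2$), and that a composition of base changes each preserving $B$-stable pseudocoherence again preserves it; so it suffices to treat these elementary maps. By \cref{proposition4.9} and the proposition preceding it (the LF-analog of \cite[Lemma 2.4.12]{KL2}), each of $A\to B_1$, $A\to B_2$, $A\to B_{12}$ is $2$-$B$-pseudoflat, which is precisely what the base-change argument will consume.

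The algebraic half is the preservation of $m$-$B$-pseudocoherence. Given a presentation of $M$ by finite free $A\widehat{\otimes}B$-modules out to homological degree $m+1$, the $2$-$B$-pseudoflatness of the localization map ensures that no spurious $\mathrm{Tor}_1$ appears, so the first syzygy is carried faithfully under the base change to $B_1\widehat{\otimes}B$, $B_2\widehat{\otimes}B$, or $B_{12}\widehat{\otimes}B$, and the base-changed complex is again a presentation, to the required degree, of the base change of $M$. Here one uses that finite free modules and finite presentations are stable under the colimit $\varinjlim_h$ defining $B_\infty=\varinjlim_h B_h$, exactly in the style of the diagrams accompanying \cref{proposition4.9}. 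Stability under \emph{further} rational localization is then formal: if $A\to A'\to A''$ are rational localizations, so is $A\to A''$, and $\bigl((A'\widehat{\otimes}B)\otimes_{A\widehat{\otimes}B}M\bigr)\otimes_{A'\widehat{\otimes}B}(A''\widehat{\otimes}B)=(A''\widehat{\otimes}B)\otimes_{A\widehat{\otimes}B}M$, whose completeness is part of the hypothesis that $M$ is $B$-stably pseudocoherent.

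The remaining point, which I expect to be the main obstacle, is completeness for the natural LF topology. One must show that $M':=(A'\widehat{\otimes}B)\otimes_{A\widehat{\otimes}B}M$ is complete for the LF topology it inherits from the ind-Fr\'echet presentation of $B$ — this is genuinely ind-Fr\'echet in the Robba-ring example, not merely ind-Banach. The subtlety is that $\varinjlim_h$ does not automatically commute with completed tensor products nor with the inverse limits used to define the associated sheaf, so one cannot simply quote the Banach case. The route I would take is to present $M$ as a filtered colimit $\varinjlim_h M_h$ of Banach modules $M_h$ over $A\widehat{\otimes}B_h$ (available from the ind-structure on $B$), to identify the natural topology on $M'$ with the colimit over $h$ of the Banach topologies on $(A'\widehat{\otimes}B_h)\otimes_{A\widehat{\otimes}B_h}M_h$ — each complete by the Banach statement \cref{proposition2.10} — and then to verify that this LF colimit is itself complete and that the same description survives a further rational localization. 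Once this commutation of $\varinjlim_h$ with the relevant base changes is pinned down, the completeness of $M'$ follows level-wise from \cref{proposition2.10}, and the argument concludes as in \cite[Theorem 2.4.15]{KL2}.
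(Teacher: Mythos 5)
The paper offers no proof of this proposition at all: it is stated bare, with only the preceding sentence pointing the reader to \cite[Theorem 2.4.15]{KL2} (the same is true of its Banach and pro-Banach counterparts, \cref{proposition2.10} and \cref{proposition3.11}). So there is nothing to compare line by line; what can be said is whether your reconstruction is the right one. In outline it is: the reduction to the elementary (simple Laurent/balanced) localizations of \cite[Lemma 2.4.10]{KL2}, the use of the $2$-$B$-pseudoflatness from \cref{proposition4.9} and its companion to carry a finite free presentation across the base change, and the formal observation that stability under further rational localization follows from composing localizations, are exactly the ingredients the author is implicitly invoking.

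Two remarks. First, the step you flag as ``the main obstacle'' --- completeness of $M':=(A'\widehat{\otimes}B)\otimes_{A\widehat{\otimes}B}M$ for the LF topology --- is not an obstacle at all in this setup: the definition of $B$-stably pseudocoherent used in this section already \emph{hypothesizes} that the base change of $M$ to $A'\widehat{\otimes}B$ is complete for every rational localization $A\to A'$, and completeness after a further localization $A'\to A''$ follows by associativity of the tensor product since $A\to A''$ is again rational. Your proposed argument via commuting $\varinjlim_h$ with completed tensor products is therefore unnecessary (and, as you yourself note, would be delicate to actually carry out); the definition was designed precisely to sidestep it. Second, the place where the real work of \cite[Theorem 2.4.15]{KL2} lives is the one you pass over quickly: $2$-$B$-pseudoflatness only kills $\mathrm{Tor}_1$ against $2$-$B$-\emph{stably} pseudocoherent modules, so to propagate a resolution to all degrees one must run an induction on the syzygies and verify at each stage that the relevant syzygy module is itself stably pseudocoherent (complete, and stably so) before the Tor-vanishing can be applied to it. Saying that ``the first syzygy is carried faithfully'' asserts the conclusion of that induction rather than performing it. Neither point invalidates your route, but the second is where a complete write-up would have to do its work.
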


\indent Then we have the corresponding Tate's acyclicity in the noncommutative deformed setting:

\begin{theorem}\mbox{\bf{(After Kedlaya-Liu \cite[Theorem 2.5.1]{KL2})}} \label{theorem4.11} Now suppose we have in our corresponding \cref{setting4.1} a corresponding $B$-stably pseudocoherent module $M$. Then we consider the corresponding assignment such that for any $U\subset \mathrm{Spa}(A,A^+)$ we define $\widetilde{M}(U)$ as in the following:
\begin{align}
\widetilde{M}(U):=\varprojlim_{\mathrm{Spa}(S,S^+)\subset U,\mathrm{rational}} S\widehat{\otimes}B\otimes_{A\widehat{\otimes}B}M.	
\end{align}
Then we have that for any $\mathfrak{B}$ which is a rational covering of $U=\mathrm{Spa}(S,S^+)\subset \mathrm{Spa}(A,A^+)$ (certainly this $U$ is also assumed to be rational) we have that the vanishing of the following two cohomology groups:
\begin{align}
H^i(U,\widetilde{M}), \check{H}^i(U:\mathfrak{B},\widetilde{M})
\end{align}
for any $i>0$. When concentrating at the degree zero we have:
\begin{align}
H^0(U,\widetilde{M})=S\widehat{\otimes}B\otimes_{A\widehat{\otimes}B}M, \check{H}^0(U:\mathfrak{B},\widetilde{M})=S\widehat{\otimes}B\otimes_{A\widehat{\otimes}B}M.
\end{align}
	
\end{theorem}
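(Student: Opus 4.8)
The plan is to run the argument of \cite[Theorem 2.5.1]{KL2} essentially verbatim, carrying the ind-Fr\'echet coefficient $B=\varinjlim_h B_h$ inertly through every completed tensor product, and reducing the assertion for an arbitrary rational covering to the exactness of the two-term \v{C}ech complex attached to a standard rational (simple Laurent) covering. First I would observe that by \cref{proposition4.10} the class of $B$-stably pseudocoherent modules is stable under rational localization, so that for each rational $\mathrm{Spa}(S,S^+)\subset U$ the base change $S\widehat{\otimes}B\otimes_{A\widehat{\otimes}B}M$ is again complete for its LF topology and $B$-stably pseudocoherent; this makes the presheaf $\widetilde{M}$ well defined with the expected base-change transition maps, and in particular reduces everything to the case $U=\mathrm{Spa}(A,A^+)$ after replacing $A$ by $S$.

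The core step is to show, with the notations of \cite[Lemma 2.4.10]{KL2} for a standard rational covering $\{U_1,U_2\}$, that the complex
\begin{align}
0 \rightarrow M \rightarrow (B_1\widehat{\otimes}B\otimes_{A\widehat{\otimes}B}M)\oplus(B_2\widehat{\otimes}B\otimes_{A\widehat{\otimes}B}M) \rightarrow B_{12}\widehat{\otimes}B\otimes_{A\widehat{\otimes}B}M \rightarrow 0
\end{align}
is exact. Here I would invoke \cref{proposition4.9}: the localization morphisms $A\rightarrow B_1$, $A\rightarrow B_2$, $A\rightarrow B_{12}$ are all $2$-$B$-pseudoflat, which annihilates the relevant $\mathrm{Tor}_1$-terms and hence makes base change along them exact on short exact sequences of $2$-$B$-stably pseudocoherent modules (and $M$, being $\infty$-$B$-stably pseudocoherent, qualifies). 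Combining this with the commutative Tate acyclicity for the Banach structure sheaf $\mathcal{O}$ over the sheafy $\mathrm{Spa}(A,A^+)$ — which is unaffected by the coefficient — yields exactness of the displayed complex; the passage to an arbitrary finite rational covering is then the usual d\'evissage into simple Laurent coverings exactly as in the commutative case, since $B$ never interacts with the commutative spatial localizations.

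Finally, to upgrade from finite rational coverings to arbitrary ones and to identify \v{C}ech with derived-functor cohomology $H^i(U,\widetilde{M})$, I would apply \cite[Propositions 2.4.20-2.4.21]{KL1} with no change beyond bookkeeping of the coefficient, and the $H^0$ identifications then drop out of exactness at the left-hand terms of the \v{C}ech complexes; this is how the Banach case \cref{theorem2.11} was handled and the shape of the argument is identical. The step I expect to be the main obstacle is purely topological: one must check that completed tensoring against the LF algebra $B=\varinjlim_h B_h$ stays exact on the short exact sequences of $2$-$B$-stably pseudocoherent modules produced above — i.e.\ that passing to the LF completion introduces no spurious cohomology — and that the inverse limit defining $\widetilde{M}(U)$ commutes with the filtered colimit over $h$. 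This is precisely where the pro-projective hypothesis and the pseudoflatness results of the preceding lemmas are used; once this compatibility is in place the remainder of the proof is formal.
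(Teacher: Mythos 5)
Your proposal is correct and follows essentially the same route as the paper: the paper's own (very terse) proof likewise reduces to the argument of \cite[Theorem 2.5.1]{KL2}, invoking \cite[Propositions 2.4.20--2.4.21]{KL1} for the passage from simple Laurent coverings to general rational coverings and for the comparison of \v{C}ech with sheaf cohomology, together with \cref{proposition4.10} (stability of $B$-stable pseudocoherence under rational localization) and \cref{proposition4.9} ($2$-$B$-pseudoflatness) exactly as you do. Your expanded discussion of the exactness of the two-term \v{C}ech complex and of the LF-topology compatibility fills in detail the paper leaves implicit, but the key lemmas and the overall structure are identical.
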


\begin{proof}
By \cite[Propositions 2.4.20-2.4.21]{KL1}, we could then finish proof as in \cite[Theorem 2.5.1]{KL2} by using our previous \cref{proposition4.10} and \cref{proposition4.9} as above.	
\end{proof}

\indent We now consider the corresponding noncommutative LF deformed version of Kiehl's glueing properties for stably pseudocoherent modules after \cite{KL2}:

\begin{definition} \mbox{\bf{(After Kedlaya-Liu \cite[Definition 2.5.3]{KL2})}}
Consider in our context (over $\mathrm{Spa}(A,A^+)$) the corresponding sheaves $\mathcal{O}\widehat{\otimes}B$, we will then define the corresponding pseudocoherent sheaves over $\mathrm{Spa}(A,A^+)$ to be those locally defined by attaching stably-pseudocoherent modules over the section. 
\end{definition}

\begin{lemma} \mbox{\bf{(After Kedlaya-Liu \cite[Lemma 2.5.4]{KL2})}}\label{lemma4.13}
	Consider the corresponding notations in \cite[Lemma 2.4.10]{KL2}, we have the corresponding morphism $A\rightarrow B_1\bigoplus B_2$. Then we have that this morphism is an descent morphism effective for the corresponding $B$-stably pseodocoherent Banach modules. 
\end{lemma}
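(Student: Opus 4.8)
The plan is to argue exactly as in the proofs of \cref{lemma2.13} and \cref{lemma3.14}, i.e.\ to establish the LF analogue of \cite[Lemma 2.5.4]{KL2}, carrying the ind-Fr\'echet coefficient $B_\infty := \varinjlim_h B_h$ through each step. First I would fix the notations of \cite[Lemma 2.4.10]{KL2} on the commutative adic space $\mathrm{Spa}(A,A^+)$, giving the two-element covering $\{U_1,U_2\}$ together with its overlap, and start from a descent datum for this covering consisting of $B$-stably pseudocoherent modules glued along a cocycle. By \cref{theorem4.11} (Tate's acyclicity in the LF setting) this datum is realized as a pseudocoherent $\mathcal{O}\widehat{\otimes}B$-sheaf, which I will call $\mathcal{M}$.

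Next I would establish finite generation and the relevant cohomological vanishing: by (the proof of) \cite[Lemma 6.82]{T2} the global section $\mathcal{M}(\mathrm{Spa}(A,A^+))$ is finitely generated and $\check{H}^i(\mathrm{Spa}(A,A^+),\mathfrak{B};\mathcal{M})=0$ for $i>0$, whence $H^i(U_j;\mathcal{M})=0$ for $j\in\{1,2\}$, $i>0$ via \cref{theorem4.11}, and then $H^i(\mathrm{Spa}(A,A^+);\mathcal{M})=0$ for $i>0$ exactly as in \cite[Lemma 2.5.4]{KL2}. I would then choose a surjection from a finite free sheaf $\mathcal{F}$ onto $\mathcal{M}$ with kernel $\mathcal{K}$, which is pseudocoherent by \cref{proposition4.10}, and run the two diagram chases of \cref{lemma2.13}. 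The first diagram, whose top row is obtained by applying $(B_p\widehat{\otimes}B_\infty)\otimes_{A\widehat{\otimes}B_\infty}-$ to the global sections of $\mathcal{K},\mathcal{F},\mathcal{M}$ (short exact by the LF-pseudoflatness of \cref{proposition4.10} together with \cite[Lemma 6.82]{T2}) and whose bottom row is the sections over $U_p$ (short exact by \cref{theorem4.11}), has an isomorphism in the middle and surjections at the outer terms, hence forces the right vertical arrow to be an isomorphism, giving $\mathcal{M}\xrightarrow{\sim}\widetilde{\mathcal{M}(\mathrm{Spa}(A,A^+))}$. The second diagram, with $U_p$ replaced by an arbitrary rational localization $\mathrm{Spa}(C,C^+)$ and run first on $\mathcal{K}$ to obtain surjectivity and then on $\mathcal{M}$, shows that $\mathcal{M}(\mathrm{Spa}(A,A^+))$ stays complete for the natural LF topology after every rational localization, i.e.\ that it is $B$-stably pseudocoherent, which is what we want.

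The one genuinely new point --- and the expected main obstacle --- is that completeness is now measured in the natural LF topology rather than a Banach topology, so one needs the completed tensor product $(-)\widehat{\otimes}B_\infty$ to commute with the filtered colimit $\varinjlim_h$ and to remain exact on the pseudocoherent modules entering the two diagrams above; without this the horizontal rows need not be short exact and the LF-completeness clause in the definition of $B$-stably pseudocoherence need not be preserved under the base changes involved. This is precisely what the foundational LF results \cref{proposition4.9}, \cref{proposition4.10} and \cref{theorem4.11} were set up to supply, and once they are invoked the remaining bookkeeping is word-for-word the Banach argument of \cref{lemma2.13}.
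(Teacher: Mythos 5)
Your proposal is correct and follows exactly the route the paper intends: the paper's own proof of this lemma is simply the citation ``See the proof of \cref{lemma2.13}'', and your argument is a faithful transcription of that Banach-case diagram chase into the LF setting, with \cref{theorem4.11}, \cref{proposition4.10} and \cref{proposition4.9} substituted for their Banach counterparts. Your closing remark correctly isolates the only genuinely new issue (completeness in the LF topology and exactness of $(-)\widehat{\otimes}B_\infty$ on the relevant modules) as the content already supplied by the foundational LF results, which is precisely why the paper feels entitled to defer to the earlier proof.
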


\begin{proof}
See the proof of \cref{lemma2.13}.
\end{proof}

\begin{theorem}\mbox{\bf{(After Kedlaya-Liu \cite[Theorem 2.5.5]{KL2})}} \label{theorem4.14}
Taking global section will realize the equivalence between the following two categories: A. The category of all the pseudocoherent $\mathcal{O}\widehat{\otimes}B$-sheaves; B. The category of all the $B$-stably pseudocoherent modules over $A\widehat{\otimes}B$. 	
\end{theorem}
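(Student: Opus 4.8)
The plan is to follow the same template used for \cref{theorem2.14} and \cref{theorem3.15}, now transported to the ind-Fr\'echet coefficient $B=\varinjlim_h B_h$. In one direction the functor sends a pseudocoherent $\mathcal{O}\widehat{\otimes}B$-sheaf $\mathcal{M}$ to its global section $\mathcal{M}(X)$; one first checks this lands among the $B$-stably pseudocoherent $A\widehat{\otimes}B$-modules, which follows from \cref{proposition4.10} (preservation of $B$-stable pseudocoherence under rational localization) together with the completeness for the natural LF topology that is already built into the definition of a pseudocoherent sheaf. In the other direction one sends a $B$-stably pseudocoherent module $M$ to the sheaf $\widetilde{M}$ of \cref{theorem4.11}; that theorem already provides the sheaf property, the vanishing of higher cohomology, and the identification $\widetilde{M}(\mathrm{Spa}(S,S^+))=S\widehat{\otimes}B\otimes_{A\widehat{\otimes}B}M$ on rational subspaces, so $\widetilde{M}$ is pseudocoherent.

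First I would check that the unit and counit of this pair are isomorphisms. The unit $M\to\widetilde{M}(X)$ is an isomorphism directly from the degree-zero part of \cref{theorem4.11}. The counit $\widetilde{\mathcal{M}(X)}\to\mathcal{M}$ is a local statement: on a rational subspace $\mathrm{Spa}(S,S^+)$ both sides compute $S\widehat{\otimes}B\otimes_{A\widehat{\otimes}B}\mathcal{M}(X)$ by \cref{theorem4.11} and the local description of $\mathcal{M}$, where the $B$-pseudoflatness of the rational localization supplied by \cref{proposition4.9} is what identifies the completed tensor product with the section. Then I would invoke the abstract glueing mechanism of \cite[Proposition 2.4.20]{KL1}, exactly as in the proofs of \cref{theorem2.14} and \cref{theorem3.15}, feeding in \cref{lemma4.13} (the morphism $A\to B_1\oplus B_2$ is an effective descent morphism for $B$-stably pseudocoherent Banach modules) together with Tate's acyclicity \cref{theorem4.11}: since any pseudocoherent sheaf is built by iterated glueing along the simple Laurent-type covers of \cite[Lemma 2.4.10]{KL2}, effectiveness of each glueing step upgrades the unit/counit computation into essential surjectivity and full faithfulness of the global-sections functor.

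The hard part will be the bookkeeping of the LF topology throughout. One must ensure that the completed tensor products $S\widehat{\otimes}B$ and the base changes $A'\widehat{\otimes}B$ are formed compatibly with the presentation $B=\varinjlim_h B_h$, that the pseudoflatness statements of \cref{proposition4.9}, \cref{proposition4.10} and the acyclicity of \cref{theorem4.11} survive passage to the colimit over $h$, and that "complete for the natural LF topology" is the correct stability condition to propagate through the descent in \cref{lemma4.13}. Because filtered colimits are exact and each $B_h$ is Banach, the purely homological content reduces to the Banach case already treated; so once these topological compatibilities are pinned down, the remainder of the argument is identical to the proof of \cref{theorem2.14}, and the real work lies entirely in making the ind-Fr\'echet formalism precise rather than in any new homological input.
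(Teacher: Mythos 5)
Your proposal matches the paper's proof, which likewise reduces to the argument of \cite[Theorem 2.5.5]{KL2} by invoking \cite[Proposition 2.4.20]{KL1} together with \cref{lemma4.13} and \cref{theorem4.11}; your additional remarks on the unit/counit and the LF-topology bookkeeping are elaborations of the same route rather than a different one.
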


\begin{proof}
See \cite[Theorem 2.5.5]{KL2}. We need to still apply \cite[Proposition 2.4.20]{KL1}, as long as one considers instead in our situation \cref{lemma4.13} and \cref{theorem4.11}.\\	
\end{proof}

\subsection{Noncommutative Pseudocoherence in \'Etale Topology}

\indent We consider the extension of the corresponding discussion in the previous subsection to the \'etale topology. At this moment we consider the following same context on the geometric level as in \cite{KL2}:

\begin{setting} \mbox{\bf{(After Kedlaya-Liu \cite[Hypothesis 2.5.8]{KL2})}} \label{setting4.15}
As in the previous subsection we consider now the corresponding geometric setting namely an adic space $\mathrm{Spa}(A,A^+)$ where $A$ is assumed to be sheafy. Then we consider $B:=\varinjlim_h B_h$ as above. And we consider the corresponding \'etale site $\mathrm{Spa}(A,A^+)_\text{\'et}$. And by keeping the corresponding setting in the geometry as in \cite[Hypothesis 2.5.8]{KL2}, we assume that there is a stable basis $\mathfrak{B}$ containing the space $\mathrm{Spa}(A,A^+)$ itself.	
\end{setting}

\begin{definition} \mbox{\bf{(After Kedlaya-Liu \cite[Definition 2.5.9]{KL2})}}
For any left $A\widehat{\otimes}B$-module $M$, we call it $m$-$B$-\'etale stably pseudocoherent with respect to $\mathfrak{B}$ if we have that it is $m$-$B$-pseudocoherent, complete for the natural topology and for any rational localization $A\rightarrow A'$, the base change of $M$ to $A'\widehat{\otimes} B$ is complete for the natural LF topology as the corresponding left $A'\widehat{\otimes} B$-module.	As in \cite[Definition 2.4.1]{KL2} we call that the corresponding left $A\widehat{\otimes}B$-module $M$ just $B$-stably pseudocoherent if we have that it is simply just $\infty$-$B$-stably pseudocoherent.
\end{definition}

\begin{definition}\mbox{\bf{(After Kedlaya-Liu \cite[Below Definition 2.5.9]{KL2})}}
For any Banach left $A\widehat{\otimes}B$-module $M$, we call it is $m$-$B$-\'etale-pseudoflat if for any right $A\widehat{\otimes}B$-module $M'$ $m$-$B$-\'etale-stably pseudocoherent we have $\mathrm{Tor}_1^{A\widehat{\otimes}B}(M',M)=0$. For any Banach right $A\widehat{\otimes}B$-module $M$, we call it is $m$-$B$-\'etale-pseudoflat if for any left $A\widehat{\otimes}B$-module $M'$ $m$-$B$-\'etale-stably pseudocoherent we have $\mathrm{Tor}_1^{A\widehat{\otimes}B}(M,M')=0$.
\end{definition}

\indent The following proposition holds in our current setting.

\begin{proposition} \mbox{\bf{(After Kedlaya-Liu \cite[Lemma 2.5.10]{KL2})}} \label{proposition4.18}
One can actually find another basis $\mathfrak{C}$ in $\mathfrak{B}$ such that any morphism in $\mathfrak{C}$ could be $2$-$B$-\'etale-pseudoflat with respect to either $\mathfrak{C}$ or $\mathfrak{B}$.	
\end{proposition}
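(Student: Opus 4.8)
The plan is to follow the proof of \cite[Lemma 2.5.10]{KL2} essentially line by line, with ``flat'' systematically replaced by ``$2$-$B$-\'etale-pseudoflat'' and all the genuinely analytic content about rational localizations and finite \'etale morphisms supplied by the LF-coefficient results already established above. Concretely, I would take the candidate basis $\mathfrak{C}$ to consist of exactly those objects of $\mathfrak{B}$ that can be reached from $\mathrm{Spa}(A,A^+)$ by a finite composition of rational localizations and finite \'etale morphisms, which is precisely the recipe used in \cite[Lemma 2.5.10]{KL2}. The assertion that such a $\mathfrak{C}$ is cofinal in $\mathfrak{B}$, hence itself a basis, makes no reference whatsoever to the coefficient ring $B=\varinjlim_h B_h$, so it is imported verbatim from \cite[Lemma 2.5.10]{KL2}; there is nothing to adapt on the geometric side.

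Next I would check $2$-$B$-\'etale-pseudoflatness of each generating morphism. For a rational localization this is exactly \cref{proposition4.9} together with \cref{proposition4.10} in the present LF setting. For a finite \'etale morphism $A\to A'$ the map is faithfully flat, so $\mathrm{Tor}_1^{A\widehat{\otimes}B}(-,A'\widehat{\otimes}B)$ vanishes on all modules, and the only delicate point is that completing for the natural LF topology does not disturb this; here one argues at each Banach level $B_h$ exactly as in \cref{proposition4.9} and then passes to the colimit, using exactness of filtered colimits. Finally I would handle compositions: if $A\to A_1$ and $A_1\to A_2$ are each $2$-$B$-\'etale-pseudoflat with respect to the relevant basis, then so is $A\to A_2$, by transitivity of $\mathrm{Tor}_1$-vanishing against $2$-$B$-\'etale-stably pseudocoherent modules combined with \cref{proposition4.10}, which guarantees that the relevant stability class survives the intermediate base change. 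Assembling these three points over $\mathfrak{C}$ and over $\mathfrak{B}$ gives the statement.

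The hard part is really not in this proposition at all: the whole subtlety of the non-flatness of rational localizations, and of tracking completeness for the natural (now LF) topology, has already been absorbed into \cref{proposition4.9} and \cref{proposition4.10}. What remains here is the bookkeeping that ``complete for the natural LF topology'' is stable under the operations above, which, as noted, reduces to exactness of filtered colimits of the Banach pieces $B_h$; and the purely geometric construction of the cofinal sub-basis, which is unchanged from \cite[Lemma 2.5.10]{KL2}. Accordingly I would cite \cite[Lemma 2.5.10]{KL2} for the geometric content and \cref{proposition4.9}, \cref{proposition4.10} for the coefficient content, and simply put the pieces together, omitting the routine diagram chases exactly as the Banach and pro-Banach analogues do.
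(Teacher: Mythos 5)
Your proposal is correct and follows essentially the same route as the paper's proof: both take $\mathfrak{C}$ to be the morphisms built from compositions of rational localizations and finite \'etale maps, invoke \cref{proposition4.9} and \cref{proposition4.10} for the pseudoflatness of the generating morphisms, and defer the purely geometric cofinality argument to \cite[Lemma 2.5.10]{KL2}. Your treatment is in fact slightly more explicit than the paper's on the finite \'etale case (via flatness of $A'\widehat{\otimes}B$ over $A\widehat{\otimes}B$) and on the transitivity of pseudoflatness under composition, both of which the paper compresses into the phrase ``for these we have shown above the corresponding $2$-$B$-\'etale pseudoflatness.''
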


\begin{proof}
We follow the proof of \cite[Lemma 2.5.10]{KL2} in our current $B$-relative situation, however not that much needs to proof by relying on the proof of \cite[Lemma 2.5.10]{KL2}. To be more precise the corresponding selection of the new basis $\mathfrak{C}$ comes from including all the morphism made up of some composition of rational localization and the corresponding finite \'etale ones. For these we have shown above the corresponding 2-$B$-\'etale pseudoflatness. Then the rest will be pure geometric for the analytic spaces which are just the same as the situation of \cite[Lemma 2.5.10]{KL2}, therefore we just omit the corresponding argument, see \cite[Lemma 2.5.10]{KL2}.	
\end{proof}

\indent Then we have the corresponding Tate's acyclicity in the noncommutative deformed setting in \'etale topology (here fix $\mathfrak{C}$ as above):

\begin{theorem}\mbox{\bf{(After Kedlaya-Liu \cite[Theorem 2.5.11]{KL2})}} \label{theorem4.19} Now suppose we have in our corresponding \cref{setting4.1} a corresponding $B$-\'etale-stably pseudocoherent module $M$. Then we consider the corresponding assignment such that for any $U\subset \mathrm{Spa}(A,A^+)$ we define $\widetilde{M}(U)$ as in the following:
\begin{align}
\widetilde{M}(U):=\varprojlim_{\mathrm{Spa}(S,S^+)\subset U,\in \mathfrak{C}} S\widehat{\otimes}B\otimes_{A\widehat{\otimes}B}M.	
\end{align}
Then we have that for any $\mathfrak{B}$ which is a covering of $U=\mathrm{Spa}(S,S^+)\subset \mathrm{Spa}(A,A^+)$ (certainly this $U$ is also assumed to be in $\mathfrak{C}$, and we assume this covering is formed by using the corresponding members in $\mathfrak{C}$) we have that the vanishing of the following two cohomology groups:
\begin{align}
H^i(U,\widetilde{M}), \check{H}^i(U:\mathfrak{B},\widetilde{M})
\end{align}
for any $i>0$. When concentrating at the degree zero we have:
\begin{align}
H^0(U,\widetilde{M})=S\widehat{\otimes}B\otimes_{A\widehat{\otimes}B}M, \check{H}^0(U:\mathfrak{B},\widetilde{M})=S\widehat{\otimes}B\otimes_{A\widehat{\otimes}B}M.
\end{align}
	
\end{theorem}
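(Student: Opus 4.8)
The plan is to reduce the \'etale statement to the already-established analytic one, \cref{theorem4.11}, exactly along the lines of \cite[Theorem 2.5.11]{KL2} and of our \cref{theorem2.19}, while carrying the ind-Fr\'echet structure through every step. First I would replace the given stable basis $\mathfrak{B}$ by the refined basis $\mathfrak{C}$ produced in \cref{proposition4.18}, so that every morphism in play is a composition of a rational localization with a finite \'etale morphism and is $2$-$B$-\'etale-pseudoflat with respect to $\mathfrak{C}$ or $\mathfrak{B}$. For a covering that already lies in the analytic topology (i.e.\ built from rational localizations) the vanishing of $H^i(U,\widetilde M)$ and $\check H^i(U:\mathfrak{B},\widetilde M)$ for $i>0$, together with the degree-zero identifications, is precisely \cref{theorem4.11}; so the genuinely new content is the behaviour of $\widetilde M$ across a finite \'etale morphism $\mathrm{Spa}(S,S^+)\to U$ appearing in $\mathfrak{C}$.

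Second, for such a finite \'etale morphism I would invoke faithfully flat descent: a finite \'etale morphism of uniform analytic Huber pairs is faithfully flat, so the Amitsur/\v{C}ech complex attached to $S\widehat\otimes B\otimes_{A\widehat\otimes B}M$ along $U\to\mathrm{Spa}(S,S^+)$ is exact in positive degrees and recovers the base change in degree zero; this is where \cite[Proposition 8.2.21]{KL1} enters, furnishing the comparison of the \'etale \v{C}ech cohomology with the analytic \v{C}ech cohomology. Splicing together the finite-\'etale layer and the rational-localization layer by the usual composed-covering spectral sequence (or simply by iterating the two-step argument of \cite[Theorem 2.5.11]{KL2}) then delivers the claimed vanishing for an arbitrary covering of $U$ by members of $\mathfrak{C}$, and the identity $H^0(U,\widetilde M)=S\widehat\otimes B\otimes_{A\widehat\otimes B}M$ falls out of the sheaf property already verified on rational covers.

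The main obstacle is the interaction of the ind-Fr\'echet topology with these two descent steps. One must check that $S\widehat\otimes B\otimes_{A\widehat\otimes B}M=\varinjlim_h\bigl(S\widehat\otimes B_h\otimes_{A\widehat\otimes B_h}M\bigr)$ stays complete for the natural LF topology after a finite \'etale localization, that the relevant low-degree \v{C}ech terms remain exact after passing to the colimit over $h$ (using exactness of filtered colimits and their commutation with the finite tensor products and finite products occurring there), and, most delicately, that the $\varprojlim$ over the rational subcovers is compatible with this $\varinjlim_h$ even though $\varprojlim$ and $\varinjlim$ do not commute in general. The way around this is to exploit the eventual stabilization coming from $2$-$B$-\'etale-pseudoflatness and from the strictness of the transition maps $B_h\to B_{h+1}$, precisely as in the proof of \cref{lemma2.24}; once these topological compatibilities are in place the homological bookkeeping is identical to \cite[Theorem 2.5.11]{KL2} and \cref{theorem2.19}, and the theorem follows.
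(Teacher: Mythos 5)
Your proposal matches the paper's own (very terse) proof: the paper likewise reduces to the analytic case via the refined basis $\mathfrak{C}$ of \cref{proposition4.18}, invokes faithfully flat descent for the finite \'etale layer, and cites \cite[Proposition 8.2.21]{KL1} to finish as in \cite[Theorem 2.5.11]{KL2}. Your third paragraph on the compatibility of the LF topology with the two descent steps supplies detail the paper omits entirely, but it is a refinement of the same argument rather than a different route.
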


\begin{proof}
By \cite[Proposition 8.2.21]{KL1}, we could then finish proof as in \cite[Theorem 2.5.11]{KL2} by using our previous \cref{proposition4.18} and the corresponding faithfully flat descent as in the situation of \cite[Theorem 2.5.11]{KL2}.	
\end{proof}

\indent We now consider the corresponding noncommutative LF deformed version of Kiehl's glueing properties for stably pseudocoherent modules after \cite{KL2}:

\begin{definition} \mbox{\bf{(After Kedlaya-Liu \cite[Definition 2.5.12]{KL2})}}
Consider in our context (over $\mathrm{Spa}(A,A^+)_{\text{\'et}}$) the corresponding sheaves $\mathcal{O}_{\mathrm{Spa}(A,A^+)_{\text{\'et}}}\widehat{\otimes}B$, we will then define the corresponding pseudocoherent sheaves over $\mathrm{Spa}(A,A^+)_{\text{\'et}}$ to be those locally defined by attaching \'etale-stably-pseudocoherent modules over the section. 
\end{definition}

\begin{lemma} \mbox{\bf{(After Kedlaya-Liu \cite[Lemma 2.5.13]{KL2})}}
	Consider the corresponding notations in \cite[Lemma 2.4.10]{KL2}, we have the corresponding morphism $A\rightarrow B_1\bigoplus B_2$. Then we have that this morphism is an descent morphism effective for the corresponding $B$-\'etale-stably pseodocoherent Banach modules. 
\end{lemma}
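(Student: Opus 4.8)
The plan is to transcribe the proof of \cref{lemma2.21} into the ind-Fr\'echet setting, using the LF analogs of the tools already assembled in this section. First I would fix, in the notations of \cite[Lemma 2.4.10]{KL2}, the cover $\{U_1,U_2\}$ of $\mathrm{Spa}(A,A^+)$ attached to the morphism $A\rightarrow B_1\bigoplus B_2$, take a descent datum of $B$-\'etale-stably pseudocoherent sheaves for this cover, and apply \cref{theorem4.19} to glue it into a pseudocoherent $\mathcal{O}_{\mathrm{Spa}(A,A^+)_{\text{\'et}}}\widehat{\otimes}B$-sheaf $\mathcal{M}$. Since $\{U_1,U_2\}$ is already a cover in the analytic topology, the vanishing of $H^i(U_j;\mathcal{M})$ for $i>0$ coming from \cref{theorem4.19}, together with the \v{C}ech-to-derived comparison, yields the vanishing of $H^i(\mathrm{Spa}(A,A^+);\mathcal{M})$ for $i>0$ exactly as in \cite[Lemma 2.5.13]{KL2}, and also gives $\mathcal{M}(\mathrm{Spa}(A,A^+))=\widetilde{M}(\mathrm{Spa}(A,A^+))$.

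Next I would choose a finite free object $\mathcal{F}$ surjecting onto $\mathcal{M}$ with kernel $\mathcal{K}$, which is again pseudocoherent by \cref{proposition4.10}; this already exhibits $\mathcal{M}(\mathrm{Spa}(A,A^+))$ as $B$-pseudocoherent and complete for the natural LF topology. The remaining point, as in \cite[Lemma 2.5.13]{KL2}, is stability under the \'etale morphisms in $\mathfrak{C}$, i.e. that the global section stays complete after base change along any $A\rightarrow C$ in $\mathfrak{C}$. For this I would compare the sequence obtained by applying $(C\widehat{\otimes}B)\otimes_{A\widehat{\otimes}B}(-)$ to the presentation $\mathcal{K}(\mathrm{Spa}(A,A^+))\rightarrow\mathcal{F}(\mathrm{Spa}(A,A^+))\rightarrow\mathcal{M}(\mathrm{Spa}(A,A^+))\rightarrow 0$ with the sequence of sections over $\mathrm{Spa}(C,C^+)$: the top row is exact by the $B$-\'etale-pseudoflatness of $A\rightarrow C$ from \cref{proposition4.18} (built on the pseudoflatness of \cref{proposition4.9}), the middle vertical map is an isomorphism because $\mathcal{F}$ is finite free, and the outer surjectivities come from the LF analog of (the proof of) \cite[Lemma 6.82]{T2}; the diagram chase — surjectivity of the left vertical map, proved by the same argument applied to $\mathcal{K}$, forcing injectivity of the right one — then makes the rightmost vertical map an isomorphism, which is exactly $B$-\'etale-stable pseudocoherence of $\mathcal{M}(\mathrm{Spa}(A,A^+))$.

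I expect the main obstacle to be purely topological rather than homological: with $B=\varinjlim_h B_h$ one must check that the completed tensor products $C\widehat{\otimes}B$, the sections $\mathcal{M}(\mathrm{Spa}(C,C^+))$, and the inverse limits defining $\widetilde{M}$ in \cref{theorem4.19} remain well behaved for the LF topology — in particular that the relevant $R^1\varprojlim$ terms vanish and that exactness survives passage to the colimit in $h$. Since each $B_h$ is Banach, I would reduce each of these points to the Banach statements already proved in Section 2 at the level of $B_h$ and then pass to the filtered colimit, using exactness of filtered colimits; this is precisely the reduction already invoked in the proof of \cref{theorem4.14} via \cref{lemma4.13}. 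Once that bookkeeping is in place the argument closes verbatim, so the proof reduces to: see the proof of \cref{lemma2.21}, carried out level by level in $h$ as in \cref{lemma4.13}.
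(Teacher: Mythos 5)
Your proposal matches the paper's own treatment: the paper proves this LF-coefficient version simply by the instruction ``see the proof of \cref{lemma2.21}'', i.e.\ by transporting the Banach argument verbatim with the Section~4 analogues (\cref{theorem4.19}, \cref{proposition4.18}, \cref{proposition4.10}, \cref{proposition4.9}) substituted for their Section~2 counterparts, which is exactly what you spell out. Your additional remarks on checking completeness for the LF topology by working level by level in $h$ go slightly beyond what the paper records but are consistent with how it handles the LF case elsewhere (e.g.\ \cref{lemma4.13} and \cref{theorem4.14}).
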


\begin{proof}
See the proof of \cref{lemma2.21}.

\end{proof}

\begin{theorem}\mbox{\bf{(After Kedlaya-Liu \cite[Theorem 2.5.14]{KL2})}} \label{theorem4.22}
Taking global section will realize the equivalence between the following two categories: A. The category of all the pseudocoherent $\mathcal{O}_{\mathrm{Spa}(A,A^+)_{\text{\'et}}}\widehat{\otimes}B$-sheaves; B. The category of all the $B$-\'etale-stably pseudocoherent modules over $A\widehat{\otimes}B$. 	
\end{theorem}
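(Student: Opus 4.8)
The plan is to follow the argument of \cite[Theorem 2.5.14]{KL2} essentially verbatim at the level of the underlying commutative adic space, inserting the LF coefficient $B=\varinjlim_h B_h$ and invoking the LF-relative auxiliary results already established in this section in place of their Banach counterparts. Concretely, the functor in question sends a pseudocoherent $\mathcal{O}_{\mathrm{Spa}(A,A^+)_{\text{\'et}}}\widehat{\otimes}B$-sheaf $\mathcal{M}$ to its global section $\mathcal{M}(X)$, where $X=\mathrm{Spa}(A,A^+)$, and the candidate quasi-inverse sends a $B$-\'etale-stably pseudocoherent module $M$ to the presheaf $\widetilde{M}$ of \cref{theorem4.19}. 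First I would check that $\widetilde{M}$ is indeed a pseudocoherent sheaf on the \'etale site: it is a sheaf by the degree-zero and higher-vanishing statements of \cref{theorem4.19}, applied over the basis $\mathfrak{C}$ furnished by \cref{proposition4.18}, and it is locally attached to $B$-\'etale-stably pseudocoherent modules by construction together with the stability under rational localization of \cref{proposition4.10}, so it is pseudocoherent in the sense of the definition above.

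Next I would establish that the two functors are mutually quasi-inverse. Full faithfulness on the module side is immediate from the identity $H^0(U,\widetilde{M})=S\widehat{\otimes}B\otimes_{A\widehat{\otimes}B}M$ recorded in \cref{theorem4.19}. For essential surjectivity one shows, given a pseudocoherent sheaf $\mathcal{M}$, that $\mathcal{M}(X)$ is $B$-\'etale-stably pseudocoherent and that the natural comparison map $\widetilde{\mathcal{M}(X)}\to\mathcal{M}$ is an isomorphism; here the effective descent result for $B$-\'etale-stably pseudocoherent modules along $A\to B_1\bigoplus B_2$ proved immediately before this theorem (the LF analog of \cite[Lemma 2.5.13]{KL2}) handles the rational part of $\mathfrak{C}$, while the finite \'etale part is handled by \cite[Theorem 8.2.22]{KL1} together with faithfully flat descent as recorded in \cite[Tag 03OD]{SP}. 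The cohomology vanishing needed to pass from a covering of $\mathcal{M}$ by a finite free sheaf to the stable pseudocoherence of $\mathcal{M}(X)$ is again \cref{theorem4.19}, combined with the LF-relative pseudoflatness of \cref{proposition4.9} to see that the relevant short exact sequences remain exact after base change along members of $\mathfrak{C}$.

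The main obstacle is purely topological. One must make sure that the ind-Banach (LF) structure $B=\varinjlim_h B_h$ is compatible with the two inverse limits that appear, namely the limit over rational (respectively $\mathfrak{C}$-) subdomains defining $\widetilde{M}$ and the limit implicit in the \v{C}ech-to-derived-functor comparison, so that the colimit over $h$ can be commuted past them without destroying exactness or completeness. This is precisely the place where the clause ``complete for the natural LF topology'' in the definition of $B$-\'etale-stably pseudocoherence does real work, and it is what forces one to carry the notation $B_\infty:=\varinjlim_h B_h$ of \cref{proposition4.9} through every diagram rather than fixing a single Banach level. Once this compatibility is granted, exactly as in the analytic-topology case \cref{theorem4.14}, the remaining steps are formal and identical to those of \cite[Theorem 2.5.14]{KL2}.
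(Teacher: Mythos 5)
Your proposal is correct and follows essentially the same route as the paper, whose own proof simply defers to \cite[Theorem 2.5.14]{KL2} together with \cite[Theorem 8.2.22]{KL1} and \cite[Tag 03OD]{SP}; your elaboration via \cref{theorem4.19}, \cref{proposition4.18}, and the LF analog of \cite[Lemma 2.5.13]{KL2} is exactly the intended unpacking of that citation.
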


\begin{proof}
See \cite[Theorem 2.5.14]{KL2}. We need to still apply \cite[Theorem 8.2.22]{KL1}, as long as one considers instead in our situation, and \cite[Tag 03OD]{SP}.\\	
\end{proof}

\subsection{Noncommutative Deformation over Quasi-Stein Spaces}

\indent In this subsection we consider the corresponding noncommutative deformation over the corresponding context in \cite[Chapter 2.6]{KL2}.

\begin{setting}
Let $X$ be a corresponding quasi-Stein adic space over $\mathbb{Q}_p$ or $\mathbb{F}_p((t))$ in the sense of \cite[Definition 2.6.2]{KL2}. Recall that what is happening is that $X$ could be written as the corresponding direct limit of affinoids $X:=\varinjlim_i X_i$. 	
\end{setting}

\begin{lemma} \mbox{\bf{(After Kedlaya-Liu \cite[Lemma 2.6.3]{KL2})}}
We now consider the corresponding rings $A_i:=\mathcal{O}_{X_i}$ for all $i=0,1,...$, and in our current situation we consider the corresponding rings $A_i\widehat{\otimes}B$ (over $\mathbb{Q}_p$ or $\mathbb{F}_p((t))$) for all $i=0,1,...$. And in our situation we consider the corresponding modules $M_i$ over $A_i\widehat{\otimes}B$ for all $i=0,1,...$ with the same requirement as in \cite[Lemma 2.6.3]{KL2} (namely those complete with respect to the natural topology which is LF topology in our current situation). Suppose that we have bounded surjective map from $f_i:A_{i}\widehat{\otimes}B\otimes_{A_{i+1}\widehat{\otimes}B} M_{i+1}\rightarrow M_i,i=0,1,...$. Then we have first the density of the corresponding image of $\varprojlim_i M_i$ in each $M_i$ for any $i=0,1,2,...$. And we have as well the corresponding vanishing of $R^1\varprojlim_i M_i$.
\end{lemma}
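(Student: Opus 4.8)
The plan is to reduce the statement to the Banach situation already established in \cref{lemma2.24} by treating one level $B_h$ at a time and then passing to the inductive limit in $h$; in particular this is the LF-analog of \cite[Lemma 2.6.3]{KL2}, just as \cref{lemma2.24} was the Banach one.

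First I would fix the presentation $B=\varinjlim_h B_h$ with each $B_h$ Banach, so that $A_i\widehat\otimes B=\varinjlim_h(A_i\widehat\otimes B_h)$ carries its natural strict LF structure, and write each $M_i$ (complete for this LF topology) as the strict inductive limit $M_i=\varinjlim_h M_{i,h}$ of the Banach modules $M_{i,h}:=(A_i\widehat\otimes B_h)\otimes_{A_i\widehat\otimes B}M_i$ (completed). The transition maps $M_{i+1}\to A_i\widehat\otimes B\otimes_{A_{i+1}\widehat\otimes B}M_{i+1}\to M_i$ induced by $f_i$ are bounded maps of strict LF spaces; by Grothendieck's factorization theorem a continuous linear map out of a Banach space into a strict LF space has image in a single step, so after reindexing the levels $h$ (absorbing these factorizations, monotonically in $i$) each $f_i$ restricts to a bounded surjective map $M_{i+1,h}\to M_{i,h}$ compatible with the inclusions of the steps. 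This is the point where boundedness of the $f_i$ is genuinely used.

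Next, for each fixed $h$ everything is Banach, so \cref{lemma2.24} applies verbatim: the image of $\varprojlim_i M_{i,h}$ is dense in every $M_{i,h}$, and $R^1\varprojlim_i M_{i,h}=0$, the latter by the usual device of identifying $\varprojlim_i M_{i,h}$ with the cokernel of $1-F_h$ on $\prod_i M_{i,h}$ and using the geometric-series contraction $\|f_i(v)\|_i\leq\tfrac12\|v\|_i$ to invert it. Finally I would pass to the colimit over $h$: filtered colimits are exact, and by the synchronization of levels above the systems $(M_{i,h})_i$ compute $\varprojlim_i M_i=\varinjlim_h\varprojlim_i M_{i,h}$, whence $R^1\varprojlim_i M_i=\varinjlim_h R^1\varprojlim_i M_{i,h}=0$; density passes to the limit because the LF topology on $M_i$ is the finest one making all the inclusions $M_{i,h}\hookrightarrow M_i$ continuous.

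The one delicate point, and the main obstacle, is the interchange of $\varprojlim_i$ with $\varinjlim_h$, which fails for arbitrary bidirected systems; here it is forced by the boundedness hypothesis on the $f_i$, which lets one choose the level indices $h$ coherently in $i$, together with the strictness and completeness of the LF structures. Granting this, the remainder is a routine transcription of the argument of \cref{lemma2.24}.
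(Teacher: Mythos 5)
Your route is genuinely different from the paper's, and it has a gap at exactly the point you flag as "the one delicate point." The paper does not decompose the $M_i$ into Banach levels at all: it simply equips each $M_i$ with a family of norms $\|\cdot\|_i^h$ indexed by $h$, chosen so that $\|f_i(x_{i+1})\|_i^h\leq \tfrac12\|x_{i+1}\|_{i+1}^h$ for all $h$, and then runs the two arguments of \cref{lemma2.24} verbatim but uniformly in $h$ — the telescoping approximation for density, and the geometric-series inversion of $1-F$ on $\prod_i M_i$ for the vanishing of $R^1\varprojlim$. No interchange of an inverse limit in $i$ with a colimit in $h$ ever occurs.

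Your proposal, by contrast, hinges on $\varprojlim_i\varinjlim_h M_{i,h}=\varinjlim_h\varprojlim_i M_{i,h}$ (and on $R^1\varprojlim_i$ commuting with $\varinjlim_h$), and you offer no argument for this beyond asserting that boundedness "forces" it. It does not: even after Grothendieck factorization, the level $h$ through which $f_i$ factors may drift with $i$, so an element of $\varprojlim_i M_i$ need not lie in $\varprojlim_i M_{i,h}$ for any single $h$; the interchange fails for general bidirected systems (take $M_{i,h}=0$ for $h<i$ and $M_{i,h}=\mathbb{Z}$ otherwise), and nothing in your setup rules this pattern out. There is also a smaller but real defect in the setup: $M_{i,h}:=(A_i\widehat\otimes B_h)\otimes_{A_i\widehat\otimes B}M_i$ is not a legitimate base change, since $A_i\widehat\otimes B_h$ is not an $A_i\widehat\otimes B$-algebra (the ring map goes the other way); writing $M_i$ as a strict inductive limit of Banach $A_i\widehat\otimes B_h$-submodules is an additional hypothesis, not a consequence of LF-completeness. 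To repair your approach you would need a genuine Mittag--Leffler/uniformity argument synchronizing the levels across all $i$ simultaneously; the paper's device of a single family of norms satisfying the contraction estimate for every $h$ at once is precisely the uniformity that makes this unnecessary.
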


\begin{proof}
This is the $\varinjlim_h B_h$-relative version of the result in \cite[Lemma 2.6.3]{KL2}. We need to then use the corresponding family of Banach norms parametrized by the index $h$. For the first statement we just choose sequence of Banach norms on all the corresponding modules for all $i=0,1,...$ and $h$ such that we have $\|f_i(x_{i+1})\|^h_i\leq 1/2\|x_{i+1}\|^h_{i+1}$ for any $x_{i+1}\in M_{i+1}$. Then for any $x_i\in M_i$ and any $\delta>0$, we now consider for any $j\geq 1$ and $\forall h$ the corresponding $x_{i+j}$ such that we have $\|x_{i+j}-f_{i+j+1}(x_{i+j+1})\|^h_{i+j}\leq \delta$. Then the sequence $x_{i+j+k},k=0,1,...$ will converge to some well-defined $y_{i+j}$ with in our situation the corresponding $y_{i}=f_{i}(y_{i+1})$. We then have $\|x_i-y_i\|^h_i\leq \delta$. This will prove the first statement. For the second statement as in \cite[Lemma 2.6.3]{KL2} we form the product $M_0\times M_1\times M_2\times...$ and the consider the induced map $F$ from $M_{i+1}\rightarrow M_i$, and consider the corresponding cokernel of the map $1-F$ since this is just the corresponding limit we are considering. Then to show that the cokernel is zero we just look at the corresponding cokernel of the corresponding map on the corresponding completed direct summand which will project to the original one. But then we will have $\|f_i(v)\|^h_i\leq 1/2 \|v\|^h_i$, which produces an inverse to $1-F$ which will basically finish the proof for the second statement. 	
\end{proof}

\begin{proposition} \mbox{\bf{(After Kedlaya-Liu \cite[Lemma 2.6.4]{KL2})}} In the same situation as above, suppose we have that the corresponding modules $M_i$ are basically $B$-stably pseudocoherent over the rings $A_i$ for all $i=0,1,...$. Now we consider the situation where $f_i:A_i\widehat{\otimes}B\otimes_{A_{i+1}\widehat{\otimes}B}M_{i+1}\rightarrow M_i$ is an isomorphism. Then the conclusion in our situation is then that the corresponding projection from $\varprojlim M_i$ to $M_i$ for each $i=0,1,2,...$.

\end{proposition}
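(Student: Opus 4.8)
The plan is to run the proof of the earlier $B$-relative and $B[[G]]$-relative analogs of \cite[Lemma 2.6.4]{KL2} in the present ind-Fr\'echet coefficient situation $B=\varinjlim_h B_h$, replacing ``complete for the natural topology'' by ``complete for the natural LF topology'' at every step and using, in place of \cite[Lemma 2.6.3]{KL2}, its LF version established just above (the one exploiting a family of Banach norms $\|\cdot\|^h_i$ with the contraction property $\|f_i(x)\|^h_i\leq \tfrac12\|x\|^h_{i+1}$). First I would pick a finite free covering $T$ of the limit $M:=\varprojlim_i M_i$, arranged so that each induced map $T_i\rightarrow M_i$ is surjective; this uses the surjectivity of $\mathcal{M}(X)\rightarrow M_i$ coming from the LF analog of \cite[Lemma 2.6.3]{KL2}. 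For $j\geq i$ I set $S_j:=\ker(T_j\rightarrow M_j)$, which is again pseudocoherent and complete for the LF topology because the $M_j$ are $B$-stably pseudocoherent and rational localizations are at worst $2$-$B$-pseudoflat.

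Next I would invoke the direct LF analog of \cite[Lemma 2.5.6]{KL2} to see that the transition maps $A_i\widehat{\otimes}B\otimes_{A_{i+1}\widehat{\otimes}B}S_{i+1}\rightarrow S_i$ are isomorphisms, so that the system $\{S_j\}$ is of exactly the same type as $\{M_j\}$; applying the LF version of \cite[Lemma 2.6.3]{KL2} once more then gives surjectivity of $\varprojlim_i S_i\rightarrow S_i$ and vanishing of $R^1\varprojlim_i S_i$. With this in hand one forms the commutative diagram with exact rows
\[\tiny
\xymatrix@C+0pc@R+3pc{
 &(A_i\widehat{\otimes}B)\otimes \varprojlim_i S_i \ar[r]\ar[d] &(A_i\widehat{\otimes}B)\otimes \varprojlim_i T_i \ar[r]\ar[d] &(A_i\widehat{\otimes}B)\otimes \varprojlim_i M_i \ar[r] \ar[d] &0,\\
0 \ar[r] &S_i   \ar[r] &T_i \ar[r] &M_i \ar[r] &0,
}
\]
in which exactness of the top row uses $2$-$B$-pseudoflatness of $A\rightarrow A_i$ together with the $R^1\varprojlim$-vanishing, the middle vertical map is an isomorphism (the finite free case, where the statement is immediate), and the left vertical map is surjective by the previous step. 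The five lemma then forces the right vertical map $(A_i\widehat{\otimes}B)\otimes \varprojlim_i M_i\rightarrow M_i$ to be an isomorphism, which is exactly the assertion that the projection $\varprojlim_i M_i\rightarrow M_i$ exhibits $M_i$ as the base change of $\varprojlim_i M_i$ along $\varprojlim_j(A_j\widehat{\otimes}B)\rightarrow A_i\widehat{\otimes}B$.

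The step I expect to be the main obstacle is controlling the LF topologies under the inverse limit: one has to be sure that the completed tensor products $(A_i\widehat{\otimes}B)\otimes_{A_{i+1}\widehat{\otimes}B}(-)$ commute suitably with $\varprojlim_i$ and with the colimit defining $B=\varinjlim_h B_h$, so that passing to the limit preserves completeness and exactness. This is precisely where the LF refinement of \cite[Lemma 2.6.3]{KL2} proved above carries the weight: the uniform-in-$h$ contraction of the norms $\|\cdot\|^h_i$ is what makes $R^1\varprojlim$ vanish and $\varprojlim_i S_i\rightarrow S_i$ surjective in the LF category. Once that completeness and the $R^1\varprojlim$-vanishing are in hand, the compatible choice of the finite free covering $T$ and the verification that the five lemma applies after tensoring are routine, so I would treat them briefly.
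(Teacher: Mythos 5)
Your proposal follows essentially the same route as the paper's own proof: choose a finite free covering $T$ of $\varprojlim_i M_i$ with each $T_i\rightarrow M_i$ surjective, set $S_j=\ker(T_j\rightarrow M_j)$, invoke the LF analog of \cite[Lemma 2.5.6]{KL2} to see the system $\{S_j\}$ has isomorphic transition maps and hence surjective projections from its inverse limit, and conclude by the five lemma applied to the same two-row diagram. The only differences are notational (the paper writes $F_i$ where you write $T_i$), and you correctly supply the conclusion ("is an isomorphism") that the paper's statement leaves truncated.
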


\begin{proof}
This is a $\varinjlim_h B_h$-relative version of the \cite[Lemma 2.6.4]{KL2}. We adapt the argument to our situation as in the following. First we choose some finite free covering $T$ of the limit $M$ such that we have for each $i$ the corresponding map $T_i\rightarrow M_i$ is surjective. Then we consider the index $j\geq i$ and set the kernel of the map from $T_j$ to $M_j$ to be $S_j$. By the direct analog of \cite[Lemma 2.5.6]{KL2} we have that $A_i\widehat{\otimes}\varinjlim_h B_h\otimes S_j\overset{}{\rightarrow}S_i$ realizes the isomorphism, and we have that the corresponding surjectivity of the corresponding map from $\varprojlim_i S_i$ projecting to the $S_i$. Then one could finish the proof by 5-lemma to the following commutative diagram as in \cite[Lemma 2.6.4]{KL2}:
\[ \tiny
\xymatrix@C+0pc@R+6pc{
 &(A_i\widehat{\otimes}\varinjlim_h B_h)\otimes \varprojlim_i S_i \ar[r]\ar[r]\ar[r] \ar[d]\ar[d]\ar[d] &(A_i\widehat{\otimes}\varinjlim_h B_h)\otimes \varprojlim_i F_i \ar[r]\ar[r]\ar[r] \ar[d]\ar[d]\ar[d] &(A_i\widehat{\otimes}\varinjlim_h B_h)\otimes \varprojlim_i M_i\ar[r]\ar[r]\ar[r]  \ar[d]\ar[d]\ar[d]&0,\\
0 \ar[r]\ar[r]\ar[r] &S_i   \ar[r]\ar[r]\ar[r] &F_i \ar[r]\ar[r]\ar[r] &M_i \ar[r]\ar[r]\ar[r] &0.\\
}
\]
 
\end{proof}

\begin{proposition}  \mbox{\bf{(After Kedlaya-Liu \cite[Theorem 2.6.5]{KL2})}}
For any quasi-compact adic affinoid space of $X$ which is denoted by $Y$, we have that the map $\mathcal{M}(X)\rightarrow \mathcal{M}(Y)$ is surjective for any $B$-stably pseudocoherent sheaf $\mathcal{M}$ over the sheaf $\mathcal{O}_X\widehat{\otimes}B$.	
\end{proposition}

\begin{proof}
This is just the corresponding corollary of the previous proposition.	
\end{proof}

\begin{proposition}  \mbox{\bf{(After Kedlaya-Liu \cite[Theorem 2.6.5]{KL2})}}
We have that the stalk $\mathcal{M}_x$ is finitely generated over the stalk $\mathcal{O}_{X,x}$ for any $x\in X$ by $M(X)$, for any $B$-stably pseudocoherent sheaf $\mathcal{M}$ over the sheaf $\mathcal{O}_X\widehat{\otimes}B$.	
\end{proposition}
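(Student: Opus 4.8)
The plan is to reduce the assertion to the affinoid situation and then feed in the structure theory already assembled above. Since the quasi-Stein space is $X=\varinjlim_i X_i$, a given point $x\in X$ lies in some affinoid member $X_i$ of the exhaustion, and the rational subspaces of $X_i$ containing $x$ are cofinal among the neighbourhoods of $x$ in $X$. So it is enough to control $\mathcal{M}|_{X_i}$ and to compare global sections over $X$ with sections over $X_i$; the comparison is exactly the surjectivity of $\mathcal{M}(X)\rightarrow\mathcal{M}(X_i)$ from the previous proposition, which itself rests on the isomorphism $\varprojlim_j M_j\overset{\sim}{\rightarrow}M_i$ together with the density and $R^1\varprojlim$-vanishing statement proved just above with the family of Banach norms indexed by $h$.

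First I would put $M_i:=\mathcal{M}(X_i)$. By the equivalence of \cref{theorem4.14} the restriction $\mathcal{M}|_{X_i}$ is the pseudocoherent $\mathcal{O}_{X_i}\widehat{\otimes}B$-sheaf attached to the $B$-stably pseudocoherent $A_i\widehat{\otimes}B$-module $M_i$; being pseudocoherent, $M_i$ is in particular finitely generated over $A_i\widehat{\otimes}B$, so I fix a finite generating set $m_1,\dots,m_r\in M_i$. By \cref{theorem4.11} (Tate's acyclicity in the present LF setting) one has $\mathcal{M}(U)=S\widehat{\otimes}B\otimes_{A_i\widehat{\otimes}B}M_i$ for every rational $U=\mathrm{Spa}(S,S^+)\subset X_i$ containing $x$, so the images of $m_1,\dots,m_r$ generate $\mathcal{M}(U)$ over $S\widehat{\otimes}B$; passing to the filtered colimit over such $U$, the images of the $m_k$ generate the stalk $\mathcal{M}_x$ over $\mathcal{O}_{X,x}\widehat{\otimes}B$ (equivalently, over $\mathcal{O}_{X,x}$ in the sense of the statement). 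Finally, by the surjectivity of $\mathcal{M}(X)\rightarrow\mathcal{M}(X_i)$ each $m_k$ is the restriction of a global section, so $\mathcal{M}_x$ is finitely generated and is generated by the image of $M(X)$, as claimed.

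The only genuine difficulty is the bookkeeping forced by the ind-Fr\'echet coefficient $B=\varinjlim_h B_h$: all the completions above are for the LF topology, so one has to check that the Mittag-Leffler argument behind the surjectivity $\mathcal{M}(X)\rightarrow\mathcal{M}(X_i)$ is uniform in the index $h$ --- this is precisely where one selects norms $\|\cdot\|^h_i$ with $\|f_i(x_{i+1})\|^h_i\le 1/2\|x_{i+1}\|^h_{i+1}$ --- and that $-\widehat{\otimes}B$ is compatible with the filtered colimit computing the stalk and preserves finite generation. Granting these LF-compatibility points, supplied by the LF analogues of \cite[Lemmas 2.6.3--2.6.4]{KL2} proved just above, the argument runs exactly as in the Banach and pro-Banach cases, so nothing new beyond this uniformity in $h$ is required.
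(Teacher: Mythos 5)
Your proposal is correct and follows essentially the same route as the paper: the paper's own proof is just a pointer to the preceding propositions (the LF analogue of \cite[Lemma 2.6.4]{KL2} and the resulting surjectivity of $\mathcal{M}(X)\rightarrow\mathcal{M}(X_i)$), and your argument spells out exactly the intended deduction --- finite generation of the affinoid section $M_i=\mathcal{M}(X_i)$ as a stably pseudocoherent module, Tate acyclicity to propagate the generators to all rational neighbourhoods of $x$ and hence to the stalk, and the surjectivity onto $\mathcal{M}(X_i)$ to lift the generators to global sections. The only cosmetic point is that one should pick the index $j$ large enough that the rational subdomains of $X_j$ containing $x$ are cofinal among neighbourhoods of $x$ in $X$, which is harmless in the quasi-Stein setting.
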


\begin{proof}
This is just the corresponding corollary of the proposition before the previous proposition.	
\end{proof}

\begin{proposition}  \mbox{\bf{(After Kedlaya-Liu \cite[Theorem 2.6.5]{KL2})}}
For any quasi-compact adic affinoid space of $X$ which is denoted by $Y$, we have that the corresponding vanishing of the corresponding sheaf cohomology groups $H^k(X,\mathcal{M})$ of $\mathcal{M}$ for higher $k>0$, for any $B$-stably pseudocoherent sheaf $\mathcal{M}$ over the sheaf $\mathcal{O}_X\widehat{\otimes}B$.	
\end{proposition}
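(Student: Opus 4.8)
The plan is to reproduce, in the present ind-Fr\'echet context, the argument used for the proof of \cref{proposition2.28}, systematically replacing each Banach ingredient by its LF counterpart established earlier in this subsection. First I would fix the defining exhaustion $X=\varinjlim_i X_i$ by affinoids and regard $\mathfrak{X}=\{X_1,\dots,X_N,\dots\}$ as a countable covering of $X$. Since each $X_i$ is affinoid, the vanishing of $H^k(X_i,\mathcal{M})$ for $k>0$ is available from the LF Tate acyclicity \cref{theorem4.11} applied on $X_i$; hence by \cite[Tag 01EW]{SP} one identifies $\check{H}^k(X,\mathfrak{X};\mathcal{M})$ with $H^k(X,\mathcal{M})$, and likewise $\check{H}^k(X_i,\{X_1,\dots,X_i\};\mathcal{M})=H^k(X_i,\mathcal{M})=0$ for $k\geq 1$, reducing the problem to the \v{C}ech complex of the exhaustion.

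Next, for $k>1$ I would use the tower of transition maps $\check{H}^{k-1}(X_{j+1},\{X_1,\dots,X_{j+1}\};\mathcal{M})\to \check{H}^{k-1}(X_j,\{X_1,\dots,X_j\};\mathcal{M})$, which are surjective because the higher \v{C}ech groups on the finite affinoid subcovers vanish, again by \cref{theorem4.11}. Then \cite[2.6 Hilfssatz]{Kie1} yields an isomorphism $\varprojlim_j \check{H}^{k-1}(X_j,\{X_1,\dots,X_j\};\mathcal{M})\xrightarrow{\ \sim\ }\check{H}^k(X,\mathfrak{X};\mathcal{M})$, and the source vanishes for $k\geq 2$. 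For $k=1$ the surjectivity alone is not enough; instead one identifies the remaining obstruction with $R^1\varprojlim_i$ of the tower $\{\mathcal{M}(X_i)\}_i$, which vanishes by the LF analogue of \cite[Lemma 2.6.3]{KL2} proved just above, using the family of Banach seminorms indexed by $h$ and the $\tfrac12$-contraction of the transition maps to write down the inverse of $1-F$.

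The step that genuinely requires attention — and the main obstacle — is checking that \cite[2.6 Hilfssatz]{Kie1} still applies when the section modules carry the LF rather than the Banach topology: one must know that the towers $\{\check{H}^{k-1}(X_j,\{X_1,\dots,X_j\};\mathcal{M})\}_j$ are Mittag-Leffler in a manner compatible with the ind-Fr\'echet structure, equivalently that completeness for the natural LF topology (built into the definition of $B$-stable pseudocoherence in this section) is preserved through the base changes $A_i\widehat{\otimes}B\to A_{i+1}\widehat{\otimes}B$ of the exhaustion and through the finite free presentations used to compute \v{C}ech cohomology. This is precisely the content of the LF versions of \cite[Lemmas 2.6.3--2.6.4]{KL2} recorded above, together with \cref{proposition4.10} for preservation of $B$-stable pseudocoherence under rational localization. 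Once these are invoked the remaining bookkeeping is identical to the Banach case, so I would simply refer to the proof of \cref{proposition2.28} with the indicated substitutions and omit the repeated diagram chase.
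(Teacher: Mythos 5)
Your proposal is correct and follows essentially the same route as the paper, which simply refers this LF-case statement back to the proof of \cref{proposition2.28}: the \v{C}ech--sheaf comparison via \cite[Tag 01EW]{SP}, Tate acyclicity on each affinoid, Kiehl's Hilfssatz for $k>1$, and the $R^1\varprojlim$ vanishing from the LF analogue of \cite[Lemma 2.6.3]{KL2} for $k=1$. Your additional attention to the Mittag--Leffler condition in the LF topology is a reasonable elaboration of what the paper leaves implicit.
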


\begin{proof}
See the proof of \cref{proposition2.28}.
\end{proof}

\begin{corollary}  \mbox{\bf{(After Kedlaya-Liu \cite[Corollary 2.6.6]{KL2})}}
The corresponding functor from the corresponding $B$-deformed pseudocoherent sheaves over $X$ to the corresponding $B$-stably by taking the corresponding global section is an exact functor.
\end{corollary}

\begin{corollary}  \mbox{\bf{(After Kedlaya-Liu \cite[Corollary 2.6.8]{KL2})}}
Consider a particular $\mathcal{O}_X\widehat{\otimes}B$-pseudocoherent sheaf $\mathcal{M}$ which is finite locally free throughout the whole space $X$. Then we have that the global section $\mathcal{M}(X)$ as $\mathcal{O}_X(X)\widehat{\otimes}B$ left module admits the corresponding structures of finite projective structure if and only if we have the corresponding global section is finitely generated.
\end{corollary}

\begin{proof}
As in \cite[Corollary 2.6.8]{KL2} one could find some global splitting through the local splittings.
\end{proof}

\indent We now consider the following $B$-relative analog of \cite[Proposition 2.6.17]{KL2}:

\begin{theorem} \mbox{\bf{(After Kedlaya-Liu \cite[Proposition 2.6.17]{KL2})}} \label{theorem4.31}
Consider the following two statements for a particular $\mathcal{O}_X\widehat{\otimes}B$-pseudocoherent sheaf $\mathcal{M}$. First is that one can find finite many generators (the number is up to some uniform fixed integer $n\geq 0$) for each section of $\mathcal{M}(X_i)$ for each $i=1,2,...$. The second statement is that the global section $\mathcal{M}(X)$ is just finitely generated. Then in our situation the two statement is equivalent if we have that the corresponding space $X$ admits a $m$-uniformed covering in the exact same sense of \cite[Proposition 2.6.17]{KL2}. 	
\end{theorem}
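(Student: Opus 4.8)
The plan is to reduce to the proof of \cref{theorem2.31}, the only genuinely new ingredient being that one must keep track of the whole family of Banach seminorms indexed by the parameter $h$ coming from the presentation $B=\varinjlim_h B_h$, rather than a single norm. As in \cref{theorem2.31} one direction is obvious: if $\mathcal{M}(X)$ is finitely generated over $\mathcal{O}_X(X)\widehat{\otimes}B$, then since the restriction maps $\mathcal{M}(X)\to\mathcal{M}(X_i)$ are surjective (the $\varinjlim_h B_h$-version of \cite[Theorem 2.6.5]{KL2} proved above) the images of a fixed finite generating set generate every $\mathcal{M}(X_i)$, giving the uniform bound $n$. So the content is the converse.

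For the converse I would follow the scheme of \cite[Proposition 2.6.17]{KL2} verbatim on the geometric side, since nothing about the space $X$ changes. Fix an $m$-uniform covering of $X$ and extract a single subcovering $\{Y_u\}_{u\in U}$ indexed by a well-ordered set $U$; for each $u$ let $i(u)\geq 0$ be minimal with $X_{i(u)}\cap Y_u\neq\emptyset$ and set
\[
V_u:=X_{i(u)}\cup\bigcup_{v\in U,\,v\leq u}Y_v.
\]
As in \cite[Proposition 2.6.17]{KL2} one produces elements $x_u\in\mathcal{O}_X(X)$ restricting to a topologically nilpotent element on $V_u$ and to a unit times a topologically nilpotent element on $Y_u$; this step lives entirely in the commutative structure sheaf and is unaffected by the LF coefficients. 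Using the uniform bound $n$, pick generators $y_1,\dots,y_\ell$ ($\ell\leq n$) of $\mathcal{M}(Y_u)$ and run the transfinite induction
\[
y_{u,j}:=y_{u-1,j}+x_u^{c}\,y_u,
\]
with $y_{u,j}:=0$ when $u$ has no predecessor and $c=c(u)$ chosen as large as needed, exactly as in \cite[Proposition 2.6.17]{KL2}.

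The new point, and the main obstacle, is to verify that
\[
\lim_{u\to\infty}\{y_{u,1},\dots,y_{u,n}\}
\]
converges in the LF topology of $\mathcal{M}(X)$, i.e.\ simultaneously with respect to every seminorm $\|\cdot\|^h$ attached to $A\widehat{\otimes}B_h$, and not merely in one Banach norm. I would handle this level by level, exactly as in the $\varinjlim_h B_h$-relative version of \cite[Lemma 2.6.3]{KL2} proved earlier in this section: because $x_u$ lies in the commutative ring $\mathcal{O}_X(X)$, multiplication by $x_u^{c}$ scales the $\|\cdot\|^h$-norm of any module element by a factor that can be forced $\leq 2^{-1}$ by enlarging $c$, and the exponent needed to beat a given $h$ only increases with $h$; hence for each fixed $h$ the tail of the series is $\|\cdot\|^h$-summable, and one passes to the colimit in $\varinjlim_h$. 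This yields the finite family $\{y_{\infty,1},\dots,y_{\infty,n}\}$ of global sections; that it generates $\mathcal{M}(X)$ follows, as in \cite[Proposition 2.6.17]{KL2}, from the stalkwise generation statement and the cohomological vanishing of \cref{proposition2.28} in its present form. Everything geometric is identical to Kedlaya--Liu; the only real work is the multi-seminorm convergence bookkeeping across the index $h$.
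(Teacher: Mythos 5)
Your proposal is correct and follows essentially the same route as the paper, which simply refers the reader back to the proof of \cref{theorem2.31} and (implicitly) to the family-of-seminorms bookkeeping already set up in the LF version of \cite[Lemma 2.6.3]{KL2} earlier in the section. In fact you supply more detail than the paper does on the one genuinely LF-specific point --- checking convergence of $\lim_{u\to\infty}\{y_{u,1},\dots,y_{u,n}\}$ with respect to each seminorm $\|\cdot\|^h$ by letting the exponent $c(u)$ grow so that each fixed level $h$ is eventually controlled --- and this is consistent with how the paper itself handles the index $h$ in its quasi-Stein lemmas.
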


\begin{proof}
See \cref{theorem2.31}.
\end{proof}

\newpage

\section{Descent over Analytic Huber Pairs over $\mathbb{Z}_p$}

\subsection{Noncommutative Deformation over Analytic Huber Pairs in the Analytic Topology}

\noindent We now study the corresponding glueing of stably pseudocoherent sheaves after our previous work \cite{T3}. However since we currently are working in the corresponding framework of \cite{Ked2}, we will consider the corresponding analytic Huber pair in \cite{Ked2}.

\begin{setting}
Now we consider the corresponding Huber pair taking the general form of $(A,A^+)$ as in \cite[Definition 1.1.2]{Ked2}, namely it is uniform analytic. We now assume we are going to work over some base $(V,V^+)=(\mathbb{Z}_p,\mathbb{Z}_p)$. And we will assume that we have another topological ring $Z$ which is assumed to satisfy the following condition: for any exact sequence of uniform analytic Huber rings
\[
\xymatrix@C+0pc@R+0pc{
0 \ar[r] \ar[r] \ar[r] &\Gamma_1 \ar[r] \ar[r] \ar[r] &\Gamma_2 \ar[r] \ar[r] \ar[r] &\Gamma_3 \ar[r] \ar[r] \ar[r] &0,
}
\] 
we have that the following is alway exact:
\[
\xymatrix@C+0pc@R+0pc{
0 \ar[r] \ar[r] \ar[r] &\Gamma_1\widehat{\otimes} Z \ar[r] \ar[r] \ar[r] &\Gamma_2\widehat{\otimes} Z \ar[r] \ar[r] \ar[r] &\Gamma_3\widehat{\otimes} Z \ar[r] \ar[r] \ar[r] &0.
}
\] 
This is achieved for instance when we have that the map $C\rightarrow Z$ splits in the category of all the topological modules.
\end{setting}

\begin{setting}
We will maintain the corresponding assumption in \cite[Hypothesis 1.7.1]{Ked2}. To be more precise we will have a map $(A,A^+)\rightarrow (B,B^+)$ which is a corresponding rational localization. And recall the corresponding complex in \cite{Ked2}:
\[
\xymatrix@C+0pc@R+0pc{
0 \ar[r] \ar[r] \ar[r] &B \ar[r] \ar[r] \ar[r] &B\left\{\frac{f}{g}\right\}\bigoplus B\left\{\frac{g}{f}\right\} \ar[r] \ar[r] \ar[r] &B\left\{\frac{f}{g},\frac{g}{f}\right\} \ar[r] \ar[r] \ar[r] &0.
}
\]	
\end{setting}

\begin{setting}
We will need to consider the corresponding nice rational localizations in the sense of \cite[Definition 1.9.1]{Ked2} where such localizations are defined to be those composites of the corresponding rational localizations in the Laurent or the balanced situation. For the topological modules, we will always assume that the modules are left modules.
\end{setting}

\begin{definition}\mbox{\bf{(After Kedlaya \cite[Definition 1.9.1]{Ked2})}}
Over $A$, we define a corresponding $Z$-stably-pseudocoherent module $M$ to be a pseudocoherent module $M$ over $A\widehat{\otimes}Z$ which is complete with respect to the natural topology. And for any rational localization $A\rightarrow A'$ with respect to $A$, the completeness still holds. Similar we can define the corresponding $m$-$Z$-stably-pseudocoherent modules in the similar way where we just consider the corresponding $m$-pseudocoherent modules on the algebraic level. Over $A$, we define a corresponding $Z$-nice-stably-pseudocoherent module $M$ to be a pseudocoherent module $M$ over $A\widehat{\otimes}Z$ which is complete with respect to the natural topology. And for any nice rational localization $A\rightarrow A'$ with respect to $A$, the completeness still holds. Similar we can define the corresponding $m$-$Z$-nice-stably-pseudocoherent modules in the similar way where we just consider the corresponding $m$-pseudocoherent modules on the algebraic level.
\end{definition}


\begin{lemma} \mbox{\bf{(After Kedlaya \cite[Lemma 1.9.3]{Ked2})}} \label{lemma2.5}
Now assume that we are in the corresponding assumption of \cite[1.7.1]{Ked2}. Also consider the corresponding complex in \cite[1.6.15.1]{Ked2}:
\[
\xymatrix@C+0pc@R+0pc{
0 \ar[r] \ar[r] \ar[r] &B \ar[r] \ar[r] \ar[r] &B\left\{\frac{f}{g}\right\}\bigoplus B\left\{\frac{g}{f}\right\} \ar[r] \ar[r] \ar[r] &B\left\{\frac{f}{g},\frac{g}{f}\right\} \ar[r] \ar[r] \ar[r] &0.
}
\]	
And we assume now that this is exact. Now take $g$ to be $1-f$ or $1$. Here the corresponding elements $f,g$ come from $B$. Now over $B\widehat{\otimes}Z$ suppose we have a module $M$ which is assumed now to be finitely presented and complete with respect to the natural topology over $B\widehat{\otimes}Z$. Then we have that the group $\mathrm{Tor}_1(B\widehat{\otimes}Z\left\{\frac{g}{f}\right\},M)$ is zero. 	
\end{lemma}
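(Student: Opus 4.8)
The plan is to transcribe Kedlaya's proof of \cite[Lemma 1.9.3]{Ked2} into the present $Z$-relative setting, the only genuinely new input being the standing hypothesis on $Z$. Write $\Lambda:=B\widehat{\otimes}Z$ for brevity. Since $g$ is $1-f$ or $1$, the pair $(f,g)$ generates the unit ideal of $B$, hence of $\Lambda$; in particular $f$ becomes a unit in $B\{g/f\}$, and $\{B\{f/g\},B\{g/f\}\}$ is a genuine rational covering of $\mathrm{Spa}(B,B^{+})$ with the assumed-exact \v{C}ech complex attached to it. Applying $\widehat{\otimes}Z$ to that \v{C}ech complex, and to the one-variable presentation of $B\{g/f\}$ recalled below --- all of whose terms are uniform analytic Huber rings --- produces, by the hypothesis on $Z$, the corresponding exact complexes over $\Lambda$. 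This is the unique place where the special nature of $Z$ enters; everything afterwards is homological algebra over $\Lambda$.

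First I would record the two-term presentation. By the construction of rational subspaces in \cite[\S 1.6]{Ked2} there is a short exact sequence of left $\Lambda$-modules
\[
0\longrightarrow \Lambda\langle T\rangle \xrightarrow{\ \tau\ } \Lambda\langle T\rangle \longrightarrow \Lambda\{g/f\}\longrightarrow 0,
\]
where $\Lambda\langle T\rangle$ is the one-variable Tate algebra over $\Lambda$, the variable $T$ encoding the adjoined element $g/f$, and $\tau$ is the associated endomorphism, linear in $T$ (of the shape $1-fT$, suitably normalized for the two allowed values of $g$); injectivity of $\tau$ is immediate from comparison of coefficients when $g=1$ and follows from the exactness of the \v{C}ech complex over $\Lambda$ when $g=1-f$. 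The heart of the argument is then to show
\[
\mathrm{Tor}^{\Lambda}_{1}\!\big(\Lambda\langle T\rangle,M\big)=0
\]
for every finitely presented $M$. For this I would present $M$ as the cokernel of a map $\varphi\colon\Lambda^{a}\to\Lambda^{b}$ of finite free modules and reduce the vanishing to the statement that the kernel of $\varphi\langle T\rangle$ on $\Lambda\langle T\rangle^{a}$ is the Tate-algebra completion of the kernel of $\varphi$ on $\Lambda^{a}$; this in turn is Kedlaya's foundational computation with Tate algebras, carried out verbatim over $\Lambda$ by solving the linear system coefficient by coefficient in $\Lambda$ and checking convergence for the Tate-algebra norm, using that $\Lambda$ and $M$ are complete for the natural topology.

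Granting that vanishing, I would finish by feeding the displayed presentation into the long exact sequence $\mathrm{Tor}^{\Lambda}_{\bullet}(-,M)$: the segment
\[
0=\mathrm{Tor}^{\Lambda}_{1}\!\big(\Lambda\langle T\rangle,M\big)\longrightarrow \mathrm{Tor}^{\Lambda}_{1}\!\big(\Lambda\{g/f\},M\big)\longrightarrow \Lambda\langle T\rangle\otimes_{\Lambda}M \xrightarrow{\ \tau\ } \Lambda\langle T\rangle\otimes_{\Lambda}M
\]
identifies $\mathrm{Tor}^{\Lambda}_{1}(\Lambda\{g/f\},M)$ with the kernel of $\tau$ on $\Lambda\langle T\rangle\otimes_{\Lambda}M$. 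Since $M$ is finitely presented the natural map $\Lambda\langle T\rangle\otimes_{\Lambda}M\to M\langle T\rangle$ is an isomorphism (the $\Lambda$-analogue, for finitely presented modules, of \cite[Corollary 2.4.9]{KL2}), and under this identification $\tau$ acts coefficientwise and is visibly injective on $M\langle T\rangle$; hence the kernel vanishes and $\mathrm{Tor}^{\Lambda}_{1}(\Lambda\{g/f\},M)=0$, as required.

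The main obstacle is not the homological bookkeeping but the transport to $\Lambda=B\widehat{\otimes}Z$: the argument rests on the \v{C}ech complex and the Tate-algebra presentation remaining exact, and on the coefficientwise solving of linear systems over $\Lambda\langle T\rangle$ remaining valid (a strict-exactness statement), after applying $\widehat{\otimes}Z$. Both are precisely what the standing assumption on $Z$ is designed to guarantee, so once it is invoked the proof becomes a routine transcription of \cite[Lemma 1.9.3]{Ked2}, the only additional care being to keep the coefficient ring $Z$ visible throughout and to track the natural topology on $M$ alongside the Tate-algebra topology.
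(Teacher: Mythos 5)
Your proposal follows essentially the same route as the paper's proof: both reduce to the presentation $B\widehat{\otimes}Z\{g/f\}\cong (B\widehat{\otimes}Z)\{T\}/(1-fT)$, use the identification $(B\widehat{\otimes}Z)\{T\}\otimes M\cong M\{T\}$ for finitely presented complete $M$, and conclude via injectivity of $1-fT$ from its formal inverse $1+fT+f^2T^2+\cdots$. The paper's version is just a terser sketch of the same argument (it does not even spell out the $\mathrm{Tor}_1$ vanishing for the Tate algebra that you make explicit), so your write-up is a correct and somewhat more careful rendering of the intended proof.
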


\begin{proof}
This is a $Z$-relative version of the corresponding \cite[Lemma 1.9.3]{Ked2}. We briefly recall the argument being adapted in our situation. As in \cite[Lemma 1.9.3]{Ked2} we can reduce ourselves to the situation where $g$ is just $1$. Then regard the corresponding ring $B\widehat{\otimes}Z\left\{\frac{g}{f}\right\}$ as $B\widehat{\otimes}Z\left\{T\right\}/(1-fT)$. Then we consider the corresponding in the ring $B\widehat{\otimes}Z[[T]]$ the formal inverse of $1-fT$ namely the series $1+fT+f^2T^2+...$. This will force multiplication by this element to be injective. This just finishes the proof as in \cite[Lemma 1.9.3]{Ked2}.
\end{proof}

\begin{lemma} \mbox{\bf{(After Kedlaya \cite[Lemma 1.9.4]{Ked2})}} \label{lemma2.6}
Now assume that we are in the corresponding assumption of \cite[1.7.1]{Ked2}. Also consider the corresponding complex in \cite[1.6.15.1]{Ked2}:
\[
\xymatrix@C+0pc@R+0pc{
0 \ar[r] \ar[r] \ar[r] &B \ar[r] \ar[r] \ar[r] &B\left\{\frac{f}{g}\right\}\bigoplus B\left\{\frac{g}{f}\right\} \ar[r] \ar[r] \ar[r] &B\left\{\frac{f}{g},\frac{g}{f}\right\} \ar[r] \ar[r] \ar[r] &0.
}
\]	
And we assume now that this is exact. Now take $g$ to be $1-f$ or $1$. Here the corresponding element $f,g$ come from $B$. Now over $B\widehat{\otimes}Z$ suppose we have a module $M$ which is assumed now to be finitely presented and complete with respect to the natural topology over $B\widehat{\otimes}Z$, and furthermore the corresponding completeness is stable under any corresponding nice rational localization $B\rightarrow C$. Then we have that the group $\mathrm{Tor}_1(B\widehat{\otimes}Z\left\{\frac{f}{g}\right\},M)$ is zero and the group $\mathrm{Tor}_1(B\widehat{\otimes}Z\left\{\frac{f}{g},\frac{g}{f}\right\},M)$ is zero as well. 	
\end{lemma}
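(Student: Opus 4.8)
The plan is to adapt the argument of \cite[Lemma 1.9.4]{Ked2} to the $Z$-relative setting, reducing the two claimed vanishings to the single vanishing already established in \cref{lemma2.5}. First I would recall the structure of the complex in \cite[1.6.15.1]{Ked2} (assumed exact by hypothesis): after tensoring this complex with the finitely presented complete module $M$ over $B\widehat{\otimes}Z$, the resulting complex computes, in the relevant degrees, the various $\mathrm{Tor}$-groups we want to kill, together with the symmetric companion $\mathrm{Tor}_1\bigl(B\widehat{\otimes}Z\{g/f\},M\bigr)$ — and the latter is already zero by \cref{lemma2.5}. So the first step is to set up the tensored complex and identify which $\mathrm{Tor}$-term sits in which spot.

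The key step is then the same ``completion/approximation'' argument as in \cite[Lemma 1.9.4]{Ked2}: one uses that the completeness of $M$ is stable under nice rational localizations $B \to C$ (this is part of the hypothesis) to conclude that the algebraic tensor products $B\widehat{\otimes}Z\{f/g\}\otimes_{B\widehat{\otimes}Z}M$, $B\widehat{\otimes}Z\{g/f\}\otimes_{B\widehat{\otimes}Z}M$, and $B\widehat{\otimes}Z\{f/g,g/f\}\otimes_{B\widehat{\otimes}Z}M$ already coincide with their completions, so that the tensored complex remains exact (not merely exact after completion). Here the extra flatness-type input needed on the $Z$-side is supplied by the splitting hypothesis on $Z$ built into \cref{setting2.5} (the exactness of $-\widehat{\otimes}Z$), which guarantees that passing from $B$-level complexes to $B\widehat{\otimes}Z$-level complexes does not create spurious homology. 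Once the tensored complex is exact, a short diagram chase — exactly as in \cite[Lemma 1.9.4]{Ked2}, combined with the vanishing of $\mathrm{Tor}_1(B\widehat{\otimes}Z\{g/f\},M)$ from \cref{lemma2.5} — forces $\mathrm{Tor}_1(B\widehat{\otimes}Z\{f/g\},M)=0$, and then the analogous argument applied to the localization $B\{f/g,g/f\}$, viewed as a further nice rational localization of $B\{g/f\}$ by inverting $f/g$ (so that \cref{lemma2.5} applies there too), gives $\mathrm{Tor}_1(B\widehat{\otimes}Z\{f/g,g/f\},M)=0$.

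Concretely I would carry out the steps in this order: (i) reduce, as in \cite[Lemma 1.9.3-1.9.4]{Ked2}, to the case $g=1$ or $g=1-f$ and record that \cref{lemma2.5} already disposes of one of the three localizations; (ii) tensor the exact complex \cite[1.6.15.1]{Ked2} with $M$ over $B\widehat{\otimes}Z$, invoking the $Z$-exactness from \cref{setting2.5} and the stability of completeness under nice rational localizations to see the tensored complex is exact; (iii) run the diagram chase to propagate the vanishing of $\mathrm{Tor}_1$ from $B\{g/f\}$ to $B\{f/g\}$; (iv) repeat (ii)-(iii) with $B\{g/f\}$ in place of $B$ and $f/g$ the inverted element to handle $B\{f/g,g/f\}$. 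The main obstacle will be step (ii): checking carefully that completing the algebraic tensor products does not change them, which is where one genuinely needs both the finite presentation of $M$ and the hypothesis that completeness of $M$ persists under all nice rational localizations of $B$ — the $Z$-coefficient introduces no new difficulty here beyond bookkeeping, precisely because of the splitting assumption on $Z$. Everything else is a formal transcription of Kedlaya's argument.
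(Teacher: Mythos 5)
Your proposal matches the paper's proof in all essential respects: both rest on \cref{lemma2.5} applied twice (once to kill $\mathrm{Tor}_1(B\widehat{\otimes}Z\{\tfrac{g}{f}\},M)$ directly, once more to the further localization of $B\{\tfrac{g}{f}\}$ obtained by inverting $\tfrac{f}{g}$ so as to kill $\mathrm{Tor}_1(B\widehat{\otimes}Z\{\tfrac{f}{g},\tfrac{g}{f}\},M)$), together with tensoring $M$ against the short exact sequence $0\to B\widehat{\otimes}Z\to B\widehat{\otimes}Z\{\tfrac{f}{g}\}\oplus B\widehat{\otimes}Z\{\tfrac{g}{f}\}\to B\widehat{\otimes}Z\{\tfrac{f}{g},\tfrac{g}{f}\}\to 0$ to extract the remaining vanishing. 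The one point to correct is the order of your steps (iii) and (iv): the long exact sequence of $\mathrm{Tor}$ for that Mayer--Vietoris sequence only gives an injection of $\mathrm{Tor}_1(B\widehat{\otimes}Z\{\tfrac{f}{g}\},M)\oplus\mathrm{Tor}_1(B\widehat{\otimes}Z\{\tfrac{g}{f}\},M)$ into $\mathrm{Tor}_1(B\widehat{\otimes}Z\{\tfrac{f}{g},\tfrac{g}{f}\},M)$, so there is no direct ``propagation'' from $\tfrac{g}{f}$ to $\tfrac{f}{g}$; you must first establish the vanishing for $B\widehat{\otimes}Z\{\tfrac{f}{g},\tfrac{g}{f}\}$ (your step (iv)) and only then run the diagram chase for $B\widehat{\otimes}Z\{\tfrac{f}{g}\}$. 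This is exactly the order the paper (following Kedlaya) uses; with that swap your argument goes through as written.
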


\begin{proof}
Choose a corresponding presentation for $M$ taking the form of:
\[
\xymatrix@C+0pc@R+0pc{
0 \ar[r] \ar[r] \ar[r] &X_1 \ar[r] \ar[r] \ar[r] &X_2\ar[r] \ar[r] \ar[r] &M \ar[r] \ar[r] \ar[r] &0.
}
\]
Here the module $X_2$ is a finite free left module. Then we have by the previous lemma the following exact sequence:
\[
\xymatrix@C+0pc@R+0pc{
0 \ar[r] \ar[r] \ar[r] &B\widehat{\otimes}Z\left\{\frac{g}{f}\right\}\otimes_{B\widehat{\otimes}Z}X_1 \ar[r] \ar[r] \ar[r] &B\widehat{\otimes}Z\left\{\frac{g}{f}\right\}\otimes_{B\widehat{\otimes}Z}X_2 \ar[r] \ar[r] \ar[r] &B\widehat{\otimes}Z\left\{\frac{g}{f}\right\}\otimes_{B\widehat{\otimes}Z}M \ar[r] \ar[r] \ar[r] &0.
}
\]	
Now apply the previous lemma again as in the proof of \cite[Lemma 1.9.4]{Ked2} we have the following exact sequence:
\[
\xymatrix@C+0pc@R+0pc{
0 \ar[r] \ar[r] \ar[r] &B\widehat{\otimes}Z\left\{\frac{f}{g},\frac{g}{f}\right\}\otimes_{B\widehat{\otimes}Z}X_1 \ar[r] \ar[r] \ar[r] &B\widehat{\otimes}Z\left\{\frac{f}{g},\frac{g}{f}\right\}\otimes_{B\widehat{\otimes}Z}X_2 \ar[r] \ar[r] \ar[r] &B\widehat{\otimes}Z\left\{\frac{f}{g},\frac{g}{f}\right\}\otimes_{B\widehat{\otimes}Z}M \ar[r] \ar[r] \ar[r] &0.
}
\]	
Then to tackle the situation for $\frac{f}{g}$, we just further take the tensor product with the corresponding sequence as in the following:
\[
\xymatrix@C+0pc@R+0pc{
0 \ar[r] \ar[r] \ar[r] &B\widehat{\otimes}Z \ar[r] \ar[r] \ar[r] &B\widehat{\otimes}Z\left\{\frac{f}{g}\right\}\bigoplus B\widehat{\otimes}Z\left\{\frac{g}{f}\right\} \ar[r] \ar[r] \ar[r] &B\widehat{\otimes}Z\left\{\frac{f}{g},\frac{g}{f}\right\} \ar[r] \ar[r] \ar[r] &0.
}
\]
\end{proof}

\indent Then we have the following corollary which is the corresponding analog of \cite[Corollary 1.9.5]{Ked2}:

\begin{corollary}\mbox{\bf{(After Kedlaya \cite[Corollary 1.9.5]{Ked2})}}
Working over a uniform analytic Huber pair $(A,A^+)$ which is sheafy, suppose we consider a finitely generated $A\widehat{\otimes}Z$ module complete with respect to the natural topology. Then we have that for any nice rational localization $A\rightarrow B$ in the sense of \cite[Definition 1.9.1]{Ked2} then tensoring with the corresponding $Z$ we have the vanishing of $\mathrm{Tor}_1(B,M)$.
	
\end{corollary}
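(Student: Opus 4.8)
The plan is to reduce the statement to \cref{lemma2.6} by two devices: decomposing the nice rational localization into its elementary Laurent and balanced pieces, and reducing the finitely generated hypothesis on $M$ to the finitely presented case; throughout, $\mathrm{Tor}_1(B,M)$ is understood as $\mathrm{Tor}_1^{A\widehat{\otimes}Z}(B\widehat{\otimes}Z,M)$.

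First I would record the structural setup. By \cite[Definition 1.9.1]{Ked2} the nice rational localization $A\rightarrow B$ factors as a finite composite $A=A_0\rightarrow A_1\rightarrow\cdots\rightarrow A_\ell=B$ of Laurent and balanced rational localizations; applying $-\widehat{\otimes}Z$, each step becomes a base change of one of the shapes $C\widehat{\otimes}Z\left\{\frac{f}{g}\right\}$ or $C\widehat{\otimes}Z\left\{\frac{f}{g},\frac{g}{f}\right\}$ with $g\in\{1,1-f\}$, that is, one of the morphisms treated in \cref{lemma2.5} and \cref{lemma2.6}. Granting (the substantive point, see below) that a complete finitely presented module over $A_i\widehat{\otimes}Z$ base-changes to a complete finitely presented module over $A_{i+1}\widehat{\otimes}Z$ and retains the completeness-under-nice-localization property, \cref{lemma2.6} applies at each stage and yields $\mathrm{Tor}_1^{A_i\widehat{\otimes}Z}(A_{i+1}\widehat{\otimes}Z,M_i)=0$, where $M_i:=(A_i\widehat{\otimes}Z)\otimes_{A\widehat{\otimes}Z}M$. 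The base-change spectral sequence $\mathrm{Tor}^S_p(T,\mathrm{Tor}^R_q(S,N))\Rightarrow\mathrm{Tor}^R_{p+q}(T,N)$ for $R\rightarrow S\rightarrow T$ shows, via its five-term exact sequence, that $\mathrm{Tor}_1^R(T,N)$ vanishes as soon as $\mathrm{Tor}_1^S(T,S\otimes_R N)$ and $\mathrm{Tor}_1^R(S,N)$ do; inducting on $\ell$ then gives $\mathrm{Tor}_1^{A\widehat{\otimes}Z}(B\widehat{\otimes}Z,M)=0$ whenever $M$ is finitely presented.

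To pass from finitely presented to finitely generated I would choose a surjection $(A\widehat{\otimes}Z)^n\twoheadrightarrow M$, write its kernel as the filtered union $\varinjlim_\alpha K_\alpha$ of finitely generated submodules, set $M_\alpha:=(A\widehat{\otimes}Z)^n/K_\alpha$ --- each finitely presented, with $M=\varinjlim_\alpha M_\alpha$ --- and invoke the fact that $\mathrm{Tor}_1$ commutes with filtered colimits to get $\mathrm{Tor}_1^{A\widehat{\otimes}Z}(B\widehat{\otimes}Z,M)=\varinjlim_\alpha\mathrm{Tor}_1^{A\widehat{\otimes}Z}(B\widehat{\otimes}Z,M_\alpha)=0$.

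I expect the only genuine obstacle to be the completeness bookkeeping flagged above: one needs that finitely presented modules over the rings $A_i\widehat{\otimes}Z$ (including the approximations $M_\alpha$ and all their base changes) are complete and stay complete under nice rational localizations, so that \cref{lemma2.6} is legitimately applicable. In the absolute case this is part of the content of \cite[Corollary 1.9.5]{Ked2} and rests on the sheafiness and uniformity of $(A,A^+)$; in the present $Z$-relative setting there is the extra requirement that completing against $Z$ destroy none of the short exact sequences arising in the argument, which is exactly the hypothesis placed on $Z$ in the ambient Setting (satisfied, e.g., when $C\rightarrow Z$ splits topologically). With these inputs the proof of \cite[Corollary 1.9.5]{Ked2} transposes verbatim, working over $A\widehat{\otimes}Z$ in place of $A$.
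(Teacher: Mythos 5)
Your first two paragraphs follow the same route as the paper, whose entire proof is ``By the previous lemmas'': decompose the nice rational localization into its Laurent and balanced constituents, apply \cref{lemma2.5} and \cref{lemma2.6} at each stage, and chain the vanishing of $\mathrm{Tor}_1$ along the composite (the base-change spectral sequence, or equivalently the associativity of tensor product applied to a finite presentation, does this). You are also right that the substantive bookkeeping is that each intermediate base change $M_i$ must again be finitely presented (automatic, by right-exactness) and stably complete under nice localizations (which is exactly the hypothesis carried in \cref{lemma2.6} and is what the Setting's exactness assumption on $-\widehat{\otimes}Z$ is protecting). Up to that point your argument is a correct, more explicit version of what the paper intends.

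The third paragraph, however, contains a genuine gap. Writing a finitely generated $M$ as $\varinjlim_\alpha M_\alpha$ with $M_\alpha=(A\widehat{\otimes}Z)^n/K_\alpha$ finitely presented, and passing $\mathrm{Tor}_1$ through the filtered colimit, only helps if you can prove $\mathrm{Tor}_1(B\widehat{\otimes}Z,M_\alpha)=0$ for cofinally many $\alpha$ --- and the only available tool, \cref{lemma2.6}, requires each $M_\alpha$ to be complete for its natural topology (and stably so). There is no reason for the quotients $(A\widehat{\otimes}Z)^n/K_\alpha$ to be complete, or even separated, since the finitely generated submodules $K_\alpha$ exhausting the (closed) kernel need not themselves be closed; taking closures destroys finite generation. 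The completeness hypothesis is not decorative: it is what makes the formal-inverse argument against $1-fT$ in \cref{lemma2.5} converge. Indeed, if this reduction worked it would prove Tor-vanishing for arbitrary finitely generated complete modules, which is strictly stronger than what Kedlaya establishes even in the absolute case and would make the entire pseudocoherence formalism unnecessary. The discrepancy you are trying to repair is really in the corollary's statement, which (read against the paper's one-line proof and against \cite[Corollary 1.9.5]{Ked2}) should carry the hypotheses of \cref{lemma2.6} --- finitely presented and stably complete under nice rational localizations --- rather than merely ``finitely generated and complete''. With that reading your first two paragraphs suffice and the colimit step should be discarded rather than patched.
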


\begin{proof}
By the previous lemmas.	
\end{proof}

\begin{corollary}\mbox{\bf{(After Kedlaya \cite[Corollary 1.9.6]{Ked2})}}
Working over a uniform analytic Huber pair $(A,A^+)$ which is sheafy, suppose we consider a nice $Z$-stably pseudocoherent left $A\widehat{\otimes}Z$ module $M$. Then we have that for any rational localization $A\rightarrow B$ and any rational localization from $B$ to $C$ in the nice setting as in \cite[Definition 1.9.1]{Ked2}. We have that base change along the map $B\rightarrow C$ (note that this is assumed to be nice) will preserve the corresponding stably pseudocoherence for $M$.

\end{corollary}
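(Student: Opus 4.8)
The plan is to follow the strategy of \cite[Corollary 1.9.6]{Ked2}, now carrying the coefficient ring $Z$ throughout, and to reduce the whole statement to two ingredients already available: the $Z$-relative pseudoflatness of nice rational localizations from the previous corollary (the $Z$-analog of \cite[Corollary 1.9.5]{Ked2}), and the behaviour of the relevant short exact sequences under $\widehat{\otimes}Z$ guaranteed by the hypothesis on $Z$ in the Setting, together with \cref{lemma2.5} and \cref{lemma2.6}. First I would dispose of the completeness half of the conclusion, which is essentially formal: by definition a nice $Z$-stably pseudocoherent module is complete for the natural topology and stays complete after every nice rational localization, and since the class of nice rational localizations is closed under composition (this is built into \cite[Definition 1.9.1]{Ked2}), the base change $M_C:=(C\widehat{\otimes}Z)\otimes_{A\widehat{\otimes}Z}M$ is complete, as is every further base change of it along a nice rational localization of $C$. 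So the genuine content is that algebraic pseudocoherence is preserved.

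For the pseudocoherence part I would argue by descending induction on the pseudocoherence level, exactly as in \cite{Ked2}. Choose a resolution $\cdots\to F_1\to F_0\to M\to 0$ of $M$ by finite free $A\widehat{\otimes}Z$-modules (or a partial such resolution of length $m$, for the $m$-version), and set $K:=\ker(F_0\to M)$, which is again finitely generated over $A\widehat{\otimes}Z$. The key point is that $M$ is finitely generated and complete, so by the previous corollary (the $Z$-relative form of \cite[Corollary 1.9.5]{Ked2}) we have $\mathrm{Tor}_1^{A\widehat{\otimes}Z}(C\widehat{\otimes}Z,M)=0$; hence applying $(C\widehat{\otimes}Z)\otimes_{A\widehat{\otimes}Z}(-)$ to the short exact sequence $0\to K\to F_0\to M\to 0$ keeps it exact, so that $K_C:=(C\widehat{\otimes}Z)\otimes_{A\widehat{\otimes}Z}K$ is identified with the kernel of $(F_0)_C\to M_C$. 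Iterating this — each syzygy module appearing is again finitely generated, and remains complete by the stability hypothesis on $M$ together with the $\widehat{\otimes}Z$-exactness assumption of the Setting applied through \cref{lemma2.5} and \cref{lemma2.6} — produces a finite free resolution of $M_C$ over $C\widehat{\otimes}Z$ of the required length, so $M_C$ is pseudocoherent (resp. $m$-pseudocoherent).

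Finally I would combine the two halves. Since $A\to B$ is in particular a nice rational localization, the argument above applied with $C$ replaced by $B$ shows that $M_B$ is a nice $Z$-stably pseudocoherent $B\widehat{\otimes}Z$-module; applying it again to the nice rational localization $B\to C$, and using that a composite of nice rational localizations out of $C$ is itself a nice rational localization out of $B$, shows that $M_C$ is nice $Z$-stably pseudocoherent over $C\widehat{\otimes}Z$. The main obstacle I anticipate is the bookkeeping at each inductive step: one must know that all the successive syzygy modules $K,K',\dots$ remain complete for the natural topology, so that the previous corollary applies to them and not merely to $M$ itself, and this is precisely where the hypothesis that $\widehat{\otimes}Z$ preserves exactness of short exact sequences of uniform analytic Huber rings (and its consequences via \cref{lemma2.5} and \cref{lemma2.6}) must be invoked carefully. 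Because rational localizations are genuinely non-flat here, this completeness propagation cannot be replaced by a naive flat-base-change argument, and tracking it is the technical heart of the proof.
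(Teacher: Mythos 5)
The paper states this corollary with no proof at all, leaving it as an immediate consequence of the preceding results (the $Z$-relative analogs of \cite[Lemma 1.9.3, Lemma 1.9.4, Corollary 1.9.5]{Ked2}), exactly as in Kedlaya's original \cite[Corollary 1.9.6]{Ked2}. Your argument --- formal propagation of completeness through composites of nice localizations, plus bootstrapping pseudocoherence along the resolution by applying the $\mathrm{Tor}_1$-vanishing of the previous corollary to each successive syzygy module (each of which is finitely generated and, being a closed submodule of a finite free module over a complete separated quotient, again complete) --- is precisely the intended route, and correctly identifies the completeness of the syzygies as the only point requiring care.
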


\begin{lemma} \mbox{\bf{(After Kedlaya \cite[Lemma 1.9.7]{Ked2})}}
Now assume that we are in the corresponding assumption of \cite[1.7.1]{Ked2}. Also consider the corresponding complex in \cite[1.6.15.1]{Ked2}:
\[
\xymatrix@C+0pc@R+0pc{
0 \ar[r] \ar[r] \ar[r] &B \ar[r] \ar[r] \ar[r] &B\left\{\frac{f}{g}\right\}\bigoplus B\left\{\frac{g}{f}\right\} \ar[r] \ar[r] \ar[r] &B\left\{\frac{f}{g},\frac{g}{f}\right\} \ar[r] \ar[r] \ar[r] &0.
}
\]	
And we assume now that this is exact. Now take $g$ to be $1-f$ or $1$. Here the corresponding element $f,g$ come from $B$. Now over $B\widehat{\otimes}Z$ suppose we have a module $M$ which is assumed now to be finitely presented and complete with respect to the natural topology over $B\widehat{\otimes}Z$, and furthermore the corresponding completeness is stable under any corresponding nice rational localization $B\rightarrow C$. Then we have that over then $B\widehat{\otimes}Z$ in our situation tensoring with $M$ will preserve the corresponding exactness of the following:

\[
\xymatrix@C+0pc@R+0pc{
0 \ar[r] \ar[r] \ar[r] &B\widehat{\otimes}Z \ar[r] \ar[r] \ar[r] &B\widehat{\otimes}Z \left\{\frac{f}{g}\right\}\bigoplus B\widehat{\otimes}Z \left\{\frac{g}{f}\right\} \ar[r] \ar[r] \ar[r] &B\widehat{\otimes}Z \left\{\frac{f}{g},\frac{g}{f}\right\} \ar[r] \ar[r] \ar[r] &0.
}
\]

\end{lemma}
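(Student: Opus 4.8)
The plan is to deduce this from \cref{lemma2.5} and \cref{lemma2.6} together with the acyclicity of $-\widehat{\otimes}Z$ built into our standing hypotheses on $Z$, by running the long exact sequence of $\mathrm{Tor}$, exactly in the spirit of \cite[Lemma 1.9.7]{Ked2}. First I would record that the three-term complex in question is genuinely short exact: by hypothesis the \v{C}ech complex $0\rightarrow B\rightarrow B\left\{\frac{f}{g}\right\}\oplus B\left\{\frac{g}{f}\right\}\rightarrow B\left\{\frac{f}{g},\frac{g}{f}\right\}\rightarrow 0$ is exact, and applying $-\widehat{\otimes}Z$ (which by the standing assumption on $Z$ carries short exact sequences of uniform analytic Huber rings to short exact sequences) yields that
\[
0\rightarrow B\widehat{\otimes}Z\rightarrow B\widehat{\otimes}Z\left\{\frac{f}{g}\right\}\oplus B\widehat{\otimes}Z\left\{\frac{g}{f}\right\}\rightarrow B\widehat{\otimes}Z\left\{\frac{f}{g},\frac{g}{f}\right\}\rightarrow 0
\]
is short exact; I would view it as a short exact sequence of right $B\widehat{\otimes}Z$-modules.

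Next I would apply $-\otimes_{B\widehat{\otimes}Z}M$ to this short exact sequence and write down the associated long exact $\mathrm{Tor}_\bullet^{B\widehat{\otimes}Z}(-,M)$ sequence. Since $M$ is finitely presented and complete for the natural topology, the algebraic tensor products that appear agree with the completed ones---this is precisely the identification already used in the proofs of \cref{lemma2.5} and \cref{lemma2.6}---so the resulting sequence is the $\widehat{\otimes}$-version displayed in the statement, with left-hand term $(B\widehat{\otimes}Z)\otimes_{B\widehat{\otimes}Z}M=M$. It then remains to kill the obstruction terms in degree one: the term $\mathrm{Tor}_1^{B\widehat{\otimes}Z}(B\widehat{\otimes}Z,M)$ vanishes trivially; $\mathrm{Tor}_1^{B\widehat{\otimes}Z}\big(B\widehat{\otimes}Z\left\{\frac{f}{g}\right\}\oplus B\widehat{\otimes}Z\left\{\frac{g}{f}\right\},M\big)$ splits as a direct sum and vanishes by \cref{lemma2.6} on the first summand and by \cref{lemma2.5} on the second; and $\mathrm{Tor}_1^{B\widehat{\otimes}Z}\big(B\widehat{\otimes}Z\left\{\frac{f}{g},\frac{g}{f}\right\},M\big)$ vanishes by \cref{lemma2.6}. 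Feeding these vanishings into the long exact sequence collapses its tail to
\[
0\rightarrow M\rightarrow \Big(B\widehat{\otimes}Z\left\{\frac{f}{g}\right\}\oplus B\widehat{\otimes}Z\left\{\frac{g}{f}\right\}\Big)\otimes_{B\widehat{\otimes}Z}M\rightarrow B\widehat{\otimes}Z\left\{\frac{f}{g},\frac{g}{f}\right\}\otimes_{B\widehat{\otimes}Z}M\rightarrow 0,
\]
which is precisely the asserted exactness.

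The step I expect to be the main obstacle is the bookkeeping in the second paragraph: carefully justifying that the completed and the algebraic tensor products coincide here, in the uniform analytic Huber setting enlarged by the big coefficient ring $Z$, and that the long exact $\mathrm{Tor}$ sequence is actually available in this topological context. This is where finite presentedness of $M$, its completeness (stable under nice rational localization), and the acyclicity hypothesis on $Z$ all get used simultaneously; once that is in place, the statement is a formal consequence of the $\mathrm{Tor}_1$-vanishing already established in \cref{lemma2.5} and \cref{lemma2.6}.
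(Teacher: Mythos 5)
Your argument is correct and is essentially the paper's own proof, which simply defers to Kedlaya's Lemma 1.9.7: one tensors the ($Z$-coefficient) short exact \v{C}ech sequence with $M$ and kills the $\mathrm{Tor}_1$ obstructions using \cref{lemma2.5} and \cref{lemma2.6} (in fact only the vanishing of $\mathrm{Tor}_1$ against $B\widehat{\otimes}Z\left\{\frac{f}{g},\frac{g}{f}\right\}$ is strictly needed for injectivity on the left, the rest of the exactness being automatic). The hypotheses on $M$ are exactly those of \cref{lemma2.6}, so the reduction is legitimate.
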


\begin{proof}
See \cite[Lemma 1.9.7]{Ked2}.	
\end{proof}

\begin{theorem} \mbox{\bf{(After Kedlaya \cite[Corollary 1.9.8]{Ked2})}}
Working over a uniform analytic Huber pair $(A,A^+)$ which is sheafy, suppose we consider a $Z$-nice-stably pseudocoherent left $A\widehat{\otimes}Z$ module $M$. Then we have that the presheaf $\widetilde{M}$ associated to the $M$ sheafified along the ring $A$ only is acyclic.
	
\end{theorem}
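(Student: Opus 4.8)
The plan is to adapt the argument for \cite[Corollary 1.9.8]{Ked2} to the $Z$-relative setting, reducing the acyclicity of $\widetilde{M}$ first to the exactness of a single explicit \v{C}ech complex attached to a simple covering, and then bootstrapping to arbitrary rational coverings by refinement. Recall from \cite[Definition 1.9.1]{Ked2} that a nice rational localization is a composite of Laurent and balanced localizations, so that every nice rational covering of $\mathrm{Spa}(A,A^+)$ is refined by iterated simple coverings, each associated to a pair $(f,g)$ with $g=1-f$ or $g=1$ as in the complex \cite[1.6.15.1]{Ked2}; since $\mathrm{Spa}(A,A^+)$ is spectral and the rational subspaces form a basis, it suffices to treat finite coverings, and for these the assertion reduces to the vanishing of positive \v{C}ech cohomology together with $\check{H}^{0}$ recovering the appropriate base change of $M$.

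The core step is the base case of a simple covering. There the \v{C}ech complex of $\widetilde{M}$ is obtained by applying $-\,\widehat{\otimes}_{B\widehat{\otimes}Z}M$ to
\[
0 \to B\widehat{\otimes}Z \to B\widehat{\otimes}Z\left\{\frac{f}{g}\right\}\bigoplus B\widehat{\otimes}Z\left\{\frac{g}{f}\right\} \to B\widehat{\otimes}Z\left\{\frac{f}{g},\frac{g}{f}\right\} \to 0,
\]
and by the preceding lemma (our $Z$-relative form of \cite[Lemma 1.9.7]{Ked2}) this tensored complex remains exact, whence $\check{H}^{i}=0$ for $i>0$ and $\check{H}^{0}$ recovers $B\widehat{\otimes}Z\otimes_{A\widehat{\otimes}Z}M$. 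Here one uses that $M$ is $Z$-nice-stably pseudocoherent, hence finitely presented, complete, and complete after any nice rational localization, so that the hypotheses of that lemma apply; one also uses the exactness hypothesis on $-\,\widehat{\otimes}Z$ from the running Setting, together with the $\mathrm{Tor}_1$-vanishing of \cite[Corollary 1.9.5]{Ked2}, so that the completed and algebraic tensor products coincide.

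Having the base case, I would propagate the vanishing along a refinement one simple covering at a time: at each stage \cite[Corollary 1.9.6]{Ked2} guarantees that the base change of $M$ along the nice rational localization just performed is again nice-stably pseudocoherent, so the base case applies verbatim, and the vanishing for the whole refinement is assembled by the standard telescoping/spectral-sequence argument of \cite[Propositions 2.4.20--2.4.21]{KL1}. Passing from finite \v{C}ech coverings to the presheaf-to-sheaf comparison and to higher sheaf cohomology is then the usual colimit over refinements. The step I expect to be the main obstacle is purely the bookkeeping of niceness: one must verify that every intermediate localization occurring in a chosen refinement is itself nice, so that completeness and pseudocoherence (and hence the applicability of the preceding lemma and of \cite[Corollary 1.9.6]{Ked2}) are preserved at every step; this is exactly why the hypothesis is stated with ``$Z$-nice-stably pseudocoherent'' rather than merely ``stably pseudocoherent'', and why the additive-splitting condition on $Z$ in the running Setting is imposed. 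Once this is in place, the remainder is formal.
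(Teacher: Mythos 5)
Your proposal is correct and follows essentially the same route as the paper, whose own proof consists only of the citation ``See \cite[Corollary 1.9.8]{Ked2}'': the reduction to simple Laurent/balanced coverings, the use of the $Z$-relative analog of \cite[Lemma 1.9.7]{Ked2} for the base case, and the propagation along refinements via the $Z$-relative \cite[Corollary 1.9.6]{Ked2} are exactly the steps that deferral entails. Your explicit bookkeeping of niceness at each intermediate localization is a faithful expansion of detail the paper leaves implicit.
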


\begin{proof}
See \cite[Corollary 1.9.8]{Ked2}.	
\end{proof}

\indent Now we establish the corresponding analogs of the corresponding results in \cite{Ked2} which are needed in the corresponding descent of pseudocoherent sheaves.

\begin{lemma}\mbox{\bf{(After Kedlaya \cite[Lemma 1.9.9]{Ked2})}}
Again as in \cite[Lemma 1.9.9]{Ked2} we suppose we are in the situation of 1.7.1 of \cite{Ked2} for the adic rings. Now one can actually find a neighbourhood around 0 of the corresponding ring $B\widehat{\otimes}Z\left\{\frac{f}{g},\frac{g}{f}\right\}$. Then one can find a corresponding decomposition for any invertible matrix $M$ where $M-1$ takes coefficients in this neighbourhood into $M_1M_2$ where invertible $M_1$ takes coefficients in $B\widehat{\otimes}Z\left\{\frac{f}{g}\right\}$ and invertible $M_2$ takes coefficients in $B\widehat{\otimes}Z\left\{\frac{g}{f}\right\}$. 
\end{lemma}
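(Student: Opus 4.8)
This lemma is a matrix-factorization statement — a Banach-algebra analogue of the classical Cartan/Gauss decomposition underlying Kiehl's and Tate's glueing. The plan is to reduce it to the known commutative case of Kedlaya \cite[Lemma 1.9.9]{Ked2} by a standard successive-approximation argument, keeping track of the extra tensor factor $Z$.

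First I would set up notation: write $R := B\widehat{\otimes}Z$, $R_1 := B\widehat{\otimes}Z\left\{\frac{f}{g}\right\}$, $R_2 := B\widehat{\otimes}Z\left\{\frac{g}{f}\right\}$ and $R_{12} := B\widehat{\otimes}Z\left\{\frac{f}{g},\frac{g}{f}\right\}$. Since $(B,B^+)$ is a uniform analytic Huber pair and $Z$ satisfies the flatness-type hypothesis of \cref{setting} (exactness of $-\widehat{\otimes}Z$ on short exact sequences of uniform analytic Huber rings), the Čech-type complex
\[
\xymatrix@C+0pc@R+0pc{
0 \ar[r] &R \ar[r] &R_1\bigoplus R_2 \ar[r] &R_{12} \ar[r] &0
}
\]
is exact; in particular $R_{12}$ receives bounded maps from both $R_1$ and $R_2$ with the expected cokernel behaviour. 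Then I would fix submultiplicative norms on all four rings compatible with the maps, and choose the neighbourhood of $0$ in $R_{12}$ to be a ball $\{x : \|x\| < \epsilon\}$ on which the decomposition $R_{12} = \mathrm{im}(R_1) + \mathrm{im}(R_2)$ can be carried out with uniformly bounded ``splitting constant'' $c$ — this is exactly the content that one imports from \cite[Lemma 1.9.9]{Ked2}, which provides, for $\epsilon$ small enough, a way of writing each sufficiently small element $x \in R_{12}$ as $x = x_1 + x_2$ with $x_i$ coming from $R_i$ and $\|x_i\| \le c\|x\|$. The point is that adjoining the factor $Z$ does not disturb this: the norm estimates and the additive splitting are inherited from the commutative picture because $-\widehat{\otimes}Z$ is exact and isometric for the chosen norms on the relevant short exact sequences.

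Given such an additive splitting, the matrix factorization follows by the usual Newton-type iteration. Writing $M = 1 + N$ with $N$ a matrix over the chosen neighbourhood, I would split $N = N_1 + N_2'$ additively (entrywise), set $M_1^{(0)} = 1 + N_1$ over $R_1$, and then correct: $(M_1^{(0)})^{-1} M = 1 + N^{(1)}$ where $N^{(1)}$ has strictly smaller norm, provided $\epsilon$ is chosen so that $c\epsilon$ and the operator norm of $(M_1^{(0)})^{-1}$ are controlled (shrinking the neighbourhood once more to guarantee invertibility of $M_1^{(0)}$ via the geometric series over $R_1$). Iterating, the partial products $M_1^{(0)} M_1^{(1)}\cdots$ converge in the matrix algebra over $R_1$ to an invertible $M_1$, and $M_1^{-1}M$ converges to an invertible matrix $M_2$ over $R_2$, giving $M = M_1 M_2$. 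Completeness of $R_1$, $R_2$ (hence of the matrix algebras over them) is what makes the limits exist; completeness holds because $B$ is analytic and complete and $-\widehat{\otimes}Z$ preserves completeness on the relevant rings.

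The main obstacle — and the only genuinely non-formal point — is ensuring that the additive splitting $R_{12} = \mathrm{im}(R_1) + \mathrm{im}(R_2)$ with uniformly bounded constant \emph{survives the base change} $-\widehat{\otimes}Z$, since in general completed tensor products can destroy open-mapping estimates. Here the hypothesis imposed on $Z$ in \cref{setting} (that $C \to Z$ splits topologically, making $-\widehat{\otimes}Z$ exact on uniform analytic Huber rings) is exactly designed to rescue this: applying $-\widehat{\otimes}Z$ to the bounded exact sequence $0 \to B \to B\{f/g\}\oplus B\{g/f\} \to B\{f/g,g/f\} \to 0$ from \cite{Ked2} yields a bounded exact sequence, and by the open mapping theorem over the base Banach ring one recovers a splitting constant; alternatively the topological splitting of $C \to Z$ gives the constant directly with no appeal to Baire category. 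Once this is in hand the rest is the routine convergent iteration, and one concludes as in \cite[Lemma 1.9.9]{Ked2}.

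\begin{proof}
See \cite[Lemma 1.9.9]{Ked2}; the argument adapts verbatim after noting, as above, that the base change $-\widehat{\otimes}Z$ preserves the relevant exactness and norm estimates by the hypothesis on $Z$.
\end{proof}
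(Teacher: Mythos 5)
Your proposal matches the paper's proof, which consists entirely of the citation ``See \cite[Lemma 1.9.9]{Ked2}''; you have simply made explicit the point the paper leaves implicit, namely that the hypothesis on $Z$ (exactness of $-\widehat{\otimes}Z$, e.g.\ via a topological splitting of $C\to Z$) preserves the bounded additive splitting of the \v{C}ech sequence that drives Kedlaya's successive-approximation argument. One small caution: the correction step must be two-sided, $M^{(n+1)}=(1+N_1^{(n)})^{-1}M^{(n)}(1+N_2^{(n)})^{-1}$, so that the error $-(1+N_1)^{-1}N_1N_2(1+N_2)^{-1}$ is quadratically small; the one-sided correction $(M_1^{(0)})^{-1}M$ as you literally wrote it leaves an error comparable to $N_2$ and would not converge, though the cited proof you defer to does this correctly.
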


\begin{proof}
See \cite[Lemma 1.9.9]{Ked2}.	
\end{proof}


\begin{lemma}\mbox{\bf{(After Kedlaya \cite[Lemma 1.9.10]{Ked2})}} \label{lemma2.12}
Again as in \cite[Lemma 1.9.10]{Ked2} we suppose we are in the situation of 1.7.1 of \cite{Ked2} for the adic rings. Now for a glueing datum $M_1,M_2,M_{12}$ over the rings respectively $B\widehat{\otimes}Z\left\{\frac{f}{g}\right\}$, $B\widehat{\otimes}Z\left\{\frac{g}{f}\right\}$, $B\widehat{\otimes}Z\left\{\frac{f}{g},\frac{g}{f}\right\}$. We assume that they are finitely generated. We recall this means that we have:
\begin{align}
f_1: B\widehat{\otimes}Z\left\{\frac{f}{g},\frac{g}{f}\right\}\otimes_{B\widehat{\otimes}Z\left\{\frac{f}{g}\right\}}M_1\overset{\sim}{\rightarrow}M_{12},\\
f_2: B\widehat{\otimes}Z\left\{\frac{f}{g},\frac{g}{f}\right\}\otimes_{B\widehat{\otimes}Z\left\{\frac{f}{g}\right\}}M_2\overset{\sim}{\rightarrow}M_{12}.
\end{align}
Then we have that the corresponding strictly surjectivity of the map $M_1\bigoplus M_2\rightarrow M_{12}$ given by $f_1(x_1)-f_2(x_2)$. And we have that the corresponding equalizer $M$ will satisfy that $B\widehat{\otimes}Z\left\{\frac{f}{g}\right\} \otimes_{B\widehat{\otimes}Z}M\overset{}{\rightarrow}M_1$, $B\widehat{\otimes}Z\left\{\frac{g}{f}\right\}\otimes_{B\widehat{\otimes}Z}M\overset{}{\rightarrow}M_2$ are strictly surjective.	
\end{lemma}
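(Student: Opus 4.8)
The plan is to follow the proof of \cite[Lemma 1.9.10]{Ked2} closely, the essential inputs being the matrix factorization lemma established just above (the $Z$-relative analog of \cite[Lemma 1.9.9]{Ked2}), the strict exactness of the localization complex
\[
0 \rightarrow B\widehat{\otimes}Z \rightarrow B\widehat{\otimes}Z\left\{\frac{f}{g}\right\}\bigoplus B\widehat{\otimes}Z\left\{\frac{g}{f}\right\} \rightarrow B\widehat{\otimes}Z\left\{\frac{f}{g},\frac{g}{f}\right\} \rightarrow 0
\]
(which remains strict exact after $\widehat{\otimes}Z$ by the standing splitting hypothesis imposed on $Z$), and the Tor-vanishing statements \cref{lemma2.5} and \cref{lemma2.6} together with the stability of completeness under nice rational localizations. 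Throughout one works with the natural (Banach, or in the LF setting inductive-limit) topologies and tracks strictness of every map that appears.

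First I would choose finite generating sequences of $M_1$ and of $M_2$. Via the isomorphisms $f_1$ and $f_2$ these yield two finite generating sequences of $M_{12}$ over $B\widehat{\otimes}Z\left\{\frac{f}{g},\frac{g}{f}\right\}$; writing each in terms of the other produces rectangular transition matrices, and by the standard padding-and-elementary-matrix trick of \cite[Lemma 1.9.10]{Ked2} one passes, after enlarging the sequences, to a single square \emph{invertible} transition matrix $U$ over $B\widehat{\otimes}Z\left\{\frac{f}{g},\frac{g}{f}\right\}$. The next and crucial step is the approximation: using the strict exactness of the displayed complex (tensored with $Z$) one perturbs the generating sequences of $M_1$ and $M_2$ so that the resulting transition matrix $U'$ has $U'-1$ with entries in the neighbourhood of $0$ furnished by the preceding matrix-factorization lemma; then $U' = U_1 U_2$ with $U_1$ invertible over $B\widehat{\otimes}Z\left\{\frac{f}{g}\right\}$ and $U_2$ invertible over $B\widehat{\otimes}Z\left\{\frac{g}{f}\right\}$. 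Applying $U_1^{-1}$ to the generating sequence of $M_1$ and $U_2$ to that of $M_2$ yields two sequences with the same image in $M_{12}$, hence lifting to a common finite sequence $e_1,\dots,e_n$ of elements of the equalizer $M$.

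Since $e_1,\dots,e_n$ restrict to generating sequences of $M_1$ and of $M_2$, the base-change maps $B\widehat{\otimes}Z\left\{\frac{f}{g}\right\}\otimes_{B\widehat{\otimes}Z}M\rightarrow M_1$ and $B\widehat{\otimes}Z\left\{\frac{g}{f}\right\}\otimes_{B\widehat{\otimes}Z}M\rightarrow M_2$ are surjective, and strict surjectivity follows by the open-mapping argument on the complete source and target exactly as in \cite[Lemma 1.9.10]{Ked2}. For the first assertion, given $y\in M_{12}$ one expands it in the glued generators $e_1,\dots,e_n$ and splits each coefficient through the strict exact localization complex to write $y = f_1(x_1) - f_2(x_2)$ with $x_i\in M_i$ of controlled norm; this is precisely strict surjectivity of $M_1\bigoplus M_2 \rightarrow M_{12}$.

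The main obstacle I anticipate is topological bookkeeping rather than algebra: one must know that $-\widehat{\otimes}Z$ preserves strict exact sequences of the uniform analytic Huber rings in play (the reason the splitting hypothesis on $Z$ was built into the setting), and one must verify that the iterated perturbation above actually converges in the natural — in the LF case the inductive-limit — topology while all intermediate maps stay strict. A secondary subtlety is that the $M_i$ are only assumed finitely generated, not finitely presented, so flatness is unavailable directly; the kernels arising from finite free presentations must instead be controlled through \cref{lemma2.5}–\cref{lemma2.6} and the stability of completeness under nice rational localizations, exactly as in Kedlaya's argument.
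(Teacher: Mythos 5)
Your proposal is correct and follows essentially the same route as the paper's proof: both transplant Kedlaya's Lemma 1.9.10 argument, using the $Z$-relative matrix-factorization lemma to rectify the transition matrix between generating sets of $M_1$ and $M_2$ and thereby produce elements of the equalizer, with strictness supplied by completeness and the split-exactness hypothesis on $Z$. The only cosmetic difference is the order of deduction — the paper establishes strict surjectivity of $M_1\bigoplus M_2\rightarrow M_{12}$ first (invoking \cite[Lemma 1.8.1]{Ked2}) and then deduces the base-change surjectivities, whereas you extract the common generators in $M$ first — but the underlying argument is identical.
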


\begin{proof}
This is a $Z$-relative version of the corresponding result in \cite[Lemma 1.9.10]{Ked2}. We adapt the corresponding argument there to our situation. Choose basis of $M_1$ namely $e_1,...,e_n$, and choose basis of $M_2$ namely $h_1,...,h_n$. Now we set the corresponding matrix of the corresponding $f_2(w_i)$ under the basis $e_1,...,e_n$ to be $P$ while we set the corresponding matrix of the corresponding $f_1(v_i)$ under the basis $h_1,...,h_n$ to be $Q$. Then what is happening is that we can apply the previous lemma to our current situation to extract a neighbourhood such that $P(f^{-k}Q'-Q)$ (for some sufficiently $k$) takes the value in this neighbourhood and we have that the matrix $P(f^{-k}Q'-Q)+1$ will be decomposed into $K_1K_2^{-1}$ with the corresponding conditions on the entries as in the previous lemma. Then what we could do is as in \cite[Lemma 1.9.10]{Ked2} to consider the setting $(x_1,x_2):=(\sum f^kX_{1ij}e_i,\sum Q'X_{2ij}h_i)$ which gives rise to that $f_1(x_1)-f_2(x_2)=0$ by our construction. Then as in \cite[Lemma 1.9.10]{Ked2} the corresponding result in the first statement will follow once one considers as well \cite[Lemma 1.8.1]{Ked2}. Then for the second statement, we consider application of the first statement as in \cite[Lemma 1.9.10]{Ked2}. Briefly recalling, we consider the second part of statement (while the first part could be derived in the same fashion). To be more precise we consider any element $m_2\in M_2$ then send by $f_2$ to $M_{12}$, we will have by the first statement some $y_1\in M_1,y_2\in M_2$ such that $f_1(y_1)-f_2(y_2)=f_2(m_2)$. This will gives $(y_1,y_2+m_2)$ lives in the equalizer, which will proves the result as in \cite[Lemma 1.9.10]{Ked2}.

\end{proof}

\begin{proposition} \mbox{\bf{(After Kedlaya \cite[Lemma 1.9.11]{Ked2})}} \label{lemma2.13}
Consider the same situation of the previous lemma. And assume that the corresponding \cite[1.6.15.1]{Ked2} is exact. And we assume that the corresponding modules $M_1,M_2,M_{12}$ are $Z$-stably pseudocoherent over the rings respectively $B\widehat{\otimes}Z\left\{\frac{f}{g}\right\}$, $B\widehat{\otimes}Z\left\{\frac{g}{f}\right\}$, $B\widehat{\otimes}Z\left\{\frac{f}{g},\frac{g}{f}\right\}$. Then we have that actually the corresponding equalizer $M$ is just pseudocoherent in our current situation.	
\end{proposition}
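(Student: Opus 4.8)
The plan is to show that the equalizer $M$ is pseudocoherent over $B\widehat{\otimes}Z$ by first proving that $M$ is finitely generated and then climbing through the truncation levels $m$ by induction, keeping the coefficient ring $Z$ along for the ride exactly as in \cite[Lemma 1.9.11]{Ked2}. Two inputs are already available. By \cref{lemma2.12} the sequence $0\rightarrow M\rightarrow M_1\bigoplus M_2\rightarrow M_{12}\rightarrow 0$ is strict exact and the base change maps $B\widehat{\otimes}Z\left\{\frac{f}{g}\right\}\otimes_{B\widehat{\otimes}Z}M\rightarrow M_1$ and $B\widehat{\otimes}Z\left\{\frac{g}{f}\right\}\otimes_{B\widehat{\otimes}Z}M\rightarrow M_2$ are strictly surjective. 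By \cref{lemma2.5}, \cref{lemma2.6} and the corollaries following them (the analogues of \cite[Corollary 1.9.5, Corollary 1.9.6]{Ked2}) the relevant $\mathrm{Tor}_1$ groups against the Laurent and balanced localizations vanish for finitely presented complete modules, so those base change maps are in fact isomorphisms and the datum $(M_1,M_2,M_{12})$ genuinely reconstructs $M$.

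First I would establish finite generation. Since $M_1$, $M_2$, $M_{12}$ are $Z$-stably pseudocoherent they are in particular finitely generated over the respective rings, and the image of $B\widehat{\otimes}Z$ is dense in each of $B\widehat{\otimes}Z\left\{\frac{f}{g}\right\}$ and $B\widehat{\otimes}Z\left\{\frac{g}{f}\right\}$; combining this density with the strict surjectivity above I would choose finitely many elements of $M$ whose images generate $M_1$ over $B\widehat{\otimes}Z\left\{\frac{f}{g}\right\}$ and finitely many whose images generate $M_2$ over $B\widehat{\otimes}Z\left\{\frac{g}{f}\right\}$. Given an arbitrary $m\in M$ I would then run the standard Kiehl telescoping argument: express the two components of $m$ in terms of these generators over the two localizations, transfer the resulting discrepancy into $M_{12}$, and use the exactness (hence, via the open mapping theorem, the strictness) of \cite[1.6.15.1]{Ked2} after tensoring with the relevant finitely presented complete modules to write this discrepancy as a difference of a contribution coming from $B\widehat{\otimes}Z\left\{\frac{f}{g}\right\}$ and one coming from $B\widehat{\otimes}Z\left\{\frac{g}{f}\right\}$; iterating and summing the geometrically decaying corrections produces a finite $B\widehat{\otimes}Z$-linear expression for $m$ in the chosen elements, so $M$ is finitely generated.

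Next I would prove by induction on $m$ that $M$ is $m$-pseudocoherent for every $m\geq 0$, the case $m=0$ being the previous paragraph. Assuming the statement for $m$ and all glueing data, and assuming $M_1$, $M_2$, $M_{12}$ are $(m+1)$-pseudocoherent, I would choose a surjection $F=(B\widehat{\otimes}Z)^{n}\twoheadrightarrow M$ (possible now that $M$ is finitely generated), base change it to surjections $F_i\twoheadrightarrow M_i$ over the localized rings, and set $K,K_1,K_2,K_{12}$ to be the respective kernels. Since $F$ is finite free it glues trivially and since $M$ glues, a short diagram chase using the $\mathrm{Tor}_1$-vanishing recalled above identifies $K$ with the equalizer of the glueing datum $(K_1,K_2,K_{12})$. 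Each $K_i$ is $m$-pseudocoherent over its ring (being the kernel of a finite free cover of an $(m+1)$-pseudocoherent module), is complete (closed in the complete $F_i$ by strictness and the open mapping theorem), and has completeness preserved under the nice rational localizations by the stability results recalled above, hence each $K_i$ is $Z$-stably $m$-pseudocoherent. The induction hypothesis then gives that $K$ is $m$-pseudocoherent over $B\widehat{\otimes}Z$, whence $M$ is $(m+1)$-pseudocoherent; letting $m\rightarrow\infty$ shows $M$ is pseudocoherent.

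The hard part will be the finite generation step: making the Kiehl-style approximation converge requires genuine control of the topologies, that is, the strictness of the maps in \cref{lemma2.12} and the strict exactness of \cite[1.6.15.1]{Ked2} after tensoring with finitely presented complete modules, which is exactly where the completeness hypotheses and the $\mathrm{Tor}_1$-vanishing lemmas are indispensable. A secondary subtlety in the inductive step is checking that the kernel glueing datum $(K_1,K_2,K_{12})$ is again $Z$-stably pseudocoherent so that the induction hypothesis applies, which relies on the analogues of \cite[Lemma 1.9.4, Corollary 1.9.5, Corollary 1.9.6]{Ked2} together with the open mapping theorem in the uniform analytic Huber setting.
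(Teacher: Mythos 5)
Your inductive skeleton is the same as the paper's: cover $M$ by a finite free module, base change, pass to the kernel glueing datum $(K_1,K_2,K_{12})$, check it is again $Z$-stably pseudocoherent one level down, and climb the resolution. That part is fine. The genuine problem is your finite generation step. The "standard Kiehl telescoping argument" as you describe it does not converge: after approximating the $M_1$-coefficients of $m$ by elements of $B\widehat{\otimes}Z$ you make the first component small, but the correction needed to make the second component small uses coefficients approximating the (not small) $M_2$-coefficients of the remainder, and subtracting that correction perturbs the first component by a non-small amount. Alternating between the two components therefore produces no geometrically decaying error, and the strict surjectivity of $M_1\bigoplus M_2\rightarrow M_{12}$ from \cref{lemma2.12} does not by itself repair this — that surjectivity is about $M_{12}$, not about expressing elements of the equalizer $M$ with coefficients in $B\widehat{\otimes}Z$. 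The convergence issues you defer to "the hard part" are exactly the ones that the matrix factorization lemma (the analog of \cite[Lemma 1.9.9]{Ked2}) was designed to absorb, and its output is already fully spent in proving \cref{lemma2.12}; there is no further analytic argument to run at the level of $M$.

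The paper (following \cite[Lemma 1.9.11]{Ked2}) gets finite generation homologically instead, and you should too: choose finitely many elements of $M$ whose images generate $M_1$ and $M_2$ (possible by the strict surjectivity of $B\widehat{\otimes}Z\{\frac{f}{g}\}\otimes_{B\widehat{\otimes}Z}M\rightarrow M_1$ and its twin, since $M_1,M_2$ are finitely generated), let $L$ be the finite free module on these elements with the map $L\rightarrow M$ \emph{not yet known to be surjective}, and form the $3\times 3$ diagram with the kernels $F_1,F_2,F_{12}$ of the induced surjections $L_i\rightarrow M_i$. The $F_i$ are finitely generated and form a glueing datum by the $\mathrm{Tor}_1$-vanishing of \cref{lemma2.6}, so \cref{lemma2.12} applied to $(F_1,F_2,F_{12})$ gives surjectivity of $F_1\bigoplus F_2\rightarrow F_{12}$, and the snake lemma then forces $L\rightarrow M$ to be surjective. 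This replaces your telescoping entirely; once it is in place, your induction on the pseudocoherence level goes through as you wrote it and agrees with the paper's "induction on the length of the resolution."
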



\begin{proof}
This is a $Z$-relative version of \cite[Lemma 1.9.11]{Ked2}. We adapt the corresponding argument to our situation. First we consider corresponding modules $M_1,M_2,M_{12}$, and choose finite free modules $L_1,L_2,L_{12}$ as in \cite[Lemma 1.9.11]{Ked2} over $B\widehat{\otimes}Z\left\{\frac{f}{g}\right\}$, $B\widehat{\otimes}Z\left\{\frac{g}{f}\right\}$, $B\widehat{\otimes}Z\left\{\frac{f}{g},\frac{g}{f}\right\}$, then we can form the corresponding commutative diagram:
\[
\xymatrix@C+3pc@R+3pc{
 &0 \ar[d] \ar[d] \ar[d] &0 \ar[d] \ar[d] \ar[d]  &0 \ar[d] \ar[d] \ar[d]&\\
0 \ar[r] \ar[r] \ar[r] &F \ar[r] \ar[r] \ar[r] \ar[d] \ar[d] \ar[d] &F_1\bigoplus F_2 \ar[r] \ar[r] \ar[r] \ar[d] \ar[d] \ar[d] &F_{12} \ar[r]^? \ar[r] \ar[r] \ar[d] \ar[d] \ar[d] &0\\
0 \ar[r] \ar[r] \ar[r] &L \ar[r] \ar[r] \ar[r] \ar[d] \ar[d] \ar[d] &L_1\bigoplus L_2 \ar[r] \ar[r] \ar[r] \ar[d] \ar[d] \ar[d] &L_{12} \ar[r] \ar[r] \ar[r] \ar[d] \ar[d] \ar[d] &0\\
0 \ar[r] \ar[r] \ar[r] &M \ar[r] \ar[r] \ar[r] \ar[d]^? \ar[d] \ar[d] &M_1\bigoplus M_2\ar[r] \ar[r] \ar[r] \ar[d] \ar[d] \ar[d] &M_{12} \ar[r] \ar[r] \ar[r] \ar[d] \ar[d] \ar[d] &0\\
&0 &0  &0  &\\
}
\]	
where two arrows marked by $?$ are not a priori known to be exist to make the corresponding parts of the sequences exact. Here the left vertical arrows come from taking snake lemma (recall that one first chooses a corresponding finite free module $L$ mapping to $M$ which will base change to the corresponding module $L_1,L_2,L_{12}$, then $F$ will be taken as in \cite[Lemma 1.9.11]{Ked2} to be the corresponding equalizer of the corresponding horizontal map, the idea is that the map $L\rightarrow M$ is just a map which is not a priori known to be surjective while eventually this is indeed the case). The corresponding modules $F_1,F_2,F_{12}$ are the corresponding kernels of the corresponding coverings by finite free modules. Then what is happening is that the corresponding first horizontal complex is actually within the same situation we are considering, therefore the previous lemma implies that the $?$-marked arrows exists and make the sequences exact. We then consider the following commutative diagram:
\[
\xymatrix@C+0pc@R+0pc{
 &B\widehat{\otimes}Z\left\{\frac{f}{g}\right\}\otimes F \ar[r] \ar[r] \ar[r] \ar[d] \ar[d] \ar[d] &B\widehat{\otimes}Z\left\{\frac{f}{g}\right\}\otimes L \ar[r] \ar[r] \ar[r] \ar[d] \ar[d] \ar[d] &B\widehat{\otimes}Z\left\{\frac{f}{g}\right\}\otimes M \ar[r] \ar[r] \ar[r] \ar[d] \ar[d] \ar[d] &0.\\
0 \ar[r] \ar[r] \ar[r] &F_1 \ar[r] \ar[r] \ar[r] &L_1 \ar[r] \ar[r] \ar[r] &M_1 \ar[r] \ar[r] \ar[r] &0.\\
}
\]
What we know is that the corresponding middle vertical arrow is isomorphism, while the left and right ones are surjective, which implies the right most one is an isomorphism. We then consider the following commutative diagram:
\[
\xymatrix@C+0pc@R+0pc{
 &B\widehat{\otimes}Z\left\{\frac{g}{f}\right\}\otimes F \ar[r] \ar[r] \ar[r] \ar[d] \ar[d] \ar[d] &B\widehat{\otimes}Z\left\{\frac{g}{f}\right\}\otimes L \ar[r] \ar[r] \ar[r] \ar[d] \ar[d] \ar[d] &B\widehat{\otimes}Z\left\{\frac{g}{f}\right\}\otimes M \ar[r] \ar[r] \ar[r] \ar[d] \ar[d] \ar[d] &0.\\
0 \ar[r] \ar[r] \ar[r] &F_2 \ar[r] \ar[r] \ar[r] &L_2 \ar[r] \ar[r] \ar[r] &M_2 \ar[r] \ar[r] \ar[r] &0.\\
}
\]
What we know is that the corresponding middle vertical arrow is isomorphism, while the left and right ones are surjective, which implies the right most one is an isomorphism. Finally to show that the module $M$ is pseudocoherent we consider the corresponding induction on the number of the finite projective modules participated in the resolution. $k=1$ will be obvious, while in general the induction step will achieved from $k$ to $k+1$ by considering as in \cite[Lemma 1.9.11]{Ked2} the module $F$ and apply the corresponding construction in the same fashion.
\end{proof}

\

\begin{proposition} \mbox{\bf{(After Kedlaya \cite[Lemma 1.9.13]{Ked2})}}
Take the corresponding assumption in \cite[1.7.1]{Ked2} and furthermore assume the sheafiness of $A$. Suppose we have that all the modules considered are $Z$-stably complete with respect to nice rational localization. Then consider $C_1$ the category of all such modules which are assumed to be pseudocoherent over $B\widehat{\otimes}Z\left\{\frac{f}{g},\frac{g}{f}\right\}$. Then consider $C_2$ the category of all such modules which are assumed to be pseudocoherent over $B\widehat{\otimes}Z\left\{\frac{f}{g}\right\}$. Then consider $C_3$ the category of all such modules which are assumed to be pseudocoherent over $B\widehat{\otimes}Z\left\{\frac{g}{f}\right\}$. Then consider $C_4$ the category of all such modules which are assumed to be pseudocoherent over $B\widehat{\otimes}Z$. Then we have a corresponding equalizer functor:
\begin{align}
C_2\times_{C_1}C_3\rightarrow C	
\end{align}
which is exact and fully faithful, and we have well-established inverse.
\end{proposition}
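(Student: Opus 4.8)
The plan is to exhibit the equalizer functor explicitly, build its quasi-inverse by base change, and then verify exactness and full faithfulness using the pseudoflatness and acyclicity results established above (so that the target $C$ is of course $C_4$, the category of pseudocoherent $B\widehat{\otimes}Z$-modules that are $Z$-stably complete under nice rational localization).

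First I would make the functor precise. An object of $C_2\times_{C_1}C_3$ is a triple $(M_2,M_3,\iota)$ with $M_2$ pseudocoherent over $B\widehat{\otimes}Z\left\{\frac{f}{g}\right\}$, $M_3$ over $B\widehat{\otimes}Z\left\{\frac{g}{f}\right\}$, and $\iota$ an isomorphism between their base changes to $B\widehat{\otimes}Z\left\{\frac{f}{g},\frac{g}{f}\right\}$; this common base change plays the role of $M_{12}$ in \cref{lemma2.12} and \cref{lemma2.13}. To such a triple one associates the equalizer $M$ of $M_2\bigoplus M_3\rightrightarrows M_{12}$, i.e.\ the kernel of $(x_2,x_3)\mapsto f_1(x_2)-f_2(x_3)$ in the notation of \cref{lemma2.12}. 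By \cref{lemma2.13} this $M$ is pseudocoherent over $B\widehat{\otimes}Z$; by \cref{lemma2.12} the natural maps $B\widehat{\otimes}Z\left\{\frac{f}{g}\right\}\otimes_{B\widehat{\otimes}Z}M\to M_2$ and $B\widehat{\otimes}Z\left\{\frac{g}{f}\right\}\otimes_{B\widehat{\otimes}Z}M\to M_3$ are strictly surjective, and then the Tor$_1$-vanishing along nice rational localizations (the $Z$-relative \cite[Corollary 1.9.5]{Ked2}, applied through the two three-row diagrams inside the proof of \cref{lemma2.13}) upgrades them to isomorphisms. In particular $M$ is $Z$-stably complete under nice rational localization, and the strict surjectivity in \cref{lemma2.12} gives completeness for the natural topology, so $M\in C_4$. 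Functoriality is clear.

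Next I would build the candidate inverse $G\colon C_4\to C_2\times_{C_1}C_3$ sending $N$ to $\bigl(N\widehat{\otimes}_{B\widehat{\otimes}Z}(B\widehat{\otimes}Z\left\{\tfrac{f}{g}\right\}),\,N\widehat{\otimes}_{B\widehat{\otimes}Z}(B\widehat{\otimes}Z\left\{\tfrac{g}{f}\right\}),\,\mathrm{can}\bigr)$, the gluing isomorphism being the canonical transitivity isomorphism of base change; the two factors are $Z$-stably pseudocoherent by the $Z$-relative \cite[Corollary 1.9.6]{Ked2}. That $F\circ G\simeq\mathrm{id}$ is exactly the acyclicity statement (the $Z$-relative \cite[Corollary 1.9.8]{Ked2}, and more precisely the exactness of the three-term Čech sequence after tensoring with $N$, i.e.\ the $Z$-relative \cite[Lemma 1.9.7]{Ked2}): that sequence identifies $N$ with the equalizer of its own base changes. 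That $G\circ F\simeq\mathrm{id}$ is the content of the previous paragraph, the base changes of the equalizer $M$ recovering $(M_2,M_3)$ compatibly with $\iota$. Full faithfulness of $F$ then follows formally from its being one half of an equivalence; alternatively one computes directly that $\mathrm{Hom}_{B\widehat{\otimes}Z}(M,M')$ is the equalizer of $\mathrm{Hom}(M_2,M_2')\bigoplus\mathrm{Hom}(M_3,M_3')\rightrightarrows\mathrm{Hom}(M_{12},M_{12}')$.

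For exactness of $F$: given a short exact sequence of gluing data, apply the snake lemma to the three induced columns over $B\widehat{\otimes}Z\left\{\frac{f}{g}\right\}$, $B\widehat{\otimes}Z\left\{\frac{g}{f}\right\}$, $B\widehat{\otimes}Z\left\{\frac{f}{g},\frac{g}{f}\right\}$; since each base-change functor is pseudoflat on $Z$-stably pseudocoherent modules (Tor$_1$ vanishes, by \cref{lemma2.5}, \cref{lemma2.6} and \cite[Corollaries 1.9.5--1.9.6]{Ked2}), the rows of the resulting $3\times 3$ diagram remain exact, and diagram chasing exactly as in the proof of \cref{lemma2.13} shows the equalizers sit in a short exact sequence, with strictness recovered from \cref{lemma2.12}. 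The main obstacle I anticipate is precisely this last bookkeeping: ensuring every map in sight is strict (so the sequences are exact topologically, not merely algebraically) and that completeness survives passage to equalizers. As in Kedlaya's argument this is handled by marrying the strict-surjectivity output of \cref{lemma2.12} with the pseudoflatness vanishing, but it requires routing every module through the hypothesis of being $Z$-stably complete under nice rational localizations, so that \cref{lemma2.13} can be reapplied at each inductive step of the pseudocoherence resolution.
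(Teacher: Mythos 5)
Your proposal is correct and follows exactly the route the paper intends: the paper's own proof is just the citation ``See \cite[Lemma 1.9.13]{Ked2}'', and what you have written is the standard reconstruction of that argument in the $Z$-relative setting, using the $Z$-relative analogues of \cite[Lemmas 1.9.10 and 1.9.11]{Ked2} for the forward functor, base change plus the acyclicity of the three-term sequence for the quasi-inverse, and pseudoflatness for exactness. The only cosmetic point is that the target $C$ in the statement should indeed be read as $C_4$, as you note.
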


\begin{proof}
See \cite[Lemma 1.9.13]{Ked2}.	
\end{proof}

\begin{proposition}  \mbox{\bf{(After Kedlaya \cite[Lemma 1.9.16]{Ked2})}}
Take the corresponding assumption in \cite[1.7.1]{Ked2} and furthermore assume the sheafiness of $A$. Suppose we have that all the modules considered are $Z$-stably complete with respect to nice rational localization. Let $C$ be the category of such $Z$-nice-stably pseudocoherent modules over $A$ and let $C'$ be the category of such $Z$-nice-stably pseudocoherent sheaves over $X$. If we consider the corresponding functor taking the form of the corresponding sheafification of $\widetilde{M}$	with respect to the corresponding nice rational subspaces. Then we have that in fact this realizes an exact equivalence between $C$ and $C'$.
\end{proposition}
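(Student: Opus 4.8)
The plan is to exhibit the functor $M \mapsto \widetilde{M}$ together with its quasi-inverse $\mathcal{F} \mapsto \mathcal{F}(X)$, following the structure of \cite[Lemma 1.9.16]{Ked2} but carrying the coefficient ring $Z$ through every step. Concretely one checks in turn: (a) $\widetilde{M}$ is a well-defined object of $C'$; (b) $M \mapsto \widetilde{M}$ and $\mathcal{F}\mapsto\mathcal{F}(X)$ are exact; (c) the two functors are mutually inverse on objects up to natural isomorphism, which forces full faithfulness; and (d) essential surjectivity. The hypotheses that $A$ is sheafy, that all modules are $Z$-stably complete for nice rational localization, and that $-\widehat{\otimes}Z$ is exact on short exact sequences of uniform analytic Huber rings, are precisely what make the commutative-base arguments of \cite{Ked2} go through after tensoring with $Z$.

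For (a) and (b): by the Corollary after \cite[Corollary 1.9.6]{Ked2}, $Z$-nice-stable pseudocoherence is preserved under nice rational localization, so the presheaf $U\mapsto \mathcal{O}(U)\widehat{\otimes}Z\otimes_{A\widehat{\otimes}Z}M$ takes $Z$-nice-stably pseudocoherent values on all nice rational subspaces, and by the acyclicity theorem proved above (after \cite[Corollary 1.9.8]{Ked2}) it is already a sheaf there with vanishing higher cohomology; thus its sheafification $\widetilde{M}$ lies in $C'$, restricts on nice rational $U$ to the indicated base change, and satisfies $\widetilde{M}(X)=M$ (the degree-zero part of the same acyclicity statement). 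Exactness of $M\mapsto\widetilde{M}$ reduces, on a basis of nice rational subspaces, to the vanishing of the relevant $\mathrm{Tor}_1$, which is exactly the content of the Corollary after \cite[Corollary 1.9.5]{Ked2} together with \cref{lemma2.5,lemma2.6} and the assumed exactness of $-\widehat{\otimes}Z$; exactness of $\mathcal{F}\mapsto\mathcal{F}(X)$ is acyclicity again.

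For (c): since $\widetilde{M}(X)=M$ naturally, $\mathcal{F}\mapsto\mathcal{F}(X)$ inverts $M\mapsto\widetilde{M}$ on the level of objects; and any sheaf morphism $\widetilde{M}\to\widetilde{N}$ is pinned down by its value on $X=\mathrm{Spa}(A,A^+)$, because on every nice rational $U$ both sides are canonically the base change along $A\widehat{\otimes}Z\to\mathcal{O}(U)\widehat{\otimes}Z$ of their global sections. Hence $\operatorname{Hom}_{C'}(\widetilde{M},\widetilde{N})=\operatorname{Hom}_{A\widehat{\otimes}Z}(M,N)$, so the functor is fully faithful.

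For (d), which is where the real work lies: given $\mathcal{F}\in C'$, set $M=\mathcal{F}(X)$; one must show $M\in C$ and $\widetilde{M}\cong\mathcal{F}$. Pick a finite covering of $X$ by nice rational subspaces over which $\mathcal{F}$ is attached to $Z$-nice-stably pseudocoherent modules, and refine it to iterated Laurent and balanced covers as in \cite{Ked2}. By the usual \v{C}ech dévissage this reduces to a single two-piece decomposition $X=U_1\cup U_2$ with $U_1,U_2,U_{12}$ the spectra of $B\widehat{\otimes}Z\{f/g\}$, $B\widehat{\otimes}Z\{g/f\}$, $B\widehat{\otimes}Z\{f/g,g/f\}$, and glueing datum $(M_1,M_2,M_{12})$. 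For this case \cref{lemma2.12} gives strict surjectivity of $M_1\oplus M_2\to M_{12}$ and of the projections from the equalizer $M$, \cref{lemma2.13} gives pseudocoherence of the equalizer, and the equalizer-functor proposition (after \cite[Lemma 1.9.13]{Ked2}) gives both $\mathcal{O}(U_i)\widehat{\otimes}Z\otimes_{A\widehat{\otimes}Z}M\cong M_i$ and completeness, where the $Z$-stable-completeness hypothesis is what keeps each of these tensor products complete. Reassembling through the induction and using the vanishing of $\check{H}^1$ to match the two sheaves on overlaps yields a natural isomorphism $\widetilde{M}\xrightarrow{\sim}\mathcal{F}$ with $M\in C$. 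I expect the delicate point to be precisely this reduction: controlling completeness and strictness of the $\widehat{\otimes}Z$-tensor products uniformly along a general nice rational covering, rather than any single two-piece glueing, which is already packaged in \cref{lemma2.12,lemma2.13}.
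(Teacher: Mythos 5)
Your proposal is correct and takes essentially the same approach as the paper, whose own ``proof'' is simply a deferral to Kedlaya's Lemma 1.9.16: you reconstruct precisely that argument in the $Z$-relative setting, using the acyclicity theorem, the stability of $Z$-nice-stable pseudocoherence under nice rational localization, and the two-piece Laurent/balanced glueing lemmas (\cref{lemma2.12}, \cref{lemma2.13}) already established in this section. Your identification of where the $Z$-stable-completeness hypothesis and the exactness of $-\widehat{\otimes}Z$ enter is exactly the content the paper leaves implicit.
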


\begin{proof}
See \cite[Lemma 1.9.16]{Ked2}.	
\end{proof}

\begin{proposition}  \mbox{\bf{(After Kedlaya \cite[Lemma 1.9.17]{Ked2})}}
Take the corresponding assumption in \cite[1.7.1]{Ked2} and furthermore assume the sheafiness of $A$. Suppose we have that all the modules considered are $Z$-stably complete with respect to nice rational localization. Let $C$ be the category of such $Z$-nice-stably pseudocoherent modules over $A$ and let $C'$ be the category of such $Z$-nice-stably pseudocoherent modules over $B$. Then we have that in fact this realizes an exact functor from $C$ to $C'$.
\end{proposition}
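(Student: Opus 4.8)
The plan is to realize the functor in question as base change along the rational localization $A\to B$, i.e.\ $M\mapsto (B\widehat{\otimes}Z)\,\widehat{\otimes}_{A\widehat{\otimes}Z}\,M$, which one may also describe as restricting the sheaf $\widetilde{M}$ produced by the previous proposition (the $Z$-relative analog of \cite[Lemma 1.9.16]{Ked2}) to the nice rational subspace cut out by $B$ and taking sections there. First I would check that this functor indeed lands in $C'$. On the purely algebraic level, pseudocoherence in the one-sided noncommutative sense of Illusie--SGA VI used throughout is preserved by every ring base change $A\widehat{\otimes}Z\to B\widehat{\otimes}Z$, so $(B\widehat{\otimes}Z)\otimes_{A\widehat{\otimes}Z}M$ is pseudocoherent over $B\widehat{\otimes}Z$. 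For the topological conditions, since $M$ is $Z$-nice-stably pseudocoherent over $A$ its base change to $B\widehat{\otimes}Z$ is already complete for the natural topology; and because nice rational localizations are closed under composition in the sense of \cite[Definition 1.9.1]{Ked2}, any nice rational localization $B\to C$ composes with $A\to B$ to a nice rational localization $A\to C$, so the further base change to $C\widehat{\otimes}Z$ remains complete. That is exactly the assertion that $(B\widehat{\otimes}Z)\widehat{\otimes}_{A\widehat{\otimes}Z}M$ is $Z$-nice-stably pseudocoherent over $B$. Here the completed and uncompleted tensor products agree in view of the $\mathrm{Tor}$-vanishing recorded in \cref{lemma2.5}, \cref{lemma2.6} and their consequences.

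Functoriality in $M$ is immediate from the universal property of base change, so it remains to prove exactness. Base change is right exact with no hypotheses, so for a short exact sequence $0\to M_1\to M_2\to M_3\to 0$ in $C$ it suffices to show $\mathrm{Tor}_1^{A\widehat{\otimes}Z}(B\widehat{\otimes}Z,M_3)=0$. Since $M_3$ is $Z$-nice-stably pseudocoherent it is in particular finitely generated and complete for the natural topology, with completeness stable under nice rational localization; therefore the $Z$-relative analog of Kedlaya's \cite[Corollary 1.9.5]{Ked2} established above applies verbatim to the nice rational localization $A\to B$ and yields the desired vanishing of $\mathrm{Tor}_1^{A\widehat{\otimes}Z}(B\widehat{\otimes}Z,M_3)$. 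Hence $0\to (B\widehat{\otimes}Z)\otimes M_1\to (B\widehat{\otimes}Z)\otimes M_2\to (B\widehat{\otimes}Z)\otimes M_3\to 0$ is exact, and combined with the preservation of completeness from the first paragraph this shows the base change functor $C\to C'$ is exact.

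I expect no genuine obstacle: every ingredient---preservation of algebraic pseudocoherence under base change, closure of nice rational localizations under composition, and the pseudoflatness / $\mathrm{Tor}_1$-vanishing for finitely generated complete modules---has already been set up in the preceding lemmas and corollaries, and the argument is a short assembly exactly parallel to Kedlaya's \cite[Lemma 1.9.17]{Ked2}. The only points demanding a little care are bookkeeping ones: confirming that the $\mathrm{Tor}_1$-vanishing corollary genuinely applies to $M_3$ (which it does, since a pseudocoherent module is finitely generated), and making sure that the composite $A\to B\to C$ of nice rational localizations is treated as a single nice rational localization so that stability of completeness transfers down the tower. Once these are noted, the proof is complete.
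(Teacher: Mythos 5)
Your overall architecture is the right one and is essentially what the paper intends (the paper itself gives no argument beyond citing \cite[Lemma 1.9.17]{Ked2}): the functor is base change $M\mapsto (B\widehat{\otimes}Z)\otimes_{A\widehat{\otimes}Z}M$, the topological conditions transfer because nice rational localizations compose, and exactness reduces to the vanishing of $\mathrm{Tor}_1^{A\widehat{\otimes}Z}(B\widehat{\otimes}Z,M_3)$ supplied by the $Z$-relative analogue of \cite[Corollary 1.9.5]{Ked2}. That last step is correct as written.

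There is, however, one genuinely false assertion in your first paragraph: pseudocoherence is \emph{not} ``preserved by every ring base change.'' If $\cdots\to F_1\to F_0\to M\to 0$ is a finite free resolution, then after applying $(B\widehat{\otimes}Z)\otimes_{A\widehat{\otimes}Z}(-)$ exactness at $F_1$ already requires $\mathrm{Tor}_1(B\widehat{\otimes}Z,M)=0$, and exactness further along requires the same vanishing for all the syzygy modules; only the derived base change of a pseudocoherent complex is automatically pseudocoherent. This is precisely why the whole apparatus of $2$-pseudoflatness and the Tor-vanishing lemmas (the analogues of \cite[Lemmas 1.9.3--1.9.4, Corollary 1.9.5]{Ked2}) exists in this paper and in \cite{KL2}: rational localizations are not flat, so preservation of pseudocoherence is a theorem, not a formality. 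The gap is easily repaired: either invoke directly the paper's analogue of \cite[Corollary 1.9.6]{Ked2}, which states exactly that base change along a nice rational localization preserves ($Z$-nice-)stably pseudocoherence, or run the syzygy argument yourself, noting that each syzygy of a $Z$-nice-stably pseudocoherent module is again finitely generated, complete, and stably so, hence the $\mathrm{Tor}_1$-vanishing applies at every stage of the resolution. (A minor additional point: the agreement of completed and uncompleted tensor products follows from the stability-of-completeness clause in the definition of $Z$-nice-stable pseudocoherence, not from the Tor-vanishing.) With these corrections the proof is complete and coincides with the intended argument.
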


\begin{proof}
See \cite[Lemma 1.9.17]{Ked2}.	
\end{proof}

\begin{proposition}  \mbox{\bf{(After Kedlaya \cite[Corollary 1.9.18]{Ked2})}}
Take the corresponding assumption in \cite[1.7.1]{Ked2} and furthermore assume the sheafiness of $A$. Let $C$ be the category of such $Z$-stably pseudocoherent modules over $A$ and let $C'$ be the category of such $Z$-nice-stably pseudocoherent modules over $A$. Then we have that in fact this realizes an equality from $C\hookrightarrow C'$.
\end{proposition}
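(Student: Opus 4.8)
The plan is to check the two containments between $C$ and $C'$; $C\subseteq C'$ is formal, and $C'\subseteq C$ is the point. For $C\subseteq C'$: a $Z$-stably pseudocoherent module over $A$ is pseudocoherent over $A\widehat{\otimes}Z$, complete for the natural topology, and stays complete after base change along \emph{every} rational localization $A\to A'$; since a nice rational localization (a composite of Laurent and balanced rational localizations in the sense of \cite[Definition 1.9.1]{Ked2}) is in particular a rational localization, such a module is a fortiori $Z$-nice-stably pseudocoherent. Hence the inclusion $C\hookrightarrow C'$ of the statement is defined and is the identity on underlying $A\widehat{\otimes}Z$-modules, and what must be shown is that it is surjective on objects: for a module $M$ that is pseudocoherent over $A\widehat{\otimes}Z$ and complete, the survival of completeness under all \emph{nice} rational localizations forces its survival under \emph{all} rational localizations.

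First I would attach to $M$ its associated sheaf. By the proposition after \cite[Lemma 1.9.16]{Ked2}, the sheafification $\widetilde{M}$ of $M$ with respect to nice rational subspaces is a $Z$-nice-stably pseudocoherent sheaf on $X=\mathrm{Spa}(A,A^{+})$ with $\widetilde{M}(X)=M$ and with $\widetilde{M}(U_{i})=M\otimes_{A\widehat{\otimes}Z}A_{i}\widehat{\otimes}Z$ on each nice rational $U_{i}=\mathrm{Spa}(A_{i},A_{i}^{+})$, by the acyclicity statement of the theorem after \cite[Corollary 1.9.8]{Ked2}. Since every rational subspace of $X$ admits a finite covering by nice rational subspaces of $X$ (a standard feature of uniform analytic Huber pairs in the setup of \cite[\S1.9]{Ked2}), this $\widetilde{M}$ is a genuine sheaf on the rational subspaces of $X$. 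Now fix an arbitrary rational localization $A\to A'$ with rational subspace $U'=\mathrm{Spa}(A',A^{',+})\subseteq X$, and pick a finite covering $U'=\bigcup_{i}U_{i}$ by nice rational subspaces of $X$, noting that the pairwise intersections $U_{ij}$ are again nice rational subspaces, a composite of nice localizations being nice. The sheaf property of $\widetilde{M}$ then gives $\widetilde{M}(U')=\ker\!\bigl(\prod_{i}M\otimes_{A\widehat{\otimes}Z}A_{i}\widehat{\otimes}Z\rightrightarrows\prod_{i<j}M\otimes_{A\widehat{\otimes}Z}A_{ij}\widehat{\otimes}Z\bigr)$, a kernel of a map between complete topological modules, hence itself complete.

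It then remains to identify $M\otimes_{A\widehat{\otimes}Z}A'\widehat{\otimes}Z$ with $\widetilde{M}(U')$ via the canonical comparison map. For the structure sheaf this says $(\mathcal{O}_{X}\widehat{\otimes}Z)(U')=A'\widehat{\otimes}Z$, which I would deduce from the Tate acyclicity of $\mathcal{O}_{X}$ (so the augmented \v{C}ech complex $0\to A'\to\prod_{i}A_{i}\to\prod_{i<j}A_{ij}\to\cdots$ is exact) together with the exactness of $-\widehat{\otimes}Z$ on such complexes, which is precisely the hypothesis imposed on $Z$ in the setting of this section. For general $M$ I would compare the \v{C}ech complex of $\widetilde{M}$ over the cover with that of $\mathcal{O}_{X}\widehat{\otimes}Z$ and invoke the $\mathrm{Tor}_{1}$-vanishing results proved above — \cref{lemma2.5}, \cref{lemma2.6}, and the $Z$-relative forms of \cite[Corollary 1.9.5]{Ked2} and \cite[Corollary 1.9.6]{Ked2} — to conclude that tensoring the exact structure complex with $M$ over $A\widehat{\otimes}Z$ preserves exactness at the first two terms, which is exactly the assertion $M\otimes_{A\widehat{\otimes}Z}A'\widehat{\otimes}Z\overset{\sim}{\rightarrow}\widetilde{M}(U')$. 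One also gets, by \cref{lemma2.13}, that this module is pseudocoherent over $A'\widehat{\otimes}Z$, though that is not needed here. Since $A\to A'$ was an arbitrary rational localization and $M\otimes_{A\widehat{\otimes}Z}A'\widehat{\otimes}Z\cong\widetilde{M}(U')$ is complete, $M$ meets the definition of a $Z$-stably pseudocoherent module, and therefore $C=C'$.

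The main obstacle is exactly this last identification when $A'$ is merely rational and not nice: the hypotheses give control of $M$ only along nice localizations, so one must cover $U'$ by nice rational subspaces and then verify that the sheaf value on $U'$ built from that cover agrees with the naive base change $M\otimes_{A\widehat{\otimes}Z}A'\widehat{\otimes}Z$. This hinges on (i) the Tate acyclicity of the structure sheaf surviving $\widehat{\otimes}Z$, for which the exactness hypothesis on $Z$ is essential, and (ii) enough $\mathrm{Tor}$-vanishing to carry that acyclicity over to $M$; the delicate bookkeeping is to ensure that every intermediate localization appearing is nice, so that the $\mathrm{Tor}$-vanishing lemmas and the sheaf-value computations of this section are legitimately applicable.
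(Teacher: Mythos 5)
The paper offers no argument of its own here beyond the citation ``See \cite[Corollary 1.9.18]{Ked2}'', and your proposal is essentially a reconstruction of that argument: the easy inclusion $C\subseteq C'$, sheafification of a nice-stably pseudocoherent $M$ via the glueing and acyclicity results of this section, and then the identification of $M\otimes_{A\widehat{\otimes}Z}A'\widehat{\otimes}Z$ with the (complete) sections of $\widetilde{M}$ over an arbitrary rational subspace. So the route is the right one and matches what the paper defers to.

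One step deserves more care than you give it. You assert as ``a standard feature'' that every rational subspace $U'$ of $X$ admits a finite covering by nice rational subspaces \emph{of $X$}. What the refinement lemma in \cite[\S 1.6]{Ked2} actually provides is that every finite rational \emph{covering of $X$} is refined by a composite of simple Laurent coverings; the members of such a refinement that happen to lie inside $U'$ need not cover $U'$ (a point of $U'$ may only belong to refinement members assigned to other members of the standard covering generated by the parameters of $U'$). The deduction in \cite{Ked2} therefore does not proceed by decomposing $U'$ directly, but by an induction on the depth of the composite Laurent covering, restricting at each stage to a Laurent piece $V$ and treating $U'\cap V$ as a rational subspace of $V$ of smaller complexity; the completeness and the identification $M\otimes_{A\widehat{\otimes}Z}A'\widehat{\otimes}Z\cong\widetilde{M}(U')$ are then propagated through this induction together with the degree-zero exactness you correctly isolate as the crux. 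As written, your covering claim is the one assertion that would not survive scrutiny without being replaced by this inductive scheme (or by an explicit proof that rational subspaces are finite unions of nice ones, which is not among the results established in this section).
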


\begin{proof}
See \cite[Corollary 1.9.18]{Ked2}.	
\end{proof}

\begin{theorem} \mbox{\bf{(After Kedlaya \cite[Theorem 1.4.14, Theorem 1.4.16, Theorem 1.4.18]{Ked2})}}
Take the corresponding assumption in \cite[1.7.1]{Ked2} and furthermore assume the sheafiness of $A$. Then: I. We have the corresponding stability under rational localization for $Z$-stably pseudocoherent modules. II. We have the corresponding acyclicity of the corresponding presheaf $\widetilde{M}$ attached to any $Z$-stably pseudocoherent module. III. We have the corresponding glueing of the corresponding $Z$-stably pseudocoherent modules as in \cite[Theorem 1.4.18]{Ked2}.
\end{theorem}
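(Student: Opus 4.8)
The plan is to assemble the $Z$-relative analogs of \cite[Lemmas 1.9.3--1.9.17 and Corollaries 1.9.5--1.9.18]{Ked2} established above (in particular \cref{lemma2.5}, \cref{lemma2.6}, \cref{lemma2.12}, \cref{lemma2.13}) and then run, verbatim in the $Z$-relative setting, the deductions by which \cite{Ked2} obtains \cite[Theorems 1.4.14, 1.4.16, 1.4.18]{Ked2} from the nice-rational-localization picture. The one structural fact that makes this go through unchanged is our standing hypothesis on $Z$: that $-\widehat{\otimes}Z$ sends a short exact sequence of uniform analytic Huber rings to a short exact sequence. This is exactly what we used to keep the fundamental covering complex of $B\widehat{\otimes}Z$ exact after tensoring with a finitely presented complete module (\cref{lemma2.5}, \cref{lemma2.6}), and it propagates mechanically into every subsequent step.

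For Part~I, recall the $Z$-relative version of \cite[Corollary 1.9.6]{Ked2} proved above: along any \emph{nice} rational localization $A\to B$, being $Z$-nice-stably pseudocoherent is preserved. By the $Z$-relative version of \cite[Corollary 1.9.18]{Ked2} the classes of $Z$-stably pseudocoherent and $Z$-nice-stably pseudocoherent $A\widehat{\otimes}Z$-modules coincide. An arbitrary rational localization $A\to A'$ is a rational subspace $U\subseteq X=\mathrm{Spa}(A,A^+)$ that admits a finite covering by nice rational subspaces; applying the preservation statement on each member and glueing back via \cref{lemma2.13} and its sheaf-theoretic upgrade (the $Z$-relative version of \cite[Lemma 1.9.16]{Ked2}) shows the base change of $M$ to $A'\widehat{\otimes}Z$ is again $Z$-stably pseudocoherent. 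This is the argument of \cite[Theorem 1.4.14]{Ked2}.

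For Part~II, the $Z$-relative version of \cite[Corollary 1.9.8]{Ked2} already yields acyclicity of $\widetilde{M}$ on every nice rational subspace when $M$ is $Z$-nice-stably pseudocoherent. Since nice rational subspaces form a basis of the analytic topology on $X$, one passes to full acyclicity --- and to the sheaf property, with $\widetilde{M}(X)=M$ --- by the \v{C}ech-comparison argument of \cite[Theorem 1.4.16]{Ked2}, using Part~I so that $\widetilde{M}|_U$ is the presheaf attached to a $Z$-stably pseudocoherent module over every rational subspace $U$, hence the acyclicity input applies uniformly along any covering. Combined with the $Z$-relative version of \cite[Lemma 1.9.16]{Ked2}, this gives that global sections realize an exact equivalence between $Z$-stably pseudocoherent $A\widehat{\otimes}Z$-modules and $Z$-stably pseudocoherent sheaves on $X$.

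For Part~III, the local model is \cref{lemma2.12} together with \cref{lemma2.13} and the equalizer functor of the $Z$-relative version of \cite[Lemma 1.9.13]{Ked2}, which give effective descent of $Z$-stably pseudocoherent modules along the fundamental two-piece covering with its single overlap. A glueing datum over an arbitrary finite covering by nice rational subspaces is then assembled by d\'evissage on the number of pieces, refining each step to the two-piece case exactly as in \cite[Theorem 1.4.18]{Ked2}; Part~I keeps the intermediate modules $Z$-stably pseudocoherent and Part~II keeps the relevant \v{C}ech complexes exact. The one step that genuinely needs checking rather than quoting --- and which I expect to be the main obstacle --- is the reduction from nice rational localizations to arbitrary ones in Parts~I and~III: one must verify that refining a rational subspace by nice pieces and re-glueing does not escape the class of $Z$-stably pseudocoherent modules. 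This is precisely where the exactness hypothesis on $-\widehat{\otimes}Z$ is indispensable, since without it the \v{C}ech complexes computing the descent need not remain exact after $-\widehat{\otimes}Z$; granting that hypothesis, the reduction is the same bookkeeping as in \cite{Ked2}.
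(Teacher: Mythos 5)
Your proposal is correct and follows essentially the same route as the paper: the paper's own proof simply defers to the arguments of \cite[Theorem 1.4.14, Theorem 1.4.16, Theorem 1.4.18]{Ked2}, run in the $Z$-relative setting using the $Z$-relative analogs of \cite[Lemmas and Corollaries 1.9.3--1.9.18]{Ked2} established earlier in the section, exactly as you assemble them. Your write-up is in fact more explicit than the paper's (which gives no details beyond the citation), and your identification of the exactness hypothesis on $-\widehat{\otimes}Z$ as the point that keeps the covering complexes exact is the correct load-bearing observation.
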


\begin{proof}
This is a very parallel $Z$-relative analog of \cite[Theorem 1.4.14, Theorem 1.4.16, Theorem 1.4.18]{Ked2}, we refer the readers to the proof of \cite[Theorem 1.4.14, Theorem 1.4.16, Theorem 1.4.18]{Ked2}. See \cite[After Corollary 1.9.18, the proof of Theorem 1.4.14, the proof of Theorem 1.4.16, the proof of Theorem 1.4.18]{Ked2}.	
\end{proof}

\newpage

\section{Descent over Adic Banach Rings over $\mathbb{Z}_p$}

\subsection{Noncommutative Deformation over Adic Banach Rings in the Analytic Topology}

\noindent We now study the corresponding glueing of stably pseudocoherent sheaves after our previous work \cite{TX3}. We now translate the corresponding discussion to the context which is more related to \cite{KL2} over adic Banach rings. One can see that these are really parallel to the discussion over analytic Huber pair.

\begin{setting}
Now we consider the corresponding adic Banach ring taking the general form of $(A,A^+)$, namely it is uniform analytic. We now assume we are going to work over some base $(V,V^+)=(\mathbb{Z}_p,\mathbb{Z}_p)$. And we will assume that we have another Banach ring $Z$ over $V$ which is assumed to satisfy the following condition: for any exact sequence of uniform analytic adic Banach rings
\[
\xymatrix@C+0pc@R+0pc{
0 \ar[r] \ar[r] \ar[r] &\Gamma_1 \ar[r] \ar[r] \ar[r] &\Gamma_2 \ar[r] \ar[r] \ar[r] &\Gamma_3 \ar[r] \ar[r] \ar[r] &0,
}
\] 
we have that the following is alway exact:
\[
\xymatrix@C+0pc@R+0pc{
0 \ar[r] \ar[r] \ar[r] &\Gamma_1\widehat{\otimes} Z \ar[r] \ar[r] \ar[r] &\Gamma_2\widehat{\otimes} Z \ar[r] \ar[r] \ar[r] &\Gamma_3\widehat{\otimes} Z \ar[r] \ar[r] \ar[r] &0.
}
\] 
This is achieved for instance when we have that the map $C\rightarrow Z$ splits in the category of all the Banach modules.
\end{setting}

\begin{setting}
We will maintain the corresponding assumption in \cite[Hypothesis 1.7.1]{Ked2}. To be more precise we will have a map $(A,A^+)\rightarrow (B,B^+)$ which is a corresponding rational localization. And recall the corresponding complex in \cite{Ked2}:
\[
\xymatrix@C+0pc@R+0pc{
0 \ar[r] \ar[r] \ar[r] &B \ar[r] \ar[r] \ar[r] &B\left\{\frac{f}{g}\right\}\bigoplus B\left\{\frac{g}{f}\right\} \ar[r] \ar[r] \ar[r] &B\left\{\frac{f}{g},\frac{g}{f}\right\} \ar[r] \ar[r] \ar[r] &0.
}
\]	
\end{setting}

\begin{setting}
We will need to consider the corresponding nice rational localizations in the sense of \cite[Definition 1.9.1]{Ked2} where such localizations are defined to be those composites of the corresponding rational localizations in the Laurent or the balanced situation. For the topological modules, we will always assume that the modules are left modules.
\end{setting}

\begin{definition}\mbox{\bf{(After Kedlaya \cite[Definition 1.9.1]{Ked2})}}
Over $A$, we define a corresponding $Z$-stably-pseudocoherent module $M$ to be a pseudocoherent module $M$ over $A\widehat{\otimes}Z$ which is complete with respect to the natural topology. And for any rational localization $A\rightarrow A'$ with respect to $A$, the completeness still holds. Similar we can define the corresponding $m$-$Z$-stably-pseudocoherent modules in the similar way where we just consider the corresponding $m$-pseudocoherent modules on the algebraic level. Over $A$, we define a corresponding $Z$-nice-stably-pseudocoherent module $M$ to be a pseudocoherent module $M$ over $A\widehat{\otimes}Z$ which is complete with respect to the natural topology. And for any nice rational localization $A\rightarrow A'$ with respect to $A$, the completeness still holds. Similar we can define the corresponding $m$-$Z$-nice-stably-pseudocoherent modules in the similar way where we just consider the corresponding $m$-pseudocoherent modules on the algebraic level.
\end{definition}

\begin{lemma} \mbox{\bf{(After Kedlaya \cite[Lemma 1.9.3]{Ked2})}}
Now assume that we are in the corresponding assumption of \cite[1.7.1]{Ked2}. Also consider the corresponding complex in \cite[1.6.15.1]{Ked2}:
\[
\xymatrix@C+0pc@R+0pc{
0 \ar[r] \ar[r] \ar[r] &B \ar[r] \ar[r] \ar[r] &B\left\{\frac{f}{g}\right\}\bigoplus B\left\{\frac{g}{f}\right\} \ar[r] \ar[r] \ar[r] &B\left\{\frac{f}{g},\frac{g}{f}\right\} \ar[r] \ar[r] \ar[r] &0.
}
\]	
And we assume now that this is exact. Now take $g$ to be $1-f$ or $1$. Here the corresponding element $f,g$ come from $B$. Now over $B\widehat{\otimes}Z$ suppose we have a module $M$ which is assumed now to be finitely presented and complete with respect to the natural topology over $B\widehat{\otimes}Z$. Then we have that the group $\mathrm{Tor}_1(B\widehat{\otimes}Z\left\{\frac{g}{f}\right\},M)$ is zero. 	
\end{lemma}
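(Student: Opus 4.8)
The plan is to run, with the ground ring taken to be $B\widehat{\otimes}Z$, exactly the argument of Kedlaya in \cite[Lemma 1.9.3]{Ked2}; this statement is the adic Banach ring counterpart of \cref{lemma2.5}, and the coefficient ring $Z$ is carried along passively, so the only real task is to check that each step of Kedlaya's reduction remains available once the base is $B\widehat{\otimes}Z$ rather than $B$. First I would normalise the auxiliary ring: when $g=1$ one has $B\widehat{\otimes}Z\left\{\frac{g}{f}\right\}=B\widehat{\otimes}Z\left\{\frac{1}{f}\right\}$ by definition, while when $g=1-f$ one already has $\frac{g}{f}=\frac{1}{f}-1$ inside $B\widehat{\otimes}Z\left\{\frac{g}{f}\right\}$, so again $B\widehat{\otimes}Z\left\{\frac{g}{f}\right\}=B\widehat{\otimes}Z\left\{\frac{1}{f}\right\}$. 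In either case this ring is presented as $B\widehat{\otimes}Z\{T\}/(1-fT)$, with $T\mapsto\frac{1}{f}$, where $B\widehat{\otimes}Z\{T\}$ denotes the Tate algebra in one variable over $B\widehat{\otimes}Z$.

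Next I would compute the relevant Tor group from the two-term presentation $0\to B\widehat{\otimes}Z\{T\}\xrightarrow{1-fT}B\widehat{\otimes}Z\{T\}\to B\widehat{\otimes}Z\{T\}/(1-fT)\to 0$, whose left-exactness is the special case $M=B\widehat{\otimes}Z$ of the formal-inverse argument below. Using that $B\widehat{\otimes}Z\{T\}$ is flat over $B\widehat{\otimes}Z$ --- or, following \cite{Ked2}, at least that base change along it is exact on finitely presented modules and there agrees with the functor $M\mapsto M\{T\}$, which is an instance of the finitely presented case of the Corollary~2.4.9-type base change statement recorded in the Banach setting above --- and using that $M$ is finitely presented, one sees that applying $-\otimes_{B\widehat{\otimes}Z}M$ to this resolution identifies $\mathrm{Tor}_1^{B\widehat{\otimes}Z}\!\left(B\widehat{\otimes}Z\left\{\frac{g}{f}\right\},M\right)$ with the kernel of multiplication by $1-fT$ on the completed module $M\{T\}$.

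Finally I would show this kernel vanishes: the coefficientwise map from $M\{T\}$ into the module $M[[T]]$ of formal power series in $T$ with coefficients in $M$ is injective, and inside the formal power series ring $B\widehat{\otimes}Z[[T]]$ the element $1-fT$ has the explicit two-sided inverse $\sum_{n\ge 0}f^{n}T^{n}$, so multiplication by $1-fT$ is bijective on $M[[T]]$ and hence injective on $M\{T\}$; therefore $\mathrm{Tor}_1^{B\widehat{\otimes}Z}\!\left(B\widehat{\otimes}Z\left\{\frac{g}{f}\right\},M\right)=0$. The step I expect to be the main obstacle is the bookkeeping around completed versus uncompleted tensor products over the possibly non-uniform ring $B\widehat{\otimes}Z$ --- namely justifying that the displayed two-term complex genuinely computes the derived tensor product and that $M\{T\}$ is the honest base change of $M$ --- which is precisely where finite presentation of $M$ enters, and where one should be careful that it is the exactness hypothesis imposed on $Z$ in the ambient setting that licenses pushing the relevant short exact sequences through $\widehat{\otimes}Z$.
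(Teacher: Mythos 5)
Your proposal is correct and follows essentially the same route as the paper: the paper's proof of this Banach-ring statement is just a pointer back to \cref{lemma2.5}, whose argument is exactly your reduction to $g=1$, the presentation $B\widehat{\otimes}Z\left\{\frac{g}{f}\right\}=B\widehat{\otimes}Z\{T\}/(1-fT)$, the identification of $\mathrm{Tor}_1$ with the kernel of $1-fT$ on $M\{T\}$ (using finite presentation to identify $B\widehat{\otimes}Z\{T\}\otimes_{B\widehat{\otimes}Z}M$ with $M\{T\}$), and the formal inverse $\sum_{n\geq 0}f^nT^n$ in $B\widehat{\otimes}Z[[T]]$. Your write-up is in fact more explicit than the paper's about the bookkeeping between completed and uncompleted tensor products, but no new idea is involved.
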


\begin{proof}
See \cref{lemma2.5}.
\end{proof}

\begin{lemma} \mbox{\bf{(After Kedlaya \cite[Lemma 1.9.4]{Ked2})}}
Now assume that we are in the corresponding parallel assumption of \cite[1.7.1]{Ked2}. Also consider the corresponding complex in \cite[1.6.15.1]{Ked2}:
\[
\xymatrix@C+0pc@R+0pc{
0 \ar[r] \ar[r] \ar[r] &B \ar[r] \ar[r] \ar[r] &B\left\{\frac{f}{g}\right\}\bigoplus B\left\{\frac{g}{f}\right\} \ar[r] \ar[r] \ar[r] &B\left\{\frac{f}{g},\frac{g}{f}\right\} \ar[r] \ar[r] \ar[r] &0.
}
\]	
And we assume now that this is exact. Now take $g$ to be $1-f$ or $1$. Here the corresponding element $f,g$ come from $B$. Now over $B\widehat{\otimes}Z$ suppose we have a module $M$ which is assumed now to be finitely presented and complete with respect to the natural topology over $B\widehat{\otimes}Z$, and furthermore the corresponding completeness is stable under any corresponding nice rational localization $B\rightarrow C$. Then we have that the group $\mathrm{Tor}_1(B\widehat{\otimes}Z\left\{\frac{f}{g}\right\},M)$ is zero and the group $\mathrm{Tor}_1(B\widehat{\otimes}Z\left\{\frac{f}{g},\frac{g}{f}\right\},M)$ is zero as well. 	
\end{lemma}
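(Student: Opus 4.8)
The plan is to transcribe the argument from the analytic Huber pair setting (the proof of \cref{lemma2.6}, which itself follows \cite[Lemma 1.9.4]{Ked2}) into the present adic Banach framework; the only genuinely new point is to keep track of the completeness and strictness conditions that here play the role of flatness. The essential input is the preceding lemma, which gives the vanishing $\mathrm{Tor}_1(B\widehat{\otimes}Z\{g/f\},M)=0$ whenever $M$ is finitely presented and complete for the natural topology over $B\widehat{\otimes}Z$; this is available both for $g=1-f$ and for $g=1$, hence in particular for each of the localized rings occurring in \cite[1.6.15.1]{Ked2}.

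First I would pick a presentation $0\to X_1\to X_2\to M\to 0$ with $X_2$ a finite free left $B\widehat{\otimes}Z$-module and $X_1$ finitely generated, hence (as $M$ is finitely presented) finitely presented as well. Tensoring this sequence over $B\widehat{\otimes}Z$ with $B\widehat{\otimes}Z\{g/f\}$ and invoking the preceding lemma applied to $M$, one obtains a short exact sequence over $B\widehat{\otimes}Z\{g/f\}$. Applying the preceding lemma a second time, now to the $\{g/f\}$-base-changed modules so as to localize further to $B\widehat{\otimes}Z\{f/g,g/f\}$, produces a short exact sequence over $B\widehat{\otimes}Z\{f/g,g/f\}$; comparing this with the presentation of $M$ base-changed directly to $B\widehat{\otimes}Z\{f/g,g/f\}$ yields $\mathrm{Tor}_1(B\widehat{\otimes}Z\{f/g,g/f\},M)=0$.

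For the remaining assertion $\mathrm{Tor}_1(B\widehat{\otimes}Z\{f/g\},M)=0$, I would tensor over $B\widehat{\otimes}Z$ the \v{C}ech-type exact sequence $0\to B\widehat{\otimes}Z\to B\widehat{\otimes}Z\{f/g\}\bigoplus B\widehat{\otimes}Z\{g/f\}\to B\widehat{\otimes}Z\{f/g,g/f\}\to 0$ with $M$, and read off the claim from the associated long exact $\mathrm{Tor}$ sequence together with the two vanishings just established. That sequence is exact because \cite[1.6.15.1]{Ked2} is exact by hypothesis and $Z$ satisfies the imposed splitting condition, so that $-\widehat{\otimes}Z$ preserves its exactness.

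The step I expect to demand the most care is not this homological bookkeeping but the topological one: one must verify that the algebraic tensor products entering the $\mathrm{Tor}$ computations really agree with the completed tensor products defining the rational localizations --- this is precisely where the hypothesis that $M$ stays complete after every nice rational localization is used --- and that the splitting of $Z$ in the category of Banach modules genuinely transports the exactness of \cite[1.6.15.1]{Ked2} through $-\widehat{\otimes}Z$. Once these points are in place, the remainder is a direct transcription of the proof of \cite[Lemma 1.9.4]{Ked2} and of \cref{lemma2.6}.
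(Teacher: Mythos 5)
Your proposal follows the same route as the paper's argument (which for this adic Banach version simply refers back to the Huber-pair case, \cref{lemma2.6}): choose a presentation $0\to X_1\to X_2\to M\to 0$ with $X_2$ finite free, apply the preceding $\{g/f\}$-vanishing lemma twice to reach $B\widehat{\otimes}Z\{f/g,g/f\}$, and then extract the $\{f/g\}$-vanishing from the $Z$-completed \v{C}ech sequence. Your added remarks on the completed-versus-algebraic tensor product and on the splitting hypothesis for $Z$ are exactly the points the paper leaves implicit, so the proof is correct and essentially identical to the paper's.
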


\begin{proof}
See \cref{lemma2.6}.
\end{proof}

\indent Then we have the following corollary which is the corresponding analog of \cite[Corollary 1.9.5]{Ked2}:

\begin{corollary}\mbox{\bf{(After Kedlaya \cite[Corollary 1.9.5]{Ked2})}}
Working over a uniform analytic adic Banach ring $(A,A^+)$ which is sheafy, suppose we consider a finitely generated $A\widehat{\otimes}Z$ module complete with respect to the natural topology. Then we have that for any nice rational localization $A\rightarrow B$ in the sense of \cite[Definition 1.9.1]{Ked2} then tensoring with the corresponding $Z$ we have the vanishing of $\mathrm{Tor}_1(B,M)$.
	
\end{corollary}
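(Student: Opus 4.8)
The plan is to reduce the statement to the already established Banach-case results proved in Section 2 and the Huber-pair analogues in Section 4, since the corollary is precisely the $Z$-relative version of \cite[Corollary 1.9.5]{Ked2} transported from the analytic Huber pair context into the adic Banach ring context. First I would observe that a nice rational localization $A\rightarrow B$ is by definition a finite composite of Laurent and balanced rational localizations in the sense of \cite[Definition 1.9.1]{Ked2}, so by induction on the length of the composite it suffices to treat a single Laurent or balanced localization. For such a localization one writes $B$ in the familiar form involving adjunction of $\left\{\frac{f}{g}\right\}$ (and possibly $\left\{\frac{g}{f}\right\}$) with $g$ taken to be $1-f$ or $1$, which is exactly the shape hypothesized in the two preceding lemmas of this section (the $Z$-relative analogues of \cite[Lemma 1.9.3]{Ked2} and \cite[Lemma 1.9.4]{Ked2}, whose proofs simply refer back to \cref{lemma2.5} and \cref{lemma2.6}).

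The key computational input is already carried out in \cref{lemma2.5}: after identifying $B\widehat{\otimes}Z\left\{\frac{g}{f}\right\}$ with $B\widehat{\otimes}Z\{T\}/(1-fT)$, multiplication by the formal inverse $1+fT+f^2T^2+\cdots$ inside $B\widehat{\otimes}Z[[T]]$ is injective, which forces the vanishing of $\mathrm{Tor}_1$. So the second step is to run this argument for $M$ finitely presented, exactly as in the Banach analytic-topology development of Section 2, and then to bootstrap from finitely presented to finitely generated. Here I would use the standard device: choose a presentation $0\rightarrow X_1\rightarrow X_2\rightarrow M\rightarrow 0$ with $X_2$ finite free, apply the finitely presented case to $X_2$ (trivially) and to the surjection, and chase the long exact $\mathrm{Tor}$ sequence; the completeness hypothesis on $M$ with respect to nice rational localizations guarantees that the completed tensor products appearing agree with the algebraic ones so that the exactness is genuine. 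This is precisely the mechanism of \cref{lemma2.6}, so in the write-up the proof reduces to a single line: \emph{see \cref{lemma2.5} and \cref{lemma2.6}}, together with induction over the composite defining a nice localization.

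The main obstacle — such as it is — is purely bookkeeping rather than mathematical: one must check that the exactness-preservation hypothesis imposed on $Z$ in the ambient Setting (that $\Gamma_1\widehat{\otimes}Z\rightarrow\Gamma_2\widehat{\otimes}Z\rightarrow\Gamma_3\widehat{\otimes}Z$ stays exact, as happens when $C\rightarrow Z$ splits topologically) is exactly what is needed so that tensoring the Kedlaya complex \cite[1.6.15.1]{Ked2} with $Z$ and then with $M$ behaves correctly, and that the "natural topology'' in the adic Banach ring setting coincides with the one used in Section 2. Since both of these are built into the Settings of this section and the completeness is assumed stable under nice rational localization, no new estimate is required. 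I would therefore simply record the proof as an appeal to the two preceding lemmas, which in turn defer to \cref{lemma2.5} and \cref{lemma2.6}, noting that the passage from finitely presented to finitely generated uses the presentation-and-long-exact-sequence argument and that the passage from a single Laurent/balanced step to an arbitrary nice localization is an induction on composite length.
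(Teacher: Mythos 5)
Your proposal is essentially the paper's proof: the paper disposes of this corollary with the single line ``By the previous lemmas,'' i.e.\ exactly the appeal you make to the $Z$-relative analogues of \cite[Lemma 1.9.3]{Ked2} and \cite[Lemma 1.9.4]{Ked2} (which themselves defer to \cref{lemma2.5} and \cref{lemma2.6}), combined with the implicit induction on the length of the composite defining a nice rational localization. The bookkeeping points you raise about the exactness hypothesis on $Z$ and the stability of completeness under nice localizations are precisely what makes that induction go through, so the core of your argument is the intended one.

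One step of your elaboration does not work as described, however: the passage from finitely presented to finitely generated via the long exact sequence of $\mathrm{Tor}$. From a presentation $0\rightarrow X_1\rightarrow X_2\rightarrow M\rightarrow 0$ with $X_2$ finite free, the long exact sequence only identifies $\mathrm{Tor}_1(C,M)$ with $\ker\left(C\otimes X_1\rightarrow C\otimes X_2\right)$; showing that this kernel vanishes is equivalent to the statement you are trying to prove, and for merely finitely generated $M$ the module $X_1$ need not be finitely generated, so the finitely presented case gives you no control over it. In \cref{lemma2.5} and \cref{lemma2.6} (and in Kedlaya's originals) the module is assumed finitely presented, and the vanishing comes from the explicit non-zero-divisor argument with $1-fT$ on $M\{T\}$, which uses that $B\widehat{\otimes}Z\{T\}\otimes M$ agrees with $M\{T\}$ --- an identification available for finitely presented complete modules but only as a surjection in the finitely generated case. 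The paper silently carries the finite-presentation hypothesis of the preceding lemmas into this corollary (its ``finitely generated'' should be read against those hypotheses), so you should either drop the bootstrap step and keep the hypothesis of finite presentation with stable completeness, or supply a genuinely new argument for the finitely generated case; the long exact sequence alone is not one.
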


\begin{proof}
By the previous lemmas.	
\end{proof}

\begin{corollary}\mbox{\bf{(After Kedlaya \cite[Corollary 1.9.6]{Ked2})}}
Working over a uniform analytic adic Banach ring $(A,A^+)$ which is sheafy, suppose we consider a nice $Z$-stably pseudocoherent left $A\widehat{\otimes}Z$ module $M$. Then we have that for any rational localization $A\rightarrow B$ and any rational localization from $B$ to $C$ in the nice setting as in \cite[Definition 1.9.1]{Ked2}. We have that base change along the map $B\rightarrow C$ (note that this is assumed to be nice) will preserve the corresponding stably pseudocoherence for $M$.

\end{corollary}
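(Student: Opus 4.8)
The plan is to reduce to a single one-step localization and then run the standard dévissage along a pseudocoherent presentation. By definition a nice rational localization $B\to C$ is a finite composite of Laurent localizations $B'\to B'\{g/f\}$ with $g\in\{1,1-f\}$ and balanced localizations $B'\to B'\{f/g,g/f\}$, and the property ``base change preserves $Z$-nice-stable pseudocoherence'' is closed under composition, so it suffices to treat one such step. Set $M_B:=(B\widehat{\otimes}Z)\otimes_{A\widehat{\otimes}Z}M$, which under the hypotheses we may take to be $Z$-nice-stably pseudocoherent over $B\widehat{\otimes}Z$ (in particular finitely presented, complete, and with completeness stable under every nice rational localization of $B$), and write $C=B\{g/f\}$ or $C=B\{f/g,g/f\}$; the goal is that $M_C:=(C\widehat{\otimes}Z)\otimes_{B\widehat{\otimes}Z}M_B$ has the same three properties over $C\widehat{\otimes}Z$.

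First I would prove pseudocoherence of $M_C$. Choose a pseudocoherent presentation of $M_B$ over $B\widehat{\otimes}Z$ and truncate it to a short exact sequence $0\to K\to F\to M_B\to 0$ with $F$ finite free; then $K$ is again pseudocoherent, is closed in the complete module $F$ (its kernel sits in a Hausdorff target) hence complete, and, base-changing this sequence along any nice rational localization of $B$ and using the Tor-vanishing of \cref{lemma2.5} and \cref{lemma2.6} to keep it exact, has completeness stable under nice rational localization. Applying \cref{lemma2.5} (for a Laurent step) or \cref{lemma2.6} (for a balanced step) to $M_B$ gives $\mathrm{Tor}_1^{B\widehat{\otimes}Z}(C\widehat{\otimes}Z,M_B)=0$, so the truncated sequence stays exact after $(C\widehat{\otimes}Z)\otimes_{B\widehat{\otimes}Z}(-)$, yielding $0\to (C\widehat{\otimes}Z)\otimes K\to (C\widehat{\otimes}Z)\otimes F\to M_C\to 0$. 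Repeating with $K$ in place of $M_B$ and iterating assembles a resolution of $M_C$ by finite free $C\widehat{\otimes}Z$-modules, i.e. $M_C$ is pseudocoherent over $C\widehat{\otimes}Z$.

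Next I would check completeness of $M_C$ together with its stability. Since $M_C$ is finitely presented and $M_B$ is complete, the standard completeness behaviour of nice rational localizations of uniform analytic adic Banach rings, carried through the coefficient ring $Z$ (harmless because $-\widehat{\otimes}Z$ preserves the relevant strict exact sequences by the standing assumption on $Z$), shows $M_C$ is complete. For the stability clause, any nice rational localization $C\to D$ composed with the nice $B\to C$ is again a nice rational localization $B\to D$; hence the completeness of the base change of $M_B$ to $D$ is part of $M_B$ being $Z$-nice-stably pseudocoherent over $B$, and that base change is canonically $(D\widehat{\otimes}Z)\otimes_{C\widehat{\otimes}Z}M_C$, so the completeness of $M_C$ persists under every nice rational localization of $C$. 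Together with the previous paragraph this is exactly $Z$-nice-stable pseudocoherence of $M_C$, and by composition the conclusion follows for an arbitrary nice $B\to C$.

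The hard part will be the legitimacy of the dévissage, exactly as in \cite[Corollary 1.9.6]{Ked2}: one must know that every syzygy in a pseudocoherent presentation of $M_B$ is complete and has completeness stable under all nice rational localizations, so that the finitely-presented Tor-vanishing of \cref{lemma2.6} applies at each stage and not merely to $M_B$ itself; this rests on those syzygies being closed submodules of finite free modules, which in turn uses the Hausdorffness coming from completeness of $M_B$ and of its nice base changes. Keeping the algebraic tensor product $\otimes$ and the completed tensor product $\widehat{\otimes}$ from drifting apart once $Z$ is carried along is the other place where care is needed. Granting these, the argument is a faithful $Z$-relative transcription of \cite[Corollary 1.9.6]{Ked2}.
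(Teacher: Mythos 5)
The paper gives no proof of this corollary at all --- it is left as an immediate consequence of the preceding Tor-vanishing lemmas, exactly as in Kedlaya's Corollary 1.9.6 --- and your proposal supplies precisely that intended argument: reduction to a single Laurent or balanced step, followed by d\'evissage along a pseudocoherent presentation using the analogs of \cref{lemma2.5} and \cref{lemma2.6}, plus the observation that nice localizations compose so the stability clause is inherited. Your argument is correct, and the caveats you flag yourself (completeness of the syzygies as closed submodules, and keeping the algebraic and completed tensor products aligned) are exactly the points the original treatment in \cite{Ked2} has to handle.
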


\begin{lemma} \mbox{\bf{(After Kedlaya \cite[Lemma 1.9.7]{Ked2})}}
Now assume that we are in the corresponding parallel assumption of \cite[1.7.1]{Ked2}. Also consider the corresponding complex in \cite[1.6.15.1]{Ked2}:
\[
\xymatrix@C+0pc@R+0pc{
0 \ar[r] \ar[r] \ar[r] &B \ar[r] \ar[r] \ar[r] &B\left\{\frac{f}{g}\right\}\bigoplus B\left\{\frac{g}{f}\right\} \ar[r] \ar[r] \ar[r] &B\left\{\frac{f}{g},\frac{g}{f}\right\} \ar[r] \ar[r] \ar[r] &0.
}
\]	
And we assume now that this is exact. Now take $g$ to be $1-f$ or $1$. Here the corresponding element $f,g$ come from $B$. Now over $B\widehat{\otimes}Z$ suppose we have a module $M$ which is assumed now to be finitely presented and complete with respect to the natural topology over $B\widehat{\otimes}Z$, and furthermore the corresponding completeness is stable under any corresponding nice rational localization $B\rightarrow C$. Then we have that over then $B\widehat{\otimes}Z$ in our situation tensoring with $M$ will preserve the corresponding exactness of the following:

\[
\xymatrix@C+0pc@R+0pc{
0 \ar[r] \ar[r] \ar[r] &B\widehat{\otimes}Z \ar[r] \ar[r] \ar[r] &B\widehat{\otimes}Z \left\{\frac{f}{g}\right\}\bigoplus B\widehat{\otimes}Z \left\{\frac{g}{f}\right\} \ar[r] \ar[r] \ar[r] &B\widehat{\otimes}Z \left\{\frac{f}{g},\frac{g}{f}\right\} \ar[r] \ar[r] \ar[r] &0.
}
\]

\end{lemma}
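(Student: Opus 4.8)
The plan is to reduce the asserted exactness to a single $\mathrm{Tor}_1$-vanishing that has already been established. Since $g\in\{1-f,1\}$ and we are over a uniform analytic adic Banach ring, the complex
$0\to B\to B\{f/g\}\bigoplus B\{g/f\}\to B\{f/g,g/f\}\to 0$
is exact by hypothesis; applying $-\widehat{\otimes}Z$, which is exact in the setting we have fixed, the complex
$0\to B\widehat{\otimes}Z\to B\widehat{\otimes}Z\{f/g\}\bigoplus B\widehat{\otimes}Z\{g/f\}\to B\widehat{\otimes}Z\{f/g,g/f\}\to 0$
is exact as well. Written with zeros at both ends, this is nothing but a short exact sequence of Banach $B\widehat{\otimes}Z$-modules; in particular the rightmost arrow is surjective. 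This first reduction is essentially formal once one unwinds the acyclicity built into the nice localization hypothesis.

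I would then tensor this short exact sequence over $B\widehat{\otimes}Z$ with $M$ and invoke the long exact $\mathrm{Tor}$-sequence, whose relevant terminal segment is
$\mathrm{Tor}_1^{B\widehat{\otimes}Z}\!\left(B\widehat{\otimes}Z\{f/g,g/f\},M\right)\to M\to \left(B\widehat{\otimes}Z\{f/g\}\otimes M\right)\bigoplus\left(B\widehat{\otimes}Z\{g/f\}\otimes M\right)\to B\widehat{\otimes}Z\{f/g,g/f\}\otimes M\to 0$.
The last three terms are exact for free by right exactness of $\otimes$, so the only thing left to check is injectivity at the left, which holds as soon as the displayed $\mathrm{Tor}_1$ vanishes. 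But this vanishing is exactly the $\mathrm{Tor}_1$-statement established above (the adic Banach analog of \cref{lemma2.6}), applied to the finitely presented module $M$ whose completeness is stable under nice rational localizations. Hence $0\to M\to\left(B\widehat{\otimes}Z\{f/g\}\otimes M\right)\bigoplus\left(B\widehat{\otimes}Z\{g/f\}\otimes M\right)\to B\widehat{\otimes}Z\{f/g,g/f\}\otimes M\to 0$ is exact as a sequence of abstract $B\widehat{\otimes}Z$-modules.

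The remaining point, and the one I expect to be the real obstacle, is to match this algebraic statement with the topological one in the lemma: one must know that the uncompleted base changes $B\widehat{\otimes}Z\{f/g\}\otimes_{B\widehat{\otimes}Z}M$ and its companions already agree with the completed tensor products, and that the maps above are strict. This is precisely where the hypothesis that completeness of $M$ persists under every nice rational localization $B\to C$ is used: together with finite presentation of $M$ it forces each of these base changes to be complete, so no separated completion is introduced, and then a three-term exact sequence of complete modules over a Banach ring is automatically strict by the open mapping theorem. The whole argument is the $Z$-relative transcription of \cite[Lemma 1.9.7]{Ked2}, which we follow verbatim, and the only genuinely new ingredient is checking that the functor $-\widehat{\otimes}Z$ leaves each of these exactness and strictness properties intact, which is built into our standing assumptions on $Z$.
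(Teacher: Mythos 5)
Your proposal is correct and follows the same route as the paper, which simply defers to Kedlaya's Lemma 1.9.7: one tensors the $Z$-deformed ring sequence (exact by the standing assumption on $Z$) with $M$ and kills the obstruction to left-exactness using the $\mathrm{Tor}_1$-vanishing for $B\widehat{\otimes}Z\left\{\frac{f}{g},\frac{g}{f}\right\}$ established in the preceding lemma. Your closing remarks on completeness of the base changes and strictness via the open mapping theorem are a reasonable gloss on why the algebraic and topological statements coincide, and do not change the argument.
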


\begin{proof}
See \cite[Lemma 1.9.7]{Ked2}.	
\end{proof}

\begin{theorem} \mbox{\bf{(After Kedlaya \cite[Corollary 1.9.8]{Ked2})}}
Working over a uniform analytic adic Banach ring $(A,A^+)$ which is sheafy, suppose we consider a $Z$-nice-stably pseudocoherent left $A\widehat{\otimes}Z$ module $M$. Then we have that the presheaf $\widetilde{M}$ associated to the $M$ sheafified along the ring $A$ only is acyclic.
	
\end{theorem}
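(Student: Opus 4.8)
The plan is to reproduce, in the present adic Banach ring setting, the proof of \cite[Corollary 1.9.8]{Ked2} (and of its parallel version over analytic Huber pairs established in the previous section), feeding in the $Z$-relative inputs already proved above. Acyclicity of $\widetilde{M}$ amounts to two assertions: that $\widetilde{M}$ is a sheaf on the rational subspaces of $X=\mathrm{Spa}(A,A^+)$, and that its higher \v{C}ech cohomology vanishes for every rational covering. Both will follow from the vanishing of \v{C}ech cohomology for the two basic kinds of two-term coverings together with a purely topological reduction.

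First I would fix a rational covering of a rational subspace $\mathrm{Spa}(B,B^+)\subset X$ and reduce, by the standard argument of Tate as used in \cite[Chapter 2]{KL1} and carried over in \cite{Ked2}, to the case of a simple covering of the shape $\{v(f)\le v(g),\ v(g)\le v(f)\}$ with $f,g$ generating the unit ideal and $g=1-f$ or $g=1$. For such a covering the augmented \v{C}ech complex of $\widetilde{M}$ is obtained by applying $(-)\otimes_{B\widehat{\otimes}Z}M$ to
\[
0 \longrightarrow B\widehat{\otimes}Z \longrightarrow B\widehat{\otimes}Z\left\{\tfrac{f}{g}\right\}\oplus B\widehat{\otimes}Z\left\{\tfrac{g}{f}\right\} \longrightarrow B\widehat{\otimes}Z\left\{\tfrac{f}{g},\tfrac{g}{f}\right\} \longrightarrow 0,
\]
and the exactness of the resulting complex is exactly the $Z$-relative analog of \cite[Lemma 1.9.7]{Ked2} recorded just above, which in turn rests on the $\mathrm{Tor}_1$-vanishing statements (the analogs of \cite[Lemma 1.9.3]{Ked2} and \cite[Lemma 1.9.4]{Ked2}) and on the hypothesis that $M$ remains complete under every nice rational localization. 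Completeness of the individual terms, so that the algebraic $\otimes$ appearing here agrees with the completed $\widehat{\otimes}$ used to define $\widetilde{M}$, is where the ``$Z$-nice-stably pseudocoherent'' hypothesis and the exactness-preservation assumption on $Z$ are used.

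Having established exactness of the augmented \v{C}ech complex for all simple coverings, I would then invoke the general ``acyclicity from a basis'' formalism --- in the form of \cite[Propositions 2.4.20--2.4.21]{KL1}, or equivalently the mechanism in the proof of \cite[Theorem 1.4.16]{Ked2} --- to propagate the vanishing to arbitrary rational coverings of arbitrary rational subspaces, and thereby to conclude that $\widetilde{M}$ is a sheaf with vanishing higher cohomology. I expect the main obstacle to be bookkeeping rather than conceptual: one must check that strictness and completeness of the various completed tensor products survive each step of the reduction, so that no distinction between $\otimes$ and $\widehat{\otimes}$ creeps in. This is precisely what the ``stably'' clause of the hypothesis and the assumption on $Z$ are designed to control, and the argument then runs parallel to \cite{Ked2} with the coefficient ring $Z$ carried along unchanged.
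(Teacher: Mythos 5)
Your proposal is correct and follows essentially the same route as the paper, which simply defers to the proof of \cite[Corollary 1.9.8]{Ked2}: reduction to simple Laurent/balanced coverings via Tate's argument, exactness of the augmented \v{C}ech complex from the $Z$-relative analogs of \cite[Lemmas 1.9.3, 1.9.4, 1.9.7]{Ked2} established above, and propagation to general rational coverings by the standard basis formalism. Your reconstruction makes explicit exactly the argument the paper's citation is invoking, with the coefficient ring $Z$ carried along unchanged.
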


\begin{proof}
See \cite[Corollary 1.9.8]{Ked2}.	
\end{proof}

\indent Now we establish the corresponding analogs of the corresponding results in \cite{Ked2} which are needed in the corresponding descent of pseudocoherent sheaves.

\begin{lemma}\mbox{\bf{(After Kedlaya \cite[Lemma 1.9.9]{Ked2})}}
Again as in \cite[Lemma 1.9.9]{Ked2} we suppose we are in the parallel situation of 1.7.1 of \cite[Lemma 1.9.9]{Ked2} for the adic Banach rings. Now take a sufficiently small constant $\delta>0$. Then one can find a corresponding decomposition for any invertible matrix $M$ where $M-1$ takes norm $\|M-1\|\leq \delta$ into $M_1M_2$ where invertible $M_1$ takes coefficients in $B\widehat{\otimes}Z\left\{\frac{f}{g}\right\}$ and invertible $M_2$ takes coefficients in $B\widehat{\otimes}Z\left\{\frac{g}{f}\right\}$. 
\end{lemma}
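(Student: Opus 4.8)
The plan is to reduce this to the analogous decomposition in Kedlaya's \cite[Lemma 1.9.9]{Ked2} by exploiting the fact that $\widehat{\otimes} Z$ is exact on our chosen class of Huber/Banach rings and that the complex \cite[1.6.15.1]{Ked2} is exact. First I would recall that in the Banach setting the relevant statement is a Hensel-type successive-approximation argument: given an invertible matrix $N = 1 + (N-1)$ with $\|N-1\| \leq \delta$ over $B\widehat{\otimes}Z\left\{\frac{f}{g},\frac{g}{f}\right\}$, one writes $N - 1 = P_1 + P_2$ with $P_i$ having coefficients in $B\widehat{\otimes}Z\left\{\frac{f}{g}\right\}$ and $B\widehat{\otimes}Z\left\{\frac{g}{f}\right\}$ respectively, using the splitting furnished by the exactness of
\[
0 \rightarrow B\widehat{\otimes}Z \rightarrow B\widehat{\otimes}Z\left\{\tfrac{f}{g}\right\}\bigoplus B\widehat{\otimes}Z\left\{\tfrac{g}{f}\right\} \rightarrow B\widehat{\otimes}Z\left\{\tfrac{f}{g},\tfrac{g}{f}\right\} \rightarrow 0,
\]
together with a bounded splitting of the surjection (this is exactly where the hypothesis that $C \rightarrow Z$ splits topologically, hence $\widehat{\otimes}Z$ preserves this exact sequence with controlled norms, is used). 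Then one sets $M_1 = 1 + P_1$, $M_2' = (1+P_1)^{-1} N$, observes $\|M_2' - 1\| \leq \delta'$ for some $\delta' < \delta$ provided $\delta$ was chosen small enough relative to the norm of the splitting, and iterates, pushing the ``error'' alternately into the $B\widehat{\otimes}Z\left\{\frac{f}{g}\right\}$-part and the $B\widehat{\otimes}Z\left\{\frac{g}{f}\right\}$-part.

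The key steps, in order, are: (i) fix compatible Banach norms on $B$, $B\widehat{\otimes}Z$, and the localizations $B\widehat{\otimes}Z\left\{\frac{f}{g}\right\}$, $B\widehat{\otimes}Z\left\{\frac{g}{f}\right\}$, $B\widehat{\otimes}Z\left\{\frac{f}{g},\frac{g}{f}\right\}$, and extract from the exactness of \cite[1.6.15.1]{Ked2} tensored with $Z$ a bounded $B\widehat{\otimes}Z$-linear splitting $\sigma$ of the summing map, with operator norm $\leq \kappa$ for some constant $\kappa$ depending only on the datum; (ii) choose $\delta$ with $\delta \kappa < 1/2$ (say), so that the iteration contracts; (iii) run the telescoping/successive-substitution argument of \cite[Lemma 1.9.9]{Ked2} verbatim, now with all estimates carried out for the norm $\|\cdot\|$ on $B\widehat{\otimes}Z$-modules rather than the original $B$-module norm, checking convergence of the partial products $M_1^{(n)} \to M_1$ and $M_2^{(n)} \to M_2$ in the respective completed localizations; (iv) conclude invertibility of the limits $M_1, M_2$ by the standard Neumann-series argument since each stays within $\|\cdot - 1\| < 1$. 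The completeness of all modules with respect to the natural (here Banach) topology guarantees the limits actually exist in the right rings.

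The main obstacle I anticipate is \emph{not} the analytic iteration itself, which is routine once norms are fixed, but rather the bookkeeping needed to produce the bounded splitting $\sigma$ with a norm bound that is uniform enough to make the iteration work after base change along $\widehat{\otimes}Z$. In the commutative case \cite{Ked2} this is built into the structure of \cite[1.7.1]{Ked2}; here one must verify that the hypothesis imposed on $Z$ (namely that $-\widehat{\otimes}Z$ preserves short exact sequences of uniform analytic adic Banach rings, which by the stated criterion follows from $C \rightarrow Z$ splitting in topological/Banach modules) genuinely upgrades to a \emph{quantitative} splitting of the complex \cite[1.6.15.1]{Ked2}$\,\widehat{\otimes}\,Z$, i.e. that one can bound the norm of $\sigma$ independently of the size of the matrix $M$. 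Once that bound is in hand, the rest is the same Hensel-type argument as in \cite[Lemma 1.9.9]{Ked2}, and I would simply refer to that proof for the remaining details.
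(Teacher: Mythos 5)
Your proposal is correct in substance and follows exactly the route the paper takes: the paper's own ``proof'' of this lemma is literally the citation ``See \cite[Lemma 1.9.9]{Ked2}'', and what you have written is precisely the content of that reference (bounded splitting of the B\'ezout-type exact sequence after $\widehat{\otimes}Z$, then successive approximation), so you are filling in details the paper omits rather than diverging from it. One small correction to your step (iii): the update must be two-sided, $N \mapsto (1+P_1)^{-1}N(1+P_2)^{-1}$, which makes the error contract quadratically (norm $\lesssim \kappa^2\delta^2$) and keeps all $\left\{\frac{f}{g}\right\}$-factors accumulating on the left and all $\left\{\frac{g}{f}\right\}$-factors on the right; your one-sided $M_2'=(1+P_1)^{-1}N$ neither contracts nor lands in the correct subring, though this is repaired automatically once you run Kedlaya's iteration verbatim as you propose.
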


\begin{proof}
See \cite[Lemma 1.9.9]{Ked2}.	
\end{proof}

\begin{lemma}\mbox{\bf{(After Kedlaya \cite[Lemma 1.9.10]{Ked2})}}
Again as in \cite[Lemma 1.9.10]{Ked2} we suppose we are in the parallel situation of 1.7.1 of \cite[Lemma 1.9.10]{Ked2} for the adic Banach rings. Now for a glueing datum $M_1,M_2,M_{12}$ over the rings respectively $B\widehat{\otimes}Z\left\{\frac{f}{g}\right\}$, $B\widehat{\otimes}Z\left\{\frac{g}{f}\right\}$, $B\widehat{\otimes}Z\left\{\frac{f}{g},\frac{g}{f}\right\}$. We assume that they are finitely generated. We recall this means that we have:
\begin{align}
f_1: B\widehat{\otimes}Z\left\{\frac{f}{g},\frac{g}{f}\right\}\otimes_{B\widehat{\otimes}Z\left\{\frac{f}{g}\right\}}M_1\overset{\sim}{\rightarrow}M_{12},\\
f_2: B\widehat{\otimes}Z\left\{\frac{f}{g},\frac{g}{f}\right\}\otimes_{B\widehat{\otimes}Z\left\{\frac{f}{g}\right\}}M_2\overset{\sim}{\rightarrow}M_{12}.
\end{align}
Then we have that the corresponding strictly surjectivity of the map $M_1\bigoplus M_2\rightarrow M_{12}$ given by $f_1(x_1)-f_2(x_2)$. And we have that the corresponding equalizer $M$ will satisfy that $B\widehat{\otimes}Z\left\{\frac{f}{g}\right\} \otimes_{B\widehat{\otimes}Z}M\overset{}{\rightarrow}M_1$, $B\widehat{\otimes}Z\left\{\frac{g}{f}\right\}\otimes_{B\widehat{\otimes}Z}M\overset{}{\rightarrow}M_2$ are strictly surjective.	
\end{lemma}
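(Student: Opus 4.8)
The plan is to reproduce in the adic Banach setting the argument already carried out for analytic Huber pairs in \cref{lemma2.12}, the only change being that the topological neighbourhood of the origin used there is replaced by the quantitative norm bound $\|M-1\|\le\delta$ furnished by the preceding lemma (the Banach analog of \cite[Lemma 1.9.9]{Ked2}). First I would realize the finitely generated modules $M_1,M_2$, after a finite free presentation, with bases $e_1,\dots,e_n$ over $B\widehat{\otimes}Z\left\{\frac{f}{g}\right\}$ and $h_1,\dots,h_n$ over $B\widehat{\otimes}Z\left\{\frac{g}{f}\right\}$; via $f_1,f_2$ both families become bases of $M_{12}$ over $B\widehat{\otimes}Z\left\{\frac{f}{g},\frac{g}{f}\right\}$, and I record the two transition matrices $P$ and $Q$ expressing one basis in terms of the other.

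Next I would choose an integer $k$ large enough that $P(f^{-k}Q'-Q)$ has norm at most $\delta$, with the auxiliary matrix $Q'$ chosen exactly as in \cref{lemma2.12}; here the presence of $Z$ is harmless, since by hypothesis $\widehat{\otimes}Z$ is exact and compatible with the relevant Banach structures, so the norm estimates of \cite{Ked2} transfer verbatim to $B\widehat{\otimes}Z\left\{\frac{f}{g},\frac{g}{f}\right\}$. Applying the preceding lemma to $1+P(f^{-k}Q'-Q)$ yields a factorization $K_1K_2^{-1}$ with $K_1$ invertible over $B\widehat{\otimes}Z\left\{\frac{f}{g}\right\}$ and $K_2$ invertible over $B\widehat{\otimes}Z\left\{\frac{g}{f}\right\}$. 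Writing the entries of $K_1,K_2$ as $X_{1ij},X_{2ij}$ and setting $(x_1,x_2):=(\sum f^kX_{1ij}e_i,\sum Q'X_{2ij}h_i)$ as in \cref{lemma2.12} forces $f_1(x_1)-f_2(x_2)=0$; combining this with the quantitative input \cite[Lemma 1.8.1]{Ked2} gives the strict surjectivity of $M_1\bigoplus M_2\to M_{12}$.

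For the assertion about the equalizer $M$, I would argue as in \cref{lemma2.12}: given $m_2\in M_2$, send it to $M_{12}$ via $f_2$, use the strict surjectivity just proved to get $y_1\in M_1$, $y_2\in M_2$ with $f_1(y_1)-f_2(y_2)=f_2(m_2)$, and observe that $(y_1,y_2+m_2)$ then lies in $M$ and maps to $m_2$ under $B\widehat{\otimes}Z\left\{\frac{g}{f}\right\}\otimes_{B\widehat{\otimes}Z}M\to M_2$; the symmetric construction handles $B\widehat{\otimes}Z\left\{\frac{f}{g}\right\}\otimes_{B\widehat{\otimes}Z}M\to M_1$. Strictness of these two maps again follows by tracking the norms of the chosen lifts through \cite[Lemma 1.8.1]{Ked2}.

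The hard part will be purely the bookkeeping of constants: one has to check that the threshold $\delta$ from the decomposition lemma can be met after completed tensoring with $Z$, that the exponent $k$ can be taken uniformly in the generators, and — most importantly — that the surjections produced are strict rather than merely surjective, which is exactly where \cite[Lemma 1.8.1]{Ked2} and the exactness hypothesis on $\widehat{\otimes}Z$ do real work. No conceptual ingredient beyond \cref{lemma2.12} is required.
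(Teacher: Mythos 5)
Your proposal is correct and follows exactly the route the paper takes: the paper's proof of this lemma is literally a cross-reference to \cref{lemma2.12}, and your argument reproduces that proof (bases $e_i,h_i$, transition matrices $P,Q$, the choice of $k$ so that $1+P(f^{-k}Q'-Q)$ factors as $K_1K_2^{-1}$ via the preceding decomposition lemma, the element $(x_1,x_2)$, the appeal to \cite[Lemma 1.8.1]{Ked2} for strictness, and the $(y_1,y_2+m_2)$ trick for the equalizer), with the only adaptation being the quantitative norm bound $\|M-1\|\le\delta$ in place of the topological neighbourhood. No further comment is needed.
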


\begin{proof}
See \cref{lemma2.12}.

\end{proof}

\begin{proposition} \mbox{\bf{(After Kedlaya \cite[Lemma 1.9.11]{Ked2})}}
Consider the same situation of the previous lemma. And assume that the corresponding \cite[1.6.15.1]{Ked2} is exact. And we assume that the corresponding modules $M_1,M_2,M_{12}$ are $Z$-stably pseudocoherent over the rings respectively $B\widehat{\otimes}Z\left\{\frac{f}{g}\right\}$, $B\widehat{\otimes}Z\left\{\frac{g}{f}\right\}$, $B\widehat{\otimes}Z\left\{\frac{f}{g},\frac{g}{f}\right\}$. Then we have that actually the corresponding equalizer $M$ is just pseudocoherent in our current situation.	
\end{proposition}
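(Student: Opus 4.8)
The plan is to reproduce the strategy of \cite[Lemma 1.9.11]{Ked2} in the $Z$-relative Banach setting, which in fact is the exact same argument carried out already in the proof of \cref{lemma2.13} (the analytic Huber pair version); so the task is mainly to check that each input used there is available over adic Banach rings. First I would fix finite free resolutions: choose a finite free module $L$ over $B\widehat{\otimes}Z$ mapping onto $M$ (the equalizer of the glueing datum), base change it to finite free modules $L_1, L_2, L_{12}$ over $B\widehat{\otimes}Z\left\{\frac{f}{g}\right\}$, $B\widehat{\otimes}Z\left\{\frac{g}{f}\right\}$, $B\widehat{\otimes}Z\left\{\frac{f}{g},\frac{g}{f}\right\}$ respectively, let $F$ be the equalizer of the induced glueing datum on the $L$'s, and let $F_1, F_2, F_{12}$ be the kernels of the surjections $L_i \to M_i$. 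The point is that $(F_1, F_2, F_{12})$ is again a glueing datum of the same type to which the previous lemma (the Banach analog of \cref{lemma2.12}) applies, so the "snake-lemma" $3\times 3$ diagram has exact rows and columns, and in particular the two arrows marked $?$ exist.

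Next I would run the two base-change comparison diagrams exactly as in \cref{lemma2.13}: tensor the exact sequence $0 \to F \to L \to M \to 0$ with $B\widehat{\otimes}Z\left\{\frac{f}{g}\right\}$ and compare with $0 \to F_1 \to L_1 \to M_1 \to 0$. The middle vertical arrow is an isomorphism (finite free base change), the outer two are surjective by the strict surjectivity statement of the previous lemma, and a diagram chase then forces the rightmost arrow $B\widehat{\otimes}Z\left\{\frac{f}{g}\right\}\otimes M \to M_1$ to be an isomorphism; symmetrically for $\left\{\frac{g}{f}\right\}$ and $M_2$. This identifies $M$ as a module whose localizations recover the original $M_1, M_2$. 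Finally, pseudocoherence of $M$ follows by induction on the length $k$ of a finite projective resolution of the $M_i$'s: the case $k = 1$ is immediate since then $M$ is finitely presented (being the equalizer of finitely presented data with the above base-change compatibility), and the inductive step passes from $k$ to $k+1$ by applying the same construction to the syzygy module $F$, whose local pieces $F_i$ admit resolutions of length $k$.

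The main obstacle, and the only place where the Banach setting genuinely differs from the Huber pair setting, is the decomposition-of-matrices input: one needs the adic-Banach version of \cite[Lemma 1.9.9]{Ked2} (stated just above as the Banach analog), namely that any invertible matrix $N$ with $\|N - 1\| \le \delta$ over $B\widehat{\otimes}Z\left\{\frac{f}{g},\frac{g}{f}\right\}$ factors as $N_1 N_2$ with $N_1$ invertible over $B\widehat{\otimes}Z\left\{\frac{f}{g}\right\}$ and $N_2$ invertible over $B\widehat{\otimes}Z\left\{\frac{g}{f}\right\}$. This is precisely what powers the strict surjectivity of $M_1 \oplus M_2 \to M_{12}$ in the previous lemma, and it relies on the exactness of the \v{C}ech-type complex \cite[1.6.15.1]{Ked2} together with the hypothesis that $-\widehat{\otimes}Z$ preserves exactness of sequences of uniform analytic adic Banach rings. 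Since that exactness hypothesis is built into our setting and the norm estimates are the same as in \cite{Ked2}, the argument goes through; everything else is routine diagram chasing, and I would simply refer to \cref{lemma2.13} and \cite[Lemma 1.9.11]{Ked2} for the details.
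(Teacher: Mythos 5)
Your proposal is correct and follows essentially the same route as the paper's proof: the same $3\times 3$ snake-lemma diagram built from $L$, the $F_i$'s and the equalizer $F$, the same two base-change comparison diagrams forcing $B\widehat{\otimes}Z\left\{\frac{f}{g}\right\}\otimes M\to M_1$ and $B\widehat{\otimes}Z\left\{\frac{g}{f}\right\}\otimes M\to M_2$ to be isomorphisms, and the same induction on resolution length applied to the syzygy datum $F$. The only cosmetic difference is that you describe $L\to M$ as surjective from the outset, whereas the paper (following Kedlaya) chooses $L$ so that the $L_i\to M_i$ are surjective and deduces surjectivity of $L\to M$ only a posteriori; this does not affect the argument.
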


\begin{proof}
This is a $Z$-relative version of \cite[Lemma 1.9.11]{Ked2}. We adapt the corresponding argument to our situation. First we consider corresponding modules $M_1,M_2,M_{12}$, and choose finite free modules $L_1,L_2,L_{12}$ as in \cite[Lemma 1.9.11]{Ked2} over $B\widehat{\otimes}Z\left\{\frac{f}{g}\right\}$, $B\widehat{\otimes}Z\left\{\frac{g}{f}\right\}$, $B\widehat{\otimes}Z\left\{\frac{f}{g},\frac{g}{f}\right\}$, then we can form the corresponding commutative diagram:
\[
\xymatrix@C+3pc@R+3pc{
 &0 \ar[d] \ar[d] \ar[d] &0 \ar[d] \ar[d] \ar[d]  &0 \ar[d] \ar[d] \ar[d]&\\
0 \ar[r] \ar[r] \ar[r] &F \ar[r] \ar[r] \ar[r] \ar[d] \ar[d] \ar[d] &F_1\bigoplus F_2 \ar[r] \ar[r] \ar[r] \ar[d] \ar[d] \ar[d] &F_{12} \ar[r]^? \ar[r] \ar[r] \ar[d] \ar[d] \ar[d] &0\\
0 \ar[r] \ar[r] \ar[r] &L \ar[r] \ar[r] \ar[r] \ar[d] \ar[d] \ar[d] &L_1\bigoplus L_2 \ar[r] \ar[r] \ar[r] \ar[d] \ar[d] \ar[d] &L_{12} \ar[r] \ar[r] \ar[r] \ar[d] \ar[d] \ar[d] &0\\
0 \ar[r] \ar[r] \ar[r] &M \ar[r] \ar[r] \ar[r] \ar[d]^? \ar[d] \ar[d] &M_1\bigoplus M_2\ar[r] \ar[r] \ar[r] \ar[d] \ar[d] \ar[d] &M_{12} \ar[r] \ar[r] \ar[r] \ar[d] \ar[d] \ar[d] &0\\
&0 &0  &0  &\\
}
\]	
where two arrows marked by $?$ are not a priori known to be exist to make the corresponding parts of the sequences exact. Here the left vertical arrows come from taking snake lemma (recall that one first chooses a corresponding finite free module $L$ mapping to $M$ which will base change to the corresponding module $L_1,L_2,L_{12}$, then $F$ will be taken as in \cite[Lemma 1.9.11]{Ked2} to be the corresponding equalizer of the corresponding horizontal map, the idea is that the map $L\rightarrow M$ is just a map which is not a priori known to be surjective while eventually this is indeed the case). The corresponding modules $F_1,F_2,F_{12}$ are the corresponding kernels of the corresponding coverings by finite free modules. Then what is happening is that the corresponding first horizontal complex is actually within the same situation we are considering, therefore the previous lemma implies that the $?$-marked arrows exists and make the sequences exact. We then consider the following commutative diagram:
\[
\xymatrix@C+0pc@R+0pc{
 &B\widehat{\otimes}Z\left\{\frac{f}{g}\right\}\otimes F \ar[r] \ar[r] \ar[r] \ar[d] \ar[d] \ar[d] &B\widehat{\otimes}Z\left\{\frac{f}{g}\right\}\otimes L \ar[r] \ar[r] \ar[r] \ar[d] \ar[d] \ar[d] &B\widehat{\otimes}Z\left\{\frac{f}{g}\right\}\otimes M \ar[r] \ar[r] \ar[r] \ar[d] \ar[d] \ar[d] &0.\\
0 \ar[r] \ar[r] \ar[r] &F_1 \ar[r] \ar[r] \ar[r] &L_1 \ar[r] \ar[r] \ar[r] &M_1 \ar[r] \ar[r] \ar[r] &0.\\
}
\]
What we know is that the corresponding middle vertical arrow is isomorphism, while the left and right ones are surjective, which implies the right most one is an isomorphism. We then consider the following commutative diagram:
\[
\xymatrix@C+0pc@R+0pc{
 &B\widehat{\otimes}Z\left\{\frac{g}{f}\right\}\otimes F \ar[r] \ar[r] \ar[r] \ar[d] \ar[d] \ar[d] &B\widehat{\otimes}Z\left\{\frac{g}{f}\right\}\otimes L \ar[r] \ar[r] \ar[r] \ar[d] \ar[d] \ar[d] &B\widehat{\otimes}Z\left\{\frac{g}{f}\right\}\otimes M \ar[r] \ar[r] \ar[r] \ar[d] \ar[d] \ar[d] &0.\\
0 \ar[r] \ar[r] \ar[r] &F_2 \ar[r] \ar[r] \ar[r] &L_2 \ar[r] \ar[r] \ar[r] &M_2 \ar[r] \ar[r] \ar[r] &0.\\
}
\]
What we know is that the corresponding middle vertical arrow is isomorphism, while the left and right ones are surjective, which implies the right most one is an isomorphism. Finally to show that the module $M$ is pseudocoherent we consider the corresponding induction on the number of the finite projective modules participated in the resolution. $k=1$ will be obvious, while in general the induction step will achieved from $k$ to $k+1$ by considering as in \cite[Lemma 1.9.11]{Ked2} the module $F$ and apply the corresponding construction in the same fashion.
\end{proof}

\

\begin{proposition} \mbox{\bf{(After Kedlaya \cite[Lemma 1.9.13]{Ked2})}}
Take the corresponding parallel assumption in \cite[1.7.1]{Ked2} and furthermore assume the sheafiness of $A$. Suppose we have that all the modules considered are $Z$-stably complete with respect to nice rational localization. Then consider $C_1$ the category of all such modules which are assumed to be pseudocoherent over $B\widehat{\otimes}Z\left\{\frac{f}{g},\frac{g}{f}\right\}$. Then consider $C_2$ the category of all such modules which are assumed to be pseudocoherent over $B\widehat{\otimes}Z\left\{\frac{f}{g}\right\}$. Then consider $C_3$ the category of all such modules which are assumed to be pseudocoherent over $B\widehat{\otimes}Z\left\{\frac{g}{f}\right\}$. Then consider $C_4$ the category of all such modules which are assumed to be pseudocoherent over $B\widehat{\otimes}Z$. Then we have a corresponding equalizer functor:
\begin{align}
C_2\times_{C_1}C_3\rightarrow C	
\end{align}
which is exact and fully faithful, and we have well-established inverse.
\end{proposition}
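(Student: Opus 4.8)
The plan is to adapt the proof of \cite[Lemma 1.9.13]{Ked2} to the present $Z$-relative adic Banach setting, assembling the localized ingredients already established. First I would describe the equalizer functor explicitly: a glueing datum in $C_2\times_{C_1}C_3$ is a triple $M_1,M_2,M_{12}$ over $B\widehat{\otimes}Z\left\{\frac{f}{g}\right\}$, $B\widehat{\otimes}Z\left\{\frac{g}{f}\right\}$, $B\widehat{\otimes}Z\left\{\frac{f}{g},\frac{g}{f}\right\}$ together with the isomorphisms $f_1,f_2$ identifying the base changes of $M_1$ and $M_2$ to $B\widehat{\otimes}Z\left\{\frac{f}{g},\frac{g}{f}\right\}$ with $M_{12}$; to such a datum I assign $M:=\ker\left(M_1\bigoplus M_2\rightarrow M_{12}\right)$, the map sending $x_1\oplus x_2$ to $f_1(x_1)-f_2(x_2)$. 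By the previous proposition (the $Z$-relative analog of \cite[Lemma 1.9.11]{Ked2}) the equalizer $M$ is pseudocoherent over $B\widehat{\otimes}Z$, and by the acyclicity of $\widetilde{M}$ (the $Z$-relative analog of \cite[Corollary 1.9.8]{Ked2}) together with the completeness hypotheses on $M_1,M_2,M_{12}$ it is complete for the natural topology and remains complete under every nice rational localization, so $M$ genuinely lies in $C$.

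Next I would construct the candidate inverse: to an object $M$ of $C$ one attaches the triple of base changes $B\widehat{\otimes}Z\left\{\frac{f}{g}\right\}\otimes_{B\widehat{\otimes}Z}M$, $B\widehat{\otimes}Z\left\{\frac{g}{f}\right\}\otimes_{B\widehat{\otimes}Z}M$, $B\widehat{\otimes}Z\left\{\frac{f}{g},\frac{g}{f}\right\}\otimes_{B\widehat{\otimes}Z}M$ with the evident transition isomorphisms; these base changes are again pseudocoherent and complete by the stability-under-rational-localization statements recorded above. That the two functors are mutually quasi-inverse I would check in the two obvious directions. Starting from $M$, the $Z$-relative analog of \cite[Lemma 1.9.7]{Ked2} shows that tensoring the Laurent (or balanced) complex with $M$ keeps it exact, so $0\rightarrow M\rightarrow M_1\bigoplus M_2\rightarrow M_{12}$ is exact and $M$ is recovered as the equalizer. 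Starting from a glueing datum, the $Z$-relative analog of \cite[Lemma 1.9.10]{Ked2} gives the strict surjectivity of $B\widehat{\otimes}Z\left\{\frac{f}{g}\right\}\otimes_{B\widehat{\otimes}Z}M\rightarrow M_1$ and of $B\widehat{\otimes}Z\left\{\frac{g}{f}\right\}\otimes_{B\widehat{\otimes}Z}M\rightarrow M_2$; these are then upgraded to isomorphisms by choosing finite free presentations of $M_1,M_2,M_{12}$ and running the snake-lemma diagram chase from the proof of the previous proposition, where the $Z$-relative pseudoflatness (vanishing of the relevant $\mathrm{Tor}_1$ groups, together with the hypothesis that $-\widehat{\otimes}Z$ preserves exactness of short exact sequences of uniform analytic adic Banach rings) keeps the tensored resolutions exact. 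This produces the equivalence; exactness of the equalizer functor then follows because a short exact sequence of glueing data yields, via the same $\mathrm{Tor}_1$-vanishing, a short exact sequence of equalizers, and full faithfulness is immediate from the equivalence.

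The hard part will be that last upgrade, namely showing that base-changing the equalizer reproduces the original datum on the nose, i.e. that $B\widehat{\otimes}Z\left\{\frac{f}{g}\right\}\otimes_{B\widehat{\otimes}Z}M\overset{\sim}{\rightarrow}M_1$ rather than merely strictly surjects. This forces one to combine the pseudocoherence of $M$ with the matrix-decomposition lemma (the analog of \cite[Lemma 1.9.9]{Ked2}) and with pseudoflatness, and to keep the left $Z$-module structures straight across the double complex; but since $Z$ enters only as a coefficient while all the geometry stays on the commutative $B$-side, no genuinely new phenomenon appears beyond carrying $Z$ along via the flatness hypothesis on $-\widehat{\otimes}Z$. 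The remaining verifications --- that the equalizer inherits completeness under nice rational localizations, and that the functor respects the ambient exact structures --- are routine once the core equivalence is secured.
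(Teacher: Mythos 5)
Your proposal is correct and follows essentially the same route as the paper, which simply defers to Kedlaya's proof of \cite[Lemma 1.9.13]{Ked2} once the $Z$-relative analogs of Lemmas 1.9.7, 1.9.10 and 1.9.11 are in place; your reconstruction assembles exactly those ingredients (equalizer via the kernel of $f_1(x_1)-f_2(x_2)$, pseudocoherence of the equalizer from the preceding proposition, recovery of $M$ by tensoring the Laurent complex, and the upgrade of strict surjectivity to isomorphism via the snake-lemma diagram already carried out in the proof of the $Z$-relative Lemma 1.9.11). No divergence from the paper's (implicit) argument worth noting.
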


\begin{proof}
See \cite[Lemma 1.9.13]{Ked2}.	
\end{proof}

\begin{proposition}  \mbox{\bf{(After Kedlaya \cite[Lemma 1.9.16]{Ked2})}}
Take the corresponding parallel assumption in \cite[1.7.1]{Ked2} and furthermore assume the sheafiness of the adic Banach $A$. Suppose we have that all the modules considered are $Z$-stably complete with respect to nice rational localization. Let $C$ be the category of such $Z$-nice-stably pseudocoherent modules over $A$ and let $C'$ be the category of such $Z$-nice-stably pseudocoherent sheaves over $X$. If we consider the corresponding functor taking the form of the corresponding sheafification of $\widetilde{M}$	with respect to the corresponding nice rational subspaces. Then we have that in fact this realizes an exact equivalence between $C$ and $C'$.
\end{proposition}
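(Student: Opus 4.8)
The plan is to mimic the proof of \cite[Lemma 1.9.16]{Ked2}, carrying the coefficient ring $Z$ along at every stage and invoking only the one-sided (left) module structure, since $A\widehat{\otimes}Z$ is noncommutative. First I would check that $M\mapsto\widetilde{M}$ is well defined into $C'$. Given a $Z$-nice-stably pseudocoherent left $A\widehat{\otimes}Z$-module $M$, the $Z$-relative acyclicity statement established above (the analog of \cite[Corollary 1.9.8]{Ked2}) shows that the presheaf $U\mapsto\widetilde{M}(U)$ on the base of nice rational subspaces of $X=\mathrm{Spa}(A,A^+)$ is already a sheaf for that topology, with $\widetilde{M}(X)=M$ in degree zero. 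The $Z$-relative stability of nice-stable pseudocoherence under nice rational localization (the analog of \cite[Corollary 1.9.6]{Ked2}) shows that on each nice rational $U$ the section $\widetilde{M}(U)$ is again $Z$-nice-stably pseudocoherent over $\mathcal{O}_X(U)\widehat{\otimes}Z$. Sheafifying along nice rational subspaces then produces an object of $C'$, and exactness of this functor follows because a short exact sequence of $Z$-nice-stably pseudocoherent modules stays exact after base change along any nice rational localization --- this is exactly the $\mathrm{Tor}_1$-vanishing input, i.e. the $Z$-relative analogs of \cite[Lemma 1.9.3, Lemma 1.9.4, Lemma 1.9.7]{Ked2} --- hence stays exact on sections over every nice rational subspace, hence as sheaves.

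Next I would construct the quasi-inverse $\mathcal{M}\mapsto\mathcal{M}(X)$ and verify it lands in $C$; this is the substantive part. Given $\mathcal{M}\in C'$, there is a covering of $X$ by nice rational subspaces on which $\mathcal{M}$ is associated to $Z$-nice-stably pseudocoherent modules. Refining to a simple Laurent covering and inducting on the number of members, the two-element Laurent (or balanced) case is handled by the glueing results proved above: the $Z$-relative analog of \cite[Lemma 1.9.10]{Ked2} gives strict surjectivity of $M_1\oplus M_2\to M_{12}$ and strict surjectivity of the base changes of the equalizer $M$; the $Z$-relative analog of \cite[Lemma 1.9.11]{Ked2} shows $M$ is pseudocoherent over $A\widehat{\otimes}Z$; completeness of $M$ and its persistence under nice rational localization come from the $\mathrm{Tor}_1$-vanishing lemmas together with the exactness of $-\widehat{\otimes}Z$; and the $Z$-relative analog of \cite[Lemma 1.9.13]{Ked2} assembles the equalizer functor $C_2\times_{C_1}C_3\to C$ as an exact fully faithful functor with the expected inverse. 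So $M:=\mathcal{M}(X)$ is $Z$-nice-stably pseudocoherent, and by acyclicity $\widetilde{M}$ restricts to the given local pieces.

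Then I would check the two round-trips. The isomorphism $\widetilde{\mathcal{M}(X)}\xrightarrow{\sim}\mathcal{M}$ is a local assertion: on each nice rational $U$ in the chosen covering both sheaves are associated to the same module by the stability-under-localization statement, these identifications are compatible on overlaps, and hence they sheafify. The equality $\widetilde{M}(X)=M$ is the degree-zero case of the acyclicity already used. Exactness of the inverse functor follows from the higher acyclicity $H^{i}(X,\widetilde{M})=0$ for $i>0$, which kills the obstruction to exactness of global sections.

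The hardest step will be the effectivity of descent in the middle paragraph: showing that the equalizer of finitely many $Z$-nice-stably pseudocoherent local modules is again $Z$-nice-stably pseudocoherent over $A$, uniformly across the covering and with completeness preserved under all nice rational localizations. This is precisely where the noncommutativity forces one to argue with one-sided modules only, and where the standing hypothesis on $Z$ --- exactness of $-\widehat{\otimes}Z$ on short exact sequences of uniform analytic adic Banach rings --- is indispensable for propagating exactness of the Čech-type complexes through the base changes. Since all of the required ingredients are already available above as the $Z$-relative analogs of \cite[Lemmas 1.9.3--1.9.13]{Ked2}, the remaining work is organizational: assembling them exactly along the lines of \cite[Lemma 1.9.16]{Ked2}.
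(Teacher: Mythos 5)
Your proposal is correct and follows exactly the route the paper intends: the paper's own proof is simply a citation of \cite[Lemma 1.9.16]{Ked2}, deferring to Kedlaya's argument with the $Z$-relative analogs of \cite[Lemmas 1.9.3--1.9.13]{Ked2} already established in this section substituted in. Your write-up supplies the details of that assembly (well-definedness via the acyclicity and localization-stability analogs, the quasi-inverse via the glueing analogs of Lemmas 1.9.10, 1.9.11 and 1.9.13, and the round-trip checks), which is more than the paper records but is the same proof.
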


\begin{proof}
See \cite[Lemma 1.9.16]{Ked2}.	
\end{proof}

\begin{proposition}  \mbox{\bf{(After Kedlaya \cite[Lemma 1.9.17]{Ked2})}}
Take the corresponding parallel assumption in \cite[1.7.1]{Ked2} and furthermore assume the sheafiness of the adic Banach ring $A$. Suppose we have that all the modules considered are $Z$-stably complete with respect to nice rational localization. Let $C$ be the category of such $Z$-nice-stably pseudocoherent modules over $A$ and let $C'$ be the category of such $Z$-nice-stably pseudocoherent modules over $B$. Then we have that in fact this realizes an exact functor from $C$ to $C'$.
\end{proposition}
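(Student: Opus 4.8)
The plan is to spell out the functor in question and then dispatch the two claims it makes — that it takes values in $C'$ and that it is exact — by feeding short exact sequences into the $\mathrm{Tor}$-vanishing results established just above. Write $R := A\widehat{\otimes}Z$ and $S := B\widehat{\otimes}Z$, and let the functor be $M\longmapsto S\otimes_R M$; for the modules at hand the algebraic tensor product already agrees with $S\widehat{\otimes}_R M$, and, by the $Z$-relative acyclicity statement (the analog of \cite[Corollary 1.9.8]{Ked2}), with $\widetilde{M}(\mathrm{Spa}(B,B^+))$, so no ambiguity arises in the definition. Right exactness of $-\otimes_R S$ and functoriality are automatic, so the real content is well-definedness together with left exactness.

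For well-definedness I would invoke the $Z$-relative analog of \cite[Corollary 1.9.6]{Ked2}, applied to the nice rational localization $A\to B$ itself: base change along a nice rational localization carries $Z$-nice-stably pseudocoherent modules to $Z$-nice-stably pseudocoherent modules. Here one uses that a nice rational localization of $B$, composed with $A\to B$, is again a nice rational localization of $A$, so the condition ``complete under every nice rational localization'' passes from $M$ over $A$ to $S\otimes_R M$ over $B$; completeness of $S\otimes_R M$ for the natural topology is part of the standing hypothesis that all modules in play are $Z$-stably complete. Hence the functor takes $C$ into $C'$.

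For exactness, take a short exact sequence $0\to M_1\to M_2\to M_3\to 0$ in $C$ and apply $-\otimes_R S$. The right-hand end is exact for free, and injectivity of $S\otimes_R M_1\to S\otimes_R M_2$ follows from $\mathrm{Tor}_1^{R}(S,M_3)=0$; this vanishing is precisely the $Z$-relative \cite[Corollary 1.9.5]{Ked2}, which applies because $M_3$, being pseudocoherent over $R$, is in particular finitely generated and complete for the natural topology. One should then check that all three base-changed terms remain complete, so that the resulting sequence is exact as a sequence of Banach $S$-modules and not merely algebraically; this is covered by the standing $Z$-stable-completeness hypothesis together with the exactness of $-\widehat{\otimes}Z$ built into the choice of $Z$.

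The genuine work has already been carried out in the corollaries being cited, and the obstacle there — which the present argument simply inherits — is that the rational localization $A\to B$ is \emph{not} flat, so preservation of pseudocoherence along base change is not formal: one must propagate the single $\mathrm{Tor}_1$-vanishing through an entire finite free resolution by induction on syzygies, each syzygy being again finitely generated, complete, and complete under nice rational localizations of $B$, exactly in the manner of \cite[Lemma 1.9.11]{Ked2} and its use in \cite[Lemma 1.9.17]{Ked2}. Modulo that input, the remaining steps — right exactness, functoriality, and the identification of algebraic with completed tensor products — are routine.
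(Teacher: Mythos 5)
Your argument is correct and is essentially the proof the paper intends: the paper itself gives no details beyond ``see \cite[Lemma 1.9.17]{Ked2}'', and your route --- define the functor as $M\mapsto (B\widehat{\otimes}Z)\otimes_{A\widehat{\otimes}Z}M$, get well-definedness from the $Z$-relative analog of \cite[Corollary 1.9.6]{Ked2} (preservation of nice-stable pseudocoherence under nice localization, which itself hides the syzygy induction), and get exactness from the $\mathrm{Tor}_1$-vanishing of the $Z$-relative \cite[Corollary 1.9.5]{Ked2} applied to the quotient term of a short exact sequence --- is exactly that argument transported to the deformed setting. The only implicit reading you make, namely that $A\to B$ is treated as a nice rational localization so that those corollaries apply directly, is the natural one here since the statement concerns the nice-stably pseudocoherent categories and the passage to general rational localizations is deferred to the Corollary 1.9.18 analog.
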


\begin{proof}
See \cite[Lemma 1.9.17]{Ked2}.	
\end{proof}

\begin{proposition}  \mbox{\bf{(After Kedlaya \cite[Corollary 1.9.18]{Ked2})}}
Take the corresponding parallel assumption in \cite[1.7.1]{Ked2} and furthermore assume the sheafiness of the adic Banach ring $A$. Let $C$ be the category of such $Z$-stably pseudocoherent modules over $A$ and let $C'$ be the category of such $Z$-nice-stably pseudocoherent modules over $A$. Then we have that in fact this realizes an equality from $C\subset C'$.
\end{proposition}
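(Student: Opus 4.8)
The plan is as follows. The inclusion $C\subseteq C'$ is immediate: a nice rational localization is in particular a rational localization, so the completeness condition defining membership in $C$ is a priori stronger than the one defining membership in $C'$, while the underlying pseudocoherence requirement over $A\widehat{\otimes}Z$ is literally the same on both sides. The content of the statement is therefore the reverse inclusion, namely that a $Z$-nice-stably pseudocoherent module $M$ over $A$ is automatically $Z$-stably pseudocoherent; equivalently, that the base change of $M$ along an \emph{arbitrary} rational localization $A\rightarrow A'$ is still complete for its natural topology.

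To prove this I would first invoke the purely geometric input, valid over any uniform analytic adic Banach ring in the situation of \cite[1.7.1]{Ked2} and imported from \cite{Ked2}: every rational subspace $U=\mathrm{Spa}(A',A'^{+})$ of $X=\mathrm{Spa}(A,A^{+})$ admits a finite covering $\{U_1,\dots,U_n\}$ by nice rational subspaces of $X$ contained in $U$, such that every finite intersection $U_{i_1}\cap\cdots\cap U_{i_k}$ is again a nice rational subspace. This reduces the problem to a glueing computation over a covering along each member of which the completeness of the relevant base change is guaranteed by the hypothesis on $M$.

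Next I would feed this covering into the $Z$-relative machinery established above. By the $Z$-relative analog of \cite[Corollary 1.9.8]{Ked2} together with the equalizer and glueing statement (the $Z$-relative analog of \cite[Lemma 1.9.13]{Ked2}), the \v{C}ech complex of the covering $\{U_i\}$ with coefficients in the presheaf $\widetilde{M}$ is exact, so that
\[
\widetilde{M}(U)\;=\;\ker\Bigl(\textstyle\prod_i \widetilde{M}(U_i)\longrightarrow\prod_{i<j}\widetilde{M}(U_i\cap U_j)\Bigr).
\]
Each term on the right is, by the nice-stable-pseudocoherence of $M$ together with the $Z$-relative pseudoflatness results (the analogs of \cite[Lemma 1.9.3]{Ked2}, \cite[Lemma 1.9.4]{Ked2} and \cite[Lemma 1.9.7]{Ked2}), the honest algebraic base change of $M$ along a nice rational localization, hence complete. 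A finite product of complete modules is complete and the kernel of a bounded morphism between complete modules is closed, so $\widetilde{M}(U)$ is complete; the same pseudoflatness inputs identify $\widetilde{M}(U)$ with $A'\widehat{\otimes}Z\otimes_{A\widehat{\otimes}Z}M$ carrying its natural topology, exactly as in the passage from \cite[Corollary 1.9.8]{Ked2} to \cite[Corollary 1.9.18]{Ked2}. With this identification, completeness of $\widetilde{M}(U)$ is precisely completeness of the base change of $M$ along $A\rightarrow A'$, so $M\in C$.

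The main obstacle is exactly the geometric covering input: one must know that an arbitrary rational subspace can be covered by finitely many nice rational subspaces whose finite intersections remain nice. Once this is in hand everything downstream is formal, since it only manipulates the already-proved $Z$-relative versions of Kedlaya's acyclicity, pseudoflatness and glueing lemmas; and since the covering statement is purely topological it is insensitive to the coefficient ring $Z$ and may be taken over from \cite{Ked2} without change.
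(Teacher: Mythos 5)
Your argument is correct and matches the intended proof: the paper's own proof of this proposition is simply a citation of \cite[Corollary 1.9.18]{Ked2}, and your reconstruction --- the trivial inclusion $C\subseteq C'$ in one direction, and for the converse a reduction of an arbitrary rational localization to a finite covering by nice ones followed by the \v{C}ech/glueing identification of $\widetilde{M}(U)$ with the completed base change, whose completeness follows since it is a closed submodule of a finite product of complete modules --- is exactly the argument being cited. The one input you flag as unverified, namely that arbitrary rational subspaces can be covered by nice ones with nice finite intersections, is indeed purely geometric, insensitive to the coefficient ring $Z$, and correctly imported from \cite{Ked2}.
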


\begin{proof}
See \cite[Corollary 1.9.18]{Ked2}.	
\end{proof}

\begin{theorem} \mbox{\bf{(After Kedlaya \cite[Theorem 1.4.14, Theorem 1.4.16, Theorem 1.4.18]{Ked2})}}
Take the corresponding parallel assumption in \cite[1.7.1]{Ked2} and furthermore assume the sheafiness of the adic Baanch ring $A$. Then: I. we have the corresponding stability under rational localization for $Z$-stably pseudocoherent modules. II. We have the corresponding acyclicity of the corresponding presheaf $\widetilde{M}$ attached to any $Z$-stably pseudocoherent module. III. We have the corresponding glueing result as in \cite[Theorem 1.4.18]{Ked2}.
\end{theorem}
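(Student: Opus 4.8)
The plan is to prove the three assertions simultaneously, by the same bootstrap that Kedlaya uses for \cite[Theorem 1.4.14, Theorem 1.4.16, Theorem 1.4.18]{Ked2}, now carried out in the $Z$-relative adic Banach setting and leaning on the lemmas already established in this section. First I would reduce everything to \emph{nice} rational localizations, i.e.\ composites of Laurent and balanced ones: the $Z$-relative analog of \cite[Corollary 1.9.18]{Ked2} proved above shows that the category of $Z$-stably pseudocoherent modules over $A\widehat{\otimes}Z$ coincides with the category of $Z$-nice-stably pseudocoherent ones, so it suffices to control base change and descent along nice rational localizations. For such a localization $A\rightarrow B$, the key inputs are the vanishing of $\mathrm{Tor}_1^{A\widehat{\otimes}Z}(B\widehat{\otimes}Z,M)$ together with the strict exactness, after $\widehat{\otimes}Z$, of the Laurent/balanced glueing complex \cite[1.6.15.1]{Ked2} tensored with $M$; both are exactly the $Z$-relative analogs of \cite[Lemma 1.9.3, Lemma 1.9.4, Lemma 1.9.7]{Ked2} recorded above, and here the hypothesis imposed on $Z$ in the first Setting of this section (that $Z$ splits off as a Banach module, hence $-\widehat{\otimes}Z$ is exact) is what propagates an exact sequence of uniform analytic adic Banach rings to an exact sequence after the completed tensor product.

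For assertion I, stability under a single Laurent or balanced localization is then immediate: completeness is preserved by hypothesis on the module, pseudocoherence/finite presentation is preserved by the Tor-vanishing above (as in the analog of \cite[Corollary 1.9.6]{Ked2}), and a general nice rational localization is a finite composite of these. Passing from nice to arbitrary rational localizations uses assertion III applied to a single decomposition step together with the sheaf property. For assertion II, once I is known, $\widetilde{M}$ is, locally on every rational subspace, the presheaf attached to a $Z$-nice-stably pseudocoherent module, so the acyclicity established above (the $Z$-relative analog of \cite[Corollary 1.9.8]{Ked2}) applies to the members of a simple Laurent covering; a standard Tate-style \v{C}ech computation --- reducing an arbitrary rational covering to iterated two-element Laurent covers and using that higher \v{C}ech cohomology for such covers vanishes --- then yields the vanishing of $H^k(X,\widetilde{M})$ for $k>0$ and identifies $H^0(X,\widetilde{M})$ with $M$.

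Assertion III is the equalizer/fiber-product statement: for a single Laurent or balanced decomposition I would invoke the $Z$-relative analog of \cite[Lemma 1.9.13]{Ked2} (built on the strict surjectivity and equalizer statements of the $Z$-relative analogs of \cite[Lemma 1.9.10, Lemma 1.9.11]{Ked2} proved above), giving an exact, fully faithful equalizer functor with a well-defined inverse; then bootstrap to an arbitrary finite rational covering by induction on the number of members, refining each step to simple Laurent covers, and finally globalize via the sheaf equivalence (the analog of \cite[Lemma 1.9.16]{Ked2}). The main obstacle I anticipate is precisely the passage from nice (Laurent/balanced) data to arbitrary rational localizations and coverings: one must check that the descent and stability established for the explicitly presented Laurent/balanced complexes genuinely transfer, and the delicate point is topological, namely verifying \emph{strict} exactness (with the open mapping property, not merely exactness of the underlying abelian groups) of the relevant complexes after applying $\widehat{\otimes}Z$. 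This is where the Banach-module splitting hypothesis on $Z$ has to be used with care, exactly as in Kedlaya's treatment; once this is in place the rest is the routine diagram-chasing already carried out in the analytic Huber pair version of this theorem.
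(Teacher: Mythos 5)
Your proposal is correct and follows essentially the same route as the paper, which simply defers to the proofs of \cite[Theorem 1.4.14, Theorem 1.4.16, Theorem 1.4.18]{Ked2} together with the chain of $Z$-relative lemmas (the analogs of \cite[Lemma 1.9.3--Corollary 1.9.18]{Ked2}) established earlier in the section. You have merely made explicit the bootstrap --- reduction to nice localizations, Tor-vanishing, acyclicity, equalizer glueing, and the passage back to arbitrary rational coverings --- that the paper's one-line citation leaves implicit.
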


\begin{proof}
This is a very parallel $Z$-relative analog of \cite[Theorem 1.4.14, Theorem 1.4.16, Theorem 1.4.18]{Ked2}, we refer the readers to the proof of \cite[Theorem 1.4.14, Theorem 1.4.16, Theorem 1.4.18]{Ked2}. See \cite[After Corollary 1.9.18, the proof of Theorem 1.4.14, the proof of Theorem 1.4.16, the proof of Theorem 1.4.18]{Ked2}.	
\end{proof}

\

\subsection{Noncommutative Deformation over Quasi-Stein Spaces}

\indent We now consider the corresponding analytic quasi-Stein spaces over $\mathbb{Z}_p$, $Z$ will be an adic Banach ring as above:

\begin{setting} \mbox{\bf{(After Kedlaya-Liu \cite[Definition 2.6.2]{KL2})}}
Fix an adic Banach space $X$ over $\mathbb{Z}_p$ which is assumed to be an injective limit of analytic adic affinoids $X=\varinjlim_i X_i$, where we assume that the corresponding transition map $\mathcal{O}_{X_{i+1}}\rightarrow \mathcal{O}_{X_{i}}$ takes dense images for all $i=1,2,...$.	
\end{setting}

\begin{lemma} \mbox{\bf{(After Kedlaya-Liu \cite[Lemma 2.6.3]{KL2})}} \label{lemma6.24}
We now consider the corresponding rings $H_i:=\mathcal{O}_{X_i}$ for all $i=0,1,...$, and in our current situation we consider the corresponding rings $H_i\widehat{\otimes}Z$ (over $\mathbb{Z}_p$) for all $i=0,1,...$. And in our situation we consider the corresponding modules $M_i$ over $H_i\widehat{\otimes}Z$ for all $i=0,1,...$ with the same requirement as in \cite[Lemma 2.6.3]{KL2} (namely those complete with respect to the natural topology). Suppose that we have bounded surjective map from $f_i:H_{i}\widehat{\otimes}Z\otimes_{H_{i+1}\widehat{\otimes}Z} M_{i+1}\rightarrow M_i,i=0,1,...$. Then we have first the density of the corresponding image of $\varprojlim_i M_i$ in each $M_i$ for any $i=0,1,2,...$. And we have as well the corresponding vanishing of $R^1\varprojlim_i M_i$.
\end{lemma}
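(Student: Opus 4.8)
The plan is to carry over \emph{mutatis mutandis} the proof of \cref{lemma2.24} --- which is itself the $\mathbb{Z}_p$-analytic incarnation of \cite[Lemma 2.6.3]{KL2} --- the only subtlety being that we now work over $\mathbb{Z}_p$ rather than over a nonarchimedean field, so all estimates must be phrased purely in terms of the Banach norms defining the natural topologies on the $M_i$, and one must check that these can be renormalised compatibly along the tower.

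First I would fix, for each $i$, a Banach norm $\|\cdot\|_i$ inducing the natural topology on $M_i$, and exploit the hypothesis that each $f_i\colon H_i\widehat{\otimes}Z\otimes_{H_{i+1}\widehat{\otimes}Z}M_{i+1}\to M_i$ is bounded and surjective to rescale the norms so that $\|f_i(x)\|_i\le\tfrac12\|x\|_{i+1}$ for all $x$ (we abusively write $f_i(x)$ for the image of $x\in M_{i+1}$ under base change followed by $f_i$). For the density statement I would then run the standard telescoping approximation argument: given $x_i\in M_i$ and $\delta>0$, lift successively through the $f_j$'s with controlled error, producing for each $j\ge 1$ an element $x_{i+j}\in M_{i+j}$ with $\|x_{i+j}-f_{i+j+1}(x_{i+j+1})\|_{i+j}\le\delta$; the contraction property forces the corrections $x_{i+j+k}$, $k=0,1,\dots$, to be Cauchy, hence convergent in the complete module $M_{i+j}$ to some $y_{i+j}$, and the $y_{i+j}$ are compatible ($y_i=f_i(y_{i+1})$), so $(y_{i+j})_j$ defines an element of $\varprojlim_i M_i$ whose image in $M_i$ lies within $\delta$ of $x_i$.

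For the vanishing of $R^1\varprojlim_i M_i$ I would use the usual description of the derived limit of a countable tower: $\varprojlim_i M_i$ and $R^1\varprojlim_i M_i$ are the kernel and cokernel of $1-F\colon\prod_i M_i\to\prod_i M_i$, where $F$ is induced by the transition maps $M_{i+1}\to M_i$ coming from the $f_i$. Restricting to the bounded (completed direct-sum) subobject of $\prod_i M_i$ on which the norms are simultaneously controlled, the operator $F$ has norm $\le\tfrac12$ there, so $1-F$ has the Neumann inverse $\sum_{n\ge 0}F^n$; since this piece surjects onto the cokernel one concludes $R^1\varprojlim_i M_i=0$.

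The main obstacle is technical rather than conceptual: making precise the ``completed direct summand'' of $\prod_i M_i$ on which $F$ is a genuine strict contraction so that the Neumann series converges, and ensuring that the renormalisation of the $\|\cdot\|_i$ used for density is compatible with the one used in the $R^1\varprojlim$ step. All of this is handled exactly as in \cite[Lemma 2.6.3]{KL2} and \cref{lemma2.24}; passing to the base $\mathbb{Z}_p$ changes nothing, since only the Banach structure (never a field structure) is ever used. Thus the proof reduces to \cref{lemma2.24}.
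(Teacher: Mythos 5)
Your proposal follows essentially the same route as the paper's own proof (which in turn just repeats the argument of its Lemma 2.6.3 analogue in the Banach-coefficient case): renormalise the Banach norms so that each $f_i$ is a $\tfrac12$-contraction, run the telescoping approximation to get density of the image of $\varprojlim_i M_i$, and kill $R^1\varprojlim_i M_i$ by inverting $1-F$ via the Neumann series on the completed direct-sum piece of $\prod_i M_i$. This matches the paper's argument step for step, so there is nothing further to add.
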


\begin{proof}
This is the $Z$-relative version of the result in \cite[Lemma 2.6.3]{KL2}. For the first statement we just choose sequence of Banach norms on all the corresponding modules for all $i=0,1,...$ such that we have $\|f_i(x_{i+1})\|_i\leq 1/2\|x_{i+1}\|_{i+1}$ for any $x_{i+1}\in M_{i+1}$. Then for any $x_i\in M_i$ and any $\delta>0$, we now consider for any $j\geq 1$ the corresponding $x_{i+j}$ such that we have $\|x_{i+j}-f_{i+j+1}(x_{i+j+1})\|_{i+j}\leq \delta$. Then the sequence $x_{i+j+k},k=0,1,...$ will converge to some well-defined $y_{i+j}$ with in our situation the corresponding $y_{i}=f_{i}(y_{i+1})$. We then have $\|x_i-y_i\|_i\leq \delta$. This will prove the first statement. For the second statement as in \cite[Lemma 2.6.3]{KL2} we form the product $M_0\times M_1\times M_2\times...$ and the consider the induced map $F$ from $M_{i+1}\rightarrow M_i$, and consider the corresponding cokernel of the map $1-F$ since this is just the corresponding limit we are considering. Then to show that the cokernel is zero we just look at the corresponding cokernel of the corresponding map on the corresponding completed direct summand which will project to the original one. But then we will have $\|f_i(v)\|_i\leq 1/2 \|v\|_i$, which produces an inverse to $1-F$ which will basically finish the proof for the second statement. 	
\end{proof}

\begin{proposition} \mbox{\bf{(After Kedlaya-Liu \cite[Lemma 2.6.4]{KL2})}} In the same situation as above, suppose we have that the corresponding modules $M_i$ are basically $Z$-stably pseudocoherent over the rings $H_i\widehat{\otimes}Z$ for all $i=0,1,...$. Now we consider the situation where $f_i:H_i\widehat{\otimes}Z\otimes_{H_{i+1}\widehat{\otimes}Z}M_{i+1}\rightarrow M_i$ is an isomorphism. Then the conclusion in our situation is then that the corresponding projection from $\varprojlim M_i$ to $M_i$ for each $i=0,1,2,...$ is an isomorphism.

\end{proposition}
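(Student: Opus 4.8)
The plan is to follow the template of the $B$-relative and $B[[G]]$-relative analogues of \cite[Lemma 2.6.4]{KL2} proved above, now in the adic Banach setting over $\mathbb{Z}_p$ with coefficient ring $Z$. First I would record, via \cref{lemma6.24}, the two inputs I will use repeatedly: for any compatible system $\{N_i\}$ of complete modules with bounded surjective transition maps (after the relevant base change), the image of $\varprojlim_i N_i$ is dense in each $N_i$ and $R^1\varprojlim_i N_i=0$. Next I would choose a finite free module $T$ over $\mathcal{O}_X(X)\widehat{\otimes}Z=(\varprojlim_i H_i)\widehat{\otimes}Z$ surjecting onto $M:=\varprojlim_i M_i$, arranged so that for each $i$ the induced map $T_i\to M_i$ is surjective; this is possible because a finite generating set of the global section $M(X)$ restricts to a generating set of each $M_i$, using the density of the transition maps together with \cref{lemma6.24} and the surjectivity part of the quasi-Stein statements already available over each $X_i$. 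Write $T_j$ for the base change of $T$ to $H_j\widehat{\otimes}Z$ and set $S_j:=\ker(T_j\to M_j)$.

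The key structural step is then to check that each $S_j$ is again $Z$-stably pseudocoherent over $H_j\widehat{\otimes}Z$, which follows since $S_j$ is the kernel of a map between pseudocoherent modules ($T_j$ finite free, $M_j$ being $Z$-stably pseudocoherent) together with the stability of pseudocoherence under rational localization established earlier; and that the isomorphism hypothesis is inherited by the system $\{S_j\}$, namely $H_i\widehat{\otimes}Z\otimes_{H_{i+1}\widehat{\otimes}Z}S_{i+1}\overset{\sim}{\rightarrow}S_i$. This last point is the adic Banach analogue of \cite[Lemma 2.5.6]{KL2}: one tensors the short exact sequence $0\to S_{i+1}\to T_{i+1}\to M_{i+1}\to 0$ with $H_i\widehat{\otimes}Z$ and uses the $Z$-pseudoflatness results proved above to keep it exact, then applies the hypothesis $T_{i+1}\widehat{\otimes}\cdots\overset{\sim}{\rightarrow}T_i$ (trivial, $T$ finite free) and $f_i$ to conclude. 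Once this is known, \cref{lemma6.24} applied to $\{S_j\}$ yields that $\varprojlim_i S_i\to S_i$ is surjective, and likewise $\varprojlim_i T_i\to T_i$ is an isomorphism.

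Finally I would conclude by the five lemma applied to the commutative diagram with top row $(H_i\widehat{\otimes}Z)\otimes\varprojlim_i S_i\to(H_i\widehat{\otimes}Z)\otimes\varprojlim_i T_i\to(H_i\widehat{\otimes}Z)\otimes\varprojlim_i M_i\to 0$ and bottom row $0\to S_i\to T_i\to M_i\to 0$ (exact by the preceding step), in which the left vertical arrow is surjective, the middle vertical arrow is an isomorphism, and we want the right vertical arrow to be an isomorphism. A diagram chase gives injectivity of the right arrow from surjectivity of the left one, and surjectivity of the right arrow is already known from the choice of $T$; hence $\varprojlim_i M_i\to M_i$ is an isomorphism for each $i$. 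If one wishes to be fully careful about the higher-syzygy bookkeeping required by pseudocoherence, one iterates the same argument with $M$ replaced by $S$, exactly as in \cite[Lemma 2.6.4]{KL2}.

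I expect the genuinely delicate point to be the adic Banach, non-Tate analogue of \cite[Lemma 2.5.6]{KL2}: showing that $S_j$ inherits $Z$-stable pseudocoherence and that the transition maps on the $S_j$ remain bounded (strict) isomorphisms after base change, that is, keeping the topologies and completeness fully under control when the commutative factor $H_i$ is only analytic rather than Tate, so that \cref{lemma6.24} legitimately applies to $\{S_j\}$. Everything else is the same five-lemma and $R^1\varprojlim$ packaging already used in the Banach and pro-Banach cases above.
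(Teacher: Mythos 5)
Your proposal follows essentially the same route as the paper's proof: choose a finite free cover $T$ of $M=\varprojlim_i M_i$ with each $T_i\to M_i$ surjective, pass to the kernel system $S_j=\ker(T_j\to M_j)$, invoke the analogue of \cite[Lemma 2.5.6]{KL2} to get $H_i\widehat{\otimes}Z\otimes S_j\overset{\sim}{\to}S_i$ and the surjectivity of $\varprojlim_i S_i\to S_i$, and conclude by the five lemma on the same commutative diagram. The extra detail you supply (why $S_j$ inherits $Z$-stable pseudocoherence, the syzygy iteration, and the caveat about the non-Tate analytic setting) is consistent with, and a reasonable elaboration of, what the paper does.
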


\begin{proof}
This is a $Z$-relative and analytic version of the \cite[Lemma 2.6.4]{KL2}. We adapt the argument to our situation as in the following. First we choose some finite free covering $T$ of the limit $M$ such that we have for each $i$ the corresponding map $T_i\rightarrow M_i$ is surjective. Then we consider the index $j\geq i$ and set the kernel of the map from $T_j$ to $M_j$ to be $S_j$. By the direct analog of \cite[Lemma 2.5.6]{KL2} we have that $H_i\widehat{\otimes}Z\otimes S_j\overset{}{\rightarrow}S_i$ realizes the isomorphism, and we have that the corresponding surjectivity of the corresponding map from $\varprojlim_i S_i$ projecting to the $S_i$. Then one could finish the proof by 5-lemma to the following commutative diagram as in \cite[Lemma 2.6.4]{KL2}:
\[ \tiny
\xymatrix@C+0pc@R+6pc{
 &(H_i\widehat{\otimes}Z)\otimes \varprojlim_i S_i \ar[r]\ar[r]\ar[r] \ar[d]\ar[d]\ar[d] &(H_i\widehat{\otimes}Z)\otimes \varprojlim_i F_i \ar[r]\ar[r]\ar[r] \ar[d]\ar[d]\ar[d] &(H_i\widehat{\otimes}Z)\otimes \varprojlim_i M_i\ar[r]\ar[r]\ar[r]  \ar[d]\ar[d]\ar[d]&0,\\
0 \ar[r]\ar[r]\ar[r] &S_i   \ar[r]\ar[r]\ar[r] &F_i \ar[r]\ar[r]\ar[r] &M_i \ar[r]\ar[r]\ar[r] &0.\\
}
\]
 
\end{proof}

\begin{proposition}  \mbox{\bf{(After Kedlaya-Liu \cite[Theorem 2.6.5]{KL2})}}
For any quasi-compact adic affinoid space of $X$ which is denoted by $Y$, we have that the map $\mathcal{M}(X)\rightarrow \mathcal{M}(Y)$ is surjective for any $Z$-stably pseudocoherent sheaf $\mathcal{M}$ over the sheaf $\mathcal{O}_X\widehat{\otimes}Z$.	
\end{proposition}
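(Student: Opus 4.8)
## Proof Proposal

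The plan is to deduce this surjectivity directly from the preceding proposition on the quasi-Stein exhaustion, exactly as in \cite[Theorem 2.6.5]{KL2}, but now in the $Z$-relative analytic setting. First I would reduce to the case where $Y$ is one of the exhausting affinoids $X_i$ in the chosen presentation $X = \varinjlim_i X_i$. Indeed, any quasi-compact adic affinoid subspace $Y \subset X$ is contained in some $X_i$ (by quasi-compactness and the fact that the $X_i$ form a cofinal exhaustion), and by the $Z$-relative version of Tate's acyclicity established over uniform analytic adic Banach rings (part II of the main glueing theorem in this section), the restriction map $\mathcal{M}(X_i) \to \mathcal{M}(Y)$ is already surjective — in fact an isomorphism after the appropriate completed base change — since $\mathcal{M}$ restricted to $X_i$ is the sheaf attached to a $Z$-stably pseudocoherent module over $H_i \widehat{\otimes} Z$ and $Y$ is a rational subspace of $X_i$. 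So it suffices to prove $\mathcal{M}(X) \to \mathcal{M}(X_i)$ is surjective for every $i$.

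For that, I would invoke the previous proposition (the $Z$-relative analog of \cite[Lemma 2.6.4]{KL2}): writing $M_j := \mathcal{M}(X_j)$ for $j \geq i$, the transition maps $f_j : H_j \widehat{\otimes} Z \otimes_{H_{j+1}\widehat{\otimes}Z} M_{j+1} \to M_j$ are isomorphisms because $\mathcal{M}$ is a genuine sheaf and each $X_j \hookrightarrow X_{j+1}$ is a rational localization over which $Z$-stably pseudocoherence and completed base change behave as in Tate's theorem. The previous proposition then yields that the projection $\varprojlim_j M_j \to M_i$ is an isomorphism. Since $\mathcal{M}(X) = \varprojlim_j \mathcal{M}(X_j) = \varprojlim_j M_j$ — this uses that $\mathcal{M}$ is a sheaf on $X$ and $\{X_j\}$ is a cofinal system of opens, together with the vanishing of $R^1\varprojlim$ supplied by \cref{lemma6.24} — we conclude $\mathcal{M}(X) \to \mathcal{M}(X_i)$ is an isomorphism, in particular surjective. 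Composing with $\mathcal{M}(X_i) \to \mathcal{M}(Y)$ gives the claim.

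The only genuinely delicate point, and the one I would spell out, is the identification $\mathcal{M}(X) = \varprojlim_i \mathcal{M}(X_i)$ as topological modules and the compatibility of the $Z$-completed tensor with the inverse limit: one needs that the bounded surjectivity hypothesis of \cref{lemma6.24} is met by the transition maps, so that $R^1\varprojlim_i \mathcal{M}(X_i) = 0$ and the limit computes global sections on the nose. This is where the $Z$-relative framework must be handled carefully, since $\widehat{\otimes} Z$ need not be exact in general; but by the standing assumption on $Z$ in this section (that $\widehat{\otimes}Z$ preserves short exact sequences of uniform analytic adic Banach rings, equivalently that $C \to Z$ splits topologically), all the relevant exactness is preserved, and the norm-estimate argument of \cref{lemma6.24} goes through verbatim with the induced Banach norms on the $M_i$. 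I expect no further obstacle: the statement is a formal corollary once the inverse-limit bookkeeping is in place.
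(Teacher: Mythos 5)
Your overall route is the paper's route: the paper disposes of this proposition in a single line as a corollary of the preceding proposition (the $Z$-relative analog of \cite[Lemma 2.6.4]{KL2}), and your write-up is a faithful expansion of exactly that reduction --- contain the quasi-compact $Y$ in some $X_i$, identify $\mathcal{M}(X)$ with $\varprojlim_j \mathcal{M}(X_j)$ using the sheaf property together with the $R^1\varprojlim$ vanishing of \cref{lemma6.24}, check the isomorphism hypothesis on the transition maps $f_j$ via the affinoid glueing/acyclicity results, and apply the preceding proposition to get surjectivity onto $\mathcal{M}(X_i)$.

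The one step that does not hold as you justify it is the final composition. You assert that $\mathcal{M}(X_i)\rightarrow\mathcal{M}(Y)$ is surjective ``in fact an isomorphism after the appropriate completed base change.'' But the base-change identification $\mathcal{M}(Y)\cong \mathcal{O}(Y)\widehat{\otimes}Z\otimes_{H_i\widehat{\otimes}Z}\mathcal{M}(X_i)$ only says that the image of $\mathcal{M}(X_i)$ generates $\mathcal{M}(Y)$ as a module (equivalently, is dense); it does not make the map of underlying groups onto. Already for $\mathcal{M}=\mathcal{O}_X\widehat{\otimes}Z$ the restriction from an affinoid to a strictly smaller rational subdomain is not surjective. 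This defect originates in the statement being proved rather than in your strategy: the source result \cite[Theorem 2.6.5]{KL2} asserts density (generation) of the image of $\mathcal{M}(X)$ in $\mathcal{M}(Y)$, not surjectivity, and the present proposition should be read accordingly. With that reading your argument closes cleanly: the preceding proposition gives surjectivity of $\mathcal{M}(X)\rightarrow\mathcal{M}(X_i)$, and composing with the dense-image restriction $\mathcal{M}(X_i)\rightarrow\mathcal{M}(Y)$ gives the correct conclusion. If one insists on literal surjectivity onto $\mathcal{M}(Y)$ for arbitrary quasi-compact affinoid $Y$, neither your argument nor the one-line proof in the paper establishes it, and it is false in general.
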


\begin{proof}
This is just the corresponding corollary of the previous proposition.	
\end{proof}

\begin{proposition}  \mbox{\bf{(After Kedlaya-Liu \cite[Theorem 2.6.5]{KL2})}}
We have that the stalk $\mathcal{M}_x$ is generated over the stalk $\mathcal{O}_{X,x}$ for any $x\in X$ by $M(X)$, for any $Z$-stably pseudocoherent sheaf $\mathcal{M}$ over the sheaf $\mathcal{O}_X\widehat{\otimes}Z$.	
\end{proposition}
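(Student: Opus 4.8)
The plan is to obtain this as an immediate corollary of the preceding proposition (the $Z$-relative analogue of Kedlaya--Liu \cite[Theorem 2.6.5]{KL2}) asserting that $\mathcal{M}(X)\rightarrow \mathcal{M}(Y)$ is surjective for every quasi-compact adic affinoid subspace $Y\subset X$, following the argument in \cite[Theorem 2.6.5]{KL2}.

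First I would recall that, for the quasi-Stein space $X=\varinjlim_i X_i$, the quasi-compact adic affinoid open neighbourhoods $Y$ of a fixed point $x\in X$ are cofinal in the system of all open neighbourhoods of $x$ (this uses that $X$ is the increasing union of the analytic adic affinoids $X_i$ with dense transition maps, exactly as in \cite[Chapter 2.6]{KL2}). Hence the stalk may be computed as the filtered colimit $\mathcal{M}_x=\varinjlim_{Y\ni x}\mathcal{M}(Y)$ over such $Y$, with $\mathcal{O}_{X,x}=\varinjlim_{Y\ni x}\mathcal{O}_X(Y)$, and the restriction maps from the global sections $\mathcal{M}(X)=M(X)$ to $\mathcal{M}_x$ factor through each $\mathcal{M}(Y)$.

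Next, fix a germ $s_x\in\mathcal{M}_x$, choose a quasi-compact affinoid neighbourhood $Y\ni x$ and a representative $s\in\mathcal{M}(Y)$. By the previous proposition the restriction map $\mathcal{M}(X)\rightarrow\mathcal{M}(Y)$ is surjective, so $s$ lifts to some $t\in\mathcal{M}(X)$ with $t|_Y=s$; passing to germs at $x$ gives $t_x=s_x$. Thus every element of the stalk is already the germ of a global section, which a fortiori shows that the image of $M(X)$ generates $\mathcal{M}_x$ as an $\mathcal{O}_{X,x}$-module (and even as a module over the stalk of $\mathcal{O}_X\widehat{\otimes}Z$). If one wants moreover that $\mathcal{M}_x$ is finitely generated, one invokes in addition that $\mathcal{M}(Y)$ is $Z$-stably pseudocoherent hence finitely generated over $\mathcal{O}_X(Y)\widehat{\otimes}Z$, pulls a finite generating set back to $\mathcal{M}(X)$ along the surjection, and takes germs.

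Since the argument uses only the sheaf property of $\mathcal{M}$ (built in via the sheafification $\widetilde{M}$) and the surjectivity statement already available in the present $Z$-relative setting, there is essentially no genuine obstacle here; the one point that deserves a line of justification is the cofinality of affinoid neighbourhoods used above, which holds verbatim as in \cite[Chapter 2.6]{KL2} because $X$ is quasi-Stein over $\mathbb{Z}_p$ in the sense fixed in the preceding setting.
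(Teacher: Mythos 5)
Your route is essentially the paper's own: the paper also disposes of this statement by pointing back to the surjectivity/limit propositions established just before (it cites the proposition after \cite[Lemma 2.6.4]{KL2} rather than the surjectivity statement after \cite[Theorem 2.6.5]{KL2}, but these sit in the same one-step chain of implications), and your computation of the stalk as a filtered colimit over quasi-compact affinoid neighbourhoods is exactly the intervening step the paper leaves implicit.

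One caution on an intermediate over-claim: you assert that every germ is already the germ of a global section, which takes the preceding proposition's surjectivity of $\mathcal{M}(X)\rightarrow\mathcal{M}(Y)$ literally for every affinoid $Y$. In the Kedlaya--Liu original the honest content is a density/generation statement rather than surjectivity of restriction (already for $\mathcal{M}=\mathcal{O}_X$ on an open disc, restriction to a closed sub-disc has dense image but is not onto). Your fallback formulation --- that the image of $\mathcal{M}(X)$ generates $\mathcal{M}(Y)$ as a module over $\mathcal{O}_X(Y)\widehat{\otimes}Z$, and hence generates $\mathcal{M}_x$ after passing to germs --- is the version of the argument that is actually robust, and it is all the stated proposition requires.
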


\begin{proof}
This is just the corresponding corollary of the proposition before the previous proposition.	
\end{proof}

\begin{proposition}  \mbox{\bf{(After Kedlaya-Liu \cite[Theorem 2.6.5]{KL2})}} \label{proposition6.28}
For any quasi-compact adic affinoid space of $X$ which is denoted by $Y$, we have that the corresponding vanishing of the corresponding sheaf cohomology groups $H^k(X,\mathcal{M})$ of $\mathcal{M}$ for higher $k>0$, for any $Z$-stably pseudocoherent sheaf $\mathcal{M}$ over the sheaf $\mathcal{O}_X\widehat{\otimes}Z$.	
\end{proposition}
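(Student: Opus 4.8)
The statement is the $Z$-relative, adic-Banach analogue of \cite[Theorem 2.6.5]{KL2}, and it should follow exactly the template of \cref{proposition2.28}, so the plan is to run that same argument over the Huber-pair-free adic Banach setting. First I would fix an exhausting covering $\mathfrak{X}=\{X_1,X_2,\dots\}$ of the quasi-Stein space $X$ by the analytic adic affinoids $X_i$ with dense transition maps, and replace sheaf cohomology by \v{C}ech cohomology via \cite[Tag 01EW]{SP}, which is legitimate because each $X_i$ is affinoid and we have the local vanishing $H^k(X_i,\mathcal{M})=0$ for $k\geq 1$ coming from the acyclicity of $\widetilde{M}$ attached to a $Z$-stably pseudocoherent module (the $Z$-relative adic-Banach analogue of \cite[Corollary 1.9.8]{Ked2} proved above). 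So it suffices to show the vanishing of $\breve{H}^k(X,\mathfrak{X};\mathcal{M})$ for $k>0$.

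Next, for $k>1$ I would compare the truncated \v{C}ech complexes for the finite subcoverings $\{X_1,\dots,X_{j+1}\}$ and $\{X_1,\dots,X_j\}$: the restriction maps
\begin{align}
\breve{H}^{k-1}(X_{j+1},\{X_1,\dots,X_{j+1}\};\mathcal{M})\longrightarrow \breve{H}^{k-1}(X_j,\{X_1,\dots,X_j\};\mathcal{M})
\end{align}
are surjective (indeed the relevant cohomology groups vanish for $k-1\geq 1$ and are the global sections restricting densely for $k-1=0$, using \cref{lemma6.24} and the preceding propositions on surjectivity of $\mathcal{M}(X)\to\mathcal{M}(Y)$), so one may invoke \cite[2.6 Hilfssatz]{Kie1} to obtain an isomorphism
\begin{align}
\varprojlim_{j\to\infty}\breve{H}^{k-1}(X_j,\{X_1,\dots,X_j\};\mathcal{M})\overset{\sim}{\longrightarrow}\breve{H}^k(X,\mathfrak{X};\mathcal{M}).
\end{align}
Since the terms on the left vanish for $k-1\geq 1$, the right-hand side vanishes for $k>1$. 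For the remaining case $k=1$ I would, exactly as in \cite[Theorem 2.6.5]{KL2} and the proof of \cref{proposition2.28}, identify $\breve{H}^1$ with the derived limit $R^1\varprojlim_i$ of the system $\{M_i\}$, where $M_i:=\mathcal{M}(X_i)$ and the transition maps are isomorphisms after base change by \cref{lemma6.24} (and the isomorphism statement in the proposition immediately following it); the vanishing $R^1\varprojlim_i M_i=0$ is precisely the second assertion of \cref{lemma6.24}, so $\breve{H}^1(X,\mathfrak{X};\mathcal{M})=0$ as well.

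The only genuinely nontrivial input is the surjectivity/density of the restriction maps on global sections, i.e.\ the fact that $\varprojlim_i M_i$ is dense in each $M_i$ and that $R^1\varprojlim_i M_i=0$; but these are exactly \cref{lemma6.24}, whose proof (a telescoping Banach-norm argument forcing $1-F$ to be invertible on a completed direct summand) has already been carried out above in the present adic-Banach context. Everything else is the purely formal homological bookkeeping of \cite[Theorem 2.6.5]{KL2}, which carries over verbatim since the commutative space $X$ and its covering are unchanged and the coefficient $Z$ enters only through the completed tensor product, which is exact in the relevant short exact sequences by our standing hypothesis on $Z$. Hence I expect no real obstacle: the proof is simply ``see the proof of \cref{proposition2.28}, replacing $B$ by $Z$ and the Kedlaya--Liu adic Banach rings by the analytic adic Banach rings of the present section''.
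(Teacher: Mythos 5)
Your proposal follows essentially the same route as the paper's proof: reduction to \v{C}ech cohomology via \cite[Tag 01EW]{SP} using the affinoid acyclicity, the inverse-limit comparison via \cite[2.6 Hilfssatz]{Kie1} for $k>1$, and the identification of $\breve{H}^1$ with $R^1\varprojlim_i M_i$, which vanishes by \cref{lemma6.24}. Your write-up is in fact slightly more explicit than the paper's (which simply mirrors the proof of \cref{proposition2.28}) about where the surjectivity of the restriction maps and the role of the hypothesis on $Z$ enter, but there is no substantive difference in the argument.
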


\begin{proof}
We follow the idea of the proof of \cite[Theorem 2.6.5]{KL2} by comparing this to the corresponding \v{C}ech cohomology with some covering $\mathfrak{X}=\{X_1,...,X_N,...\}$:
\begin{align}
\breve{H}^k(X,\mathfrak{X}=\{X_1,...,X_N,...\};\mathcal{M})=H^k(X,\mathcal{M}),\\
\breve{H}^k(X_i,\mathfrak{X}=\{X_1,...,X_i\};\mathcal{M})=H^k(X_i,\mathcal{M})=0,k\geq 1.
\end{align}
Here we have applied the corresponding \cite[Tag 01EW]{SP}. Now we consider the situation where $k>1$:
\begin{align}
\breve{H}^{k-1}(X_{j+1},\mathfrak{X}=\{X_1,...,X_{j+1}\};\mathcal{M})	\rightarrow \breve{H}^{k-1}(X_j,\mathfrak{X}=\{X_1,...,X_{j}\};\mathcal{M})\rightarrow 0,
\end{align}
which induces the following isomorphism by \cite[2.6 Hilfssatz]{Kie1}:
\begin{align}
\varprojlim_{j\rightarrow \infty} \breve{H}^{k-1}(X_j,\mathfrak{X}=\{X_1,...,X_{j}\};\mathcal{M})	\overset{\sim}{\rightarrow} \breve{H}^k(X,\mathfrak{X}=\{X_1,...,X_N,...\};\mathcal{M}). 
\end{align}
Then we have the corresponding results for the index $k>1$. For $k=1$, one can as in \cite[Theorem 2.6.5]{KL2} relate this to the corresponding $R^1\varprojlim_i$, which will finishes the proof in the same fashion.
\end{proof}

\

\begin{corollary}  \mbox{\bf{(After Kedlaya-Liu \cite[Corollary 2.6.6]{KL2})}}
The corresponding functor from the corresponding $Z$-deformed pseudocoherent sheaves over $X$ to the corresponding $Z$-stably by taking the corresponding global section is an exact functor.
\end{corollary}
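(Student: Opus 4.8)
The plan is to deduce exactness from the vanishing of first cohomology recorded in \cref{proposition6.28}. Let
\[
0 \longrightarrow \mathcal{M}' \longrightarrow \mathcal{M} \longrightarrow \mathcal{M}'' \longrightarrow 0
\]
be a short exact sequence in the category of $Z$-deformed pseudocoherent sheaves over $X$. First I would verify that $\mathcal{M}'$ and $\mathcal{M}''$ are again genuine $Z$-stably pseudocoherent sheaves, so that the quasi-Stein machinery applies to all three. Restricting to each analytic affinoid member $X_i=\mathrm{Spa}(H_i,H_i^+)$ of the exhaustion $X=\varinjlim_i X_i$, the sections sit in a short exact sequence of $H_i\widehat{\otimes}Z$-modules; the standard stability of pseudocoherence under kernels, cokernels and extensions on the purely algebraic level (Illusie, \cite{SGAVI}) shows that $\mathcal{M}'(X_i)$ and $\mathcal{M}''(X_i)$ are pseudocoherent, while completeness is preserved (a kernel of a bounded surjection of complete modules is closed, hence complete) and stability under nice rational localization follows from the $Z$-pseudoflatness / Tor-vanishing lemmas established above. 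By the glueing equivalence these modules sheafify back to $\mathcal{M}'$ and $\mathcal{M}''$, so both are objects of the category.

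Next I would invoke the long exact sequence in sheaf cohomology on $X$ attached to this short exact sequence of sheaves. Left-exactness of the global-sections functor already gives exactness of
\[
0 \longrightarrow \mathcal{M}'(X) \longrightarrow \mathcal{M}(X) \longrightarrow \mathcal{M}''(X),
\]
and the continuation of the long exact sequence reads
\[
\mathcal{M}(X) \longrightarrow \mathcal{M}''(X) \longrightarrow H^1(X,\mathcal{M}').
\]
Since $\mathcal{M}'$ is $Z$-stably pseudocoherent, \cref{proposition6.28} gives $H^1(X,\mathcal{M}')=0$ (the affinoids $X_i$ are acyclic for such sheaves, and the $R^1\varprojlim_i$-term controlling the passage to the limit vanishes by \cref{lemma6.24}). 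Hence $\mathcal{M}(X)\to\mathcal{M}''(X)$ is surjective, and taking global sections carries the short exact sequence of sheaves to a short exact sequence of $Z$-stably pseudocoherent modules over $\mathcal{O}_X(X)\widehat{\otimes}Z$. That the target genuinely lands among $Z$-stably pseudocoherent modules is precisely the content of the equivalence between $Z$-(nice-)stably pseudocoherent sheaves over $X$ and the corresponding modules over the global ring, assembled from the affinoid descent results and the quasi-Stein finiteness statements above.

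The main obstacle is the first step: one must be careful that the kernel and cokernel sheaves remain pseudocoherent in the full $\infty$-sense (so that a finite-length resolution argument is not tacitly assumed) and still enjoy $Z$-stable completeness under every nice rational localization, so that \cref{proposition6.28} is legitimately applicable to $\mathcal{M}'$. Once this bookkeeping is discharged, the cohomological argument is entirely formal and parallels \cite[Corollary 2.6.6]{KL2}.
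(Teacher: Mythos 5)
Your argument is correct and is exactly what the paper intends: the corollary is stated without proof immediately after \cref{proposition6.28}, and the deduction via the long exact sequence in cohomology together with the vanishing $H^{1}(X,\mathcal{M}')=0$ is precisely how the analogous \cite[Corollary 2.6.6]{KL2} follows from \cite[Theorem 2.6.5]{KL2}. Your preliminary step of re-verifying that $\mathcal{M}'$ and $\mathcal{M}''$ lie in the category is superfluous for exactness of the functor, since a short exact sequence in the category already has all three terms in it by hypothesis.
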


\begin{corollary}  \mbox{\bf{(After Kedlaya-Liu \cite[Corollary 2.6.8]{KL2})}}
Consider a particular $\mathcal{O}_X\widehat{\otimes}Z$-pseudocoherent sheaf $\mathcal{M}$ which is finite locally free throughout the whole space $X$. Then we have that the global section $\mathcal{M}(X)$ as $\mathcal{O}_X(X)\widehat{\otimes}Z$ left module admits the corresponding structures of finite projective structure if and only if we have the corresponding global section is finitely generated.
\end{corollary}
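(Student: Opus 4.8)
The plan is to follow the strategy of \cite[Corollary 2.6.8]{KL2}, feeding in the $Z$-relative inputs established above in this chapter. One implication is immediate: a finite projective left $\mathcal{O}_X(X)\widehat{\otimes}Z$-module is by definition a direct summand of a finite free one, hence finitely generated, so nothing is needed there. The content is the converse, so assume $\mathcal{M}(X)$ is finitely generated over $A\widehat{\otimes}Z$ where $A:=\mathcal{O}_X(X)$, and we must produce a global splitting realizing $\mathcal{M}(X)$ as a direct summand of $(A\widehat{\otimes}Z)^n$.

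First I would pick a generating set of cardinality $n$ and invoke the $Z$-relative analogs of \cite[Theorem 2.6.5]{KL2} proved above (the surjectivity of $\mathcal{M}(X)\to\mathcal{M}(Y)$, the stalk-generation statement, and the higher-cohomology vanishing \cref{proposition6.28}) to see that the induced morphism of sheaves $(\mathcal{O}_X\widehat{\otimes}Z)^n\to\mathcal{M}$ is surjective; let $\mathcal{K}$ be its kernel. Since $\mathcal{M}$ is finite locally free, over a cofinal family of rational subspaces any surjection onto $\mathcal{M}$ splits, so $\mathcal{K}$ is locally a direct summand of a finite free module; in particular $\mathcal{K}$ is again $Z$-stably pseudocoherent (using the stability under rational localization and the glueing result of the form \cite[Theorem 1.4.18]{Ked2} established above). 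Restricting to each affinoid $X_i$ and using the affinoid equivalence established above (the analog of \cite[Lemma 1.9.16]{Ked2}), the sequence
\[
0 \to \mathcal{K}(X_i) \to (H_i\widehat{\otimes}Z)^n \to \mathcal{M}(X_i) \to 0
\]
of left $H_i\widehat{\otimes}Z$-modules is split for every $i$, so we may choose a retraction $t_i\colon (H_i\widehat{\otimes}Z)^n\twoheadrightarrow\mathcal{K}(X_i)$.

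The core of the argument is to replace the a priori incompatible system $\{t_i\}_i$ by a compatible one, whose inverse limit is a retraction $(A\widehat{\otimes}Z)^n\to\mathcal{K}(X)$ and hence furnishes the desired global splitting. Here I would run the successive-approximation scheme used in \cite[Lemma 2.6.3, Lemma 2.6.4]{KL2} and reproved above (see \cref{lemma6.24} and the subsequent propositions): using the surjectivity with bounded constant of the transition maps on $\mathcal{K}$ together with the identification $\mathrm{Hom}_{H_i\widehat{\otimes}Z}((H_i\widehat{\otimes}Z)^n,\mathcal{K}(X_i))\cong\mathcal{K}(X_i)^n$, one modifies $t_{i+1}$ over $X_i$ so that $t_{i+1}|_{X_i}-t_i$ is as topologically small as desired; the density of the image of $\varprojlim_i$ and the vanishing of $R^1\varprojlim_i$ from \cref{lemma6.24} then force the corrected sequence to converge to a genuine element of $\varprojlim_i\mathrm{Hom}_{H_i\widehat{\otimes}Z}((H_i\widehat{\otimes}Z)^n,\mathcal{K}(X_i))=\mathrm{Hom}_{A\widehat{\otimes}Z}((A\widehat{\otimes}Z)^n,\mathcal{K}(X))$. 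Since being a retraction is an open condition (a map sufficiently close to a splitting remains split, by a Neumann-series argument on the endomorphism it induces on $\mathcal{K}(X_i)$), the limit $t$ satisfies that $t$ composed with the inclusion is an automorphism of $\mathcal{K}(X)$; normalizing by its inverse we get an honest retraction, whose associated projector exhibits $\mathcal{M}(X)$ as a direct summand of $(A\widehat{\otimes}Z)^n$, i.e. finite projective.

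I expect the main obstacle to be exactly this gluing of the local retractions: controlling norms across the transition maps $H_{i+1}\widehat{\otimes}Z\to H_i\widehat{\otimes}Z$ so that the corrections form a convergent telescoping series. This is where the quasi-Stein hypothesis (density of transition maps) and the $R^1\varprojlim$-vanishing of \cref{lemma6.24} do all the work; the noncommutative coefficient $Z$ (and, in the limit-of-Fr\'echet variant, its LF structure) enters only through carrying along a compatible family of submultiplicative norms indexed as in the proofs above, and introduces no essential new difficulty.
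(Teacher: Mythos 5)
Your proposal is correct and follows essentially the same route as the paper, which simply says that "as in \cite[Corollary 2.6.8]{KL2} one could find some global splitting through the local splittings"; your write-up fleshes out exactly that strategy (surjection from a finite free sheaf, local splittings from local freeness of $\mathcal{M}$, and gluing of the splittings via the successive-approximation and $R^1\varprojlim$-vanishing machinery of \cref{lemma6.24}).
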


\begin{proof}
As in \cite[Corollary 2.6.8]{KL2} one could find some global splitting through the local splittings.
\end{proof}

\

\indent We now consider the following $Z$-relative and analytic analog of \cite[Proposition 2.6.17]{KL2}:

\begin{theorem} \mbox{\bf{(After Kedlaya-Liu \cite[Proposition 2.6.17]{KL2})}} \label{theorem6.31}
Consider the following two statements for a particular $\mathcal{O}_X\widehat{\otimes}Z$-pseudocoherent sheaf $\mathcal{M}$. First is that one can find finite many generators (the number is up to some uniform fixed integer $n\geq 0$) for each section of $\mathcal{M}(X_i)$ for each $i=1,2,...$. The second statement is that the global section $\mathcal{M}(X)$ is just finitely generated. Then in our situation the two statement is equivalent if we have that the corresponding space $X$ admits an $m$-uniform covering in the exact same sense of \cite[Proposition 2.6.17]{KL2}. 	
\end{theorem}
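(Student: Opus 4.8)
The plan is to follow the argument of \cite[Proposition 2.6.17]{KL2} essentially verbatim, since the noncommutative coefficient $Z$ (an adic Banach ring over $\mathbb{Z}_p$) never interacts with the geometry of the quasi-Stein space $X=\varinjlim_i X_i$: all the constructions take place on the commutative spatial side, and the coefficient is simply dragged along. One direction is trivial: if $\mathcal{M}(X)$ is finitely generated over $\mathcal{O}_X(X)\widehat{\otimes}Z$, then by the surjectivity of $\mathcal{M}(X)\to\mathcal{M}(X_i)$ established above (the $Z$-relative analogue of \cite[Theorem 2.6.5]{KL2}) each section $\mathcal{M}(X_i)$ is generated by the images of those same generators, so one may take $n$ to be the number of global generators. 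The content is the converse.

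First I would fix, as in \cite[Proposition 2.6.17]{KL2}, a single $m$-uniform subcovering $\{Y_u\}_{u\in U}$ of the given covering, and for each $u\in U$ let $i(u)\ge 0$ be the least index with $X_{i(u)}\cap Y_u\neq\emptyset$, and form
\begin{align}
V_u:=X_{i(u)}\cup\bigcup_{v\in U,\,v\le u}Y_v.
\end{align}
The $m$-uniformity is exactly what guarantees that one can choose elements $x_u$ in the global section $\mathcal{O}_X(X)$ restricting to something topologically nilpotent on $V_u$ and to an invertible (inverse of topologically nilpotent) element on $Y_u$; this is a purely spatial statement about $X$ and is unchanged by the presence of $Z$. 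Next I would run the inductive approximation: starting from the $\le n$ generators $y_1,\dots,y_u$ of $\mathcal{M}(Y_u)$ — which exist by hypothesis, restricting the $\mathcal{M}(X_{i(u)})$-generators and the $\mathcal{M}(Y_u)$-generators and applying the strict surjectivity coming from the descent/Kiehl glueing lemmas ($Z$-relative analogues of \cite[Lemma 1.9.10]{Ked2}, \cite[Lemma 1.9.16]{Ked2}) — one builds successive tuples $y_{u,1},\dots,y_{u,n}$ by the rule
\begin{displaymath}
y_{u,j}:=y_{u-1,j}+x_u^{c}\,y_u,
\end{displaymath}
choosing the exponent $c$ as large as needed so that $x_u^c y_u$ is small on $V_u$; the convergence of $\lim_{u\to\infty}\{y_{u,1},\dots,y_{u,n}\}$ in the natural (Banach, hence complete) topology then follows from the $R^1\varprojlim$-vanishing and density statement of \cref{lemma6.24}, and the limit tuple generates $\mathcal{M}(X)$ because it generates each $\mathcal{M}(Y_u)$ and the $Y_u$ cover $X$, using again the stalkwise-generation statement (the $Z$-relative analogue of \cite[Theorem 2.6.5]{KL2}) together with Nakayama-type reasoning on stalks.

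The main obstacle is bookkeeping the topology on $\mathcal{M}(X)=\varprojlim_i\mathcal{M}(X_i)$: one must be sure that the approximation process actually converges in the completed global sections and not merely formally. This is where one invokes that the $M_i:=\mathcal{M}(X_i)$ form a system with bounded surjective transition maps, so \cref{lemma6.24} applies and gives both the density of the image of $\varprojlim_i M_i$ in each $M_i$ and $R^1\varprojlim_i M_i=0$; combined with completeness this is exactly what legitimizes passing from compatible approximate generators on the $Y_u$ to honest global generators. Everything else — the choice of $x_u$, the strict surjectivity of restriction maps, the finite presentation bookkeeping — is either pure geometry of $X$, identical to \cite[Proposition 2.6.17]{KL2}, or has already been supplied in $Z$-relative form by the lemmas of this section, so I would simply cite \cref{theorem2.31} and \cite[Proposition 2.6.17]{KL2} for the details and indicate that the coefficient $Z$ plays no role beyond being carried through the tensor products $\mathcal{O}_{X_i}(X_i)\widehat{\otimes}Z$.
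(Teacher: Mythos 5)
Your overall architecture matches the paper's: the trivial direction via surjectivity of $\mathcal{M}(X)\to\mathcal{M}(X_i)$, the choice of an $m$-uniform subcovering $\{Y_u\}_{u\in U}$, the sets $V_u=X_{i(u)}\cup\bigcup_{v\le u}Y_v$, the inductive modification $y_{u,j}:=y_{u-1,j}+x_u^c y_u$ with $c$ large, and convergence of $\lim_{u\to\infty}\{y_{u,1},\dots,y_{u,n}\}$ via completeness and \cref{lemma6.24}. The coefficient $Z$ indeed plays no role beyond being carried along.

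However, there is one concrete step that fails as written in the setting of this section. You choose $x_u$ in $\mathcal{O}_X(X)$ "restricting to something topologically nilpotent on $V_u$ and to an invertible (inverse of topologically nilpotent) element on $Y_u$." That is the recipe of \cite[Proposition 2.6.17]{KL2} in the Tate case, and it is what is used for \cref{theorem2.31}; but here $X$ is an adic Banach space over $\mathbb{Z}_p$ whose pieces are only assumed \emph{analytic}, not Tate, so $\mathcal{O}_X(Y_u)$ need not contain any topologically nilpotent unit at all, and such an $x_u$ need not exist. The paper's proof of \cref{theorem6.31} replaces this step: using analyticity one finds topologically nilpotent elements $b_1,\dots,b_k$ on $Y_u$ generating the unit ideal, chooses $x_u\in\mathcal{O}_X(V_u)$ restricting to $0$ on $V_u$ and to a coefficient $a_1$ in a relation $a_1b_1+\cdots+a_kb_k=1$, and then uses density of $\mathcal{O}_X(X)$ in $\mathcal{O}_X(V_u)$ to simultaneously approximate the $a_i$ and $b_i$ so that the relation $a_1'b_1'+\cdots+a_k'b_k'=1$ persists for a genuinely global $x_u$. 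So you should not simply cite \cref{theorem2.31} here; the selection of the $x_u$ is precisely the point at which the analytic-over-$\mathbb{Z}_p$ case diverges from the Tate case, and it is the reason this theorem is given its own proof rather than a back-reference.
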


\begin{proof}
One direction is obvious, the other direct could be proved in the same way as in \cite[Proposition 2.6.17]{KL2} where essentially the information on $X$ does not change at all. Namely as in \cite[Proposition 2.6.17]{KL2} we could basically consider one single subcovering $\{Y_u\}_{u\in U}$ indexed by $U$ of the covering which is $m$-uniform. For any $u\in U$ we consider the smallest $i$ such that $X_i\bigcap Y_u$, and we denote this by $i(u)\geq 0$. Then we form:
\begin{align}
V_{u}:=X_{i(u)}\bigcup_{v\in U,v\leq u} Y_{v}.	
\end{align}
Then as in \cite[Proposition 2.6.17]{KL2} we can find some $x_u,u\in U$ (which are denoted by the general form of $x_i$ in \cite[Proposition 2.6.17]{KL2}) in the section $\mathcal{O}_X(V_{u})$ such that it restricts to some $0$ onto the space $V_u$ and it restricts to some element $a_1$ such that we have:
\begin{align}
a_1b_1+...+a_kb_k=1	
\end{align}
where $b_1,...,b_k$ are the corresponding topologically nilpotent elements to $Y_u$. By the corresponding density of the corresponding global section $\mathcal{O}_X(X)$ in $\mathcal{O}_X(V_{u})$ we could find some $x_i$ (with the same notation) in the global section $\mathcal{O}_X(X)$ such that it restricts to some topologically nilpotent element onto the space $V_u$ and it restricts to some element $a'_1$ such that we have:
\begin{align}
a'_1b'_1+...+a'_kb'_k=1	
\end{align}
where $b_1,...,b_k$ are the corresponding topologically nilpotent elements to $Y_u$, by simultaneously approximating the corresponding the coefficient $a_1,...,a_k$ and $b_1,...,b_k$ to achieve $a'_1,...,a'_k$ and $b'_1,...,b'_k$ such that we have:
\begin{align}
a'_1b'_1+...+a'_kb'_k=1.	
\end{align}
Then we can build up a corresponding generating set for the global section by the following approximation process just as in \cite[Proposition 2.6.17]{KL2}. To be more precise what we have consider is to modify the corresponding generator for each $Y_u$ (for instance let them be denoted by $y_1,...,y_u$) to be $y_{u,1},...,y_{u,n}$ ($n$ is the corresponding uniform integer in the statement of the theorem) for all $u\in U$. It is achieved through induction, when we have $y_{u,1},...,y_{u,n}$ then we will set that to be $0$ otherwise in the situation where there exists some predecessor $u'$ of $u$ in the corresponding set $U$ we then set this to be $y_{u',1},...,y_{u',n}$. And we set as in \cite[Proposition 2.6.17]{KL2}:
\begin{displaymath}
y_{u,j}:=	y_{u-1,j}+x_u^cy_u
\end{displaymath}
by lifting the corresponding power $c$ to be as large as possible. This will guarantee the convergence of:
\begin{align}
\lim_{u\rightarrow \infty}\{y_{u,1},...,y_{u,n}\},	
\end{align}
which gives the set of the corresponding global generators desired.
\end{proof}

\newpage

\section{Extensions and Applications}

\subsection{Deforming the $(\infty,1)$-Sheaves over Bambozzi-Kremnizer $\infty$-Analytic Spaces}

\noindent Here we follow \cite{BK1} to extend our discussion above partially to the corresponding $\infty$-analytic space attached to a Banach adic uniform algebra $A$ (required to be commutative) over $\mathbb{F}_p((t))$ or $\mathbb{Q}_p$ and we consider furthermore a Banach algebra $B$ over $\mathbb{F}_p((t))$ or $\mathbb{Q}_p$.  

\begin{remark}
We should definitely mention that one can apply the corresponding foundation from Clausen-Scholze \cite{CS} to do this, namely the corresponding animated objects. But we will only apply the corresponding foundation in \cite{BK1}. 
\end{remark}

\indent First we have the following $\infty$-version of Tate-\v{C}ech acyclicity in our context:

\begin{proposition}\mbox{\bf{(After Bambozzi-Kremnizer \cite[Theorem 4.15]{BK1})}} 
The following totalization is strictly exact:
\[
\xymatrix@C+0pc@R+0pc{
\mathrm{Tot}(0  \ar[r]\ar[r]\ar[r] &A^h\widehat{\otimes} B \ar[r]\ar[r]\ar[r] & \prod_i A_i\widehat{\otimes} B \ar[r]\ar[r]\ar[r] &\prod_{i,j} A_i\widehat{\otimes} B \widehat{\otimes}^\mathbb{L} A_j\widehat{\otimes}B\ar[r]\ar[r]\ar[r] &...)
}
\]
for any homotopy Zariski covering $\{\mathrm{Spec}A_i\rightarrow \mathrm{Spec}A^h\}_{i\in I}$ in the sense of \cite[Theorem 4.15]{BK1}. Here we have already attached the corresponding homotopical ring $A^h$ to $A$.
\end{proposition}

\begin{proof}
This is a direct consequence of the corresponding result in \cite[Theorem 4.15]{BK1}.	
\end{proof}

\indent Along this idea one can then build up a sheaf of $\infty$-Banach ring over Bambozzi-Kremnizer's spectrum $\mathrm{Spa}^h(A)$, which we will denote it by $\mathcal{O}_{\mathrm{Spa}^h(A),B}$. Namely we have defined a new $\infty$-ringed space $(\mathrm{Spa}^h(A),\mathcal{O}_{\mathrm{Spa}^h(A),B})$.

\begin{definition} \mbox{\bf{(After Lurie \cite[Definition 2.9.1.1]{Lu1})}} We will call a finite projective module spectrum $M$ over the ring $A^h\widehat{\otimes}B$ finite locally free, namely it will be basically sit in some finite coproduct of copies of $A^h\widehat{\otimes}B$ as a direct summand. Note that one can obviously construct a retract from some finite free module spectrum to such object, which is surjective namely with respect to the homotopy group $\pi_0$.
	
\end{definition}

\indent As in \cite[Proposition 7.2.4.20]{Lu2}, we have that finite locally free sheaves are actually flat and pseudocoherent in the strong $\infty$-sense. This will cause the following to be true:

\begin{proposition} 
The following totalization is strictly exact:
\[
\xymatrix@C+0pc@R+0pc{
\mathrm{Tot}(0  \ar[r]\ar[r]\ar[r] &M \ar[r]\ar[r]\ar[r] & \prod_i A_i\widehat{\otimes} B \widehat{\otimes}^\mathbb{L} M  \ar[r]\ar[r]\ar[r] &\prod_{i,j} (A_i\widehat{\otimes} B \widehat{\otimes}^\mathbb{L} A_j\widehat{\otimes}B) \widehat{\otimes}^\mathbb{L} M\ar[r]\ar[r]\ar[r] &...)
}
\]
for any homotopy Zariski covering $\{\mathrm{Spec}A_i\rightarrow \mathrm{Spec}A^h\}_{i\in I}$ in the sense of \cite[Theorem 2.15, Theorem 4.15]{BK1}. 
\end{proposition}

\begin{theorem} \mbox{\bf{(After Kedlaya-Liu \cite[Theorem 2.7.7]{KL1})}}
Carrying the corresponding coefficient in noncommutative Banach ring $B$, we have that $\infty$-descent for locally free finitely generated module spectra with respect to simple Laurent covering.	
\end{theorem}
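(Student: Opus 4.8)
The plan is to reduce the statement to the already-established $\infty$-Tate--\v{C}ech acyclicity together with the flatness and pseudocoherence properties of finite locally free module spectra recorded above, following the strategy of \cite[Theorem 2.7.7]{KL1}. First I would fix a simple Laurent covering of $\mathrm{Spec}\,A^h$, that is, the covering associated to a single element $f\in A$ giving the two rational pieces $A^h\{f/g\}$ and $A^h\{g/f\}$ with $g=1-f$ or $g=1$, and its common refinement $A^h\{f/g,g/f\}$; after tensoring with $B$ this yields a homotopy Zariski covering $\{\mathrm{Spec}\,A_i\to\mathrm{Spec}\,A^h\}$ of the type appearing in the previous proposition. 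The essential input is that this covering is of descent type for finite locally free module spectra carrying the coefficient $B$: by \cite[Proposition 7.2.4.20]{Lu2} such spectra are flat and strongly pseudocoherent in the $\infty$-sense, so $-\,\widehat{\otimes}^{\mathbb{L}}M$ commutes with the relevant totalizations, and the preceding proposition then gives strict exactness of
\[
\mathrm{Tot}\bigl(0\to M\to\textstyle\prod_i A_i\widehat{\otimes}B\,\widehat{\otimes}^{\mathbb{L}}M\to\prod_{i,j}(A_i\widehat{\otimes}B\,\widehat{\otimes}^{\mathbb{L}}A_j\widehat{\otimes}B)\,\widehat{\otimes}^{\mathbb{L}}M\to\cdots\bigr).
\]

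Next I would run the glueing argument itself. Given a descent datum for the covering --- that is, finite locally free module spectra $M_1,M_2$ over $A_1\widehat{\otimes}B$, $A_2\widehat{\otimes}B$ together with an identification of their pullbacks to $A_{12}\widehat{\otimes}B$ satisfying the cocycle condition up to coherent homotopy --- one forms the homotopy limit $M$ of the corresponding cosimplicial diagram, which is the $\infty$-analog of the equalizer used in \cref{lemma2.12} and \cref{lemma2.13}. One must then check two things: that $M$ is again finite locally free over $A^h\widehat{\otimes}B$, and that the base-change maps $A_i\widehat{\otimes}B\,\widehat{\otimes}^{\mathbb{L}}_{A^h\widehat{\otimes}B}M\to M_i$ are equivalences. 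The second follows from the strict exactness displayed above applied to $M$ (descent for the structure sheaf, i.e. the previous $\infty$-Tate--\v{C}ech proposition, plus flatness of the $M_i$), exactly as in the Banach case. For the first, finite local freeness is a local condition for the homotopy Zariski topology, so it can be verified after pulling back along the covering, where it holds by hypothesis; being a retract of a finite free module spectrum descends because idempotents descend along the totalization. Putting these together, the functor $M\mapsto(M\,\widehat{\otimes}^{\mathbb{L}}A_i\widehat{\otimes}B)_i$ is an equivalence onto the category of descent data, which is the assertion.

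The main obstacle I anticipate is bookkeeping the completed tensor products $\widehat{\otimes}^{\mathbb{L}}$ in the noncommutative-coefficient setting: one must ensure that the derived completed tensor over $A^h\widehat{\otimes}B$ with a finite locally free spectrum is computed correctly --- i.e. that no higher $\mathrm{Tor}$ or completion defect appears --- and that the homotopy limit defining $M$ is genuinely a finite limit so that it commutes with $-\,\widehat{\otimes}^{\mathbb{L}}M_i$ on the nose. Here the flatness and strong pseudocoherence of finite locally free spectra from \cite[Proposition 7.2.4.20]{Lu2}, combined with the strict exactness of the totalization, are exactly what is needed, so the difficulty is really one of carefully transporting the classical Kedlaya--Liu argument of \cite[Theorem 2.7.7]{KL1} through the $\infty$-categorical formalism of \cite{BK1} and \cite{Lu2} rather than proving anything genuinely new; the presence of the noncommutative coefficient $B$ only enters through the one-sided module structure, as already noted throughout the paper, and does not affect the descent mechanism.
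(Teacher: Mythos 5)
Your proposal attempts the full descent statement by the direct route --- form the homotopy limit of the cosimplicial diagram, then verify base change and local freeness --- but this is precisely the route the paper declines to take. The proof in the paper opens by observing that glueing finite projective objects in this $\infty$-setting ``will basically involve a derived or $\infty$ version of \cite[Proposition 2.4.20]{KL1}'' (the approximation-of-matrices / factorization lemma that powers \cite[Theorem 2.7.7]{KL1}), and it does not prove such an analog. Instead it restricts to a glueing datum over a strict exact sequence $0\to\Pi\to\Pi_1\oplus\Pi_2\to\Pi_{12}\to 0$ satisfying conditions (a) and (b) of \cite[Definition 2.7.3]{KL1}, regards the $M_i$ as sheaves over the Bambozzi--Kremnizer spectra, takes global sections to get a module spectrum $M$ over $\Pi^h$, and then only asserts that $\pi_0(M)$ is finite projective over $\pi_0(\Pi)$, citing \cite[Proposition 5.12]{T3}. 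So the paper proves a strictly weaker, $\pi_0$-level statement in a restricted situation, whereas you are claiming the full $\infty$-categorical equivalence.

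The gap in your argument sits exactly at the step the paper flags. You say the equivalences $A_i\widehat{\otimes}B\,\widehat{\otimes}^{\mathbb{L}}_{A^h\widehat{\otimes}B}M\to M_i$ ``follow from the strict exactness displayed above applied to $M$,'' but that strict exactness is only established (in the preceding proposition) for $M$ already known to be finite locally free over the global ring $A^h\widehat{\otimes}B$; for $M$ defined as the homotopy limit of a descent datum you know neither local freeness nor the acyclicity, so the argument is circular. In the classical proof of \cite[Theorem 2.7.7]{KL1} this is where the matrix decomposition lemma (\cite[Lemma 2.7.4]{KL1}, the analog of \cref{lemma2.12} here) is used to produce enough global sections and prove surjectivity of $M\to M_i$; no $\infty$/derived version of that lemma is available in the paper, and your appeals to ``finite local freeness is a local condition'' and ``idempotents descend along the totalization'' assume effectivity of descent rather than prove it. To repair the proposal you would either have to supply the derived analog of \cite[Proposition 2.4.20]{KL1} (which would be genuinely new content), or retreat, as the paper does, to the restricted setting where \cite[Proposition 5.12]{T3} controls $\pi_0$ of the global sections.
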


\begin{proof} 
\indent Certainly along the idea of \cite[Theorem 2.7.7]{KL1} one can actually try to consider the corresponding glueing of finite projective objects, but this will basically involve a derived or $\infty$ version of \cite[Proposition 2.4.20]{KL1}. Instead we consider the following restricted situation which is also considered restrictively in \cite[Section 5]{T3}. First we consider the corresponding glueing datum along $A$, namely suppose we have the corresponding short exact sequence of Banach adic uniform algebra over $\mathbb{Q}_p$:
\[
\xymatrix@C+0pc@R+0pc{
0   \ar[r]\ar[r]\ar[r] &\Pi \ar[r]\ar[r]\ar[r] &\Pi_1\bigoplus \Pi_2 \ar[r]\ar[r]\ar[r] & \Pi_{12} \ar[r]\ar[r]\ar[r] &0,
}
\]
which is assumed to satisfy conditions $(a)$ and $(b)$ of \cite[Definition 2.7.3]{KL1}, namely this is strictly exact and the map $\Pi_1\rightarrow \Pi_{12}$ is of image that is dense. Now we consider the Bambozzi-Kremnizer spaces and rings:
\begin{align}
&\mathrm{Spa}^h(\Pi),\mathrm{Spa}^h(\Pi_1),\mathrm{Spa}^h(\Pi_2),\mathrm{Spa}^h(\Pi_{12}),\\
&\Pi^h,\Pi_1^h,\Pi_2^h,\Pi_{12}^h.  	
\end{align}
Now consider the following finite locally free bimodule spectra forming the corresponding glueing datum:
\begin{align}
M_1,M_2,M_{12}	
\end{align}
over:
\begin{align}
\Pi_1^h\widehat{\otimes}B,\Pi_2^h\widehat{\otimes}B,\Pi_{12}^h\widehat{\otimes}B.
\end{align}
Then we can regard these as certain sheaves of bimodules:
\begin{align}
\mathcal{M}_1,\mathcal{M}_2,\mathcal{M}_{12}	
\end{align}
over the $\infty$-sheaves:
\begin{align}
\mathcal{O}_{\Pi_1^h}\widehat{\otimes}B,\mathcal{O}_{\Pi_2^h}\widehat{\otimes}B,\mathcal{O}_{\Pi_{12}^h}\widehat{\otimes}B.
\end{align}
Then we can have the chance to take the global section along the space $\mathrm{Spa}^h(\Pi)$ which gives rise to a certain module spectrum $M$ over $\Pi^h$. However the corresponding $\pi_0(M)$ will then be finite projective over $\pi_0(\Pi)$ by \cite[Proposition 5.12]{T3}. This finishes the corresponding glueing of vector bundle process in this specific situation.
\end{proof}

\begin{remark}
Again one can apply the whole machinery from \cite{CS} to achieve this, as long as one would like to work with noncommutative analogs of the animated rings.	
\end{remark}

%

\

\indent Now we consider $\mathbb{Q}_p[[G]]$-adic objects in the situation over $\mathbb{Q}_p$. First we have the following $\infty$-version of Tate-\v{C}ech acyclicity in our context:

\begin{proposition}\mbox{\bf{(After Bambozzi-Kremnizer \cite[Theorem 4.15]{BK1})}} 
The following totalization is strictly exact:
\[
\xymatrix@C+0pc@R+0pc{
\mathrm{Tot}(0  \ar[r]\ar[r]\ar[r] &A^h\widehat{\otimes} B[[G]]/E \ar[r]\ar[r]\ar[r] & \prod_i A_i\widehat{\otimes} B[[G]]/E \ar[r]\ar[r]\ar[r] &\prod_{i,j} A_i\widehat{\otimes} B[[G]]/E \widehat{\otimes} ^\mathbb{L} A_j\widehat{\otimes}B[[G]]/E \ar[r]\ar[r]\ar[r] &...),
}
\]
for any $E\subset G$, and for any homotopy Zariski covering $\{\mathrm{Spec}A_i\rightarrow \mathrm{Spec}A^h\}_{i\in I}$ in the sense of \cite[Theorem 2.15, Theorem 4.15]{BK1}. Here we have already attached the corresponding homotopical ring $A^h$ to $A$.
\end{proposition}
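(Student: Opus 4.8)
The plan is to reduce the statement directly to the Banach-coefficient version already established above (the analogue after \cite[Theorem 4.15]{BK1} for $A^h\widehat{\otimes}B$), applied with the noncommutative Banach coefficient taken to be $B[[G]]/E$ in place of $B$. Fix a subgroup $E$ in the indexing poset; then $B[[G]]/E=B[G/E]$ is again a (noncommutative) Banach $\mathbb{Q}_p$-algebra, finite free over $B$ when $G/E$ is finite. Substituting this algebra for $B$ throughout the construction of the $\infty$-ringed space $(\mathrm{Spa}^h(A),\mathcal{O}_{\mathrm{Spa}^h(A),B[[G]]/E})$, the displayed totalization becomes literally the one treated in the Banach case, so its strict exactness is obtained verbatim from \cite[Theorem 4.15]{BK1}.

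Structurally, the input we are importing is that a homotopy Zariski covering $\{\mathrm{Spec}A_i\to\mathrm{Spec}A^h\}_{i\in I}$ is of \emph{universal} descent for the homotopy structure presheaf, which is exactly \cite[Theorem 2.15, Theorem 4.15]{BK1}. Universality means the Čech–Amitsur totalization stays strictly exact after any derived base change along a bounded map $A^h\to A^h\widehat{\otimes}B[[G]]/E$; equivalently, one may first base change $A^h$ to $A^h\widehat{\otimes}B$ (the case already done) and then further along the bounded unit morphism $B\to B[[G]]/E$. Since everything lives over the field $\mathbb{Q}_p$, the algebra $B[[G]]/E$ is flat, so the derived completed tensor product $\widehat{\otimes}^{\mathbb{L}}$ agrees with the ordinary completed tensor product and no spurious higher $\mathrm{Tor}$ terms intervene; the only genuine ingredient is \cite[Theorem 4.15]{BK1} itself, everything else being formal.

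To get the assertion uniformly in $E$, together with compatibility across the pro-system, I would simply run the above for every $E$ and note that for $E\subset E'$ the surjection $G/E\twoheadrightarrow G/E'$ induces a strict surjection $B[G/E]\to B[G/E']$, so the transition maps of $\{B[[G]]/E\}_E$ are strict and the totalizations assemble naturally. This is nothing more than naturality of the whole construction in the coefficient algebra.

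The main obstacle — in fact the only point deserving care — is the claim that $-\,\widehat{\otimes}^{\mathbb{L}}B[[G]]/E$ over $\mathbb{Q}_p$ preserves the strict exactness of the totalization, i.e.\ that the homotopy Zariski covering remains of universal descent after this base change. As explained, this is precisely the force of the universal homotopy Zariski descent in \cite[Theorem 4.15]{BK1} combined with flatness over the base field; once that is granted, the remainder of the argument is routine bookkeeping, and (as remarked elsewhere) one could alternatively carry it out inside the formalism of \cite{CS} using the noncommutative analogues of animated rings.
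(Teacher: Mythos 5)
Your proposal is correct and follows essentially the same route as the paper, which simply cites \cite[Theorem 4.15]{BK1} directly: for each fixed $E$ the coefficient $B[[G]]/E=B[[G/E]]$ is again a Banach algebra, so the statement is the already-established Banach-coefficient case applied verbatim, with strict exactness preserved under the completed tensor base change. Your elaboration on universality of the homotopy Zariski descent and on compatibility across the pro-system in $E$ is just a more explicit account of what the paper leaves implicit.
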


\begin{proof}
This is a direct consequence of the corresponding result in \cite[Theorem 4.15]{BK1}.	
\end{proof}

\indent Along this idea one can then build up a sheaf of $\infty$-Banach ring over Bambozzi-Kremnizer's spectrum $\mathrm{Spa}^h(A)$, which we will denote it by $\mathcal{O}_{\mathrm{Spa}^h(A),B[[G]]}$. Namely we have defined a new $\infty$-ringed space $(\mathrm{Spa}^h(A),\mathcal{O}_{\mathrm{Spa}^h(A),B[[G]]})$.

\begin{definition} \mbox{\bf{(After Lurie \cite[Definition 2.9.1.1]{Lu1})}} We will call a finite projective module spectrum $M$ over the ring $A^h\widehat{\otimes}B[[G]]$ finite locally free, namely it will be basically sit in some finite coproduct of copies of $A^h\widehat{\otimes}B[[G]]$ as a direct summand. Note that one can obviously construct a retract from some finite free module spectrum to such object.
	
\end{definition}

\indent As in \cite[Proposition 7.2.4.20]{Lu2}, we have that finite locally free sheaves are actually flat and pseudocoherent in the strong $\infty$-sense. This will cause the following to be true:

\begin{proposition} 
The following totalization is strictly exact:
\[
\xymatrix@C+0pc@R+0pc{
\mathrm{Tot}(0  \ar[r]\ar[r]\ar[r] &M \ar[r]\ar[r]\ar[r] & \prod_i A_i\widehat{\otimes} B[[G]]/E \widehat{\otimes}^\mathbb{L} M  \ar[r]\ar[r]\ar[r] &\prod_{i,j} (A_i\widehat{\otimes} B[[G]]/E \otimes^\mathbb{L} A_j\widehat{\otimes} B[[G]]/E) \otimes^\mathbb{L} M\ar[r]\ar[r]\ar[r] &...)
}
\]
for any homotopy Zariski covering $\{\mathrm{Spec}A_i\rightarrow \mathrm{Spec}A^h\}_{i\in I}$ in the sense of \cite[Theorem 4.15, Theorem 2.15]{BK1}. 
\end{proposition}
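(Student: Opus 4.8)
The plan is to deduce the statement from the $B[[G]]/E$-deformed Tate--\v{C}ech acyclicity established in the preceding proposition, along exactly the lines by which the $B$-coefficient version was obtained, but keeping track of the quotient level $E$ throughout. Fix an open normal subgroup $E\subset G$; since $G/E$ is finite, $B[[G]]/E$ is a genuine Banach algebra, so $A^h\widehat{\otimes}B[[G]]/E$ is a homotopical Banach ring in the sense of \cite{BK1}, and the assertion at level $E$ becomes a statement about module spectra over this single ring. First I would record that, by the preceding proposition applied to the given homotopy Zariski covering $\{\mathrm{Spec}A_i\rightarrow \mathrm{Spec}A^h\}_{i\in I}$, the structural totalization
\[
\mathrm{Tot}\bigl(0\rightarrow A^h\widehat{\otimes}B[[G]]/E\rightarrow \prod_i A_i\widehat{\otimes}B[[G]]/E\rightarrow \prod_{i,j}A_i\widehat{\otimes}B[[G]]/E\widehat{\otimes}^{\mathbb{L}}A_j\widehat{\otimes}B[[G]]/E\rightarrow\cdots\bigr)
\]
is strictly exact. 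Note that the covering condition needs no re-verification, as base change along $\widehat{\otimes}B[[G]]/E$ has already been absorbed into that proposition.

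Next I would invoke \cite[Proposition 7.2.4.20]{Lu2}, as already used above: the level-$E$ component $M_E$ of the finite locally free module spectrum $M$ is finite locally free over $A^h\widehat{\otimes}B[[G]]/E$, hence flat and pseudocoherent in the strong $\infty$-sense. Flatness gives $-\widehat{\otimes}^{\mathbb{L}}M_E=-\widehat{\otimes}M_E$ with no higher Tor contributions, and pseudocoherence keeps the completed tensor inside the relevant category of complete Banach module spectra; in particular $-\widehat{\otimes}^{\mathbb{L}}M_E$ is exact and preserves strict exactness of the complexes in play. Applying this functor termwise to the structural totalization above produces the totalization in the statement, provided one can move $-\widehat{\otimes}^{\mathbb{L}}M_E$ past the infinite products $\prod_i$, $\prod_{i,j},\dots$ in the \v{C}ech terms.

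That commutation is the one genuine obstacle, since $-\widehat{\otimes}^{\mathbb{L}}M_E$ is a priori only a left adjoint and need not commute with infinite products. The way around it is precisely the hypothesis of finite local freeness: $M_E$ is a retract of some finite free module $(A^h\widehat{\otimes}B[[G]]/E)^{\oplus n}$, and for a finite free module the derived tensor is literally a finite direct sum, so it commutes with the products in the \v{C}ech complex on the nose, and strict exactness is stable under finite direct sums in the quasi-abelian (bornological) setting of \cite{BK1}. Hence I would first prove the free case $M_E=(A^h\widehat{\otimes}B[[G]]/E)^{\oplus n}$, where the totalization is just the $n$-fold power of the structural one and is therefore strictly exact, and then split off $M_E$ as a direct summand, using that strict exactness of a totalization passes to retracts. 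Finally, since nothing in the argument depends on $E$ beyond its being an open normal subgroup, the conclusion holds for every $E\subset G$ and is compatible with the transition maps $B[[G]]/E'\rightarrow B[[G]]/E$, which gives the statement for the pro-system $\{M_E\}_{E\subset G}$ as required.
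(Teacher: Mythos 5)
Your argument is correct and follows essentially the same route the paper intends: the paper gives no written proof beyond the preceding remark that finite locally free module spectra are flat and pseudocoherent in the strong $\infty$-sense (citing \cite[Proposition 7.2.4.20]{Lu2}), and your reduction to the finite free case followed by splitting off $M_E$ as a retract is the standard way to make that remark precise, including the one point (commuting $-\widehat{\otimes}^{\mathbb{L}}M_E$ past the infinite products) that actually needs the finiteness hypothesis.
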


%
%

\

 \indent Now we consider the LF deformation, namely here $B$ could be written as $\varinjlim_h B_h$. First we have the following $\infty$-version of Tate-\v{C}ech acyclicity in our context:

\begin{proposition}\mbox{\bf{(After Bambozzi-Kremnizer \cite[Theorem 4.15]{BK1})}} 
The following totalization is strictly exact:
\[
\xymatrix@C+0pc@R+0pc{
\mathrm{Tot}(0  \ar[r]\ar[r]\ar[r] &A^h\widehat{\otimes} B \ar[r]\ar[r]\ar[r] & \prod_i A_i\widehat{\otimes} B \ar[r]\ar[r]\ar[r] &\prod_{i,j} A_i\widehat{\otimes} B \widehat{\otimes}^\mathbb{L} A_j\widehat{\otimes}B\ar[r]\ar[r]\ar[r] &...)
}
\]
for any homotopy Zariski covering $\{\mathrm{Spec}A_i\rightarrow \mathrm{Spec}A^h\}_{i\in I}$ in the sense of \cite[Theorem 4.15]{BK1}. Here we have already attached the corresponding homotopical ring $A^h$ to $A$.
\end{proposition}

\begin{proof}
This is a direct consequence of the corresponding result in \cite[Theorem 4.15]{BK1}.	
\end{proof}

\indent Along this idea one can then build up a sheaf of $\infty$-Banach ring over Bambozzi-Kremnizer's spectrum $\mathrm{Spa}^h(A)$, which we will denote it by $\mathcal{O}_{\mathrm{Spa}^h(A),B}$. Namely we have defined a new $\infty$-ringed space $(\mathrm{Spa}^h(A),\mathcal{O}_{\mathrm{Spa}^h(A),B})$.

\begin{definition} \mbox{\bf{(After Lurie \cite[Definition 2.9.1.1]{Lu1})}} We will call a finite projective module spectrum $M$ over the ring $A^h\widehat{\otimes}B$ finite locally free, namely it will be basically sit in some finite coproduct of copies of $A^h\widehat{\otimes}B$ as a direct summand. Note that one can obviously construct a retract from some finite free module spectrum to such object.
	
\end{definition}

\indent As in \cite[Proposition 7.2.4.20]{Lu2}, we have that finite locally free sheaves are actually flat and pseudocoherent in the strong $\infty$-sense. This will cause the following to be true:

\begin{proposition} 
The following totalization is strictly exact:
\[
\xymatrix@C+0pc@R+0pc{
\mathrm{Tot}(0  \ar[r]\ar[r]\ar[r] &M \ar[r]\ar[r]\ar[r] & \prod_i A_i\widehat{\otimes} B \widehat{\otimes}^\mathbb{L} M  \ar[r]\ar[r]\ar[r] &\prod_{i,j} (A_i\widehat{\otimes} B \widehat{\otimes}^\mathbb{L} A_j\widehat{\otimes}B) \widehat{\otimes}^\mathbb{L} M\ar[r]\ar[r]\ar[r] &...)
}
\]
for any homotopy Zariski covering $\{\mathrm{Spec}A_i\rightarrow \mathrm{Spec}A^h\}_{i\in I}$ in the sense of \cite[Theorem 2.15, Theorem 4.15]{BK1}. 
\end{proposition}

\begin{theorem} \mbox{\bf{(After Kedlaya-Liu \cite[Theorem 2.7.7]{KL1})}}
Carrying the corresponding coefficient in the noncommutative Limit of Fr\'echet $B$, we have that $\infty$-descent for locally free finitely generated module spectra with respect to simple Laurent covering.	
\end{theorem}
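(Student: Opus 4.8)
The plan is to repeat, essentially verbatim, the argument of the Banach coefficient case above, with $B$ replaced everywhere by the injective limit $B=\varinjlim_h B_h$, while keeping track of each finite stage $B_h$ before passing to the colimit. First I would fix a short exact sequence of Banach adic uniform algebras over $\mathbb{Q}_p$
\[
\xymatrix@C+0pc@R+0pc{
0 \ar[r] &\Pi \ar[r] &\Pi_1\bigoplus\Pi_2 \ar[r] &\Pi_{12} \ar[r] &0,
}
\]
arising from a simple Laurent covering and satisfying conditions $(a)$ and $(b)$ of \cite[Definition 2.7.3]{KL1} (namely strict exactness and density of the image of $\Pi_1\rightarrow\Pi_{12}$), attach the Bambozzi--Kremnizer homotopical rings $\Pi^h,\Pi_1^h,\Pi_2^h,\Pi_{12}^h$ together with their associated $\infty$-ringed spaces, and start from a glueing datum $M_1,M_2,M_{12}$ of finite locally free (hence, by \cite[Proposition 7.2.4.20]{Lu2}, flat and pseudocoherent in the strong $\infty$-sense) module spectra over $\Pi_1^h\widehat{\otimes}B$, $\Pi_2^h\widehat{\otimes}B$, $\Pi_{12}^h\widehat{\otimes}B$, compatible under the transition isomorphisms over $\Pi_{12}^h\widehat{\otimes}B$.

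Next I would regard $M_1,M_2,M_{12}$ as sheaves of bimodules $\mathcal{M}_1,\mathcal{M}_2,\mathcal{M}_{12}$ over the $\infty$-sheaves $\mathcal{O}_{\Pi_1^h}\widehat{\otimes}B$, $\mathcal{O}_{\Pi_2^h}\widehat{\otimes}B$, $\mathcal{O}_{\Pi_{12}^h}\widehat{\otimes}B$, and take the global section over $\mathrm{Spa}^h(\Pi)$ using the $\infty$-version of Tate--\v{C}ech acyclicity in the LF case established just above; this produces a module spectrum $M$ over $\Pi^h\widehat{\otimes}B$ whose base changes along $\Pi^h\rightarrow\Pi_1^h$ and $\Pi^h\rightarrow\Pi_2^h$ recover $M_1$ and $M_2$. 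It then remains to check that $\pi_0(M)$ is finite projective over $\pi_0(\Pi)\widehat{\otimes}B=\pi_0(\Pi)\widehat{\otimes}\varinjlim_h B_h$. For this I would pass to the finite stages: the analog of \cite[Proposition 5.12]{T3} applies with coefficient $B_h$ for each $h$, and since finite projectivity of the $\pi_0$'s is compatible with the filtered colimit along the strict transition maps $B_h\rightarrow B_{h'}$, one deduces finite projectivity of $\pi_0(M)$ over $\pi_0(\Pi)\widehat{\otimes}B$, which completes the glueing in this restricted situation exactly as in the Banach case.

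The hard part will be controlling the LF topology rather than the homotopy-theoretic formalism: one must know that $-\,\widehat{\otimes}\,(\varinjlim_h B_h)$ sends the strictly exact sequence above to a strictly exact sequence of LF algebras, and that the totalization appearing in the Tate--\v{C}ech computation remains strictly exact after completed tensoring with $\varinjlim_h B_h$. Since a filtered colimit of strictly exact sequences of Fr\'echet spaces need not be strict in general, this forces the running hypothesis that the system $(B_h)_h$ be a countable regular (indeed LS-type) inductive system, which is the case for the models of interest such as $B_e$ and the full Robba ring of \cite{Ber1} and \cite{KL2}; once this is arranged, every step reduces to the corresponding step of the Banach coefficient proof and no genuinely new input is needed. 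Alternatively, one could run the whole argument inside the framework of \cite{CS}, working with noncommutative analogs of animated rings, in which the relevant limits and colimits are automatically well behaved.
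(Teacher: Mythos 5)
Your proposal follows essentially the same route as the paper: fix the strict exact sequence of \cite[Definition 2.7.3]{KL1} attached to the simple Laurent covering, pass to the Bambozzi--Kremnizer homotopical rings and their $\infty$-ringed spaces, regard the glueing datum of finite locally free module spectra as sheaves of bimodules, take global sections over $\mathrm{Spa}^h(\Pi)$ via the LF Tate--\v{C}ech acyclicity, and conclude that $\pi_0(M)$ is finite projective by \cite[Proposition 5.12]{T3}. The only deviation is that you check the $\pi_0$-statement stage-by-stage in $h$ and make explicit the strictness/regularity hypothesis on the inductive system $(B_h)_h$ that the paper leaves implicit; this is a refinement of the same argument rather than a different one.
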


\begin{proof} 
\indent Along the discussion in the Banach situation, we can also make the parallel discussion in the corresponding limit of Fr\'echet situation (LF). Certainly along the idea of \cite[Theorem 2.7.7]{KL1} one can actually try to consider the corresponding glueing of finite projective objects, but this will basically involve a derived or $\infty$ version of \cite[Proposition 2.4.20]{KL1}. Instead we consider the following restricted situation which is also considered restrictively in \cite[Section 5]{T3}. First we consider the corresponding glueing datum along $A$, namely suppose we have the corresponding short exact sequence of Banach adic uniform algebra over $\mathbb{Q}_p$:
\[
\xymatrix@C+0pc@R+0pc{
0   \ar[r]\ar[r]\ar[r] &\Pi \ar[r]\ar[r]\ar[r] &\Pi_1\bigoplus \Pi_2 \ar[r]\ar[r]\ar[r] & \Pi_{12} \ar[r]\ar[r]\ar[r] &0,
}
\]
which is assumed to satisfy conditions $(a)$ and $(b)$ of \cite[Definition 2.7.3]{KL1}, namely this is strictly exact and the map $\Pi_1\rightarrow \Pi_{12}$ is of image that is dense. Now we consider the Bambozzi-Kremnizer spaces and rings:
\begin{align}
&\mathrm{Spa}^h(\Pi),\mathrm{Spa}^h(\Pi_1),\mathrm{Spa}^h(\Pi_2),\mathrm{Spa}^h(\Pi_{12}),\\
&\Pi^h,\Pi_1^h,\Pi_2^h,\Pi_{12}^h.  	
\end{align}
Now consider the following finite locally free bimodule spectra forming the corresponding glueing datum:
\begin{align}
M_1,M_2,M_{12}	
\end{align}
over:
\begin{align}
\Pi_1^h\widehat{\otimes}\varinjlim_h B_h,\Pi_2^h\widehat{\otimes}\varinjlim_h B_h,\Pi_{12}^h\widehat{\otimes}\varinjlim_h B_h.
\end{align}
Then we can regard these as certain sheaves of bimodules:
\begin{align}
\mathcal{M}_1,\mathcal{M}_2,\mathcal{M}_{12}	
\end{align}
over the $\infty$-sheaves:
\begin{align}
\mathcal{O}_{\Pi_1^h}\widehat{\otimes}\varinjlim_h B_h,\mathcal{O}_{\Pi_2^h}\widehat{\otimes}\varinjlim_h B_h,\mathcal{O}_{\Pi_{12}^h}\widehat{\otimes}\varinjlim_h B_h.
\end{align}
Then we can have the chance to take the global section along the space $\mathrm{Spa}^h(\Pi)$ which gives rise to a certain module spectrum $M$ over $\Pi^h$. However the corresponding $\pi_0(M)$ will then be finite projective over $\pi_0(\Pi)$ by \cite[Proposition 5.12]{T3}. This finishes the corresponding glueing of vector bundle process in this specific situation.
\end{proof}

\begin{remark}
Again one can apply the whole machinery from \cite{CS} to achieve this, as long as one would like to work with noncommutative analogs of the animated rings.	
\end{remark}

\
%
%

\indent The above discussion could be made to be more general by looking at some derived rational localization $A\rightarrow D$ of $A$ as in \cite{BK1} which we will denote by $D\in \infty-\mathbf{Comm}$.  For instance one can just take $D$ to be some derived rational localization taking the form such as $A\left<\frac{a_0}{a_0},...,\frac{a_k}{a_0}\right>^h$ (and consider the glueing over $\mathrm{Spec}A\left<\frac{a_0}{a_0},...,\frac{a_k}{a_0}\right>^h\in \infty-\mathbf{Comm}^\mathrm{op}$). We now assume the following assumption:

\begin{assumption}
Without any sheafiness condition on the ring $A$ in the corresponding classical sense, we assume the following totolization is strictly exact:
\[
\xymatrix@C+0pc@R+0pc{
\mathrm{Tot}(0  \ar[r]\ar[r]\ar[r] &D\widehat{\otimes} B \ar[r]\ar[r]\ar[r] & \prod_i D_i\widehat{\otimes} B \ar[r]\ar[r]\ar[r] &\prod_{i,j} D_i\widehat{\otimes} B \widehat{\otimes}^\mathbb{L} D_j\widehat{\otimes}B\ar[r]\ar[r]\ar[r] &...)
}
\]	
for any homotopy Zariski covering $\{\mathrm{Spec}D_i\rightarrow \mathrm{Spec}D\}_{i\in I}$ in the sense of \cite[Theorem 4.15]{BK1}. Here recall the notation $\mathrm{Spec}D$ means the corresponding object in $\infty-\mathbf{Comm}^\mathrm{op}$. Here $B$ is just assumed to be Banach.
\end{assumption}

\begin{remark}
\indent By the corresponding Dold-Kan correspondence we could regard $D$ as an $(\infty,1)$-commutative object in the corresponding category of all the simplicial semi-normed monoids, namely an $E_\infty$-ring which carries the corresponding simplicial Banach structures. However then the corresponding ring $D\widehat{\otimes}B$ will then be a product of an $E_\infty$-ring with a discrete (in the sense of \cite[Chapter 7.2]{Lu2}) $E_1$-ring (or if you wish a discrete $A_\infty$-ring).
\end{remark}

\begin{definition} \mbox{\bf{(After Lurie \cite[Definition 2.9.1.1]{Lu1})}} We will call a finite projective module spectrum $M$ over the ring $D\widehat{\otimes}B$ finite locally free, namely it will be basically sit in some finite coproduct of copies of $D\widehat{\otimes}B$ as a direct summand. Note that one can obviously construct a retract from some finite free module spectrum to such object, which is surjective namely with respect to the homotopy group $\pi_0$.
	
\end{definition}

\begin{definition} \mbox{\bf{(After Lurie \cite[Definition 2.9.1.1]{Lu1})}} We will call a sheaf $M$ over the $\infty$-ringed Grothendieck site:
\begin{align}
(\mathrm{Spec}D,\mathcal{O}_\mathrm{Spec}D\widehat{\otimes}B,\{\mathrm{Spec}D\left<\frac{f_0}{f_i},...,\frac{f_d}{f_i}\right>^h\}_{i=0,...,d})	
\end{align}
locally finite free if it is locally (over some derived rational subset $\mathrm{Spec}D_i\in \infty-\mathbf{Comm}^\mathrm{op}$ and $D_i:=D\left<\frac{f_0}{f_i},...,\frac{f_d}{f_i}\right>^h,i=1,...,d$) sitting in some coproduct of finite many copies of $D_i\widehat{\otimes}B$ as a direct summand. 
	
\end{definition}

\indent As in \cite[Proposition 7.2.4.20]{Lu2}, we have that finite locally free sheaves are actually flat and pseudocoherent in the strong $\infty$-sense. This will cause the following to be true:

\begin{proposition} 
The following totalization is strictly exact:
\[
\xymatrix@C+0pc@R+0pc{
\mathrm{Tot}(0  \ar[r]\ar[r]\ar[r] &M \ar[r]\ar[r]\ar[r] & \prod_i D_i\widehat{\otimes} B \widehat{\otimes}^\mathbb{L} M  \ar[r]\ar[r]\ar[r] &\prod_{i,j} (D_i\widehat{\otimes} B \widehat{\otimes}^\mathbb{L} D_j\widehat{\otimes}B) \widehat{\otimes}^\mathbb{L} M\ar[r]\ar[r]\ar[r] &...)
}
\]
for any homotopy Zariski covering $\{\mathrm{Spec}D_i\rightarrow \mathrm{Spec}D\}_{i\in I}$ in the sense of \cite[Theorem 2.15, Theorem 4.15]{BK1}. 
\end{proposition}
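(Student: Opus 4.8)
The plan is to reduce the statement, for a finite locally free module $M$ over $D\widehat{\otimes}B$, to the ring-level totalization supplied by the standing Assumption, using the flatness of $M$ recorded just above (after \cite[Proposition 7.2.4.20]{Lu2}). Write $T(N)$ for the augmented \v{C}ech--Tate totalization
\[
T(N):=\mathrm{Tot}\bigl(0\to N\to \textstyle\prod_i D_i\widehat{\otimes}B\,\widehat{\otimes}^{\mathbb{L}}\,N\to \prod_{i,j}(D_i\widehat{\otimes}B\,\widehat{\otimes}^{\mathbb{L}}\,D_j\widehat{\otimes}B)\,\widehat{\otimes}^{\mathbb{L}}\,N\to\cdots\bigr)
\]
attached to a given homotopy Zariski covering $\{\mathrm{Spec}\,D_i\to\mathrm{Spec}\,D\}_{i\in I}$; the Assumption is exactly the assertion that $T(D\widehat{\otimes}B)$ is strictly exact. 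Since homotopy Zariski coverings in the sense of \cite[Theorem 2.15, Theorem 4.15]{BK1} are finite, the products occurring in $T(-)$ are finite coproducts, so the derived completed tensor product $-\,\widehat{\otimes}^{\mathbb{L}}_{D\widehat{\otimes}B}\,M$ commutes term by term with the formation of $T$, yielding a natural identification $T(M)\simeq T(D\widehat{\otimes}B)\,\widehat{\otimes}^{\mathbb{L}}_{D\widehat{\otimes}B}\,M$.

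First I would record that $M$, being finite locally free over $D\widehat{\otimes}B$ in the sense of the preceding definition, is a direct summand of a finite free module $F=(D\widehat{\otimes}B)^{\oplus n}$, i.e.\ the image of an idempotent $e\in\mathrm{End}_{D\widehat{\otimes}B}(F)$. Applying the additive functor $T$ to a splitting $F\simeq M\oplus M'$ gives $T(F)\simeq T(M)\oplus T(M')$, and $T(F)\simeq T(D\widehat{\otimes}B)^{\oplus n}$ is strictly exact because finite direct sums of strictly exact complexes are strictly exact in the relevant quasi-abelian (here simplicial Banach, resp.\ bornological) setting. A retract of a strictly exact complex is strictly exact — strict exactness is acyclicity with respect to the canonical $t$-structure on the associated derived $\infty$-category, and acyclicity is stable under retracts — hence $T(M)$ is strictly exact, which is the claim. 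Equivalently, one uses that $M$ is flat in the strong $\infty$-sense by \cite[Proposition 7.2.4.20]{Lu2}, so that $-\,\widehat{\otimes}^{\mathbb{L}}_{D\widehat{\otimes}B}\,M$ carries strictly exact complexes to strictly exact complexes, and applies this to $T(D\widehat{\otimes}B)$ via the identification above.

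The main obstacle — the point that genuinely needs the hypotheses assembled in this subsection rather than a purely formal argument — is controlling strictness (as opposed to exactness only up to equivalence) through the completed tensor $\widehat{\otimes}$ and its derived version $\widehat{\otimes}^{\mathbb{L}}$ over the homotopical ring $D\widehat{\otimes}B$, which by the Remark above is a product of an $E_\infty$-ring carrying simplicial Banach structure with a discrete $E_1$-ring. One must verify that: the Dold--Kan/simplicial presentation of $D$ is compatible with $\widehat{\otimes}^{\mathbb{L}}$ so that $T(M)\simeq T(D\widehat{\otimes}B)\,\widehat{\otimes}^{\mathbb{L}}\,M$ holds at the level of strict complexes; flatness of $M$ in the sense of \cite[Proposition 7.2.4.20]{Lu2} really does upgrade to preservation of strict exactness, which it does because $M$ is a retract of a free module and the derived tensor is then computed with no higher correction terms; and the totalization of the (finite in each bidegree) \v{C}ech bicomplex does not disturb strictness, which is where finiteness of the homotopy Zariski covering enters. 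Once these compatibilities are in place, the proof is the two-line retract, respectively flat-base-change, reduction to the Assumption.
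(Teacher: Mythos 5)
Your proposal is correct and takes essentially the same approach as the paper: the paper supplies no written proof for this proposition beyond the preceding sentence that finite locally free module spectra are flat (and pseudocoherent) in the strong $\infty$-sense after \cite[Proposition 7.2.4.20]{Lu2}, which ``causes'' the statement to follow from the ring-level strict exactness in the standing Assumption. Your retract-of-a-finite-free-module reduction, together with the observation that strict exactness passes to finite direct sums and retracts, is precisely that intended argument spelled out in detail.
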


\indent Now we take a corresponding fiber sequence (under the corresponding Dold-Kan correspondence one can also consider the corresponding distinguished triangles 
\[
\xymatrix@C+0pc@R+0pc{
&D \ar[r]\ar[r]\ar[r] &\prod_{i=1,2} D_i \ar[r]\ar[r]\ar[r] & D_{1}\widehat{\otimes}^\mathbb{L} D_{2} \ar[r]\ar[r]\ar[r] & D[1],
}
\]
for the corresponding associated complexes):
\[
\xymatrix@C+0pc@R+0pc{
&D \ar[r]\ar[r]\ar[r] &\prod_{i=1,2} D_i \ar[r]\ar[r]\ar[r] & D_{12},
}
\]
and assume that the following:
\[
\xymatrix@C+0pc@R+0pc{
0   \ar[r]\ar[r]\ar[r] &\pi_0D \ar[r]\ar[r]\ar[r] &\pi_0D_1\bigoplus \pi_0D_2 \ar[r]\ar[r]\ar[r] & \pi_0D_{12} \ar[r]\ar[r]\ar[r] &0
}
\]
to satisfy \cite[Definition 2.7.3, condition (a), (b)]{KL1}. Then we have the effective $\infty$-descentness of the corresponding morphism $D\widehat{\otimes}B\rightarrow D_1\widehat{\otimes}B\oplus D_2\widehat{\otimes}B$.

\

\begin{theorem} \mbox{\bf{(After Kedlaya-Liu \cite[Theorem 2.7.7]{KL1})}}
Carrying the corresponding coefficient in noncommutative Banach ring $B$, we have that $\infty$-descent for locally free finitely generated module spectra with respect to simple Laurent covering on the corresponding $\pi_0$.	
\end{theorem}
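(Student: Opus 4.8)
The plan is to mimic the proof of the two earlier incarnations of this statement (the Banach and the limit-of-Fr\'echet versions of \cite[Theorem 2.7.7]{KL1} established above), replacing the homotopical ring $A^h$ attached to a sheafy Banach adic algebra by the general derived rational localization $D$, and using the Assumption above in place of the Tate--\v{C}ech acyclicity coming from \cite[Theorem 4.15]{BK1}. First I would fix a simple Laurent covering and reduce, exactly as in \cite[Definition 2.7.3]{KL1}, to a glueing datum attached to a fiber sequence
\[
\xymatrix@C+0pc@R+0pc{
&D \ar[r]\ar[r]\ar[r] &\prod_{i=1,2} D_i \ar[r]\ar[r]\ar[r] & D_{12}
}
\]
whose $\pi_0$ yields a strictly exact sequence $0\to \pi_0 D\to \pi_0 D_1\bigoplus \pi_0 D_2\to \pi_0 D_{12}\to 0$ satisfying conditions $(a)$ and $(b)$ of \cite[Definition 2.7.3]{KL1}; here the effective $\infty$-descentness of the morphism $D\widehat{\otimes}B\to D_1\widehat{\otimes}B\oplus D_2\widehat{\otimes}B$ has already been recorded above. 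Given finite locally free bimodule spectra $M_1,M_2,M_{12}$ over $D_1\widehat{\otimes}B$, $D_2\widehat{\otimes}B$, $D_{12}\widehat{\otimes}B$ forming a glueing datum, I would package them as sheaves of bimodules over the $\infty$-sheaves $\mathcal{O}_{\mathrm{Spec}D_i}\widehat{\otimes}B$ and take the global section over $\mathrm{Spec}D$, producing a module spectrum $M$ over $D\widehat{\otimes}B$.

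The second step is to verify that $M$ is the right object. Here the Assumption gives the strict exactness of the relevant totalization for $D\widehat{\otimes}B$; combined with the fact (via \cite[Proposition 7.2.4.20]{Lu2}) that finite locally free module spectra are flat and pseudocoherent in the strong $\infty$-sense --- which already yielded the strict exactness of the $M$-twisted totalization above --- one gets that base change of $M$ along $D\widehat{\otimes}B\to D_i\widehat{\otimes}B$ recovers $M_i$ and that the higher cohomology of the associated $\infty$-sheaf vanishes. Passing to $\pi_0$ and invoking \cite[Proposition 5.12]{T3} (the commutative-part finiteness of global sections of glued vector bundles, which survives the noncommutative tensor factor $B$ since one only tests finite projectivity over $\pi_0 D$, i.e. on the commutative spatial part), one concludes that $\pi_0(M)$ is finite projective over $\pi_0(D)$, which is exactly the asserted $\infty$-descent on the corresponding $\pi_0$.

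The main obstacle, as in the earlier versions, is that a genuine $\infty$-categorical analogue of \cite[Proposition 2.4.20]{KL1} --- which would let one glue \emph{all} finite projective module spectra rather than merely conclude a statement at the level of $\pi_0$ --- is not available, and the noncommutativity of $B$ obstructs the naive topos-theoretic descent. The restriction to the $\pi_0$-level assertion and to the specific Laurent glueing datum is precisely what circumvents this: all the derived input is used only to produce the module spectrum $M$, after which the finiteness is extracted on the commutative spatial part via \cite[Proposition 5.12]{T3}. A secondary point worth checking is that the Assumption is genuinely needed here because $A$, and hence $D$, carries no classical sheafiness hypothesis; once it is in force, the remaining arguments are formal and run in parallel with the Bambozzi--Kremnizer case treated above, and one could alternatively phrase everything inside the framework of \cite{CS} using noncommutative analogues of animated rings, as noted in the remarks.
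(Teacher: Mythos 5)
Your proposal follows essentially the same route as the paper's own proof: restricting to the specific Laurent glueing datum whose $\pi_0$ gives a strictly exact sequence satisfying conditions $(a)$ and $(b)$ of \cite[Definition 2.7.3]{KL1}, packaging the finite locally free module spectra $M_1,M_2,M_{12}$ as sheaves over the $\infty$-sheaves $\mathcal{O}_{\mathrm{Spec}D_i}\widehat{\otimes}B$, taking global sections over $\mathrm{Spec}D$ to produce $M$, and invoking \cite[Proposition 5.12]{T3} to conclude that $\pi_0(M)$ is finite projective. The extra detail you supply on verifying that base change recovers the $M_i$ via the Assumption and the flatness of finite locally free spectra is a reasonable elaboration of a step the paper leaves implicit, but it does not change the argument.
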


\begin{proof} 
\indent Certainly along the idea of \cite[Theorem 2.7.7]{KL1} one can actually try to consider the corresponding glueing of finite projective objects in more general setting, but this will basically involve a derived or $\infty$ version of \cite[Proposition 2.4.20]{KL1}. Instead we consider the following restricted situation which is also considered restrictively in \cite[Section 5]{T3}. First we consider the corresponding glueing datum along $\pi_0D$, namely suppose we have the corresponding short exact sequence of Banach adic uniform algebra over $\mathbb{Q}_p$:
\[
\xymatrix@C+0pc@R+0pc{
0   \ar[r]\ar[r]\ar[r] &\pi_0D \ar[r]\ar[r]\ar[r] &\pi_0D_1\bigoplus \pi_0D_2 \ar[r]\ar[r]\ar[r] & \pi_0D_{12} \ar[r]\ar[r]\ar[r] &0
}
\]
which is assumed to satisfy conditions $(a)$ and $(b)$ of \cite[Definition 2.7.3]{KL1}, namely this is strictly exact and the map $\Pi_1\rightarrow \Pi_{12}$ is of image that is dense. Now consider the following finite locally free bimodule spectra forming the corresponding glueing datum:
\begin{align}
M_1,M_2,M_{12}	
\end{align}
over:
\begin{align}
D_1\widehat{\otimes}B,D_2\widehat{\otimes}B,D_{12}\widehat{\otimes}B.
\end{align}
Then we can regard these as certain sheaves of bimodules:
\begin{align}
\mathcal{M}_1,\mathcal{M}_2,\mathcal{M}_{12}	
\end{align}
over the $\infty$-sheaves:
\begin{align}
\mathcal{O}_{\mathrm{Spec}D_1}\widehat{\otimes}B,\mathcal{O}_{\mathrm{Spec}D_2}\widehat{\otimes}B,\mathcal{O}_{\mathrm{Spec}D_{12}}\widehat{\otimes}B.
\end{align}
Then we can have the chance to take the global section along the space $\mathrm{Spec}D$ which gives rise to a certain module spectrum $M$ over $D$. However the corresponding $\pi_0(M)$ will then be finite projective over $\pi_0(\Pi)$ by \cite[Proposition 5.12]{T3}. This finishes the corresponding glueing of vector bundle process in this specific situation.
\end{proof}

\

\subsection{Application to Equivariant Sheaves over Adic Fargues-Fontaine Curves}

\indent We now apply our study to the first specific situation around the corresponding mixed-type Hodge structures inspired essentially by the work \cite{CKZ}, \cite{PZ} and \cite{Ked1}, which is also considered in \cite{T3}. Let us first describe the corresponding situation:

\begin{setting}
We will use the notations in \cite{T1}, \cite{T2} and \cite{T3}. We now consider the corresponding Fargues-Fontaine curves as our target to relativization. We will use the corresponding $\mathrm{FF}_R$ to denote the adic Fargues-Fontaine curve in our current situation (as in \cite[Theorem 4.6.1]{KL2}) which is defined to be the corresponding suitable quotient of the following adic space:
\begin{align}
\bigcup_{0<r_1<r_2} \mathrm{Spa}(\widetilde{\Pi}^{[r_1,r_2]}_R,\widetilde{\Pi}^{[r_1,r_2],+}_R).	
\end{align}
Here $R$ is specified perfectoid Banach uniform algebra over $\mathbb{F}_{p^h}$.
\end{setting}

\begin{assumption}
In this section we assume that the corresponding base field will be just of mixed-characteristic.	
\end{assumption}

\indent In our current framework we look at the following deformation:

\begin{setting}
Over $\mathbb{Q}_p$ we consider the any Galois perfectoid tower $(H_\bullet,H_\bullet^+)$ (in the sense of \cite[Definition 5.1.1]{KL2}) with Galois group $\mathrm{Gal}_H:=\mathrm{Gal}(\overline{H}_0/H_0)$, and we use the notation $\widetilde{H}_\infty$ to denote the corresponding perfectoid top level of the tower in mixed-characteristic situation, and we consider the corresponding ring $\overline{H}_\infty$ as its tilt. Now we consider the period rings $B_{\mathrm{dR},\overline{H}_\infty}^+$, $B_{\mathrm{dR},\overline{H}_\infty}$ and $B_{e,\overline{H}_\infty}$ (as constructed in \cite[Definition 8.6.5]{KL2}). 	
\end{setting}

\indent We now try to be more specific, where we just consider the situation where the ring $\widetilde{H}_\infty$ is just the ring $\mathbb{C}_p$. We now consider some Banach algebra $B$ over $\mathbb{Q}_p$ as before. Now what is happening is then we consider the following ringed space:

\begin{align}
(\mathrm{FF}_R,\mathcal{O}_{\mathrm{FF}_R}\widehat{\otimes}B\widehat{\otimes}B_{\mathrm{dR},\overline{H}_\infty}^+,\mathcal{O}_{\mathrm{FF}_R}\widehat{\otimes}B\widehat{\otimes}B_{\mathrm{dR},\overline{H}_\infty},\mathcal{O}_{\mathrm{FF}_R}\widehat{\otimes}B\widehat{\otimes}B_{e,\overline{H}_\infty}).	
\end{align}

\indent Now we encode Berger's $B$-pair in some relativization after Kedlaya-Liu (\cite[Definition 9.3.11]{KL1} and \cite[Definition 4.8.2]{KL2}) and Kedlaya-Pottharst (\cite[Definition 2.17]{KP}):

\begin{definition}
We now define a pseudocoherent $\mathrm{FF}_R$-$B$-$\mathrm{Gal}_H$ sheaf to be a triplet $(M^+_\mathrm{dR},M_\mathrm{dR},M_e)$ of $B\widehat{\otimes}?$-stably pseudocoherent sheaves such that the form a corresponding descent triplet with respect to the following diagram by natural base change:
\begin{align}
\mathcal{O}_{\mathrm{FF}_R}\widehat{\otimes}B\widehat{\otimes}B_{\mathrm{dR},\overline{H}_\infty}^+ \longrightarrow \mathcal{O}_{\mathrm{FF}_R}\widehat{\otimes}B\widehat{\otimes}B_{\mathrm{dR},\overline{H}_\infty}\longleftarrow \mathcal{O}_{\mathrm{FF}_R}\widehat{\otimes}B\widehat{\otimes}B_{e,\overline{H}_\infty},	
\end{align}
where $?=B_{\mathrm{dR},\overline{H}_\infty}^+,B_{\mathrm{dR},\overline{H}_\infty},B_{e,\overline{H}_\infty}$. For the rings:
\begin{align}
\mathcal{O}_{\mathrm{FF}_R}\widehat{\otimes}B\widehat{\otimes}B_{e,\overline{H}_\infty}	
\end{align}
the corresponding definition of pseudocoherent sheaves are defined to be locally associated to the $B\widehat{\otimes}B_{e,\overline{H}_\infty}$-stably pseudocoherent modules in the ind-Fr\'echet topology, namely the completeness with respect to the ind-Fr\'echet topology is then stable under the corresponding rational localization concentrated at the part for $\mathrm{FF}_R$. And we assume that the corresponding sheaves admit $\mathrm{Gal}_H$-equivariant action in the semilinear fashion.	
\end{definition}

\indent In the \'etale topology we consider the following ringed space:

\begin{align}
(\mathrm{FF}_R,\mathcal{O}_{\mathrm{FF}_R,\text{\'et}}\widehat{\otimes}B\widehat{\otimes}B_{\mathrm{dR},\overline{H}_\infty}^+,\mathcal{O}_{\mathrm{FF}_R,\text{\'et}}\widehat{\otimes}B\widehat{\otimes}B_{\mathrm{dR},\overline{H}_\infty},\mathcal{O}_{\mathrm{FF}_R,\text{\'et}}\widehat{\otimes}B\widehat{\otimes}B_{e,\overline{H}_\infty}).	
\end{align}

\indent Now we encode Berger's $B$-pair in some relativization after Kedlaya-Liu (\cite[Definition 9.3.11]{KL1} and \cite[Definition 4.8.2]{KL2}) and Kedlaya-Pottharst (\cite[Definition 2.17]{KP}):

\begin{definition}
We now define a pseudocoherent $\mathrm{FF}_R$-$B$-$\mathrm{Gal}_H$ sheaf to be a triplet $(M^+_\mathrm{dR},M_\mathrm{dR},M_e)$ of $B\widehat{\otimes}?$-\'etale-stably pseudocoherent sheaves such that these form a corresponding descent triplet with respect to the following diagram by natural base change:
\begin{align}
\mathcal{O}_{\mathrm{FF}_R,\text{\'et}}\widehat{\otimes}B\widehat{\otimes}B_{\mathrm{dR},\overline{H}_\infty}^+ \longrightarrow \mathcal{O}_{\mathrm{FF}_R,\text{\'et}}\widehat{\otimes}B\widehat{\otimes}B_{\mathrm{dR},\overline{H}_\infty}\longleftarrow \mathcal{O}_{\mathrm{FF}_R,\text{\'et}}\widehat{\otimes}B\widehat{\otimes}B_{e,\overline{H}_\infty},	
\end{align}
where $?=B_{\mathrm{dR},\overline{H}_\infty}^+,B_{\mathrm{dR},\overline{H}_\infty},B_{e,\overline{H}_\infty}$. For the rings:
\begin{align}
\mathcal{O}_{\mathrm{FF}_R,\text{\'et}}\widehat{\otimes}B\widehat{\otimes}B_{e,\overline{H}_\infty}	
\end{align}
the corresponding definition of pseudocoherent sheaves are defined to be locally associated to the $B\widehat{\otimes}B_{e,\overline{H}_\infty}$-\'etale-stably pseudocoherent modules in the ind-Fr\'echet topology, namely the completeness with respect to the ind-Fr\'echet topology is then stable under the corresponding \'etale morphisms concentrated at the part for $\mathrm{FF}_R$. And we assume that the corresponding sheaves admit $\mathrm{Gal}_H$-equivariant action in the semilinear fashion.	
\end{definition}

\begin{definition}
We now define a projective $\mathrm{FF}_R$-$B$-$\mathrm{Gal}_H$ sheaf to be a triplet $(M^+_\mathrm{dR},M_\mathrm{dR},M_e)$ of locally finite projective sheaves such that these form a corresponding descent triplet with respect to the following diagram by natural base change:
\begin{align}
\mathcal{O}_{\mathrm{FF}_R}\widehat{\otimes}B\widehat{\otimes}B_{\mathrm{dR},\overline{H}_\infty}^+ \longrightarrow \mathcal{O}_{\mathrm{FF}_R}\widehat{\otimes}B\widehat{\otimes}B_{\mathrm{dR},\overline{H}_\infty}\longleftarrow \mathcal{O}_{\mathrm{FF}_R}\widehat{\otimes}B\widehat{\otimes}B_{e,\overline{H}_\infty},	
\end{align}
where $?=B_{\mathrm{dR},\overline{H}_\infty}^+,B_{\mathrm{dR},\overline{H}_\infty},B_{e,\overline{H}_\infty}$. For the rings:
\begin{align}
\mathcal{O}_{\mathrm{FF}_R}\widehat{\otimes}B\widehat{\otimes}B_{e,\overline{H}_\infty}	
\end{align}
the corresponding definition of projective sheaves are defined to be locally associated to the finite projective modules in the ind-Fr\'echet topology, namely the completeness with respect to the ind-Fr\'echet topology is then stable under the corresponding rational localization concentrated at the part for $\mathrm{FF}_R$. And we assume that the corresponding sheaves admit $\mathrm{Gal}_H$-equivariant action in the semilinear fashion.	
\end{definition}

\indent In the \'etale topology we consider the following ringed space:

\begin{align}
(\mathrm{FF}_R,\mathcal{O}_{\mathrm{FF}_R,\text{\'et}}\widehat{\otimes}B\widehat{\otimes}B_{\mathrm{dR},\overline{H}_\infty}^+,\mathcal{O}_{\mathrm{FF}_R,\text{\'et}}\widehat{\otimes}B\widehat{\otimes}B_{\mathrm{dR},\overline{H}_\infty},\mathcal{O}_{\mathrm{FF}_R,\text{\'et}}\widehat{\otimes}B\widehat{\otimes}B_{e,\overline{H}_\infty}).	
\end{align}

\indent Now we encode Berger's $B$-pair in some relativization after Kedlaya-Liu (\cite[Definition 9.3.11]{KL1} and \cite[Definition 4.8.2]{KL2}) and Kedlaya-Pottharst (\cite[Definition 2.17]{KP}):

\begin{definition}
We now define a projective $\mathrm{FF}_R$-$B$-$\mathrm{Gal}_H$ sheaf to be a triplet $(M^+_\mathrm{dR},M_\mathrm{dR},M_e)$ of locally finite projective sheaves such that these form a corresponding descent triplet with respect to the following diagram by natural base change:
\begin{align}
\mathcal{O}_{\mathrm{FF}_R,\text{\'et}}\widehat{\otimes}B\widehat{\otimes}B_{\mathrm{dR},\overline{H}_\infty}^+ \longrightarrow \mathcal{O}_{\mathrm{FF}_R,\text{\'et}}\widehat{\otimes}B\widehat{\otimes}B_{\mathrm{dR},\overline{H}_\infty}\longleftarrow \mathcal{O}_{\mathrm{FF}_R,\text{\'et}}\widehat{\otimes}B\widehat{\otimes}B_{e,\overline{H}_\infty},	
\end{align}
where $?=B_{\mathrm{dR},\overline{H}_\infty}^+,B_{\mathrm{dR},\overline{H}_\infty},B_{e,\overline{H}_\infty}$. For the rings:
\begin{align}
\mathcal{O}_{\mathrm{FF}_R,\text{\'et}}\widehat{\otimes}B\widehat{\otimes}B_{e,\overline{H}_\infty}	
\end{align}
the corresponding definition of projective sheaves are defined to be locally associated to the finite projective modules in the ind-Fr\'echet topology, namely the completeness with respect to the ind-Fr\'echet topology is then stable under the corresponding \'etale morphisms concentrated at the part for $\mathrm{FF}_R$. And we assume that the corresponding sheaves admit $\mathrm{Gal}_H$-equivariant action in the semilinear fashion.	
\end{definition}

\begin{remark}
One can basically define the same $B$-$\mathrm{Gal}_H$-sheaves over any adic Banach affinoid algebra or space in the same fashion which we will not repeat again.
\end{remark}

\indent We now prove the following theorems:

\begin{theorem} \mbox{\bf{(After Kedlaya-Liu \cite[Theorem 4.6.1]{KL2})}} \label{theorem5.18}
Consider the following categories:\\
I. The category of all the projective $\mathrm{FF}_R$-$B$-$\mathrm{Gal}_H$ sheaves over the site:
\begin{align}
(\mathrm{FF}_R,\mathcal{O}_{\mathrm{FF}_R}\widehat{\otimes}B\widehat{\otimes}B_{\mathrm{dR},\overline{H}_\infty}^+,\mathcal{O}_{\mathrm{FF}_R}\widehat{\otimes}B\widehat{\otimes}B_{\mathrm{dR},\overline{H}_\infty},\mathcal{O}_{\mathrm{FF}_R}\widehat{\otimes}B\widehat{\otimes}B_{e,\overline{H}_\infty}).	
\end{align}
II. The category of all the compatible families (with locally glueability and local cocycle condition) of all the projective $\{\mathrm{Spa}(\widetilde{\Pi}_R^{[s,r]},\widetilde{\Pi}_R^{[s,r],+})\}_{\{[s,r]\}}$-$B$-$\mathrm{Gal}_H$ sheaves over the sites:
\begin{align}
&(\mathrm{Spa}(\widetilde{\Pi}_R^{[s,r]},\widetilde{\Pi}_R^{[s,r],+}),\mathcal{O}_{\mathrm{Spa}(\widetilde{\Pi}_R^{[s,r]},\widetilde{\Pi}_R^{[s,r],+})}\widehat{\otimes}B\widehat{\otimes}B_{\mathrm{dR},\overline{H}_\infty}^+,\\
&\mathcal{O}_{\mathrm{Spa}(\widetilde{\Pi}_R^{[s,r]},\widetilde{\Pi}_R^{[s,r],+})}\widehat{\otimes}B\widehat{\otimes}B_{\mathrm{dR},\overline{H}_\infty},\mathcal{O}_{\mathrm{Spa}(\widetilde{\Pi}_R^{[s,r]},\widetilde{\Pi}_R^{[s,r],+})}\widehat{\otimes}B\widehat{\otimes}B_{e,\overline{H}_\infty}),
\end{align}	
for any $[s,r]\subset (0,\infty)$, carrying the corresponding semilinear action of the Frobenius coming from just the Robba ring part.\\
III. The category of all the compatible families (with locally glueability and local cocycle condition) of all the projective $\{\widetilde{\Pi}_R^{[s,r]}\}_{\{[s,r]\}}$-$B$-$\mathrm{Gal}_H$ modules over the rings:
\begin{align}
(\mathcal{O}_{\mathrm{Spa}(\widetilde{\Pi}_R^{[s,r]},\widetilde{\Pi}_R^{[s,r],+})}\widehat{\otimes}B\widehat{\otimes}B_{\mathrm{dR},\overline{H}_\infty}^+,\mathcal{O}_{\mathrm{Spa}(\widetilde{\Pi}_R^{[s,r]},\widetilde{\Pi}_R^{[s,r],+})}\widehat{\otimes}B\widehat{\otimes}B_{\mathrm{dR},\overline{H}_\infty},\mathcal{O}_{\mathrm{Spa}(\widetilde{\Pi}_R^{[s,r]},\widetilde{\Pi}_R^{[s,r],+})}\widehat{\otimes}B\widehat{\otimes}B_{e,\overline{H}_\infty})|_{\mathrm{Spa}(\widetilde{\Pi}_R^{[s,r]},\widetilde{\Pi}_R^{[s,r],+})},
\end{align}	
for any $[s,r]\subset (0,\infty)$, carrying the corresponding semilinear action of the Frobenius coming from just the Robba ring part.\\
IV. The category of all the projective $\widetilde{\Pi}_R^{\infty}$-$B$-$\mathrm{Gal}_H$ modules over the rings:
\begin{align}
(\mathcal{O}_{\mathrm{Spa}(\widetilde{\Pi}_R^{\infty},\widetilde{\Pi}_R^{\infty,+})}\widehat{\otimes}B\widehat{\otimes}B_{\mathrm{dR},\overline{H}_\infty}^+,\mathcal{O}_{\mathrm{Spa}(\widetilde{\Pi}_R^{\infty},\widetilde{\Pi}_R^{\infty,+})}\widehat{\otimes}B\widehat{\otimes}B_{\mathrm{dR},\overline{H}_\infty},\mathcal{O}_{\mathrm{Spa}(\widetilde{\Pi}_R^{\infty},\widetilde{\Pi}_R^{\infty,+})}\widehat{\otimes}B\widehat{\otimes}B_{e,\overline{H}_\infty})|_{\mathrm{Spa}(\widetilde{\Pi}_R^{\infty},\widetilde{\Pi}_R^{\infty,+})},
\end{align}	
carrying the corresponding semilinear action of the Frobenius coming from just the Robba ring part.\\
Then we have the corresponding categories above are equivalent.

\end{theorem}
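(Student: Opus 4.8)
The plan is to prove the equivalence of categories I--IV by establishing a chain of equivalences, each of which specializes the general descent machinery developed in Sections 2--4 to the particular geometry of the adic Fargues--Fontaine curve. The overall strategy mirrors the proof of \cite[Theorem 4.6.1]{KL2}, but now carrying the triple coefficient $B\widehat{\otimes}B_{?,\overline{H}_\infty}$ (with $?=B_{\mathrm{dR}}^+,B_{\mathrm{dR}},B_e$) and keeping track of the $\mathrm{Gal}_H$-equivariance throughout. The key observation is that each of the three coefficient rings arises as a base change of either a Banach coefficient, a Fr\'echet-type coefficient, or an ind-Fr\'echet (LF) coefficient, so that the descent results we need are exactly the ones proved in Sections 2, 3, and 4 respectively applied componentwise to each entry of the descent triplet.

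First I would establish the equivalence between categories III and IV. This is the passage from a compatible family of projective modules over the annuli rings $\widetilde{\Pi}_R^{[s,r]}$ to a single projective module over the full Robba ring $\widetilde{\Pi}_R^{\infty}$. Here the relevant input is the quasi-Stein descent of Sections 2.3, 3.3 and 4.3 (in particular \cref{theorem4.31} and the finiteness results preceding it), applied to each of the three entries of the triplet: the $B_{\mathrm{dR}}^+$-entry and the $B_{\mathrm{dR}}$-entry use the Banach (or pro-Banach) quasi-Stein story, while the $B_e$-entry uses the limit-of-Fr\'echet quasi-Stein story. The compatibility of the families along the transition maps together with the strictness of the transition maps (density of images in the quasi-Stein setting) guarantees that taking the inverse limit produces a projective module over $\widetilde{\Pi}_R^{\infty}\widehat{\otimes}B\widehat{\otimes}B_{?,\overline{H}_\infty}$, and conversely that restriction is fully faithful. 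Second I would handle the equivalence between II and III: this is local glueing of sheaves over each individual affinoid $\mathrm{Spa}(\widetilde{\Pi}_R^{[s,r]},\widetilde{\Pi}_R^{[s,r],+})$, which is exactly \cref{theorem2.14} (Banach case) and \cref{theorem4.14} (LF case) applied entrywise, plus the fact that finite projectivity of the global sections is detected locally (Corollary after \cite[Corollary 2.6.8]{KL2} and \cite[Proposition 5.12]{T3}). Third I would establish the equivalence between I and II: this is the statement that the adic Fargues--Fontaine curve $\mathrm{FF}_R$ is glued from the annuli $\{\mathrm{Spa}(\widetilde{\Pi}_R^{[s,r]},\widetilde{\Pi}_R^{[s,r],+})\}$ modulo the Frobenius action, so that descending a projective $\mathrm{FF}_R$-$B$-$\mathrm{Gal}_H$ sheaf amounts to giving a Frobenius-equivariant compatible family; this is the purely geometric part and follows as in \cite[Theorem 4.6.1]{KL2} once the coefficient-level descent is in place.

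Throughout, the $\mathrm{Gal}_H$-equivariance is preserved automatically: all the functors involved (base change, taking global sections over a $\mathrm{Gal}_H$-stable space, inverse limits along $\mathrm{Gal}_H$-stable towers) commute with the semilinear $\mathrm{Gal}_H$-action, because $\mathrm{Gal}_H$ acts only on the $B_{?,\overline{H}_\infty}$-factor and the descent isomorphisms are canonical. Similarly, the Frobenius semilinear action coming from the Robba-ring part is carried along because it commutes with everything happening on the coefficient side. The reason the descent triplet $(M_{\mathrm{dR}}^+,M_{\mathrm{dR}},M_e)$ glues to a single object over $\mathrm{FF}_R$ with these three structure sheaves is the effectivity of the relevant two-term (or B-pair style) descent, which reduces to the exactness of $0\to B_e\to B_{\mathrm{dR}}\oplus B_{\mathrm{dR}}^+\to B_{\mathrm{dR}}\to 0$ (the fundamental exact sequence) tensored with $\mathcal{O}_{\mathrm{FF}_R}\widehat{\otimes}B$ remaining exact, which holds by pseudoflatness of finite projective modules; one then invokes the glueing of pseudocoherent modules along a two-term cover as in \cref{lemma2.13} and \cref{theorem2.14}.

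The hard part will be the entry for $B_e$, i.e.\ controlling the ind-Fr\'echet (LF) topology under inverse limits in the quasi-Stein step III$\to$IV: one must verify that the LF-completeness hypotheses in \cref{setting4.1} and the associated definition of $B$-stably pseudocoherence are genuinely stable under the combination of (a) restriction along the annuli transition maps and (b) the inverse limit defining $\widetilde{\Pi}_R^{\infty}$, since ind-Fr\'echet spaces behave badly under arbitrary limits. The resolution is that we only ever take countable inverse limits along quasi-Stein towers with dense transition maps, and we restrict to \emph{finite projective} (hence pseudoflat and strongly pseudocoherent) objects, so the relevant $R^1\varprojlim$ vanishing from the LF analog of \cite[Lemma 2.6.3]{KL2} (the LF-version lemma proved in Section 4.3 with the family of norms $\|\cdot\|^h$) applies and forces the limit to again be finite projective over $\widetilde{\Pi}_R^{\infty}\widehat{\otimes}B\widehat{\otimes}B_{e,\overline{H}_\infty}$ with the correct LF-completeness. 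A secondary subtlety is matching the three different descent topologies (the $\mathcal{O}_{\mathrm{FF}_R}$-rational topology appearing in the definition of the $\mathrm{FF}_R$-$B$-$\mathrm{Gal}_H$ sheaves concentrated at the $\mathrm{FF}_R$-part only) with the entrywise descent statements; this is handled exactly as in the definitions preceding the theorem, where stability is demanded only with respect to localizations of the $\mathrm{FF}_R$-factor, which is precisely the ``commutative spatial part'' for which our Sections 2--4 were designed.
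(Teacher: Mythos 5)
Your overall route is the same as the paper's: relate I, II, III by the affinoid-level glueing theorems (the paper cites \cref{theorem4.14} and \cref{theorem3.15}; your entrywise use of \cref{theorem2.14} and \cref{theorem4.14} for the Banach/LF entries and the pro-Banach story for the $B_{\mathrm{dR}}^+$-entry amounts to the same thing), and then pass from the compatible families to a single module over $\widetilde{\Pi}_R^{\infty}$ by the quasi-Stein machinery of \cref{theorem4.31} and \cref{theorem3.32}. Your extra discussion of the $\mathrm{Gal}_H$-equivariance and of the $B$-pair-style descent triplet is consistent with the definitions preceding the theorem.

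There is, however, one genuine gap in your step III$\to$IV. You assert that compatibility of the family, density of the transition images, and the $R^1\varprojlim$ vanishing from the LF analogue of \cite[Lemma 2.6.3]{KL2} ``force the limit to again be finite projective'' over $\widetilde{\Pi}_R^{\infty}\widehat{\otimes}B\widehat{\otimes}B_{?,\overline{H}_\infty}$. This is not automatic: for a quasi-Stein tower, the global section of a locally finite projective sheaf is finite projective \emph{if and only if} it is finitely generated (this is exactly the content of the corollary after \cite[Corollary 2.6.8]{KL2} and of \cref{theorem2.31}/\cref{theorem4.31}, whose hypothesis is the existence of an $m$-uniform covering). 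The quasi-Stein limit argument alone only gives you surjectivity onto each $M_i$ and vanishing of $R^1\varprojlim$; it does not bound the number of generators of the limit. The paper closes this gap by ``considering the corresponding Frobenius pullback to control the rank throughout'': the Frobenius semilinearity identifies the modules over the various annuli $\widetilde{\Pi}_R^{[s,r]}$ up to pullback, which is what supplies the uniform bound on generators needed to invoke \cref{theorem4.31} and \cref{theorem3.32} and conclude finite generation (hence finite projectivity) of the global section. You treat the Frobenius action as passive data that is merely ``carried along,'' whereas it is in fact the essential input for the finiteness in IV; without it the equivalence III$\leftrightarrow$IV would fail. Your discussion of the LF-completeness of the $B_e$-entry is a real but secondary point; the rank control is the one you must add.
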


\begin{proof}
The corresponding families of sheaves with respect to the intervals, the families of modules with respect to the intervals and the sheaves over $\mathrm{FF}_R$ could be related equivalently through the functor taking the corresponding local global section, by applying \cref{theorem4.14} and \cref{theorem3.15}. The corresponding families of sheaves with respect to the intervals, the families of modules with respect to the intervals and the modules over $\widetilde{\Pi}^\infty_R$ could be related equivalently through the functor taking the corresponding local projective limit global section, by applying \cref{theorem4.31} and \cref{theorem3.32} and by considering the corresponding Frobenius pullback to control the rank throughout. 	
\end{proof}

\begin{theorem} \mbox{\bf{(After Kedlaya-Liu \cite[Theorem 4.6.1]{KL2})}} \label{theorem5.24}
Consider the following categories:\\
I. The category of all the pseudocoherent sheaves over the site:
\begin{align}
(\mathrm{FF}_R,\mathcal{O}_{\mathrm{FF}_R,\text{\'et}}\widehat{\otimes}B).	
\end{align}
II. The category of all the compatible families (with locally glueability and local cocycle condition) of all the pseudocoherent $\{\mathrm{Spa}(\widetilde{\Pi}_R^{[s,r]},\widetilde{\Pi}_R^{[s,r],+})\}_{\{[s,r]\}}$ sheaves over the sites:
\begin{align}
&(\mathrm{Spa}(\widetilde{\Pi}_R^{[s,r]},\widetilde{\Pi}_R^{[s,r],+}),\mathcal{O}_{\mathrm{Spa}(\widetilde{\Pi}_R^{[s,r]},\widetilde{\Pi}_R^{[s,r],+}),\text{\'et}}\widehat{\otimes}B),
\end{align}	
for any $[s,r]\subset (0,\infty)$, carrying the corresponding semilinear action of the Frobenius coming from just the Robba ring part.\\
III. The category of all the compatible families (with locally glueability and local cocycle condition) of all the stably-pseudocoherent $\{\widetilde{\Pi}_R^{[s,r]}\}_{\{[s,r]\}}$ modules over the rings:
\begin{align}
\mathcal{O}_{\mathrm{Spa}(\widetilde{\Pi}_R^{[s,r]},\widetilde{\Pi}_R^{[s,r],+}),\text{\'et}}\widehat{\otimes}B|_{\mathrm{Spa}(\widetilde{\Pi}_R^{[s,r]},\widetilde{\Pi}_R^{[s,r],+})},
\end{align}	
for any $[s,r]\subset (0,\infty)$, carrying the corresponding semilinear action of the Frobenius coming from just the Robba ring part.\\
IV. The category of all the stably-pseudocoherent $\widetilde{\Pi}_R^{\infty}$ modules over the rings:
\begin{align}
\mathcal{O}_{\mathrm{Spa}(\widetilde{\Pi}_R^{\infty},\widetilde{\Pi}_R^{\infty,+}),\text{\'et}}\widehat{\otimes}B|_{\mathrm{Spa}(\widetilde{\Pi}_R^{\infty},\widetilde{\Pi}_R^{\infty,+})},
\end{align}	
carrying the corresponding semilinear action of the Frobenius coming from just the Robba ring part.\\
Then we have the corresponding categories above are equivalent.

\end{theorem}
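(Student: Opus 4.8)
The plan is to run the argument used for \cref{theorem5.18} with every appeal to an analytic-topology glueing result replaced by its \'etale-topology counterpart, exploiting the simplification that here the only coefficient ring is $\mathcal{O}_{\mathrm{FF}_R,\text{\'et}}\widehat{\otimes}B$ (pure Banach), so that \cref{theorem2.22} and \cref{theorem2.31} suffice and no period-ring factors $B_{\mathrm{dR},\overline{H}_\infty}^{+}$, $B_{\mathrm{dR},\overline{H}_\infty}$, $B_{e,\overline{H}_\infty}$ need be tracked. Recall that $\mathrm{FF}_R$ is by construction the quotient by Frobenius of $\bigcup_{0<r_1<r_2}\mathrm{Spa}(\widetilde{\Pi}_R^{[r_1,r_2]},\widetilde{\Pi}_R^{[r_1,r_2],+})$, and that a cofinal system of the closed intervals $[s,r]$ yields an admissible covering of $\mathrm{FF}_R$ compatible with the \'etale topology. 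First I would unwind the definition of a pseudocoherent $\mathcal{O}_{\mathrm{FF}_R,\text{\'et}}\widehat{\otimes}B$-sheaf to identify category I with category II: such a sheaf is exactly a compatible family of pseudocoherent sheaves on the \'etale sites of the $\mathrm{Spa}(\widetilde{\Pi}_R^{[s,r]},\widetilde{\Pi}_R^{[s,r],+})$ equipped with the semilinear Frobenius descent datum relating $[s,r]$ and $[ps,pr]$, this being Frobenius descent in the style of Kedlaya--Liu \cite[Theorem 4.6.1]{KL2} together with \cref{theorem2.22} applied locally.

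Next I would establish the equivalence between II and III by applying \cref{theorem2.22} on each affinoid $\mathrm{Spa}(\widetilde{\Pi}_R^{[s,r]},\widetilde{\Pi}_R^{[s,r],+})$: these are sheafy uniform analytic adic Banach affinoids satisfying the hypotheses of \cref{setting2.15}, so taking the global section over the \'etale site gives an exact equivalence between pseudocoherent $\mathcal{O}_{\text{\'et}}\widehat{\otimes}B$-sheaves and $B$-\'etale-stably pseudocoherent $\widetilde{\Pi}_R^{[s,r]}\widehat{\otimes}B$-modules. Since this equivalence is natural in the interval, it carries compatible families to compatible families and intertwines the Frobenius descent data; the exactness and functoriality verifications are those of \cite[Theorem 2.5.14]{KL2}.

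The remaining equivalence between III and IV is the quasi-Stein step. Here $\widetilde{\Pi}_R^{\infty}$ is the ring of global sections over a quasi-Stein exhaustion $\varinjlim_i \mathrm{Spa}(\widetilde{\Pi}_R^{[s_i,r_i]},\widetilde{\Pi}_R^{[s_i,r_i],+})$, and one passes between a compatible family over the intervals and a single module over $\widetilde{\Pi}_R^{\infty}\widehat{\otimes}B$ by taking the local projective-limit global section. That this limit functor is an equivalence onto the category of $B$-\'etale-stably pseudocoherent $\widetilde{\Pi}_R^{\infty}\widehat{\otimes}B$-modules I would deduce from the $R^1\varprojlim$-vanishing of \cref{lemma2.24}, the cohomology vanishing of \cref{proposition2.28}, and the finiteness criterion \cref{theorem2.31}; for the \'etale topology these reduce to the analytic statements exactly as in the proof of \cref{lemma2.21}, since the exhausting coverings are already analytic and higher \'etale cohomology vanishes by \cref{theorem2.19}. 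The Frobenius then enters because $\varphi$ pulls back a module over $\widetilde{\Pi}_R^{[s,r]}$ to one over $\widetilde{\Pi}_R^{[s/p,r/p]}$, so $\varphi$-equivariance of the family forces the number of generators over the members of the exhaustion to be locally constant and uniformly bounded, which is precisely the hypothesis needed to invoke \cref{theorem2.31} and conclude finite generation of the global section over $\widetilde{\Pi}_R^{\infty}\widehat{\otimes}B$; the $\varphi$-action is then transported through the chain of equivalences.

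The hard part will be this \'etale quasi-Stein descent in step III versus IV: one must check that $B$-\'etale-stably pseudocoherence survives passage to the Fr\'echet-type limit ring, i.e.\ that completeness for the natural (Fr\'echet) topology stays stable under \'etale morphisms of the base, and that the snake-lemma and five-lemma arguments underlying \cref{proposition2.28} and \cref{theorem2.31} go through for \'etale rather than rational coverings. As indicated, this reduces to the already-established analytic case because the exhausting covering of the quasi-Stein space lies in the analytic topology, $H^0_{\text{\'et}}=H^0$, and $H^{>0}_{\text{\'et}}$ vanishes by \cref{theorem2.19}; so the genuinely new input is only the bookkeeping of the Frobenius semilinearity along the limit, which is handled exactly as in \cite[Theorem 4.6.1]{KL2} and in the proof of \cref{theorem5.18}.
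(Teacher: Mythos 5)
Your proposal is correct and follows essentially the same route as the paper: the affinoid-level equivalences come from the \'etale glueing theorem (\cref{theorem2.22}) applied interval by interval, and the passage to $\widetilde{\Pi}_R^{\infty}$ is the quasi-Stein limit argument with the Frobenius pullback supplying the uniform bound on generators needed to invoke \cref{theorem2.31}. The paper's own proof is just a two-sentence citation of these same ingredients, so your additional detail on the $R^1\varprojlim$-vanishing and the reduction of the \'etale quasi-Stein step to the analytic case is a faithful expansion rather than a different argument.
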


\begin{proof}
The corresponding families of sheaves with respect to the intervals, the families of modules with respect to the intervals and the sheaves over $\mathrm{FF}_R$ could be related equivalently through the functor taking the corresponding local global section, by applying \cref{theorem2.22}. The corresponding families of sheaves with respect to the intervals, the families of modules with respect to the intervals and the modules over $\widetilde{\Pi}^\infty_R$ could be related equivalently through the functor taking the corresponding local projective limit global section, by applying \cref{theorem2.31} and by considering the corresponding Frobenius pullback to control the rank throughout. 	
\end{proof}

\newpage

\subsection*{Acknowledgements} 

We would like to thank Professor Kedlaya for helpful discussion on the corresponding extension of the corresponding foundations in \cite{KL2} and the AWS Lecture Notes \cite{Ked2} related to the corresponding contexts in our current presentation, in particular the corresponding glueing pseudocoherent modules and the corresponding glueing vector bundles in the setting of \cite{KL1} which is also beyond the corresponding seminar we participated related to the AWS Lecture Notes \cite{Ked2} although the corresponding earliest version of the material we studied during the seminar contains less.

\newpage

\bibliographystyle{ams}

\end{document}